\def\blfootnote{\xdef\@thefnmark{}\@footnotetext}
\theoremstyle{plain}
\mathchardef\emptyset="001F
\numberwithin{equation}{section}
\newcommand{\e}{\varepsilon}
\newcommand{\be}{\begin{equation}}
\newcommand{\ee}{\end{equation}}
\newcommand{\E}{\mathfrak{E}}
\newcommand{\R}{{\mathbb R}}
\newcommand{\Sn}{{\mathbb S}}
\newcommand{\N}{{\mathbb N}}
\newcommand{\Z}{{\mathbb Z}}
\newcommand{\Rk}{{\R}^k}
\newcommand{\Rd}{{\R}^d}
\newcommand{\Rkd}{\R^{k\times d}}
\newcommand{\Rdk}{\R^{k\times d}}
\newcommand{\ClosureE}{\mathfrak{E}^{\alpha,\vartheta}_{w}}
\newcommand{\hd}{{\mathcal H}^{d-1}}
\newcommand{\Ld}{{\mathcal{L}}^d}
\newcommand{\meno}{\textup{-}}
\newcommand{\jump}[1]{J_{#1}}
\newcommand{\mres}{\mathbin{\vrule height 1.6ex depth 0pt width
0.13ex\vrule height 0.13ex depth 0pt width 1.3ex}}
\newtheorem{theorem}{Theorem}[section]
\newtheorem{corollary}[theorem]{Corollary}
\newtheorem{lemma}[theorem]{Lemma}
\newtheorem{proposition}[theorem]{Proposition}
\theoremstyle{definition}
\newtheorem{remark}[theorem]{Remark}
\newtheorem{definition}[theorem]{Definition}
\address{SISSA, Via Bonomea 265, 34136 Trieste}
\author[G. Dal Maso]{Gianni Dal Maso}
\author[D.Donati]{Davide Donati}
\title[Homogenisation of Vectorial Free-discontinuity problems]{Homogenisation of vectorial free-discontinuity functionals with cohesive type surface terms}
\begin{document}

\thanks{Preprint SISSA 18/2024/MATE}
\begin{abstract}
  The results on  $\Gamma$-limits of sequences of free-discontinuity functionals with bounded cohesive surface terms are extended to the case of vector-valued functions. In this framework, we prove an integral representation result for the $\Gamma$-limit, which is then used to study   deterministic and stochastic homogenisation problems for this type of functionals.
\end{abstract}
\maketitle
\vspace{-0.6cm}
{\bf MSC codes:} 49J45, 49Q20, 60G60, 74Q05, 74S60.

{\bf Keywords:} $\Gamma$-convergence, Free-discontinuity Problems, Homogenisation, Stochastic Homogenisation, Integral Representation, Functionals Depending on Vector-valued Functions

\section{Introduction}
Free-discontinuity problems are minimisation problems for functionals of the form
\begin{equation}\label{intro:free disco}
    \int_{A}f(x,\nabla u)\, dx+\int_{J_u}g(x,[u],\nu_u)\, d\hd, 
\end{equation}
where 
\begin{itemize}
    \item $A$ is a bounded open set in $\Rd$,
    \item $f$ and $g$ are two given scalar functions,
    \item the unknown $u$ is a function defined in $A$ with values in $\Rk$,
    \item $J_u$ is the  $(d-1)$-dimensional essential discontinuity set of $u$,  whose location and size are unknown,
    \item $\nabla u$ is the gradient of $u$ in $A\setminus J_u$, 
    \item $\nu_u$ is the measure theoretical unit normal to $\jump{u}$,
    \item $[u]=u^+-u^-$, where $u^+ $ and $u^-$ are the traces of $u$ on both sides of $J_u$,
    \item $\hd$ is the $(d-1)$-dimensional Hausdorff measure.
\end{itemize} 
Since the function $u$ exhibits an essential discontinuity set, a suitable framework to study these problems is the space $BV(A;\Rk)$ of functions of bounded variation. Problems of this type in $BV(A;\Rk)$ have been extensively studied (see for instance \cite[Chapter 4.6]{AmbFuscPall} and \cite{BraidesApprox}).

In his pioneering work \cite{Griffith}, Griffith  introduced the idea that the crack growth  in an elastic material is determined by the competition between the stored elastic energy and the energy spent to open a new portion of the crack.  Adopting this point of view, Francfort and Marigo proposed in \cite{francfort1998revisiting} a variational model to study crack growth which includes the requirement that at each time $t>0$ the displacement $u(t)$  of the elastic body minimises a free-discontinuity functional of the form \eqref{intro:free disco}, where $g$ also depends on the cracks present before $t$. In these models, the crack at time $t$ is the union of $J_{u(s)}$ for $s\leq t$, the  volume integral represents the stored elastic energy, while the surface integral is related the energy spent to produce the crack. For an overview on this subject we refer to \cite{BourFrancMari}.

In cohesive models of fracture mechanics, it is natural to assume that the function $g$ satisfies the following growth conditions
\begin{equation}\label{intro: growth for g}
    c(|\zeta|\land 1)\leq g(x,\zeta,\nu)\leq C(|\zeta|\land1),
\end{equation}
for suitable constants $0<c\leq C$, where for $s,t\in\R$, $s\land t=\min\{s,t\}$. In particular, the Dugdale model \cite{Dugdale1960YieldingOS} can be reformulated in the language of \cite{BourFrancMari}, using $g(\zeta):= a(|\zeta|\land b)$, for suitable constants $a,b\in[0,+\infty)$. Unfortunately, under hypotheses \eqref{intro: growth for g} the functional \eqref{intro:free disco} is never lower semicontinuous in  $BV(A;\Rk)$ with respect to the weak$^*$ convergence. Under suitable hypotheses (see for instance \cite[Theorem 3.1]{Braides1995} and \cite{bouchitte1998global}), its lower semicontinuous envelope has the form 
\begin{equation}\label{intro: reference problem}
     \int_{A} f(x,\nabla u)\, dx+\int_A f^\infty\Big(x,\frac{dD^cu}{d|D^cu|}\Big)\, d|D^cu|+\int_{\jump{u}\cap A} g(x, [u],\nu_u)\,d\hd,
\end{equation}
for different functions $f$ and $g$, where $f$ satisfy the growth conditions 
\begin{equation}\label{intro growth for f}
    c(|\xi|-1)\leq f(x,\xi)\leq C(|\xi|+1) \quad \text{for every $x\in\Rd$ and  $\xi\in\Rkd$},
\end{equation} and $f^\infty$ is its recession function with respect $\xi$. Here and in the rest of the paper  $D^cu$ is the Cantor part of the measure $Du$ (see \cite[Definition 3.91]{AmbFuscPall}), and $dD^cu/d|D^cu|$ is the Radon-Nikod\'ym derivative of $D^cu$ with respect to its total variation. 

Since condition \eqref{intro: growth for g} does not allow us to control the total variation of $|Du|(A)$ along a minimising sequence, to gain coerciveness it is convenient to extend the functional to a space larger than $BV(A;\Rk)$, where all terms of \eqref{intro: reference problem} can still be defined. We choose to extend these functionals to the space $GBV_\star(A;\Rk)$, studied by the second author in \cite{donati2023new}, and defined using the scalar version $GBV_\star(A)$ introduced in \cite{DalToa22}. This is a vector space and, although $Du$ is not defined for every $u\in GBV_\star(A;\Rk)$, one can always define  $\nabla u$ and $D^cu$ in a convenient way. Moreover, the bounds for a minimising sequence obtained from \eqref{intro: growth for g} are enough to apply a suitable compactness result with respect to convergence $\Ld$-a.e. (see \cite[Theorem 5.4]{DalToa22}, \cite[Theorem 7.13]{DalToa23}, and \cite[Theorem 4.8]{donati2023new}). 

In this paper, we study the $\Gamma$-convergence with respect to the convergence in measure of sequences of functionals of the form \eqref{intro: reference problem}, where $f$ and $g$ satisfy \eqref{intro growth for f} and  \eqref{intro: growth for g}.
We first prove a compactness result (Theorem \ref{thm:compactness for E}),  which shows that under suitable hypotheses (see Definitions \ref{def:good bulk integrands} and \ref{def:good surface integrands}) on the integrands $f_n$ and $g_n$  corresponding to a  sequence of functionals $(E_n)_n$, there always exists a subsequence $\Gamma$-converging to a functional $E$, whose volume and jump terms can be written as  integral functionals associated to some functions $f$ and $g$ (see Theorem \ref{thm:rappresentazione}).

To obtain a complete integral representation of $E$ as in \eqref{intro: reference problem} it remains to deal with the term  depending on the Cantor part. This requires stronger hypotheses on $f_n$ and $g_n$, studied in Section \ref{sec:smallerclass}, concerning quantitative estimates on the behaviour of $f_n$ and $g_n$ near $\infty$ and near $0$, respectively. Under these assumptions, we show that $f$ and $g$ can be obtained by taking suitable limits of the infima of some minimisation problems for $E_n(\cdot,Q)$ on suitable small cubes $Q$ and that, if $f$ is independent of $x$, we have the  integral representation  
\begin{equation}\nonumber\label{intro sezione 8}
    E(u,A)= \int_{A} f(\nabla u)\, dx+\int_A f^\infty\Big(\frac{dD^cu}{d|D^cu|}\Big)\, d|D^cu|+\int_{\jump{u}\cap A} g(x, [u],\nu_u)\,d\hd.
\end{equation}

These results are then applied to the case of  homogenisation, where the functionals $E_n$ are given by
\begin{equation}\label{eq:stochastic integrals}
    E_n(u,A)= \int_{A} f\Big(\frac{x}{\e_n},\nabla u\Big)\, dx+\int_A f^\infty\Big(\frac{x}{\e_n},\frac{D^cu}{|D^cu|}\Big)\, d|D^cu|+\int_{\jump{u}\cap A} g\Big(\frac{x}{\e_n}, [u],\nu_u\Big)\,d\hd,
\end{equation}
for a sequence $\e_n\to 0^+$. We determine general conditions on $f$ and $g$ which imply that the $\Gamma$-limit of these sequence of functionals exists and has the form 
\begin{equation}\label{intro:homogenised}
    E_{\rm hom}(u,A)= \int_{A} f_{\rm hom}(\nabla u)\, dx+\int_A f_{\rm hom}^\infty\Big(\frac{dD^cu}{d|D^cu|}\Big)\, d|D^cu|+\int_{\jump{u}\cap A} g_{\rm hom}( [u],\nu_u)\,d\hd,
\end{equation}
for some functions $f_{\rm hom}$ and $g_{\rm hom}$ independent of $x$. These functions $f_{\rm hom}$ and $g_{\rm hom}$ are obtained by taking the limits of infima of certain minimisation problems for $E_n$, with $\e_n=1$, on cubes whose sides tend to infinity.

Thanks to these properties,  we can apply the previous results to stochastic homogenisation problems, where $f$ and $g$ are random integrands satisfying suitable properties (see Definition \ref{def:stochastic integrands}). Under these assumptions, we show that the sequence $E_n$ $\Gamma$-converges to $E$ given by \eqref{intro:homogenised} almost surely.

These results were obtained in the scalar case in \cite{DalToa23,DalToa23b}. 
In these papers, the truncation $u^{(m)}:=(u\land m)\lor(-m)$  and the corresponding estimates for the functionals are frequently used.  The main difficulty in the vector-valued setting is that these truncations have to be replaced by a sort of {\it smooth} truncations of the form $\psi_{m}^i\circ u$, $i\in\{1,...,m\}$ for suitable functions $\psi_m^i\in C^{\infty}_c(\Rk;\Rk)$ satisfying 
\begin{equation*}
    \psi_m^i(y)=y \quad \text{for $|y|\leq R_m$ \quad and \quad  } \psi_m^i(y)=0 \quad \text{ for $|y|\geq \sigma^mR_m$},
\end{equation*}
where  the sequence $R_m\to +\infty$ and the constant $\sigma>2$ are prescribed. These types of truncations were already considered in previous works (see, for instance, \cite{BraDefVit,CagnettiDetFree,  CeladaDalM, FonsecaMuller,FuscoHutchinson}). The main difficulty here is that an estimate for $E_n({\psi_m^i\circ u},A)$ in terms of $E_n(u,A)$ cannot be obtained for every $i\in\{1,...,m\}$, but only for a suitable choice of $i$, depending on $n,m, u,$ and $A$. This requires deep changes in many technical results of \cite{DalToa23} and \cite{DalToa23b}, which introduce new terms depending on $m$ and which cannot be neglected (see, for instance, Lemma \ref{lemma:teo troncature} and Proposition \ref{prop:g is not in gtheta}).

Our results can be considered as a preliminary step for the study of the asymptotic behaviour of the crack growth in a heterogeneous  cohesive material, when the size of the grains tends to zero. Indeed, according to the approach of \cite{MielkeRoubStef2008} (see also \cite{MainikMielke2009,NumerApprox,mielke2015rate,MielTimof}), in the variational theory of rate-independent problems  the convergence of the quasi-static evolutions requires the $\Gamma$-convergence of the corresponding energy-dissipation functionals and the existence of a joint recovery sequence. Our results give a complete answer to the first part of this program in the case of cohesive models for fracture. In this paper we do not address the construction of a joint recovery sequence.

The problem of homogenisation of free-discontinuity functionals has already been addressed in a wide variety of cases. In the now classical work \cite{BraDefVit}, Braides, Defranceschi, and Vitali first dealt with the deterministic periodic case for integrands $f$ and $g$ satisfying 
\begin{eqnarray}
    \label{intro:pgrowth}&\displaystyle c|\xi|^p\leq f(x,\xi)\leq C(1+|\xi|^p)\quad \text{for every $x\in\Rd$ and $\xi\in\Rkd$},\\
    &\displaystyle\label{intro:g bradefrvit} c(1+|\zeta|)\leq g(x,\zeta,\nu)\leq C(1+|\zeta|)\quad \text{for every }x\in\Rd, \zeta\in\Rk,\nu\in\Sn^{d-1}
\end{eqnarray}
for some suitable constants $c,C>0$ and for $p>1$. Under these growth conditions one can use the compactness results on $SBV$ by Ambrosio \cite[Theorem 2.1]{AmbroExist} (see also \cite{AmbrosioComp1,AmbrosioANew})  and the functionals to homogenise are of type \eqref{intro:free disco}. The main result of their paper is a periodic  homogenisation theorem for such functionals. They also show that, in this case, the bulk integrand $f_{\rm hom}$ of the $\Gamma$-limit  is determined only by $f$ and  the surface integrand $g_{\rm hom}$ is determined only by $g$. 

Later Giacomini and Ponsiglione studied the scalar case $k=1$ in \cite{GiacominiPonsiglione}, assuming that $f$ satisfies \eqref{intro:pgrowth} and that $g$ does not depend on $\zeta$ and
\[
c\leq g(x,\nu)\leq C \quad \text{ for every } x\in\Rd \text{ and } \nu\in\mathbb{S}^{d-1}.
\] 
Note that in their result no periodicity assumption on $f$ and $g$ is made. As in the periodic case studied by Braides et al. \cite{BraDefVit},  under these hypotheses they are able to show that $f_{\rm hom}$ dependes only on $f$ and that $g_{\rm hom}$ depends only on $g$.  

More recently, Cagnetti et al. considered the vectorial case $k\geq 1$ in \cite{CagnettiDetFree} with non-periodic integrands $f$ and $g$, where $f$ satisfies \eqref{intro:pgrowth} and  $g$ satisfies 
\begin{equation}\label{intro:growthCagnetti} 
    c\leq g(x,\zeta,\nu)\leq C(1+|\zeta|)\quad \text{for every $x\in\Rd$, $\zeta\in\Rk$, and $\nu\in\Sn^{d-1}$}.
\end{equation}

In a subsequent work \cite{CagnettiStocFree}, the results of  \cite{CagnettiDetFree} are then employed to deal with the stochastic homogenisation of free-discontinuity integral functionals satisfying  growth conditions \eqref{intro:pgrowth} and \eqref{intro:growthCagnetti}. Under the standard assumptions of stochastic homogenisation they prove an almost sure  $\Gamma$-convergence result for functionals of type \eqref{intro:free disco}.

Building on the techniques devised in \cite{CagnettiDetFree,CagnettiStocFree}, the same authors tackled in \cite{CagnettiGlobal} the problem of deterministic and stochastic homogenisation for sequences of type \eqref{eq:stochastic integrals}  under the hypotheses that both $f$ and $g$ have linear growth. In this case, the underlying function space becomes $BV(A;\Rk)$ and the integral depending on the Cantor part of the derivative has to be considered in \eqref{eq:stochastic integrals}. 

Recently, the problem of homogenisation of free-discontinuity functionals was also addressed in the context of functions of bounded deformation.
In \cite{FriedrichPeruSolo}, Friedrich, Peruguini, and Solombrino tackle in dimension $d=2$ the $\Gamma$-convergence with respect to the convergence in measure of functionals 
$E_n$  given by 
\begin{equation}
    \label{intro: linearised homogenisation}
E_n(u,A)=\int_{A}f\Big(\frac{x}{\e_n},\mathcal{E}u\Big)\, dx+\int_Ag\Big(\frac{x}{\e_n},[u],\nu_u\Big)\, \hd,
\end{equation}
where $A\subset\R^2$ is open and bounded, $u\in GSBD^p(A;\R^2)$, and 
$\mathcal{E}u=(\nabla u+\nabla u^T)/2$ is the approximate symmetric gradient of $u$, assuming that the integrand $f\colon \R^2\times \R^{2\times 2}_{\rm sym}\to [0,+\infty)$ has $p$-growth and that $g$ is bounded from below by a positive constant.

In  a recent work \cite{DonFri23}, the stochastic   homogenisation of functionals of type \eqref{intro: linearised homogenisation} restricted to  piecewise rigid functions, i.e., functions $u(x)=\sum_{i=1}^\infty (R_ix+b_i)\chi_{E_i}$ for $R_i\in    \R^{d\times d}$ skew-symmetric, $b_i\in\R^d,$ and $E_i$ of finite perimeter, was tackled in dimensions $d\geq 2$.

\medskip

Our paper is organised as follows. In Section \ref{sec:notationpreliminaries} we fix the notation and lay out the basic tools used throughout the paper.
We then introduce in Section \ref{sec:classE} the collections of volume integrands $\mathcal{F}$ and surface integrands $\mathcal{G}$ which will be the object of our study; we also introduce a class of abstract functionals $\E$ which contains the integral functionals corresponding to integrands in $\mathcal{F}$ and $\mathcal{G}$.

Section \ref{seq:compactness} is devoted to proving a compactness result for sequences of integral functionals. In Section \ref{sec:partial representation} we investigate the properties of the class $\E$, by proving an integral representation result for $\E$, showing that if $E\in\E$ is lower-semicontinuous with respect to the convergence in measure then its \lq\lq  absolutely continuous part\rq\rq $E^a$ and its \lq\lq jump part\rq\rq $E^j$  can be represented by integral functionals, with integrands $f\in\mathcal{F}$ and $g\in\mathcal{G}$, respectively.

In Section \ref{sec:smallerclass} we introduce two smaller collections of integrands $\mathcal{F}^\alpha$ and $\mathcal{G}^\vartheta$ and study the $\Gamma$-limits of sequences of integral functionals associated with them. We then prove in Section \ref{sec:repr} that under some suitable hypotheses these $\Gamma$-limits can be fully represented as the sum of three integral functionals as in \eqref{intro: reference problem}, including the term depending on the Cantor part. 

In Section \ref{sec:integrands in the limit}
we exploit the representation result of Section \ref{sec:repr} to give a necessary sufficient condition for the $\Gamma$-convergence of sequences of functionals of the form \eqref{intro: reference problem} with $f\in\mathcal{F}^\alpha$ and $g\in\mathcal{G}^\vartheta$. Finally, Section \ref{sec:homogenisation} is devoted to the study of the $\Gamma$-limit of functionals of type \eqref{eq:stochastic integrals}. As an application, these results are then employed in the final part of Section \ref{sec:homogenisation} to deal with stochastic homogenisation.
 
\section{Notation and preliminaries}\label{sec:notationpreliminaries}

In this Section we fix the notation and introduce the basic tools used in the rest of the paper.
\begin{enumerate}
    \item [(a)] Let $n\in\N$. The scalar product in $\mathbb{R}^n$ is denoted by $\cdot$ and the  Euclidean norm of $\mathbb{R}^n$ is denoted by $|\,\,|$. Given $x\in\mathbb{R}^n$,  the $i$-th component of $x$  is denoted by $x_i$. For every $\rho>0$ and $x\in\R^n$ the open ball of radius $\rho$ and center $x$ is denoted by $B_\rho(x)$.
    \item [(b)] We fix once and for all two positive integer numbers $d\geq 1$, $k\geq 1$. the unit spere in $\Rd$ is denote by $\mathbb{S}^{d-1}:=\{\nu\in\Rd: |\nu|=1\}$, . We also set $\mathbb{S}^{d-1}_{\pm}:=\{\nu\in\mathbb{S}^{d-1}: \pm \nu_{i(\nu)}>0\}$, where $i(\nu)\in\{1,...,d\}$ is the largest index such that $\nu^{i(\nu)}\neq 0$.
    \item [(c)]  Vectors in $\Rd$ are identified with $1\times d$ matrices, while $\R^{k\times d}$ is identified with the space of all $k\times d$ matrices. 
    For $\xi\in\Rkd$ and $x\in\Rd$ $\xi x\in\Rk$ is defined by the usual rules of matrix multiplication and  $\xi^i$ is the $i$-th row of $\xi$.
    Given a matrix $\xi=(\xi_{ij})\in\Rkd$, its Frobenius norm  is defined by
    $$|\xi|:=\Big(\sum_{i=1}^k\sum_{j=1}^d\xi_{ij}^2\Big)^{1/2}.$$

    \item [(d)]For $\rho>0$ we set $Q(\rho):=\{y\in\Rd: |y\cdot e _i|<\rho /2\}$, where $(e_i)_{i=1}^d$ is the canonical basis of $\Rd$. Given $x\in\Rd$, we set $Q(x,\rho):=x+Q(\rho)$.  
    \item [(e)]  For $n\in\N$  the space of all $n\times n$ orthonormal matrices $R$ with $\text{det}(R)=1$ is denoted by  $SO(n)$.  For every $\nu\in\mathbb{S}^{d-1}$ we fix once and for all an element $R_\nu\in SO(d)$ such that $R_{\nu}(e_d)=\nu$. We suppose that $R_{e_d}=I$, the identity matrix, that the restrictions of $\nu\to\mathbb{S}^{d-1}$ to $\mathbb{S}^{d-1}_{\pm}$ is continuous, and that $R_\nu(Q(\rho))=R_{-\nu}(Q(\rho))$ for every $\nu\in\mathbb{S}^{d-1}$ (see \cite[Example A.1]{CagnettiStocFree} for the proof of the existence of such $\nu\mapsto R_\nu$).

    \item [(f)] For $x\in\Rd$, $\nu\in\mathbb{S}^{d-1}$, $\lambda\geq 1$, and $\rho>0$, we consider the rectangle 
    \begin{equation*}
Q^\lambda_\nu(x,\rho):=x+R_\nu\Big(\Big(-\frac{\lambda\rho}{2},\frac{\lambda\rho}{2}\Big)^{d-1}\times\Big(-\frac{\rho}{2},\frac{\rho}{2}\Big)\Big);   
    \end{equation*}
     we omit the indication of $\lambda$ when $\lambda=1$.
    \item [(g)] Given an open set $\Omega\subset \Rd$, $\mathcal{A}(\Omega)$ (resp. $\mathcal{B}(\Omega)$) is the collection of all open (resp. Borel) sets $A\subset\Omega$. Given $A,B\in\mathcal{A}(\Omega)$, $A\subset\subset B$ means that $A$ is relatively compact in $B$. We set $\mathcal{A}_c(\Omega):=\{A\in\mathcal{A}(\Omega)\colon \,A\subset\subset\Omega\}$.

    \item [(h)]

For every $x\in\Rd$, $\xi\in\Rkd$, $\zeta\in\Rk$, and $\nu\in\mathbb{S}^{d-1}$  the two functions $\ell_\xi\colon \Rd \to\Rk$ and  $u_{x,\zeta,\nu}\colon \Rd\to\Rk$ are defined for every $y\in\Rd$ by
    \begin{eqnarray}
        &\displaystyle \nonumber \ell_\xi(y):=\xi y,\\
        &\displaystyle\nonumber u_{x,\zeta,\nu}(y):=\begin{cases}
            \zeta \quad \text{ if }(y-x)\cdot \nu\geq 0,\\
            0 \quad \text{ if }(y-x)\cdot \nu< 0.
        \end{cases}
    \end{eqnarray}
    Moreover, we set $\Pi^\nu_x:=\{x\in\Rd\colon (y-x)\cdot \nu=0\}$.

\item[(i)]Given $A\in\mathcal{A}(\Rd)$,  the space of $\R^n$-valued bounded Radon measures on $A$ is denoted by $\mathcal{M}_b(A;\mathbb{R}^n)$. If $n=1$ we omit the indication of $\R^n$. 
 If $\mu \in\mathcal{M}_b(A;\Rdk)$ and $\lambda\in \mathcal{M}_b(A)$ is non-negative, $d\mu/d\lambda$ denotes the Radon-Nikod\'ym derivative of $\mu$ with respect to $\lambda$.
The Lebesgue measure is denoted by $\Ld$ and the $(d-1)$-dimensional Hausdorff measure is denoted by $\hd$. 
For $\mu\in\mathcal{M}_b(A;\Rkd)$  the total variation $|\mu|$ is computed with respect to the Frobenius norm.
        
\item [(j)] For every $A\in\mathcal{A}(\Rd)$,  $L^0(A;\R^n)$ is the space of all $\Ld$-measurable  functions $u\colon A\to\Rk$ with the topology induced by the convergence in measure. We recall that such topology is metrisable and separable. When $n=1$ the indication $\R^n$ is omitted.

 \item [(k)] Given an $\Ld$-measurable set $E\subset \Omega$, a point $x\in \Omega$  such that 
 \begin{equation*}
     \limsup_{\rho\to 0^+}\frac{\Ld(E\cap B_\rho(x))}{\rho^d}>0,
 \end{equation*} and an $\Ld$-measurable function $u\colon E\to\Rk$, we say that $a\in \Rk$ is the approximate limit of $u$ at  $x$, in symbols 
\begin{equation*}
    \underset{\underset{y\in E}{y\to x}}{\text{ap}\lim \,}u(y)=a,
\end{equation*}
if for every $\varepsilon>0$ we have
\begin{equation*}
    \lim_{\rho\to 0^+}\frac{\Ld(\{|u-a|>\varepsilon\}   \cap B_\rho(x))}{\rho^d}=0,
\end{equation*}
where $\{|u-a|>\e\}:=\{y\in E\colon |u(y)-a|>\e\}$.

\item [(l)] Given $A\in\mathcal{A}(\Rd)$  and an $\Ld$-measurable function $u\colon A\to\Rk$,  the jump set $\jump{u}$ is the set of all points $x\in A$ such that 
there exists a triple $(u^+(x),u^-(x),\nu_u(x))\in\Rk\times\Rk\times\mathbb{S}^{d-1}$, with $u^+(x)\neq u^-(x)$, such that, setting 
\begin{equation*}
    H^+:=\{y\in A\colon \, (y-x)\cdot \nu_u(x)>0\}\,\, \text{ and }\,\,H^-:=\{y\in A\colon \, (y-x)\cdot \nu_u(x)<0\},
\end{equation*}
we have 
\begin{equation*}
      \underset{\underset{y\in H^+}{y\to x}}{\text{ap}\lim \,}u(y)=u^+(x)\text{ \,\,\, and }  \,\,\,\underset{\underset{y\in H^-}{y\to x}}{\text{ap}\lim \,}u(y)=u^-(x).
\end{equation*}
The triple $(u^+(x),u^-(x),\nu_u(x))$ is well-defined up to interchanging the roles of $u^+(x)$ and $u^-(x)$ and swapping  the sign of $\nu_u(x)$.

\item [(m)] Given $A\in\mathcal{A}(\Rd)$ the symbol $BV(A;\Rk)$ denotes the space of $\Rk$-valued functions with bounded variation on $A$. We refer the reader to \cite{AmbFuscPall} (see also \cite{BraidesApprox, Evans2015,giusti1984minimal}) for an exhaustive introduction to this function space. 
We recall that if $u\in BV(A; \Rk)$ then for $\hd$-a.e. $x\in A\setminus\jump{u}$ there exists 
\[ \Tilde{u}(x):=\underset{\underset{y\in A}{y\to x}}{\text{ap}\lim \,}u(y).\]
 We recall also that if $u\in BV(A;\Rk)$, then  $\jump{u}$ is a $(d-1)$-countably rectifiable set, and for $\hd$-a.e. $x\in\jump{u}$ the vector $\nu_u(x)$ is a measure theoretical normal to $\jump{u}$.  For every $x\in\jump{u}$  we set 
\[[u](x):=u^+(x)-u^-(x).\]
 A change of sign of $\nu_u(x)$ obviously implies a change of sign in $[u](x)$. 
 
 \item [(n)]Given $u\in BV(A;\Rk)$, its distributional derivative  $Du$, which is by definition a bounded  $\Rkd$-valued Radon measure, can be decomposed as

\[Du=\nabla u\Ld +D^cu+[u]\otimes\nu_u\hd\mres\jump{u},\]
where 
\begin{itemize}
    \item $\nabla u\in L^1(A;\Rdk)$ is the approximate gradient of $u$, that is, the only $\Rkd$-valued function such that for $\Ld$-a.e $x\in A$ we have 
\begin{equation}\label{eq:Approximate Differentaibility}
    \underset{\underset{y\in A}{y\to x}}{\text{ap}\lim \,}\frac{u(y)-\Tilde{u}(x)-\nabla u(x)(y-x)}{|y-x|}=0,
\end{equation}
\item $D^cu$, called the Cantor part of $Du$, is a measure singular with respect to $\Ld$ and vanishing on all Borel sets $B\in\mathcal{B}(A)$ with $\hd(B)<+\infty$,
    \item $\otimes$ denotes the tensor product defined by $(a\otimes b)_{ij}=a_ib_j$ for $a\in\Rk$, $b\in\Rd$, $\hd\mres\jump{u}$ is the Borel measure on $A$ defined by $\hd(B):=\hd(B\cap \jump{u})$ for every $B\in\mathcal{B}(A)$, and $[u]\otimes\nu_u\hd\mres\jump{u}$ denotes the measure with density $[u]\otimes\nu_u$ with respect to $\hd\mres\jump{u}$.  
\end{itemize}

\end{enumerate}

\bigskip

We briefly recall the definition and the main properties of the space $GBV_\star(A;\Rk)$, introduced in the scalar setting by the first author and Toader in \cite{DalToa22} and in the vectorial setting by the second author in \cite{donati2023new}. For $s,t\in\R$ we set $s\land t=\min\{s,t\}$ and $s\lor t=\max\{s,t\}$. For every $t\in\R$, $a\in\Rk$ and $m>0$, we set  $t^{(m)}:=(t\land m)\lor (-m)$ and $a^{(m)}:=(a_1^{(m)},...,a_k^{(m)})$. In the rest of the section, $A\subset\Rd$ will always be a bounded open set.
\begin{definition}\label{def: GBVstar}
   Let $u\in L^0(A;\Rk)$. Then $u\in GBV_\star(A;\Rk)$ if and only $u^{(m)}\in BV(A;\Rk)$ for every $m>0$ and there exists $M>0$ such that 
   \begin{equation}\label{eq:boundsinglecomponent}
\sup_{m>0}\int_A|\nabla u^{(m)}|\, dx+|D^cu^{(m)}|(A)+\int_{\jump{u^{(m)}}}|[u^{(m)}]|\land 1\,d\mathcal{H}^{d-1}\leq M.
\end{equation}
$GBV_\star(A)$ is defined similarly for scalar functions.
\end{definition}

\begin{remark}\label{re:def componentwise gbv}
    It follows immediately from the definition that $u$ belongs to $GBV_\star(A;\Rk)$ if and only if each component $u_i$ belongs to  $GBV_\star(A)$. By \cite[Theorem 3.9]{DalToa23} this implies   that $GBV_\star(A;\Rk)$ is a vector space.
\end{remark}

To characterise $GBV_\star(A;\Rk)$ by means of smooth truncations we introduce the following functions.

Given a positive constant $\sigma>2$, we fix a  smooth radial function $\psi\in C^\infty_c(\Rk;\Rk)$ satisfying
\begin{equation}\label{eq:defpsi}
    \begin{cases}
        \psi(y)=y&\text{ if }|y|\leq 1,\\
        \psi(y)=0&\text{ if }|y|\geq \sigma,\\
        |\psi(y)|\leq \sigma,\\
        \textup{Lip}(\psi)=1.
    \end{cases}
\end{equation}

\noindent It is not difficult to construct such a function  (see for instance \cite[Section 4]{CagnettiDetFree}).

Given $R>0$, we set
\begin{equation}\label{def:psiR}\psi_{R}(y):=R \psi\Big(\frac{y}{R}\Big)\quad \text{ for every $y\in\Rk$}.
\end{equation}
Note that $\psi_R$ satisfies
\begin{equation}\label{eq:proppsiR1}
    \begin{cases}
        \psi_R(y)=y&\text{ if }|y|\leq R,\\
        \psi_R(y)=0&\text{ if }|y|\geq \sigma R,\\
        |\psi_R(y)|\leq \sigma R,\\
        \textup{Lip}(\psi_R)=1.
    \end{cases}
\end{equation}
The following proposition characterises $GBV_\star(A;\Rk)$ in terms of smooth truncations. 
\begin{proposition}[{\cite[Proposition 3.8]{donati2023new}}]\label{prop:characterisation GBVstar}
   For every  $u\in GBV_\star(A;\Rk)$  there exists a constant $C_u>0$ such that for every Lipschitz function $\phi$ with compact support the function  $v:=\phi\circ u$ belongs to $BV(A;\Rk)$ and satisfies the inequality
\begin{equation}\label{eq:def GBVstar}
    \int_A|\nabla v|\, dx+|D^cv|(A)+\int_{J_{v}}|[v]|\land 1\,d\mathcal{H}^{d-1}\leq C_u(\textup{Lip}(\phi)\lor 1).
\end{equation}
Conversely, if $u\in L^0(A;\Rk)$ and there exists a constant $C_u>0$ such that for every integer $R>0$ \eqref{eq:def GBVstar} holds with $v=\psi_R\circ u$ and $\phi=\psi_R$ then $u\in GBV_\star (A;\Rk)$.
\end{proposition}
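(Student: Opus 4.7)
My plan is to prove both directions using Volpert's chain rule together with a lower semicontinuity property for the functional
\[
\mathcal{N}(v) := \int_A |\nabla v|\,dx + |D^c v|(A) + \int_{\jump{v}} |[v]|\land 1\, d\hd, \qquad v\in BV(A;\Rk),
\]
with respect to convergence in measure. For the direct implication I would approximate $u$ by its componentwise truncations $u^{(m)}$; for the converse I would reverse this, starting from $v_R:=\psi_R\circ u$ and applying a further componentwise truncation before letting $R\to\infty$. The pointwise convergences $\phi\circ u^{(m)}\to\phi\circ u$ and $(v_R)^{(m)}\to u^{(m)}$ $\Ld$-a.e.\ in $A$ follow from continuity of the outer map and the fact that $u(x)\in\Rk$ for $\Ld$-a.e.\ $x$.

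For the direct implication, fix $u\in GBV_\star(A;\Rk)$ with constant $M$ as in \eqref{eq:boundsinglecomponent} and a Lipschitz $\phi\colon\Rk\to\Rk$ with compact support. Since $u^{(m)}\in BV(A;\Rk)\cap L^\infty(A;\Rk)$, Volpert's chain rule gives $\phi\circ u^{(m)}\in BV(A;\Rk)$ with
\[
|\nabla(\phi\circ u^{(m)})|\leq \textup{Lip}(\phi)\,|\nabla u^{(m)}|,\quad |D^c(\phi\circ u^{(m)})|\leq \textup{Lip}(\phi)\,|D^c u^{(m)}|,\quad |[\phi\circ u^{(m)}]|\leq \textup{Lip}(\phi)\,|[u^{(m)}]|.
\]
A short case analysis (depending on whether $|[u^{(m)}]|\leq 1$) then yields $|[\phi\circ u^{(m)}]|\land 1\leq (\textup{Lip}(\phi)\lor 1)(|[u^{(m)}]|\land 1)$, so $\mathcal{N}(\phi\circ u^{(m)})\leq (\textup{Lip}(\phi)\lor 1)\,M$ uniformly in $m$. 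Boundedness of $\phi$ gives $|[\phi\circ u^{(m)}]|\leq 2\|\phi\|_\infty$ on $J_{\phi\circ u^{(m)}}$; splitting this jump set into $\{|[\cdot]|\leq 1\}$ and its complement upgrades the $\land 1$ bound to a uniform control of the full total variation $|D(\phi\circ u^{(m)})|(A)$. Combined with $\phi\circ u^{(m)}\to \phi\circ u$ in $L^1(A;\Rk)$ (by dominated convergence), $BV$ compactness gives $\phi\circ u\in BV(A;\Rk)$, and lower semicontinuity of $\mathcal{N}$ yields the estimate with $C_u=M$.

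For the converse, assume $\mathcal{N}(\psi_R\circ u)\leq C_u$ for every integer $R>0$ (note $\textup{Lip}(\psi_R)=1$ by \eqref{eq:proppsiR1}). Fix $m>0$ and apply the $1$-Lipschitz componentwise truncation $T_m(y):=y^{(m)}$ to $v_R:=\psi_R\circ u$; Volpert's chain rule gives $(v_R)^{(m)}\in BV(A;\Rk)\cap L^\infty(A;\Rk)$ with $|(v_R)^{(m)}|\leq m\sqrt{k}$ and $\mathcal{N}((v_R)^{(m)})\leq \mathcal{N}(v_R)\leq C_u$. The same splitting argument, now using the $L^\infty$ bound $m\sqrt{k}$, produces a uniform-in-$R$ bound on $|D(v_R)^{(m)}|(A)$. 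Since $\psi_R(y)\to y$ pointwise as $R\to\infty$ (by \eqref{eq:proppsiR1}), $v_R\to u$ $\Ld$-a.e., and by continuity of $T_m$, $(v_R)^{(m)}\to u^{(m)}$ $\Ld$-a.e. $BV$ compactness then yields $u^{(m)}\in BV(A;\Rk)$, and lower semicontinuity of $\mathcal{N}$ gives $\mathcal{N}(u^{(m)})\leq C_u$ uniformly in $m$, i.e.\ \eqref{eq:boundsinglecomponent} with $M=C_u$, so $u\in GBV_\star(A;\Rk)$ by Definition~\ref{def: GBVstar}.

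The main obstacle will be the lower semicontinuity of $\mathcal{N}$ with respect to convergence in $L^0(A;\Rk)$. The bulk-plus-Cantor contribution $\int|\nabla v|\,dx+|D^c v|(A)$ is the diffuse part of $|Dv|$ and is handled by classical $BV$ arguments, but the cohesive surface term $\int|[v]|\land 1\,d\hd$ is concave and non-linear in the jump amplitude, and its lower semicontinuity is the delicate ingredient. I would import it either from the scalar statements in \cite{DalToa22} via the componentwise reduction recorded in Remark~\ref{re:def componentwise gbv}, or from an Ambrosio-type lower semicontinuity theorem for $BV(A;\Rk)$ with cohesive (concave, subadditive) surface integrands.
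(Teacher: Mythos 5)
The paper does not prove this statement at all: it is imported verbatim from \cite[Proposition 3.8]{donati2023new}, so there is no internal proof to compare against. Judged on its own terms, your strategy (chain-rule estimates for Lipschitz outer maps, truncation in $m$ resp.\ $R$, a uniform bound on the full variation obtained by splitting the jump set according to $|[\cdot]|\le 1$, $BV$ compactness, and lower semicontinuity of the cohesive functional) is the natural one and the individual steps you list are correct, including the elementary inequality $|[\phi\circ u^{(m)}]|\land 1\le(\textup{Lip}(\phi)\lor 1)(|[u^{(m)}]|\land 1)$ and the fact that $(v_R)^{(m)}\to u^{(m)}$ $\Ld$-a.e.

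The one point you must nail down differently is the lower semicontinuity step, and here your two suggested routes are not interchangeable. Your functional $\mathcal{N}$ is exactly $V_2(\cdot,A)$ of Definition \ref{def:functional Vfrob}, and Remark \ref{re:lowersemicontinuity of Veta} states explicitly that the authors do \emph{not} know whether $V_2$ is lower semicontinuous with respect to convergence in measure; so you cannot appeal to an ``Ambrosio-type'' theorem for the Frobenius-norm cohesive term as if it were available. The componentwise reduction must therefore be your primary argument, not a fallback: run the semicontinuity through $V$ of Definition \ref{def:functional V}, whose lower semicontinuity on $L^0$ is recorded in Remark \ref{re:lowersemicontinuity of Veta} (via the scalar results of Braides and of Dal Maso--Toader). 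Concretely, in the direct implication bound $\mathcal{N}(\phi\circ u)\le V(\phi\circ u,A)\le\liminf_m V(\phi\circ u^{(m)},A)$ and use $V\le k\,V_2$ (Remark \ref{re:new c1 and c2}-type inequalities) together with your chain-rule estimates; in the converse, bound $V(u^{(m)},A)\le\liminf_R V((v_R)^{(m)},A)\le k\,C_u$ and then pass back to the Frobenius quantities via $V_2\le V$. This only worsens the constant by a factor depending on $k$, which is harmless since the statement merely asserts the existence of some $C_u$ (resp.\ some $M$ in \eqref{eq:boundsinglecomponent}). Note also that in the direct implication the membership $\phi\circ u\in BV(A;\Rk)$ already follows from your uniform total-variation bound and $L^1$ convergence, so only the quantitative estimate, not the $BV$ membership, needs the semicontinuity of the cohesive term. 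With these adjustments your proof goes through.
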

In the following proposition we recall the fine properties of functions in $GBV_\star(A;\Rk)$. 

\begin{proposition}\label{prop: Properties of GBVsvector}
    Let $u\in GBV_\star(A;\Rk)$. Then
    \begin{itemize}
        \item [(a)] for $\hd$-a.e $x\in A\setminus\jump{u}$ there exists 
        \[\Tilde{u}(x):=\underset{{y\to x}}{\textup{ap}\lim \,}u(y);\]
        \item [(b)] there exists a Borel function $\nabla u\in L^1(A;\Rdk)$ such that for $\Ld$-a.e. $x\in A $ formula \eqref{eq:Approximate Differentaibility} holds true; moreover, for every $R>0$  we have
        \begin{equation*} 
         \nabla u(x)=\nabla (\psi_R\circ u)(x) \quad \text{for $\Ld$-a.e. }x\in \{|u|\leq R\};
        \end{equation*}
        \item [(c)] 
        there exists a unique Radon measure $D^cu\in\mathcal{M}_b(A;\Rkd)$ such that for every $R>0$  we have $D^cu(B)=0$ for every $B\in\mathcal{B}(A)$ with $\hd(B\setminus\jump{u})=0$ and  $D^cu(B)=D^c(\psi_R\circ u)(B)$ for every $B\subset\{x\in A\setminus\jump{u}\colon \, \widetilde{u}(x) \text{ exists and }|\widetilde{u}(x)|\leq R\}$; moreover,  for every $B\in\mathcal{B}(A)$ we have 
        \begin{eqnarray}
        &\displaystyle\label{eq:limite cantor}
       D^cu(B)=\lim_{R\to +\infty} D^c(\psi_{R}\circ u)(B),\\\label{eq:limite cantor modulo}
       &\displaystyle |D^cu|(B)=\lim_{R\to +\infty} |D^c(\psi_{R}\circ u)|(B);
        \end{eqnarray}
        
        \item [(d)] for every  $R>0$  we have $\jump{\psi_R\circ u}\subset \jump{u}$ up to an $\hd$-negligible set  and $|[\psi_R\circ u]|\leq|[u]|$ on $\jump{\psi_R\circ u}\cap\jump{u}$. Moreover, for $\hd$-a.e $x$ in $\jump{u}$ and every $R>|u^+(x)|\lor|u^-(x)|$  we have $|[\psi_R\circ u](x)|=|[u](x)|$.
          \end{itemize}
\end{proposition}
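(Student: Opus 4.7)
The plan is to combine the smooth-truncation characterisation in Proposition \ref{prop:characterisation GBVstar} with the fine-property results for scalar $GBV_\star(A)$ obtained in \cite{DalToa22}, accessed componentwise through Remark \ref{re:def componentwise gbv}. Throughout, set $v_R := \psi_R \circ u$, which by Proposition \ref{prop:characterisation GBVstar} belongs to $BV(A;\Rk)$ for every $R > 0$, with uniform total-variation bounds inherited from \eqref{eq:boundsinglecomponent}. The strategy for (a)--(d) is then to transfer the corresponding fine properties of each $v_R$ back to $u$ by a locality-plus-stability-in-$R$ argument.

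For (a) and (b) I would argue componentwise: writing $u = (u_1,\ldots,u_k)$ with each $u_i \in GBV_\star(A)$, the scalar theory yields the existence $\hd$-a.e.\ of the approximate limits $\widetilde{u_i}$ and $\Ld$-a.e.\ of the approximate gradients $\nabla u_i$. Setting $\widetilde u(x) := (\widetilde{u_1}(x),\ldots,\widetilde{u_k}(x))$ and assembling the rows $\nabla u := (\nabla u_i)_{i=1}^k$ yields the desired objects, after checking the standard identification $J_u = \bigcup_i J_{u_i}$ modulo $\hd$-negligible sets (a direct consequence of the one-sided componentwise characterisation of jump points). The identity $\nabla u(x) = \nabla v_R(x)$ for $\Ld$-a.e.\ $x \in \{|u|\leq R\}$ then follows from the $\Ld$-a.e.\ coincidence of $u$ and $v_R$ on that set and the locality of the approximate gradient in $BV$.

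The main effort goes into (c). Introduce the increasing family of Borel sets
\[
E_R := \bigl\{x \in A \setminus J_u \,:\, \widetilde u(x) \text{ exists and } |\widetilde u(x)| \leq R\bigr\},
\]
whose union exhausts $A \setminus J_u$ up to an $\hd$-null set by part (a). For $R' \geq R$ we have $v_R = v_{R'}$ on the measurable set $\{|u|\leq R\} \supseteq E_R$ (both equal $u$ there), so the $BV$-locality of the Cantor part yields $D^c v_{R'} \mres E_R = D^c v_R \mres E_R$. Because each $D^c v_R$ vanishes on $\hd$-finite Borel sets, it is concentrated on $\bigcup_{R'} E_{R'}$; this lets me set
\[
D^c u(B) := \lim_{R \to +\infty} D^c v_R(B \cap E_R) \qquad \text{for every Borel } B \subseteq A,
\]
with uniqueness forced by the prescribed identification with $D^c v_R$ on subsets of $\{|\widetilde u|\leq R\}$ and with the uniform estimate $|D^c v_R|(A)\leq M$ from \eqref{eq:boundsinglecomponent} ensuring that $D^c u \in \mathcal{M}_b(A;\Rkd)$. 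Formula \eqref{eq:limite cantor} then follows from the decomposition $D^c v_R(B) = D^c v_R(B\cap E_R) + D^c v_R(B\setminus E_R)$, the second summand having total variation bounded by $|D^c v_R|(A \setminus E_R) \to 0$ as $R \to +\infty$ by concentration of $D^c v_R$ on $\bigcup E_{R'}$; \eqref{eq:limite cantor modulo} follows analogously by passing to $|\cdot|$ and using monotonicity.

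Finally, (d) is a direct application of the $BV$ chain rule for smooth Lipschitz compositions: at $\hd$-a.e.\ jump point $x$ of $v_R$ one has $[v_R](x) = \psi_R(u^+(x)) - \psi_R(u^-(x))$, which forces $x \in J_u$ (since $\psi_R$ is continuous, it cannot create discontinuities where $u$ is approximately continuous), giving $J_{v_R} \subseteq J_u$ up to $\hd$-null sets. The estimate $|[v_R]| \leq |[u]|$ then follows from $\textup{Lip}(\psi_R)=1$, while $|[\psi_R \circ u](x)| = |[u](x)|$ whenever $R > |u^+(x)| \lor |u^-(x)|$ because $\psi_R$ acts as the identity at both one-sided traces. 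The main obstacle in this proof is part (c): producing a single Radon measure $D^c u$ compatible with all truncations and proving the strong-limit identifications \eqref{eq:limite cantor}--\eqref{eq:limite cantor modulo} for every Borel set, which requires carefully stitching together the $BV$-locality identity with the uniform total-variation bound provided by the very definition of $GBV_\star(A;\Rk)$.
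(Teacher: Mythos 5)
First, a point of context: the paper does not prove this proposition — it is quoted verbatim from \cite[Proposition 3.7]{donati2023new} — so your argument can only be judged on its own merits. Your overall architecture (componentwise reduction to the scalar theory for (a)--(b), locality of the Cantor part on the exhausting sets $E_R$ for (c), the chain rule for (d)) is the natural one, and (a), (b), (d) are essentially sound; the only caveat in (d) is that $\jump{\psi_R\circ u}\subseteq\jump{u}$ up to $\hd$-null sets should be deduced from part (a) (a point of $A\setminus\jump{u}$ at which $\widetilde u$ fails to exist could a priori be a jump point of $v_R$; your parenthetical only handles points of approximate continuity of $u$).

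There is, however, a genuine gap at the crux of (c), plus one imprecision. The imprecision: $E_R$ is \emph{not} contained in $\{|u|\le R\}$ — the pointwise value and the approximate limit differ on an $\Ld$-null set, which is invisible to $\Ld$ but not to the singular measure $D^cv_R$ — so locality of the Cantor part must be invoked on the set where the \emph{precise representatives} of $v_R$ and $v_{R'}$ coincide; this set does contain $E_R$, since there $\widetilde{v_R}=\psi_R(\widetilde u)=\widetilde u=\widetilde{v_{R'}}$. The gap: your justification of $|D^cv_R|(A\setminus E_R)\to0$ ``by concentration of $D^cv_R$ on $\bigcup_{R'}E_{R'}$'' is not valid, because the measure $|D^cv_R|$ itself changes with $R$; concentration of each individual measure on the union gives no decay of the mass it places outside $E_R$, and this is precisely the step on which \eqref{eq:limite cantor}--\eqref{eq:limite cantor modulo} hinge. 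The missing ingredient is the domination $|D^cv_R|\le|D^cv_{R'}|$ on $E_{R'}$ for $R'\ge R$: since $v_R=\psi_R\circ v_{R'}$ $\Ld$-a.e.\ on $\{|u|\le R'\}$, their precise representatives agree on $E_{R'}$, so locality together with the classical $BV$ chain rule for the bounded $BV$ function $v_{R'}$ (the identity of Proposition \ref{prop:Properties of derivatives of GBVstar}(ii)) gives $D^cv_R=\nabla\psi_R(\widetilde{v_{R'}})\,D^cv_{R'}$ on $E_{R'}$, hence, with $a_R:=|D^cv_R|(E_R)$,
\begin{equation*}
|D^cv_R|(E_{R'}\setminus E_R)\;\le\;|D^cv_{R'}|(E_{R'})-|D^cv_{R'}|(E_R)\;=\;a_{R'}-a_R ,
\end{equation*}
where $(a_R)_R$ is nondecreasing and bounded by the uniform constant of Proposition \ref{prop:characterisation GBVstar}. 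Letting $R'\to+\infty$ yields $|D^cv_R|(A\setminus E_R)\le\lim_{R'}a_{R'}-a_R\to0$ as $R\to+\infty$, which closes both limit identities (and also makes your Cauchy argument for the setwise existence of $D^cu$ rigorous). Until this domination is supplied, part (c) is unproven.
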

For the proof of these facts we refer the reader to \cite[Proposition 3.7]{donati2023new}.

We conclude this section recalling some  useful facts related to the Cantor part of compositions with smooth function with compact support.
\begin{proposition}\label{prop:Properties of derivatives of GBVstar}
    Let $A\in\mathcal{A}_c(\Rd)$, $u\in GBV_\star(A;\Rk)$ and $\phi\in C^1_c(\Rk;\Rk)$. Then 
    \begin{itemize}
        \item[(i)] $\nabla(\phi\circ u)=\nabla \phi(\widetilde{u})\nabla u \quad \text{$\Ld$-a.e. in $A$};$
        \item[(ii)]  $D^c(\phi \circ u)=\nabla \phi(\widetilde{u})D^cu$ as Radon measures on $A$;

        \item [(iii)]  for every $R>0$ we have 
        \begin{equation*}
            \frac{dD^c(\psi_{R}\circ u)}{d|D^c(\psi_{R}\circ u)|}=\frac{dD^cu}{d|D^cu|} \quad \text{$|D^cu|$-a.e. in $\ A^R_{u,0}$},
        \end{equation*} 
          where $A^R_{u,0}:=\{x\in A\colon \widetilde{u}(x)\text{ exists and } |\widetilde{u}(x)|\leq R\}$.
        As a consequence we have
        \begin{equation*}
           \lim_{R\to+\infty} \frac{dD^c(\psi_{R}\circ u)}{d|D^c(\psi_{R}\circ u)|}=\frac{dD^cu}{d|D^cu|} \quad\text{ $|D^cu|$-a.e. in $ A$}.
        \end{equation*}
        
    \end{itemize}
\end{proposition}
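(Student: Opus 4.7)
The plan is to reduce all three statements to Vol'pert's chain rule for vector-valued $BV$ functions (see \cite[Theorem 3.96]{AmbFuscPall}) applied to the truncations $v_R:=\psi_R\circ u\in BV(A;\Rk)$, and then to pass to the limit $R\to+\infty$ via Proposition~\ref{prop: Properties of GBVsvector}(c).

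For (i), I would note that Proposition~\ref{prop:characterisation GBVstar} gives $\phi\circ u\in BV(A;\Rk)$, since $\phi$ is Lipschitz with compact support. At $\Ld$-a.e.\ $x\in A$ the approximate limit $\widetilde u(x)$ exists and \eqref{eq:Approximate Differentaibility} holds with approximate gradient $\nabla u(x)$; composing with the $C^1$ map $\phi$ and using the standard chain rule for approximate differentiation yields $\nabla(\phi\circ u)(x)=\nabla\phi(\widetilde u(x))\,\nabla u(x)$ at every such $x$.

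For (iii), I would invoke Proposition~\ref{prop: Properties of GBVsvector}(c) directly: $D^cu$ and $D^c(\psi_R\circ u)$ coincide on every Borel subset of $A^R_{u,0}\setminus\jump{u}$. Since both are Cantor-type measures and therefore vanish on $\hd$-$\sigma$-finite sets, they actually coincide as measures on the whole of $A^R_{u,0}$. Uniqueness of the polar decomposition then forces their Radon-Nikod\'ym densities with respect to the common total variation to agree $|D^cu|$-a.e.\ in $A^R_{u,0}$. The pointwise limit formula follows because $A^R_{u,0}$ is nondecreasing in $R$ and, by \eqref{eq:limite cantor modulo}, $|D^cu|$ is concentrated on $\bigcup_{R>0} A^R_{u,0}$, so $|D^cu|$-a.e.\ point lies in $A^R_{u,0}$ for every sufficiently large $R$.

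Statement (ii) is the main content. I would fix $R_0$ with $\operatorname{supp}\phi\subset\overline{B_{R_0}(0)}$ and take $R\geq R_0$. Applying Vol'pert's chain rule to $\phi\circ v_R\in BV(A;\Rk)$ gives $D^c(\phi\circ v_R)=\nabla\phi(\widetilde{v_R})\,D^cv_R$. On $A^R_{u,0}$ the identity $\widetilde{v_R}=\psi_R(\widetilde u)=\widetilde u$ (since $|\widetilde u|\leq R$) together with Proposition~\ref{prop: Properties of GBVsvector}(c) (which gives $D^cv_R=D^cu$ there) yields $D^c(\phi\circ v_R)=\nabla\phi(\widetilde u)\,D^cu$ on $A^R_{u,0}$. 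The remaining step, which I expect to be the main obstacle, is to identify $D^c(\phi\circ u)$ with $D^c(\phi\circ v_R)$ on $A^R_{u,0}$: both are $BV$ functions bounded by $\|\phi\|_\infty$, agreeing pointwise on $\{|u|\leq R\}$, so their approximate limits coincide at $|D^cu|$-a.e.\ point of $A^R_{u,0}$; by the locality of the Cantor part of a $BV$ function under this agreement condition, verifiable by one-dimensional slicing and reducing matters to the elementary fact that two $BV$ functions on an interval have the same Cantor derivative on their coincidence set, the Cantor parts coincide on $A^R_{u,0}$. Letting $R\to+\infty$ and using again that $|D^cu|$ is concentrated on $\bigcup_R A^R_{u,0}$ then yields (ii) on the whole of $A$. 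The subtle point is precisely this locality argument, since the Cantor part is singular and is not a priori determined by $\Ld$-a.e.\ agreement on arbitrary Borel sets.
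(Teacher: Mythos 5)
The paper gives no internal proof of this proposition: it simply defers to \cite{donati2023new}. Your argument is therefore a self-contained alternative rather than a reproduction, and most of it is sound. Part (i) is correct: Proposition \ref{prop:characterisation GBVstar} gives $\phi\circ u\in BV(A;\Rk)$, and the chain rule for approximate differentials, combined with Proposition \ref{prop: Properties of GBVsvector}(b), yields the formula. Part (iii) is also correct: since $A^R_{u,0}\subset A\setminus\jump{u}$, Proposition \ref{prop: Properties of GBVsvector}(c) gives that $D^cu$ and $D^c(\psi_R\circ u)$ coincide on all Borel subsets of $A^R_{u,0}$, and uniqueness of the Radon--Nikod\'ym/polar decomposition identifies the densities there. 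Only note that the concentration of $|D^cu|$ on $\bigcup_{R>0}A^R_{u,0}$ is not really a consequence of \eqref{eq:limite cantor modulo} alone; it follows from Proposition \ref{prop: Properties of GBVsvector}(a) together with the property, stated in (c), that $D^cu(B)=0$ whenever $\hd(B\setminus\jump{u})=0$.

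In (ii) there are two delicate points. The locality property you invoke is true and is indeed the crux: setting $w:=\phi\circ u-\phi\circ(\psi_R\circ u)$, one has $\widetilde w(x)=0$ at every $x\in A^R_{u,0}$, and then, componentwise, the coarea formula gives $|Dw_i|(A^R_{u,0})=0$, because a point of approximate continuity with $\widetilde w_i=0$ has density $0$ (resp.\ $1$) for $\{w_i>s\}$ with $s>0$ (resp.\ $s<0$), hence lies outside $\partial^*\{w_i>s\}$ for every $s\neq0$; this direct argument avoids the one-dimensional slicing you sketch, whose technical content (relating precise representatives of slices to slices of the precise representative) you leave unproved. The genuine gap is the final passage to the limit: knowing the identity $D^c(\phi\circ u)=\nabla\phi(\widetilde u)D^cu$ on $A^R_{u,0}$ for every $R$, and that $|D^cu|$ is concentrated on $A_{\rm reg}:=\bigcup_{R>0}A^R_{u,0}$, only controls the right-hand measure; to get (ii) on all of $A$ you must also show $|D^c(\phi\circ u)|(A\setminus A_{\rm reg})=0$, which does not follow from anything you established. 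The fix is short: by Proposition \ref{prop: Properties of GBVsvector}(a) the set $A\setminus A_{\rm reg}$ is contained in $\jump{u}$ up to an $\hd$-negligible set; moreover $\jump{u}\subset\bigcup_{m\in\N}\jump{u^{(m)}}$, which is $\sigma$-finite with respect to $\hd$ since each $u^{(m)}\in BV(A;\Rk)$; and the Cantor part of the $BV$ function $\phi\circ u$ vanishes on $\hd$-$\sigma$-finite sets (see \cite{AmbFuscPall}). With this addition your proof is complete.
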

For the proof we refer the reader to \cite{donati2023new}.
\
In accordance with the notation of \cite{DalToa23,DalToa23b} and \cite{donati2023new}, we introduce two functionals on $L^0(\Rd;\Rk)$ closely related to the space $GBV_\star(A;\Rk)$.

\begin{definition}\label{def:functional V}
    Given $u\in L^0(\Rd;\Rk)$, with components $u_1,...,u_k$, for every  $A\in\mathcal{A}_c(\Rd)$ we define 
 \begin{equation*}
  V(u,A):=\sum_{i=1}^k\Big(\int_A|\nabla u_i|\, dx+|D^cu_i|(A)+\int_{\jump{u_i}\cap A}|[u_i]|\land 1\,d \hd\Big),\\ 
 \end{equation*}
  if $u|_A \in GBV_\star(A)$ and we set $V=+\infty$ otherwise. 
 The  definition is then extended  to $A\in\mathcal{A}(\Rd)$ by setting
  \begin{equation*}
      V(u,A):=\sup\{V(u,A'):\,\, A'\subset A, \,\,A'\in\mathcal{A}_c(\Rd)\}
  \end{equation*}
  and then to $B\in\mathcal{B}(\Rd)$ by setting
  \begin{equation*}
     V(u,B):=\inf\{V(u,A):\,\, B\subset A, \,\, A\in\mathcal{A}(\Rd)\}.
  \end{equation*}

\end{definition}
\begin{definition}\label{def:functional Vfrob}
 Given $u\in L^0(\Rd;\Rk)$, for every $A\in\mathcal{A}_c(\Rd)$ we define 
    \begin{equation} \nonumber V_2(u,A):=\int_A|\nabla u|\, dx+|D^cu|(A)+\int_{\jump{u}\cap A}|[u]|\land 1\,d \hd,
    \end{equation}
    if $u\in GBV_\star(A;\Rk)$ and we set $V_2(u,A)=+\infty$ otherwise. The definition is then extended to every Borel set as in Definition \ref{def:functional V}.
\end{definition}

\begin{remark}\label{re:domainForV}
    Let $A\in\mathcal
A_c(\Rd)$ and let $u\in L^0(\Rd;\Rk)$. It follows immediately from Remark \ref{re:def componentwise gbv} that
\begin{equation*}
   u|_A\in GBV_\star(A;\Rk) \text{ \,\,if and only if\,\, } V(u,A)<+\infty.
\end{equation*}
\end{remark}

\begin{remark}\label{re:lowersemicontinuity of Veta}
    It follows from \cite[Theorem 2.1]{Braides1995} and \cite[Theorem 3.11]{DalToa22}  that the functional 
    $V$ of Definition \ref{def:functional V} is lower semicontinuous with respect to the topology of $L^0(\Rd;\Rk)$. We don't know whether this property holds for the functional $V_2$ of Definition \ref{def:functional Vfrob}.
\end{remark}

\section{Volume and Surface integrands}\label{sec:classE}

Throughout the rest of the paper we fix  six constants $c_1,...,c_6\geq 0$  and a bounded continuous function $\tau\colon [0,+\infty)\to[0,+\infty)$. We assume that 
\begin{align}
    &\label{eq:hp on constants}\displaystyle0<c_1\leq 1\leq c_3\leq c_5,\,\,\,\, c_6\geq (c_3/c_1)k^{3/2},
    \\
    &\displaystyle \nonumber \tau(0)=0\,\,\,\,\text{ and\,\,\,\, }\tau(t)\geq c_3(t\land1)\,\,\,\text{ for every $t\geq 0$}.
\end{align}

We now introduce the collection of volume integrands considered in this paper.

\begin{definition}\label{def:good bulk integrands}
    Let  $\mathcal{F}$ be the space of  functions $f\colon\Rd\times\Rdk\to [0,+\infty)$ satisfying the following conditions:
    \begin{itemize}
        \item [(f1)] $f$ is Borel measurable;
        \item [(f2)]  $c_1\sum_{i=1}^k|\xi_i|-c_2\leq f(x,\xi)$ \text{ for every }$x\in\Rd$ \text{and }$\xi\in \Rdk$;
        \item  [(f3)] $f(x,\xi)\leq c_3\sum_{i=1}^k|\xi_i|+c_4$  \text{ for every }$x\in\Rd$ \text{and  }$\xi\in \Rdk$;
        \item [(f4)] $|f(x,\xi)-f(x,\xi)|\leq c_5|\xi-\eta|$  \text{ for every }$x\in\Rd$ \text{and  }$\xi,\eta\in \Rdk$.
    \end{itemize}
\end{definition}
\begin{remark}
    It follows from the inequalities $|\xi|\leq\sum_{i=1}^k|\xi_i|\leq k^{1/2}|\xi|$ that
    \begin{itemize}
     \item[(f2$'$)]  $c_1|\xi|-c_2\leq f(x,\xi)$ \text{ for every }$x\in\Rd$ \text{ and  }$\xi\in \Rdk,$
     \item[(f3$'$)] $f(x,\xi)\leq c_3k^{1/2}|\xi|+c_4$ \text{ for every }$x\in\Rd$ \text{ and  }$\xi\in \Rdk$.
    \end{itemize}
\end{remark}

The following definition introduces the collection of surface integrands considered in this paper.
\begin{definition}\label{def:good surface integrands}
    Let $\mathcal{G}$ be the space of functions $g:\Rd\times\Rk\times\mathbb{S}^{d-1}\to[0,+\infty) $ that satisfy the following conditions:
    \begin{itemize}
        \item [(g1)] $g$ is Borel measurable;
        \item [(g2)] $c_1\sum_{i=1}^k(|\zeta_i|\land 1)\leq g(x,\zeta,\nu)$ \text{ for every }$x\in\Rd$, $\zeta\in \Rk$, and $\nu\in\mathbb{S}^{d-1}$;
        \item  [(g3)] $g(x,\zeta,\nu)\leq c_3\sum_{i=1}^k(|\zeta_i|\land 1)$  \text{ for every }$x\in\Rd$, $\zeta\in \Rk$, and $\nu\in\mathbb{S}^{d-1}$;
        \item [(g4)] $|g(x,\zeta,\nu)-g(x,\theta,\nu)|\leq \tau(|\zeta-\theta|)$  \text{ for every }$x\in\Rd$, $\zeta,\theta\in \Rk$, and $\nu\in\mathbb{S}^{d-1}$;
        \item [(g5)] $g(x,-\zeta,-\nu)=g(x,\zeta,\nu)$ \text{ for every }$x\in\Rd$, $\zeta\in \Rk$, and $\nu\in\mathbb{S}^{d-1}$;
        \item [(g6)] for every $\zeta,\theta\in\Rk$ with $c_6k|\zeta|\leq |\theta|$ we have $g(x,\zeta,\nu)\leq g(x,\theta,\nu)$ for every $x\in\Rd$ and  $\nu\in\mathbb{S}^{d-1}$.
    \end{itemize}
\end{definition}
\begin{remark}
     A variant of property (g6) was already used in \cite{CagnettiDetFree}. Combining (g2) and (g3), it is easy to show that $g(x,\zeta,\nu)\leq c_3/c_1g(x,\theta,\nu)$ whenever $\sum_{i=1}^k|\zeta_i|\leq\sum_{i=1}^k|\theta_i|$.  Arguing as in \cite[Remark 3.2]{CagnettiDetFree} we can show that this  property and (g6) are weaker than monotonicity in $|\zeta|$ of $g$.
\end{remark}

\begin{definition}
    For every $f\colon\Rd\times\Rdk\rightarrow[0,+\infty)$, the recession function $f^\infty\colon\Rd\times\Rdk\rightarrow[0,+\infty]$ (with respect to $\xi$) is the function defined by
    \begin{equation}\label{eq:defrecession}
        f^\infty(x,\xi):=\limsup_{t\to +\infty}\frac{f(x,t\xi) }{t}
    \end{equation}
   for every $x\in\Rd$ and for every $\xi\in\Rdk$.
\end{definition}

\begin{remark}\label{re:bounds finfty}
    For every $x\in\Rd$ the function $\xi\mapsto f^\infty(x,\xi)$ is positively $1$-homogeneous. If for every $x\in\Rd$ the function $\xi\mapsto f(x,\xi)$ is convex, the  $\limsup$ in \eqref{eq:defrecession} is  a limit. If $f$ satisfies (f2) and (f3), then $f^\infty$ satisfies 
    \begin{equation}\label{eq:Finfty bounds}
         c_1|\xi|\leq c_1\sum_{i=1}^k|\xi_i|\leq f^\infty(x,\xi)\leq c_3k^{1/2}|\xi|\quad \text{for every $x\in\Rd$ and  $\xi\in\Rdk$}.
    \end{equation}
If $f$ satisfies (f4), then $f^\infty$ satisfies 
\begin{equation*}\label{eq:Finfty Lipschitz}
|f^\infty(x,\xi)-f^\infty(x,\eta)|\leq c_5|\xi-\eta|\quad\text{for every  $x\in\Rd$ and  $\xi,\eta\in\Rdk$}.
\end{equation*}
\end{remark}

 In Section \ref{sec:integrands in the limit} we will consider also integrands $g^0$ that do not belong to $\mathcal{G}$. For this reason, in the following definition we do not assume $g\in\mathcal{G}$.
\begin{definition}\label{def:Functionals Efg}
 Given $f\in\mathcal{F}$ and  a Borel function $g\colon \Rd\times \Rk\times \mathbb{S}^{d-1}\to[0,+\infty)$ satisfying (g5), the functional  $E^{f,g}:L^0(\Rd;\Rk)\times\mathcal{B}(\Rd)\to[0,+\infty]$ is  the functional defined for  $A\in\mathcal{A}_c(\Rd)$ 
 by
 \begin{equation*}\label{eq:Def Efg}
     E^{f,g}(u,A):=\int_Af(x,\nabla u)\, dx+\int_Af^\infty\Big(x,\frac{D^cu}{|D^cu|}\Big)\,d|D^cu|+\int_{\jump{u}\cap A}g(x,[u],\nu_u)\, \hd,
 \end{equation*}
  if $u|_A\in GBV_\star(A;\Rk)$, and by $E^{f,g}(u,A)=+\infty$ otherwise. 
 The definition is then extended to $A\in\mathcal{A}(\Rd)$ by setting
  \begin{equation*}
      E^{f,g}(u,A):=\sup\{E^{f,g}(u,A'):\,\, A'\subset A, \,\,A'\in\mathcal{A}_c(\Rd)\},
  \end{equation*}
  and then to $B\in\mathcal{B}(\Rd)$ by setting
  \begin{equation*}
      E^{f,g}(u,B):=\inf\{E^{f,g}(u,A):\,\, B\subset A, \,\, A\in\mathcal{A}(\Rd)\}.
  \end{equation*}
\end{definition}

To study the integral representation of $\Gamma$-limits of sequences of functionals of the form $E^{f,g}$ with $f\in\mathcal{F}$ and $g\in\mathcal{G}$, it is convenient to study the properties of functionals $E^{f,g}$ that pass to the $\Gamma$-limit. This leads to define  the abstract space $\E$ of functionals defined on $L^0(\Rd;\Rk)\times \mathcal{B}(\Rd)$. We shall prove that the $\Gamma$-limit of a sequence of functionals in $\E$ belongs to $\E$ (see Theorem \ref{thm:compactness for E}) and that for every lower semicontinuous functional in $\E$  the volume and surface part admit and integral representation of the form 
\begin{equation}\nonumber 
    \int_Af(x,\nabla u)\, dx\quad \text{ and } \quad \int_{J_u} g(x,[u],\nu_u)\, \hd
\end{equation}
with 
integrands $f\in\mathcal{F}$ and $g\in\mathcal{G}$ (see Theorem \ref{thm:rappresentazione}).

 For technical reasons, in the definition of the space $\E$, we consider the behaviour of the functionals with respect to some smooth truncations. To this aim,  for every $R>0$ and every integer $i\geq 1$ we consider 
   the function $\psi_R^i$ defined by \eqref{def:psiR} with $R$ replaced by $\sigma^{i-1}R$ and $\sigma=c_6k+2$. Note that by \eqref{eq:proppsiR1}  for every integer $i\geq 1$ we have  
\begin{equation}\label{eq:properties psiRi}
    \begin{cases}
        \psi^i_R(y)=y&\text{ if }|y|\leq \sigma^{i-1}R,\\
        \psi^i_R(y)=0&\text{ if }|y|\geq \sigma^i R,\\
        |\psi^i_R(y)|\leq \sigma^i R,\\
        \textup{Lip}(\psi_R)=1.
    \end{cases}
\end{equation}

We are now in a position to introduce the space of abstract functionals $\E$.
\begin{definition}\label{def:space of functionals E}
   Let  $\E$ be the space of functionals $E\colon L^0(\Rd;\Rk)\times\mathcal{B}(\Rd)\to [0,+\infty]$ satisfying  the following conditions:
\begin{itemize}
  \item[(a)] for every $A\in\mathcal{A}(\Rd)$ and for every $u,v\in L^0(\Rd;\Rk)$ with $u=v$ $\Ld$-a.e. in $A$, we have $E(u,A)=E(v,A)$;
\item [(b)] for every $u\in L^0(\Rd;\Rk)$ the set function $B\mapsto E(u,B)$ is a non-negative Borel measure on $\Rd$ and for every $B\in\mathcal{B}(\Rd)$ we have
\begin{equation*}
    E(u,B)=\inf\{E(u,A)\colon  A\in\mathcal{A}(\Rd)\text{ and }  B\subset A \};
\end{equation*}
    \item[(c1)] for every $u\in L^0(\Rd;\Rk)$, and  $B\in\mathcal{B}(\Rd)$ we have
    \begin{equation}\nonumber 
        c_1V(u,B)-c_2\Ld(B)\leq E(u,B);
    \end{equation}
    \item[(c2)] for every $u\in L^0(\Rd;\Rk)$ and  $B\in\mathcal{B}(\Rd)$ we have
     \begin{equation}\nonumber 
        E(u,B)\leq c_3V(u,B)+c_4\Ld(B);
    \end{equation}
    \item[(d)] for every $u\in L^0(\Rd;\Rk)$, $B\in\mathcal{B}(\Rd)$, and $a\in\Rk$ we have
    \[E(u+a,B)=E(u,B); \]
    \item [(e)] for every $u\in L^0(\Rd;\Rk)$, $\xi\in\Rkd$, $B\in \mathcal{B}(\Rd)$  we have
    \begin{equation*}
        E(u+\ell_\xi,B)\leq E(u,B)+ c_5|\xi|\Ld(B);  \end{equation*}
    \item[(f)]for every $u\in L^0(\Rd;\Rk)$, $B\in\mathcal{B}(\Rd)$, $x\in\Rd $, $\zeta\in\Rk$, and $\nu\in\mathbb{S}^{d-1}$ we have
    \begin{equation*}
        E(u+u_{x,\zeta,\nu},B)\leq E(u,B)+\tau(|\zeta|)\hd(B\cap\Pi_x^\nu);
    \end{equation*}
    \item[(g)] for every $m\in\N$, $u\in L^0(\Rd;\Rk)$, $B\in\mathcal{B}(\Rd)$, $w\in W^{1,1}_{\rm loc}(\Rd;\Rk)$, and $R>0$ we have
    \begin{align}
        \nonumber \frac{1}{m}\sum_{i=1}^mE(w+\psi^i_R\circ (u-w),B)& \leq E(u,B) + c_3k^{1/2}\int_{B^R_{u,w}}|\nabla w|\, dx+c_4\Ld(B^R_{u,w}) \\
      & \label{eq: g} \quad +\frac{C}{m}\Big(E(u,B) + \Ld(B) + \int_B|\nabla w|\, dx \Big),
        \end{align}
       where $B^R_{u,w}:=\{x\in B\colon |u(x)-w(x)|\geq R\}$ and  $C:=\max\{9c_3k/c_1,2c_3k^{1/2},c_4\}$;
    \item [(h)] for every $0<\lambda\leq 1/(c_6k)$, $R\in SO(k)$, $B\in\mathcal{B}(\Rd)$, and $u\in L^0(\Rd;\Rk)$ we have
    \begin{equation}\nonumber
        E(\lambda Ru,B)\leq E(u,B)+(c_4+c_2)\Ld(B).
    \end{equation}
\end{itemize}
The subspace of all functionals $E\in\E$ such that for every $A\in\mathcal{A}(\Rd)$ the functional $E(\cdot,A)$ is lower semicontinuous with respect to the topology  of $L^0(\Rd;\Rk)$ is denoted by $\E_{\rm sc}$.
\end{definition}

\begin{remark}\label{ref:locality}
 Let $E\in\E$, $A\in\mathcal{A}(\Rd)$, and $u\in L^0(A;\Rk)$.  We can define $E(u,B)$ for every $B\in\mathcal{B}(A)$ by extending the function $u$ to a function $v\in L^0(\Rd;\Rk)$ and setting $E(u,B):=E(v,B)$. The value of $E(u,B)$ does not depend on the chosen extension thanks to the locality property (a) of Definition \ref{def:space of functionals E}.
\end{remark}

\begin{remark}\label{re:new c1 and c2}
    The inequalities (c1) and (c2) imply that
    \begin{equation}
         \tag{c1$'$} c_1V_2(u,B)-c_2\Ld(B)\leq E(u,B)
        \end{equation}
        \begin{equation}
        \tag{c2$'$}E(u,B)\leq c_3kV_2(u,B)+c_4,
    \end{equation}
    for every $u\in L^0(\Rd;\Rk)$ and $B\in\mathcal{B}(\Rd)$. 
   This follows from the elementary inequalities $|\xi|\leq\sum_{i=1}^k|\xi_i|\leq k^{1/2}|\xi|$ and $|\zeta|\land 1\leq \sum_{i=1}^k|[\zeta_i]|\land 1\leq k(|\zeta|\land 1)$, where $\xi_i$ is the $i$-th row of the matrix $\xi$ and $\zeta_i $ is the $i$-th component of $\zeta$.
\end{remark}

\begin{remark}
The technical condition (g) replaces the simpler condition (g) of \cite[Definition 3.1]{DalToa23}, which in the particular case $w_1=-m$ and $w_2=m$ reads as
\begin{equation}\label{eq:estimate for truncations in dal maso and toader}
    E(u^{(m)},A)\leq E(u,A)+\Ld(\{|u|>m\}).
\end{equation}
 Although this truncation procedure is still available in the vector-valued case  (see \cite[Definition 3.1, Proposition 3.8]{donati2023new}), acting componentwise, an estimate of the form \eqref{eq:estimate for truncations in dal maso and toader}
does not hold for $k>1$, even for the prototypical integral functional $V_2$. This is one of the main difficulties in generalising these results to the vector-valued case.  The new condition (g) of Definition \ref{def:space of functionals E} is crucial to introduce a different type of truncation operators with good estimates. Similar smooth truncations were already considered in  \cite{BraDefVit,CagnettiDetFree,  CeladaDalM, FonsecaMuller,FuscoHutchinson}.
\end{remark}

\begin{remark}\label{re:Domain of function spaces}
    It follows directly from  Remark \ref{re:domainForV} and from Definition \ref{def:space of functionals E}   that if $A\in\mathcal{A}_c(\Rd)$ and $u\in L^0(\Rd;\Rk)$ then
    \[
    E(u,A)<+\infty\,\,\,\text{ if and only if }\,\,\, u|_A\in GBV_\star(A;\Rk).
    \]
\end{remark}

The following proposition shows that functionals of type $E^{f,g}$ belong to $\E$.

\begin{proposition}\label{prop: integral functionals are in E}
    Let $f\in\mathcal{F}$ and $g\in\mathcal{G}$. Then the functional $E^{f,g}$ belongs to $\E$ and for every $A\in\mathcal{A}_c(\Rd)$. Moreover, $u\in GBV_\star(A;\Rk)$ we have
    \begin{equation}\label{eq: identity Efg sui borel}
        E^{f,g}(u,B)=\int_Bf(x,\nabla u)\, dx+\int_Bf^\infty\Big(x,\frac{D^cu}{|D^cu|}\Big)\,d|D^cu|+\int_{\jump{u}\cap B}g(x,[u],\nu_u)\, \hd
    \end{equation}
    for every $B\in\mathcal{B}(A)$.
\end{proposition}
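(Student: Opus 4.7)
My plan is to split the proof into two parts: first establish the integral representation \eqref{eq: identity Efg sui borel} on all Borel subsets of a relatively compact open set, and then verify one by one the abstract conditions (a)--(h) of Definition~\ref{def:space of functionals E}. For the integral representation, I fix $A\in\mathcal{A}_c(\Rd)$ and $u\in GBV_\star(A;\Rk)$, and observe that the right-hand side of \eqref{eq: identity Efg sui borel}, viewed as a function of $B\in\mathcal{B}(A)$, is the sum of three non-negative Borel measures (use the standard theory of $\nabla u\,\Ld$, $|D^cu|$, and $\hd\mres J_u$, together with the boundedness of $f$, $f^\infty$, $g$ on the relevant sets guaranteed by (f3$'$), Remark~\ref{re:bounds finfty} and (g3)). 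Call this measure $\mu_u$. By Definition \ref{def:Functionals Efg}, $E^{f,g}(u,A')=\mu_u(A')$ for every $A'\in\mathcal{A}_c(A)$; by inner regularity of Radon measures and the sup-definition, we get $E^{f,g}(u,A')=\mu_u(A')$ for every $A'\in\mathcal{A}(A)$; and by outer regularity together with the inf-definition, $E^{f,g}(u,B)=\mu_u(B)$ for every $B\in\mathcal{B}(A)$.

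With the integral representation in hand, properties (a) and (b) of Definition~\ref{def:space of functionals E} are immediate: locality follows because $\nabla u$, $D^cu$, and $(J_u,[u],\nu_u)$ depend only on the $\Ld$-equivalence class of $u$, while the measure property is exactly the content of the first part. Properties (c1)--(c2) are obtained by inserting the inequalities (f2)--(f3), (g2)--(g3), and the corresponding bounds for $f^\infty$ given in Remark~\ref{re:bounds finfty}, then recognising the three integrals defining $V(u,B)$ (similarly for $V_2$). Properties (d), (e), and (f) are standard BV-type computations: $\nabla(u+a)=\nabla u$, $D^c(u+a)=D^cu$, $J_{u+a}=J_u$ with $[u+a]=[u]$; the shift by $\ell_\xi$ only affects the bulk part, where (f4) provides the Lipschitz estimate in $\xi$; and $u+u_{x,\zeta,\nu}$ inserts an additional jump on $\Pi_x^\nu$ with amplitude $\zeta$, whose surface contribution is estimated by the hypothesis $g\le \tau(|\zeta|)$ implied by (g3) and $\tau\geq c_3(t\wedge 1)$. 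For (h), if $v=\lambda R u$ with $\lambda\le 1/(c_6k)$ and $R\in SO(k)$, then $\nabla v=\lambda R\nabla u$ and $D^cv=\lambda R D^cu$; using Frobenius-invariance of $R$, (f3$'$) combined with (f2$'$) gives $f(x,\lambda R\xi)\le f(x,\xi)+c_2+c_4$ (since $\lambda c_3 k^{1/2}/c_1\le 1$), the 1-homogeneity of $f^\infty$ and its two-sided bound in Remark~\ref{re:bounds finfty} give $\lambda f^\infty(x,R\eta)\le f^\infty(x,\eta)$, and (g6) applied with $\theta=[u]$ and $\zeta=\lambda R[u]$ (valid because $c_6k\lambda\le 1$) gives $g(x,\lambda R[u],\nu_u)\le g(x,[u],\nu_u)$; summing, the extra bulk constants integrate to $(c_2+c_4)\Ld(B)$.

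The main obstacle, and the step that requires real care, is property (g). Set $v_i:=w+\psi_R^i\circ(u-w)$, and for $j\in\{1,\dots,m\}$ partition $B^R_{u,w}$ via the annular sets $B_j:=\{x\in B:\sigma^{j-1}R\le|u(x)-w(x)|<\sigma^jR\}$, together with $B_{m+1}:=\{x\in B:|u(x)-w(x)|\ge\sigma^mR\}$ and the complementary $B_0:=B\setminus B^R_{u,w}$. By \eqref{eq:properties psiRi}, on $B_0$ one has $v_i=u$ for all $i$; on $B_j$ with $1\le j\le m$, one has $v_i=u$ for $i>j$ and $v_i=w$ for $i<j$, with a single "transition" index $i=j$; on $B_{m+1}$ all $v_i=w$. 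Averaging over $i$ and using Proposition~\ref{prop:Properties of derivatives of GBVstar}(i) to compute $\nabla v_i$, one gets, for each $x\in B_j$ ($1\le j\le m$),
\[
\frac{1}{m}\sum_{i=1}^m f(x,\nabla v_i)=\frac{m-j}{m}f(x,\nabla u)+\frac{j-1}{m}f(x,\nabla w)+\frac{1}{m}f(x,\nabla v_j),
\]
and analogous identities on $B_0$ and $B_{m+1}$. The $\tfrac{m-j}{m}f(x,\nabla u)$ and the $B_0$ contribution are dominated by $\int_Bf(x,\nabla u)\,dx$; the $f(x,\nabla w)$ terms on $B^R_{u,w}$ produce the $c_3k^{1/2}\int_{B^R_{u,w}}|\nabla w|\,dx+c_4\Ld(B^R_{u,w})$ term via (f3$'$); and the $\tfrac{1}{m}f(x,\nabla v_j)$ transition contributions, again by (f3$'$) and the elementary bound $|\nabla v_j|\le|\nabla u|+|\nabla w|$, give the $C/m$ remainder once expressed in terms of $E^{f,g}(u,B)+\int_B|\nabla w|\,dx+\Ld(B)$. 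A completely parallel decomposition for the Cantor part (using Proposition~\ref{prop:Properties of derivatives of GBVstar}(ii)--(iii) and the 1-homogeneity of $f^\infty$) and for the surface part (using Proposition~\ref{prop: Properties of GBVsvector}(d), the Lipschitz property (g4), (g3), and crucially (g6) to compare the reduced jump $[\psi_R^i\circ(u-w)]$ with the original $[u]$ on the annular set where $\psi_R^i$ is no longer the identity) combines to yield the estimate \eqref{eq: g}. The careful bookkeeping of how each annular layer $B_j$ contributes exactly once as a "transition" when $i$ is summed from $1$ to $m$ is what produces the crucial averaging factor $1/m$, and this is the heart of the argument.
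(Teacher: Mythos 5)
Your proposal follows essentially the same route as the paper: the paper likewise defers \eqref{eq: identity Efg sui borel} and properties (a)--(f) to the scalar argument of \cite[Proposition 3.11]{DalToa23} and concentrates on (g) and (h), and your annular decomposition for (g) is the paper's decomposition into $A^i_{\rm in}$, $A^i$, $A^i_{\rm out}$ read pointwise in $x$ rather than indexwise in $i$; the treatment of (h) is identical. Two imprecisions are worth flagging. First, for property (f) the bound $g(x,\zeta,\nu)\le\tau(|\zeta|)$ does not follow from (g3) together with $\tau(t)\ge c_3(t\wedge 1)$ (that is a lower bound on $\tau$); what is needed is (g4) with $\theta=0$ for the new jumps created on $\Pi^\nu_x\setminus J_u$, and (g4) again with $\theta=[u]$ for the points of $\Pi^\nu_x\cap J_u$ where an existing jump is merely shifted by $\zeta$. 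Second, the surface term is not ``completely parallel'' to the volume term, and the claim that each layer contributes \emph{exactly once} as a transition fails there: a point of $J_u$ carries two traces $u^\pm(x)-\widetilde w(x)$ which may lie in different layers, so one needs the paper's seven-case partition $S^i_1,\dots,S^i_7$, in which the ``transitional'' sets $S^i_2,S^i_6,S^i_7$ can recur for up to three consecutive indices each (whence the bound $\sum_i(\chi_{S^i_2}+\chi_{S^i_6}+\chi_{S^i_7})\le 9$ absorbed into $C/m$), while the mixed case (one trace inside, one outside) produces a jump $[v^i_R]=u^+-\widetilde w$ that is neither $[u]$ nor negligible and must be handled by (g6) via the quantitative inequality $\sigma-1\ge c_6k$. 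You correctly identify (g6) as the crucial tool here, so the idea is present, but the bookkeeping as literally stated only works for the volume and Cantor parts.
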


\begin{proof}
   The proof of the fact that $E^{f,g}$ satisfies \eqref{eq: identity Efg sui borel} and (a)-(f) of Definition \ref{def:space of functionals E}  may be deduced  from the proof of  \cite[Proposition 3.11]{DalToa23}, with minor changes.
   
   We are left with showing that $E^{f,g}$ enjoys properties (g) and (h) of Definition \ref{def:space of functionals E}. We begin proving (g). Since $E^{f,g}$ satisfies properties (a) and (b), it is enough to prove \eqref{eq: g} for every  $A\in\mathcal{A}_c(\Rd)$.  Without loss of generality, we may assume that $u|_A\in GBV_\star(A;\Rk)$. Let $w\in W^{1,1}_{\rm loc}(\Rd;\Rk)$, let  $R>0$, let $m\in\N$, and let $i\in\{1,...,m\}$. We set $v^i_R:=w+\psi^i_R\circ (u-w)$ and note that $v_R^i|_A\in GBV_\star(A;\Rk)$, thanks to Proposition \ref{prop:characterisation GBVstar} and to the fact that $GBV_\star(A;\Rk)$ is a vector space.
   
   Consider the set $A_{\rm reg}:=\{x\in A\colon \widetilde{u}(x),\,\widetilde{w}(x)\,\,\, \text{exist}\}$. We observe that by Proposition \ref{prop: Properties of GBVsvector}  $A_{\rm reg}$ and $\jump{u}$ are Borel sets and that $\hd(A\setminus (A_{\rm reg}\cup \jump{u}))=0$. We set
   \begin{eqnarray*}
     &&\displaystyle A_{\rm in}^i:=\{x\in A_{\rm reg}\colon|\widetilde{u}(x)-\widetilde{w}(x)|\leq \sigma^{i-1}R\},\\
     &&\displaystyle A^i:=\{x\in A_{\rm reg}\colon\sigma^{i-1}R<|\widetilde{u}(x)-\widetilde{w}(x)|< \sigma^{i}R\},\\
     &&\displaystyle A^i_{\rm out}:=\{x\in A_{\rm reg}\colon|\widetilde{u}(x)-\widetilde{w}(x)|\geq \sigma^{i}R\}.
   \end{eqnarray*}
   Thanks to (i) of Proposition \ref{prop:Properties of derivatives of GBVstar}, for every $i\in\{1,...,m\}$ we have that 
   \begin{eqnarray}
       &\nonumber \displaystyle \nabla v^i_R= \nabla u  \quad \Ld\meno \text{a.e. in } A^i_{\rm in}, \\
       &\displaystyle \label{eq:nabla sulla strip}\nabla v^i_R=\nabla w +\nabla\psi^i_R\circ(u-w)(\nabla u-\nabla w)\quad \Ld\meno \text{a.e. in } A^i,\\
       &\nonumber \displaystyle \nabla v^i_R=\nabla w \quad \Ld\meno \text{ a.e. in } A_{\rm out}^i.
   \end{eqnarray}
   Therefore,
   \begin{equation}\label{eq:split on AC concrete}
       \int_Af(x,\nabla v^i_R)\,dx=\int_{A^i_{\rm in}}f(x,\nabla u)\,dx+\int_{A^i_{\rm str}}f(x,\nabla v^i_R)\,dx+\int_{A^i_{\rm out}}f(x,\nabla w)\,dx.
   \end{equation}
  
   \noindent Exploiting (f2$'$), (f3$'$), and \eqref{eq:nabla sulla strip}, and recalling that $\text{Lip}(\psi_R^i)= 1$,  we obtain
   \begin{eqnarray}
\nonumber\int_{A^i}f(x,\nabla v^i_R)\,dx&\leq& 2c_3k^{1/2}\int_{A^i}|\nabla w|\, dx+c_3k^{1/2}\int_{A^i}|\nabla u|\, dx+c_4\Ld(A^i)\\
        &\leq& \nonumber\label{eq:stimestripac}2c_3k^{1/2}\int_{A^i}|\nabla w|\, dx+\frac{c_3k^{1/2}}{c_1}\int_{A^i}f(x,\nabla u)\,dx+c_4\Ld(A^i),\\\nonumber
        \int_{A^i_{\rm out}}f(x,\nabla w)\,dx&\leq&c_3k^{1/2}\int_{A^R_{u,w}}|\nabla w|\, dx+c_4\Ld(A_{u,w}^R).
        \end{eqnarray}
        These inequalites and \eqref{eq:split on AC concrete} lead to 
 \begin{eqnarray}
   \nonumber \sum_{i=1}^m\int_{A}f(x,\nabla v^i_R)\, dx&\leq& m\Big(\int_A f(x,\nabla u)\, dx +c_3k^{1/2}\int_{A_{u,w}^R}|\nabla w|\, dx+c_4\Ld(A_{u,w}^R)\Big)\\
     &&+\label{eq:contofatto su inner}\frac{c_3k^{1/2}}{c_1}\int_{A}f(x,\nabla u)\, dx+
     2c_3\int_{A}|\nabla w|\, dx+c_4\Ld(A).
 \end{eqnarray}
 To estimate the term depending on $D^cu$ in $E^{f,g}$, we use Proposition \ref{prop:Properties of derivatives of GBVstar}(ii) to rewrite
   \begin{multline}\nonumber
       \int_{A}f^\infty\Big(x,\frac{D^cv^i_R}{|D^cv^i_R|}\Big)\,d|D^cv^i_R|=\int_{A_{\rm in}^i}f^\infty\Big(x,\frac{dD^cu}{d|D^cu|}\Big)\,d|D^cu|\\+\int_{A^i}f^\infty\Big(x,\frac{\nabla\psi_R^i(\widetilde{u}-\widetilde{w})}{|\nabla\psi_R^i(\widetilde{u}-\widetilde{w})|}\frac{dD^cu}{d|D^cu|}\Big)\,|\nabla\psi_R^i(\widetilde{u}-\widetilde{w})| d|D^cu|+\int_{A^i_{\rm out}}f^\infty(x,0)\,d|D^cu|\\
       \leq \int_{A}f^\infty\Big(x,\frac{dD^cu}{d|D^cu|}\Big)\,d|D^cu|+\int_{A^i}f^\infty\Big(x,\frac{\nabla\psi_R^i(\widetilde{u}-\widetilde{w})}{|\nabla\psi_R^i(\widetilde{u}-\widetilde{w})|}\frac{dD^cu}{d|D^cu|}\Big)\,|\nabla\psi_R^i(\widetilde{u}-\widetilde{w})| d|D^cu|,
   \end{multline}
    where we have used that $w\in W^{1,1}_{\rm loc}(\Rd;\Rk)$ and that $f^\infty(x,0)=0.$ Taking advantage once again of \eqref{eq:Finfty bounds} and of $\text{Lip}(\psi_R^i)=1$, we infer that
    \begin{eqnarray*}\nonumber
&& \int_{A^i}f^\infty\Big(x\frac{\nabla\psi_R^i(\widetilde{u}-\widetilde{w})}{|\nabla\psi_R^i(\widetilde{u}-\widetilde{w})|}\frac{dD^cu}{d|D^cu|}\Big)\,|\nabla\psi_R^i(\widetilde{u}-\widetilde{w})| d|D^cu|\\&&\qquad \qquad\qquad\qquad
        \leq c_3k^{1/2}\int_{A^i}|\nabla\psi_R^i(\widetilde{u}-\widetilde{w})|d|D^cu|\leq \frac{c_3k^{1/2}}{c_1}\int_{A^i}f^\infty\Big(x,\frac{dD^cu}{|D^cu|}\Big)\,d|D^cu|.
    \end{eqnarray*}
    Together with the previous inequalities this leads to
    \begin{eqnarray} \nonumber\sum_{i=1}^m\int_{A}f^\infty\Big(x,\frac{D^cv^i_R}{|D^cv^i_R|}\Big)\,d|D^cv^i_R|&\leq& m\int_{A}f^\infty\Big(x,\frac{dD^cu}{d|D^cu|}\Big)\,d|D^cu|\\\label{eq:stime cantor concrete} 
    &&+\frac{c_3k^{1/2}}{c_1}\int_{A}f^\infty\Big(x,\frac{dD^cu}{|D^cu|}\Big)\,d|D^cu|.
    \end{eqnarray}
    
     We now estimate the surface term in $E^{f,g}$. To this scope, we first remark that $\jump{v^i_R}\subset A\setminus A_{\rm reg}$ and that for $\hd$-a.e $x\in\jump{v^i_R}$ we have $\nu_{v^i_R}(x)=\nu_{u}(x)$. Moreover, since $w\in W^{1,1}_{\rm loc}(\Rd;\Rk)$, for $\hd$-a.e. $x\in A$  the approximate limit $\widetilde{w}(x)$ exists, so that for $\hd$-a.e. every $x\in \jump{v^i_R}$ we have 
     \begin{eqnarray}
         &&\label{eq:v+}\displaystyle (v^i_R)^+(x)=\widetilde{w}(x)+\psi_R^i(u^+(x)-\widetilde{w}(x)),\\
         &&\label{eq:v-}\displaystyle (v^i_R)^-(x)=\widetilde{w}(x)+\psi_R^i(u^-(x)-\widetilde{w}(x)).
     \end{eqnarray}
     For $i\in\{1,...,m\}$ we introduce the following partition of  $\widetilde{J}_{v^i_R}:= \{x\in J_{v^i_R}\colon \widetilde w(x) \text{ exists}\} $:
    \begin{eqnarray*}
        &\displaystyle S^i_1:=\{x\in \widetilde{J}_{v^i_R}\colon |u^+(x)-\widetilde{w}(x)|<\sigma^{i-1}R,\,|u^{-}(x)-\widetilde w(x)|<\sigma^{i-1}R\},\\
        &\displaystyle S^i_2:=\{x\in \widetilde{J}_{v^i_R}\colon |u^+(x)-\widetilde{w}(x))|,|u^{-}(x)-\widetilde{w}(x)|\in[\sigma^{i-1}R,\sigma^{i}R]\},\\
        &\displaystyle S^i_3:=\{x\in \widetilde{J}_{v^i_R}\colon |u^+(x)-\widetilde{w}(x)|>\sigma^{i}R,\,|u^{-}(x)-\widetilde{w}(x)|>\sigma^{i}R\}\},\\
        &\displaystyle S^i_4:=\{x\in \widetilde{J}_{v^i_R}\colon |u^+(x)-\widetilde{w}(x)|<\sigma^{i-1}R,\,|u^{-}(x)-\widetilde{w}(x)|>\sigma^{i}R\},\\
       & \displaystyle S^i_5:=\{x\in \widetilde{J}_{v^i_R}\colon |u^+(x)-\widetilde{w}(x)|>\sigma^{i}R,\,|u^{-}(x)-\widetilde{w}(x)|<\sigma^{i-1}R\},
        \\&\displaystyle S^i_6:=\{x\in \widetilde{J}_{v^i_R}\colon |u^+(x)-\widetilde{w}(x)|\in[\sigma^{i-1}R,\sigma^iR],\,\,  |u^-(x)-\widetilde{w}(x)|\notin[\sigma^{i-1}R,\sigma^iR]\},\\
    &\displaystyle  S^i_7:=\{x\in  \widetilde{J}_{v^i_R}\colon \,|u^{+}(x)-\widetilde{w}(x)|\notin[\sigma^{i-1}R,\sigma^iR], \,\, 
       |u^-(x)-\widetilde{w}(x)|\in [\sigma^{i-1}R,\sigma^iR]\}.\\
    \end{eqnarray*}
    \noindent Note that for $\ell\in\{2,6,7\}$
    \begin{equation*}
    \displaystyle \text{if } S^i_\ell\cap S^j_\ell\neq \emptyset \text{\,\,\,\,then\,\, $|i-j|\leq 1$},
    \end{equation*} 
    so that 
    \begin{equation}
  \label{eq:intersection S267}\sum_{i=1}^m(\chi_{S^i_2}+\chi_{S^i_6}+\chi_{S^i_7})\leq 9.
    \end{equation}
    By definition of $S^i_1$ and of $S^i_3$, recalling \eqref{eq:v+} and \eqref{eq:v-}, we have 
\begin{eqnarray}
    &\displaystyle \label{eq:A1}\int_{S_1^i}g(x,[v^{i}_R],\nu_{v^i_R})\,d\hd=\int_{S_1^i}g(x,[u],\nu_u)\,d\hd,\\
    &\displaystyle \label{eq:A3}\int_{S_3^i}g(x,[v^{i}_R],\nu_{v^i_R})\,d\hd=\int_{S_3^i}g(x,0,\nu_u)\,d\hd=0.
\end{eqnarray}
    Since  $\text{Lip}(\psi_R^i)=1$, we have $|[\psi_R^i(u-w)]|\leq |[u-w]|=|[u]|$ in $\widetilde{J}_{{v^i_R}}$. Hence, by  (g2) and (g3) we  deduce that
    \begin{eqnarray}
        \nonumber
         \int_{S_2^i\cup S_6^i\cup S_7^i}g(x,[v^{i}_R],\nu_{v^i_R})\,d\hd&\leq&
         c_3k  \int_{S_2^i\cup S_6^i\cup S_7^i}|[\psi_R^i(u-w)]|\land 1\,d\hd \\  \leq c_3k  \int_{S_2^i\cup S_6^i\cup S_7^i}|[u]|\land 1\,d\hd
         &\leq&
         \label{eq:A2A5} 
         \frac{c_3k}{c_1}  \int_{S_2^i\cup S_6^i\cup S_7^i}g(x,[u],\nu_u)\,d\hd.
    \end{eqnarray}
    
    We are left with estimating the surface integral over $S_4^i$ and $S^i_5$. By definition, if $x\in S^i_4$ then  $|u^+(x)-\widetilde{w}(x)|<\sigma^{i-1}R$ and  $|u^{-}(x)-\widetilde{w}(x)|>\sigma^{i}R.$ 
    Therefore, 
    \begin{eqnarray}\nonumber\label{eq:dis a4 1}
        |[u](x)|=|u^+(x)-u^-(x)|&\geq&|u^-(x)-\widetilde{w}(x)|-|u^+(x)-\widetilde{w}(x)|\\
        &\geq&\sigma^iR-\sigma^{i-1}R=\sigma^{i-1}R(\sigma-1)\geq c_6k\sigma^{i-1}R,
   \end{eqnarray}
   where we have used that $\sigma>c_6k+1$. On the other hand, $[v^i_R](x)=u^+(x)-\widetilde{w}(x)$ so that
   \begin{equation}\nonumber\label{eq:dis a4 2}
       c_6k|[v^i_R]|= c_6k|u^+(x)-\widetilde{w}(x)|\leq c_6k\sigma^{i-1}R.  
   \end{equation}
   From these inequalities we deduce that $c_6k|[v^i_R]|\leq |[u]|$, which by (g6) implies 
    \begin{equation}\nonumber
        g(x,[v^i_R],\nu_{v^i_R})\leq  g(x,[u],\nu_{v^i_R})=g(x,[u],\nu_u) \quad \text{ $\hd$-a.e. in $S^{i}_4$}.
    \end{equation} 
    The same argument shows that $\hd$-a.e. in $S^{i}_5$  it holds
    \begin{equation}\nonumber
        g(x,[v_R^i],\nu_{v^i_R})\leq g(x,[u],\nu_u).
    \end{equation}
    From these last two inequalities we get 
   \begin{equation}\label{eq:A4}
        \int_{S_4^i\cup S^i_5}g(x,[v^{i}_R],\nu_{v^i_R})\,d\hd\leq  \int_{S_4^i\cup S^i_5}g(x,[u],\nu_{u})\,d\hd.
   \end{equation}
  
   Combining \eqref{eq:A1}-\eqref{eq:A4}, we obtain
   \begin{equation*}\label{eq:estimate on surface concrete}
       \int_{\jump{v^i_R}}g(x,[v^i_R],\nu_{v^{i}_R})\,d\hd\leq \int_{J_u}g(x,[u],\nu_u)\,d\hd+\frac{c_3k}{c_1}\int_{S^i_2\cup S^i_6\cup S^i_7 }g(x,[u],\nu_u)\,d\hd,
   \end{equation*}
which, in light of \eqref{eq:intersection S267}, implies that
\begin{equation}\label{eq:stimastrisce salti}
\sum_{i=1}^m\int_{\jump{v^i_R}}g(x,[v^i_R],\nu_{v^{i}_R})\,d\hd\leq \Big(m+\frac{9c_3k}{c_1}\Big)\int_{J_u}g(x,[u],\nu_u)\,d\hd.
\end{equation}
   Finally, from  \eqref{eq:contofatto su inner},  \eqref{eq:stime cantor concrete}, and \eqref{eq:stimastrisce salti} we deduce that
     \begin{eqnarray*}
       \frac1m\sum_{i=1}^mE^{f,g}(v^i_R,A) &\leq& E^{f,g}(u,A) + c_3k^{1/2}\int_{A^R_{u,w}}|\nabla w|\, dx+c_4\Ld(A^R_{u,w})\\
       &&+\frac{C}{m}\left( E^{f,g}(u,A) + \Ld(A) +\int_A|\nabla w|\, dx \right),
\end{eqnarray*}
where $C:=\max\{9c_3k/c_1,2c_3k^{1/2},c_4\}$.  This shows that $E^{f,g}$ satisfies (g).

To see that $E^{f,g}$ satisfies (h) it is enough to verify it for $A\in\mathcal{A}_c(\Rd)$ and for $u|_A\in GBV_\star(A;\Rk)$. Let $\lambda \leq 1/c_6$ and let $R\in SO(k)$. Note that by \eqref{eq:hp on constants} we also have $\lambda \leq (c_1/c_3)k^{-1/2}$. We set $v:=\lambda Ru$. By (f2'), (f3), and (g6) we can estimate
\begin{eqnarray*}
    E^{f,g}(v,A)&=&\int_{A}f(x,\nabla v)\, dx+\int_{A}f^\infty\Big(x,\frac{dD^cv}{d|D^cv|},\Big)\, d|D^cv|+\int_{A\cap \jump{u}}g(x,[v],\nu)\, d\hd\\&\leq&
c_3k^{1/2}\lambda\Big( \int_A|\nabla u|\, dx+|D^cu|(A)\Big)+\int_{A}g(x,[u],\nu)\,d\hd+c_4\Ld(A)\\
&\leq&
E^{f,g}(u,A)+(c_4+c_2)\Ld(A),
\end{eqnarray*}
concluding the proof.
\end{proof}

\begin{remark}
    It follows from Remark \ref{re:lowersemicontinuity of Veta} and Proposition \ref{prop: integral functionals are in E} 
    that the functional $V$ of Definition \ref{def:functional V}  belongs to $\E$.
\end{remark}
 The following lemma provides an estimate for smooth truncations of the form $\psi_R\circ u$, which can be considered as an extension to the vector-valued case of the simpler inequality \eqref{eq:estimate for truncations in dal maso and toader}, valid in the scalar case. The proof 
heavily relies on property (g) of Definition \ref{def:space of functionals E}. More refined versions of the following result will be presented in the forthcoming sections.

\begin{lemma}\label{lemma:monotic truncations}
    Let $A_1,A_2\in\mathcal{A}_c(\Rd)$, let $u_1\in L^0(A_1;\Rk)$ and $u_2\in L^0(A_2;\Rk)$, and for $j=1,2$ let $E(\cdot,A_j)\colon L^0(A_j;\Rk)\to [0,+\infty]$ be a a lower semicontinuous functional satisfying \textup{(c2$'$)} and \textup{(g)} of Definition \ref{def:space of functionals E} with $B=A_1$ and $B=A_2$. Then there exists a strictly increasing sequence $R_m>0$, with $R_m\to +\infty$ as $m\to+\infty$, such that
\begin{equation}\label{eq:claim lemma monotonic}
    \lim_{m\to+\infty}E(\psi_{R_m}\circ u_j,A_j)=E(u_j,A_j)
\end{equation}
for $j=1,2$.
\end{lemma}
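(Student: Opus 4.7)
The plan is to apply property (g) of Definition~\ref{def:space of functionals E} with the choice $w\equiv 0$, use an averaging argument to produce a common truncation level that works simultaneously for both $u_1$ and $u_2$, and combine this with the lower semicontinuity of $E(\cdot,A_j)$ to conclude. The lower bound is the easy half: since $\psi_R(y)=y$ for $|y|\leq R$ and each $u_j$ is finite $\Ld$-a.e.\ on $A_j$, we have $\psi_{R_m}\circ u_j\to u_j$ $\Ld$-a.e.\ and hence in $L^0$ whenever $R_m\to+\infty$; by lower semicontinuity,
\begin{equation*}
E(u_j,A_j)\leq \liminf_{m\to+\infty}E(\psi_{R_m}\circ u_j,A_j)\qquad (j=1,2).
\end{equation*}
If $E(u_j,A_j)=+\infty$ for some $j$ this already gives \eqref{eq:claim lemma monotonic}, so in what follows I assume both $E(u_j,A_j)$ are finite; by Remark~\ref{re:Domain of function spaces} each $u_j$ then belongs to $GBV_\star(A_j;\Rk)$, hence is finite $\Ld$-a.e.

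For the reverse inequality, fix $m\in\N$ and $R>0$ and apply property (g) to each $u_j$ with $w\equiv 0\in W^{1,1}_{\rm loc}(\Rd;\Rk)$ and $B=A_j$. Since $\nabla w=0$ and $(A_j)^R_{u_j,0}=\{x\in A_j\colon |u_j(x)|\geq R\}$, the estimate reduces to
\begin{equation*}
\frac{1}{m}\sum_{i=1}^m E(\psi^i_R\circ u_j,A_j)\leq E(u_j,A_j)+c_4\Ld(\{|u_j|\geq R\})+\frac{C}{m}\bigl(E(u_j,A_j)+\Ld(A_j)\bigr).
\end{equation*}
Summing over $j=1,2$ and using that the minimum of a finite collection does not exceed the average, there exists an index $i^{\ast}=i^{\ast}(m,R)\in\{1,\dots,m\}$ such that, setting $S(m,R):=\sigma^{i^{\ast}-1}R$ (so $\psi^{i^{\ast}}_R=\psi_{S(m,R)}$),
\begin{equation*}
\sum_{j=1,2}E(\psi_{S(m,R)}\circ u_j,A_j)\leq \sum_{j=1,2}\Bigl(E(u_j,A_j)+c_4\Ld(\{|u_j|\geq R\})+\tfrac{C}{m}(E(u_j,A_j)+\Ld(A_j))\Bigr).
\end{equation*}
Choosing $R^{(m)}\to+\infty$ (for instance $R^{(m)}=m$) and setting $\widetilde R_m:=S(m,R^{(m)})\geq R^{(m)}$, both error terms vanish in the limit, so $\limsup_{m}\sum_j E(\psi_{\widetilde R_m}\circ u_j,A_j)\leq \sum_j E(u_j,A_j)$.

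Combined with the lower semicontinuity bound applied to each summand, this gives
\begin{equation*}
\lim_{m\to+\infty} \sum_{j=1,2} E(\psi_{\widetilde R_m}\circ u_j,A_j)= \sum_{j=1,2} E(u_j,A_j),
\end{equation*}
while $\liminf_m E(\psi_{\widetilde R_m}\circ u_j,A_j)\geq E(u_j,A_j)$ holds separately for each $j$; since both $E(u_j,A_j)$ are finite, a routine liminf/limsup juggling (the summands are bounded, and along any subsequence passing to a convergent sub-subsequence equality must hold componentwise) forces $\lim_m E(\psi_{\widetilde R_m}\circ u_j,A_j)= E(u_j,A_j)$ for $j=1,2$. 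Finally, extracting a strictly increasing subsequence $R_m:=\widetilde R_{m_\ell}$, which is possible because $\widetilde R_m\to+\infty$, preserves the limit and yields \eqref{eq:claim lemma monotonic}. The main obstacle, and the reason why property (g) is formulated in the averaged form $\frac{1}{m}\sum_{i=1}^m$ rather than pointwise in $i$, is precisely that the truncation index must be chosen \emph{simultaneously} for $u_1$ and $u_2$; summing the two averaged bounds before applying a pigeonhole argument is exactly what makes such a joint choice possible.
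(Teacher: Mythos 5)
Your proposal is correct and follows essentially the same route as the paper: the liminf bound comes from lower semicontinuity, and property (g) with $w=0$, summed over $j=1,2$ and combined with a pigeonhole over the index $i$, yields a common truncation $\psi^{i}_{R}=\psi_{\sigma^{i-1}R}$ working simultaneously for $u_1$ and $u_2$, exactly as in the paper. One small slip: your claim that $E(u_j,A_j)=+\infty$ for some $j$ ``already gives'' the conclusion is not quite right in the mixed case, since lower semicontinuity settles only the infinite index while the finite one still requires the same averaged estimate applied to it alone (this is the ``simpler proof in the other cases'' the paper alludes to); the fix is immediate.
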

\begin{proof}
For every sequence $R_m\to+\infty$ as $m\to+\infty$,  we have that $\psi_{R_m}\circ  u\to u_j$ in $L^0(A;\Rk)$ as $m\to+\infty$, so that the lower semicontinuity of $E(\cdot,A_j)$ with respect to the topology of $L^0(A_j;\Rk)$ implies that
\begin{equation}\label{eq:liminf nel lemma 3.14}
    E(u_j,A_j)\leq \liminf_{m\to\infty}E(\psi_{R_m}\circ u_j,A_j)
\end{equation}
for $j=1,2.$

Assume now that  $E(u_1,A_1)$  and $E(u_2,A_2)$ are both finite.
 We can choose a sequence $r_m$ converging to $+\infty$ as $m\to+\infty$ such that 
\begin{equation*}
c_4\Ld(A_{u_j,0}^{r_m})\leq \frac{1}{m}\quad \text{ for every }m\in\N \text{ and for $j=1,2$}.
\end{equation*}
From (g) of Definition \ref{def:space of functionals E} applied with $w=0$, we obtain that there exists $i(m)\in\{1,...,m\}$, such that
\begin{align}\nonumber
    E(\psi_{r_m}^{i(m)}\circ u_1,A_1)+ E(\psi_{r_m}^{i(m)}\circ u_2,A_2)&\leq E(u_1,A_1)+E(u_2,A_2)\\&\quad\nonumber 
+\frac{2+C(E(u_1,A_1)+E(u_2,A_2))+2c_4C\Ld(A))}{m}.
\end{align}
We now set $R_m:=\sigma^{i(m)}r_m$, so that $\psi^{i(m)}_{r_m}=\psi_{R_m}$. Hence, the previous inequality gives
\begin{equation*}
    \limsup_{m\to\infty}(E(\psi_{R_m}\circ u_1,A_1)+E(\psi_{R_m}\circ u_2,A_2))\leq E(u_1,A_1)+E(u_2,A_2).
\end{equation*}
Combining this last inequality with \eqref{eq:liminf nel lemma 3.14}, we obtain \eqref{eq:claim lemma monotonic} in the case where   $E(u_1,A_1)$  and $E(u_2,A_2)$ are both finite. A simpler proof yields the result when in the other cases.
\end{proof}

\section{A compactness result }\label{seq:compactness}
The main result of this section is a compactness theorem for the class $\E$. The strategy here adopted is the one of \cite[Lemma 3.24]{DalToa23}. As already mentioned, the main difference in the proof this compactness result in the vectorial case lies in the different type of truncation used, which is dealt with  in Lemma \ref{lemma: convergenza delle troncature nel gammalimsup} below.

In what follows, given a sequence $(E_n)_n\subset\E$ and $A\in\mathcal{A}(\Rd)$, we set 
\begin{eqnarray}\label{eq:definition Gammaliminf e Gammalimsup}
       &\displaystyle E'(\cdot,A):=\Gamma\meno\liminf_{n\to\infty}E_n(\cdot,A)\,\,\text{ and }\,\,E''(\cdot,A):=\Gamma\meno\limsup_{n\to\infty}E_n(\cdot,A),\\
       \label{eq:definition inner regularization}&\displaystyle E'_-(\cdot,A):=\sup_{A'\in\mathcal{A}_c(A)}E'(\cdot,A)\,\,\text{ and }\,\, E''_-(\cdot,A):=\sup_{A'\in\mathcal{A}_c(A)}E''(\cdot,A'),
    \end{eqnarray}
    where the $\Gamma\meno\liminf$ and the $\Gamma\meno\limsup$ are computed with respect to the topology of $L^0(\Rd;\Rk)$. Given $u\in L^0(\Rd;\Rk)$, it is immediate to check that  $E'(u,\cdot)$, $E''(u,\cdot)$, $E'_-(u,\cdot),$ and  $E_-''(u,\cdot)$ are all increasing set functions.
    
    \begin{theorem}\label{thm:compactness for E}
    Let $(E_n)_n\subset \E$ be a sequence of functionals. Then there exists a subsequence of $(E_n)_n$, not relabelled, and a functional $E\in\E_{\rm sc}$ such that for every $A\in\mathcal{A}_c(\Rd)$ the sequence $E_n(\cdot,A)$ $\Gamma$-converges to $E(\cdot,A)$ in the topology of $L^0(\Rd;\Rk)$.
\end{theorem}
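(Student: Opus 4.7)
My plan follows the classical localisation scheme for $\Gamma$-convergence of measure-like functionals à la De~Giorgi--Letta, adapted to the present vector-valued setting as in \cite[Lemma~3.24]{DalToa23}. First I would fix a countable dense family $\mathcal{R}\subset\mathcal{A}_c(\Rd)$ of open sets (for instance, finite unions of open cubes with rational vertices), recall that the topology of $L^0(\Rd;\Rk)$ is metrisable and separable, and apply the general compactness theorem for $\Gamma$-convergence together with a diagonal extraction to obtain a subsequence, not relabelled, along which $E'(\cdot,A)=E''(\cdot,A)$ exists as a $\Gamma$-limit for every $A\in\mathcal{R}$. Using the definitions in \eqref{eq:definition inner regularization} I would then define the candidate limit by
\[
E(u,A):=E'_-(u,A)=\sup\{E'(u,A')\colon A'\in\mathcal{A}_c(A)\}\qquad \text{for }A\in\mathcal{A}(\Rd),
\]
and extend it to $\mathcal{B}(\Rd)$ by outer regularity, as in condition (b) of Definition \ref{def:space of functionals E}. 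The lower semicontinuity of $E(\cdot,A)$ is then automatic, since inner regularisations of $\Gamma$-lower limits are lower semicontinuous.

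The heart of the argument is to show that $E'_-(u,A)=E''_-(u,A)$ for every $u\in L^0(\Rd;\Rk)$ and every $A\in\mathcal{A}(\Rd)$; once this is granted, $E(\cdot,A)=\Gamma\meno\lim_n E_n(\cdot,A)$ on $\mathcal{A}_c(\Rd)$, since both $E'_-$ and $E''_-$ are inner-regular envelopes of the corresponding $\Gamma$-limits, and the $\Gamma$-convergence for a general $A\in\mathcal{A}_c(\Rd)$ follows by comparison with sets in $\mathcal{R}$ squeezing $A$ from inside. The standard tool for the equality $E'_-=E''_-$ is the classical fundamental estimate: given $A',A''\in\mathcal{A}_c(A)$ with $A'\subset\subset A''$ and a third open set $D$ with $A''\setminus\overline{A'}\subset\subset D\subset\subset A\setminus\overline{A'}$, one glues a recovery sequence for $E''$ on $A''$ with a recovery sequence on $A\setminus\overline{A'}$. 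In the scalar framework of \cite{DalToa23} this gluing is performed via the componentwise truncations $u^{(m)}$ and estimate \eqref{eq:estimate for truncations in dal maso and toader}; here this is not available, so the gluing must be executed by means of the smooth truncations $\psi^i_R$ built in \eqref{def:psiR}--\eqref{eq:properties psiRi}.

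The main obstacle is exactly this step. My plan would be to exploit property (g) of Definition \ref{def:space of functionals E} as the vector-valued substitute of \eqref{eq:estimate for truncations in dal maso and toader}. Given recovery sequences $u_n^1\to u$ on the inner set and $u_n^2\to u$ on the outer set, I take $w=u_n^1$ in (g) applied to $u_n^2$ and observe that the averaged inequality over $i\in\{1,\dots,m\}$ forces the existence, for each $m$, of an index $i(m,n)$ for which the cross term $E_n(u_n^1+\psi^{i(m,n)}_R\circ(u_n^2-u_n^1),\cdot)$ enjoys the desired control up to: (a) an error on the strip $\{|u_n^2-u_n^1|\ge R\}$, which is absorbed by sending $n\to\infty$ since $u_n^1,u_n^2\to u$ in measure; (b) a term of order $1/m$ which is absorbed by sending $m\to\infty$; (c) an $L^1$-term in $\nabla u_n^1$ that is controlled by (c2). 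This is precisely the content of the truncation result that will be stated as Lemma~\ref{lemma: convergenza delle troncature nel gammalimsup} (already foreshadowed by Lemma \ref{lemma:monotic truncations} and Proposition \ref{prop: integral functionals are in E}). Once the fundamental estimate is secured, one obtains subadditivity of $E(u,\cdot)$ on disjoint open sets; combined with (c1)-(c2), which give $E(u,\cdot)\le c_3V(u,\cdot)+c_4\Ld$, this is the hypothesis of the De~Giorgi--Letta criterion and produces a Borel measure, i.e.\ condition (b).

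Finally, I would verify conditions (a), (c1), (c2), (d), (e), (f), (g), (h) one by one for the limit $E$. Properties (a), (d), (e), (f), (h) pass to the $\Gamma$-limit almost verbatim from their validity along the sequence $E_n$, using the continuity of the translations $u\mapsto u+a$, $u\mapsto u+\ell_\xi$, $u\mapsto u+u_{x,\zeta,\nu}$, and $u\mapsto \lambda Ru$ in $L^0(\Rd;\Rk)$ to transport recovery sequences. Condition (c2) follows from the $\Gamma$-limsup inequality tested on the constant sequence $u_n=u$ together with the corresponding bound for the $E_n$, while (c1) follows from the $\Gamma$-liminf inequality combined with the lower semicontinuity of $V$ recalled in Remark \ref{re:lowersemicontinuity of Veta}. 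Condition (g), which is the only condition not of pure \emph{transport} type, requires more care: given a recovery sequence $u_n$ for $u$ and using $w\in W^{1,1}_{\mathrm{loc}}$, I would apply (g) to each $E_n$ with the same $w,R,m$ and pass to the $\liminf$ on the left-hand side using Fatou together with the fact that at least one index $i\in\{1,\dots,m\}$ realises the average (a minimax argument, up to extracting a further subsequence in $i$), while passing to the $\limsup$ on the right-hand side using (c2) and the convergence $u_n\to u$ in measure to handle the set $B_{u_n,w}^R$. The extraction step and the handling of this asymmetry in $\liminf/\limsup$ is the subtlest point and relies crucially on the averaging structure of (g), which is precisely why the definition has been formulated in that form.
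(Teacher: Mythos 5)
Your overall skeleton (abstract compactness plus inner regularisation, De~Giorgi--Letta via a fundamental estimate, term-by-term verification of the properties in Definition \ref{def:space of functionals E}, lower semicontinuity for free) is the same as the paper's, but the mechanism you propose for the one genuinely hard step --- the vector-valued truncation needed to glue recovery sequences --- does not work as described. Property (g) is only available for $w\in W^{1,1}_{\rm loc}(\Rd;\Rk)$, so you cannot take $w=u_n^1$, which is in general only a $GBV_\star$ (at best $BV$) function; the whole point of the hypothesis $w\in W^{1,1}_{\rm loc}$ is that the jump and Cantor parts created on the transition region are not controlled otherwise. Even granting some extension of (g), the error in \eqref{eq: g} contains the term $c_3k^{1/2}\int_{B^R_{u_n^2,u_n^1}}|\nabla u_n^1|\,dx$, which is \emph{not} divided by $m$ and does not vanish as $n\to\infty$: convergence in measure gives $\Ld(\{|u_n^2-u_n^1|\ge R\})\to 0$, but $(\nabla u_n^1)_n$ is merely bounded in $L^1$ by (c1$'$), not equi-integrable, so ``controlled by (c2)'' means bounded, not negligible. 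Finally, the function $u_n^1+\psi^{i}_R\circ(u_n^2-u_n^1)$ switches between the two sequences according to the size of $|u_n^2-u_n^1|$, not according to position, so it is not the spatial gluing required for subadditivity; a cutoff construction is still needed. The paper's route is different precisely here: property (g) is applied only with a \emph{fixed} Sobolev $w$ (in fact $w=0$), so the only error is $c_4\Ld(\{|w_{n_h}|\ge\sigma R\})$, which vanishes because the truncated target $\psi_R\circ u$ is bounded; this turns the recovery sequences into bounded $BV$ functions converging in $L^1$, and only then the cutoff-based fundamental estimate (Lemma \ref{lemma:fundamental estimate}) glues them, yielding subadditivity first for the truncated functions $\psi_R\circ u$ (Lemma \ref{lemma:subadditivita troncata}); the truncation is removed afterwards through Lemmas \ref{lemma: condition g passes to the limit} and \ref{lemma: convergenza delle troncature nel gammalimsup} (which rest on Lemma \ref{lemma:monotic truncations}).

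A second, related gap concerns where you place the difficulty. Along your diagonal subsequence the equality $E'_-=E''_-$ is automatic (it is exactly what the compactness theorem for $\Gamma$-limits of increasing functionals, \cite[Theorem 16.9]{DalMaso}, provides, and your rational-cube squeeze reproves it); what is \emph{not} automatic, and what you dismiss as ``comparison with sets squeezing $A$ from inside'', is the identity $E=E'=E''$ on every $A\in\mathcal{A}_c(\Rd)$. This is the content of Lemma \ref{lemma:equaility E',E''}: the inequality $E''(u,A)\le E(u,A)+\e$ is obtained from the weak subadditivity of $E''$, which in the vector-valued setting is only available for the truncated functions $\psi_R\circ u$, combined with the growth bound (c2) on $A\setminus K$ and with the approximation $E'(\psi_{R_m}\circ u,A)\to E'(u,A)$. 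Without first repairing the truncation mechanism above, this upgrade from convergence of the inner regularisations to $\Gamma$-convergence on all of $\mathcal{A}_c(\Rd)$, as well as the subadditivity needed for De~Giorgi--Letta, remains out of reach in your plan.
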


\begin{proof}
    By a compactness theorem for $\Gamma$-convergence  of increasing functionals (see \cite[Theorem 16.9]{DalMaso}), there exists a subsequence of $(E_n)_n$, not relabelled, such that 
    \begin{equation}\label{eq:definizione limite E 1st}
        E(u,A):=E'_-(u,A)=E''_-(u,A)\quad \text{for every $u\in L^0(\Rd;\Rk)$ and } \text{for }A\in\mathcal{A}(\Rd) .\\ 
    \end{equation}
    Thus, we can define $E\colon L^0(\Rd;\Rk)\times\mathcal{B}(\Rd)\to[0+\infty]$ as 
    \begin{equation}\label{eq:definizione limite E 2nd}
        E(u,B)=\inf\{E(u,A):\,\,A\in\mathcal{A}(\Rd),\,\,B\subset A \}\quad \text{for every } B\in\mathcal{B}(\Rd).
    \end{equation}
   We claim that $E\in\E_{\rm sc}$ and that for every $A\in\mathcal{A}_c(\Rd)$ we have $E(\cdot,A)=E'(\cdot,A)=E''(\cdot,A)$.

  From some general $\Gamma$-convergence results (see \cite[Proposition 16.15]{DalMaso}), it follows that $E$ satisfies property (a) of Definition \ref{def:space of functionals E}.
  
    To show that $E$ enjoys also property (b), we make use of the De Giorgi-Letta Criterion for measures \cite{DeGiorgiLetta} (see also \cite[Theorem 14.23]{DalMaso} for the particular version of the theorem here employed). As already mentioned, for every $u\in L^0(\Rd;\Rk)$ the set function $E(u,\cdot)$ is increasing. The inner regularity follows by definition of $E(u,\cdot)$, while superadditivity is a consequence of \cite[Proposition 16.12]{DalMaso} and of the fact that $E_n$ satisfies (b) of Definition \ref{def:space of functionals E}. We are left with proving that $E(u,\cdot)$ is subadditive. To prove this, we make use of a truncation argument and of the following estimate for which we refer to \cite[Lemma 3.19]{DalToa23}, the proof in the vectorial case being the same as in the scalar case.
    
    \begin{lemma}[{\cite[Lemma 3.19]{DalToa23}}]\label{lemma:fundamental estimate}
        Let $(E_n)\subset \E$ be a sequence of functionals and let $A',A'',A,U\in\mathcal{A}_c(\Rd)$, with $A'\subset\subset A''\subset\subset A$. Let $u\in L^1_{\rm loc}(\Rd;\Rk)$ and let  $(w_n)_n,(v_n)_n\subset L^1_{\rm loc}(\Rd;\Rk)$ be two sequence converging to $u$ in $L^1_{\rm loc}(\Rd;\Rk)$ and such that 
        $(w_n|_A)\subset BV(A;\Rk)$, $(v_n|_U)\subset BV(U;\Rk)$. Then for every $\delta>0$ there exists a sequence $(\varphi_n)_n\subset C^\infty_c(\Rd,[0,1])$, with $\text{supp}(\varphi_n)\subset A''$ and $\varphi_n=1$ in a neighborhood of $\overline{A}'$, such that the functions $u_n$ defined by
        \begin{equation*}
            u_n:=\varphi_nw_n+(1-\varphi_n)v_n
        \end{equation*}
        converge to $u$ in $L^1_{\rm loc }(\Rd;\Rk)$, $u_n|_{A'\cup U}\in BV(A'\cup U;\Rk)$, and 
        \begin{eqnarray*}
            &\displaystyle \liminf_{n\to+\infty}E_n(u_n,A'\cup U)\leq(1+\delta)\liminf_{n\to+\infty}(E_n(w_n,A)+E_n(v_n,U))+\delta,\\
            &\displaystyle  \limsup_{n\to+\infty}E_n(u_n,A'\cup U)\leq(1+\delta)\limsup_{n\to+\infty}(E_n(w_n,A)+E_n(v_n,U))+\delta.
        \end{eqnarray*}
    \end{lemma}
 We now consider the truncated functions $\psi_R\circ u$ and prove that $E'(\psi_R\circ u,\cdot)$ and  $E''(\psi_R\circ u,\cdot)$ satisfy a weak subadditivity inequality.
\begin{lemma}\label{lemma:subadditivita troncata}
   Let $(E_n)_n\subset\E$ be a sequence of functionals, let $E'$ and  $E''$ be the functionals defined by \eqref{eq:definition Gammaliminf e Gammalimsup}, let $u\in L^0(\Rd;\Rk)$, and let  $A',A,U\in\mathcal{A}_c(\Rd)$, with $A'\subset\subset A$. Then for every $R>0$, $m\in\N$ we have 
   \begin{eqnarray}
        &\displaystyle \label{eq:subadd trunc'}E'(\psi_R\circ u,A'\cup U)\leq E''(\psi_R\circ u,A)+E'(\psi_R\circ u,U),\\
        &\displaystyle \label{eq:subadd trunc''}E''(\psi_R\circ u,A'\cup U)\leq E''(\psi_R\circ u,A)+E''(\psi_R\circ u,U).
   \end{eqnarray}
\end{lemma}   

\begin{proof}
    We prove \eqref{eq:subadd trunc'}.
        Without loss of generality, we assume that $E''(\psi_R\circ u,A)$ and $E'(\psi_R\circ u,U)$ are both finite. 
       Let $w_n,v_n\in L^0(\Rd;\Rk)$ be two sequence of functions converging in $L^0(\Rd;\Rk)$ to $\psi_R\circ u$ and such that
        \begin{equation}\label{eq:recovery on A,B}
            \limsup_{n\to\infty}E_n(w_n,A)=E''(\psi_R\circ u,A)\,\,\text{ and }\,\, \liminf_{n\to\infty}E_n(v_n,U)=E'(\psi_R\circ u,U).
        \end{equation}
        We fix a subsequence $(E_{n_h})_{h}$ of $(E_n)_n$ such that 
        \begin{equation}\nonumber 
            \lim_{h\to+\infty}E_{n_h}(v_{n_h},U)=\liminf_{n\to\infty}E_n(v_n,U)=E'(\psi_R\circ u,U).
        \end{equation}
      Without loss of generality we may assume  that there exists $M>0$ such that 
        \begin{equation*}
            E_{n_h}(w_{n_h},A)<M\quad\text{ and }\quad 
            E_{n_h}(v_{n_h},U)<M,
        \end{equation*}
        for every $h\in\N$.
         Remark \ref{re:Domain of function spaces} then implies $(w_{n_h}|_A)_h\subset GBV_\star(A;\Rk)$, $(v_{n_h}|_U)_h\subset GBV_\star(U;\Rk)$.
          By property (g) of Definition \ref{def:space of functionals E} for every $h,m\in\N$ we can find  $i_{h,m}, j_{h,m}\in\{1,...,m\}$ such that 
        \begin{eqnarray}\label{eq:auxiliary estimates 1st lemma subadditivo}
               &\displaystyle \hspace{-0.7cm} E_{n_h}(\psi^{i_{h,m}}_{\sigma R}\circ w_{n_h},A)\leq E_{n_h}(w_{n_h},A)+c_4\Ld(A^{\sigma R}_{w_{n_h},0})+C\frac{M+\Ld(A)}{m},\\
                \label{eq:auxiliary estimates 2nd lemma subadditivo}
               &\displaystyle \hspace{-0.7cm} E_{n_h}(\psi^{j_{h,m}}_{\sigma R}\circ v_{n_h},U)\leq E_{n_h}(v_{n_h},U)+c_4\Ld(U^{\sigma R}_{v_{n_h},0})+C\frac{M+\Ld(U)}{m}.
         \end{eqnarray}
        Since $w_{n_h}$ and $v_{n_h}$ converge in $L^0(\Rd;\Rk)$ to $\psi_R\circ u$ as $n\to+\infty$, $\|\psi_{\sigma R}^{i_{h,m}}\|_{L^\infty(\Rd;\Rk)}\leq \sigma^{m+1}R$ , $\|\psi_{\sigma R}^{j_{h,m}}\|_{L^\infty(\Rd;\Rk)}\leq \sigma^{m+1}R $, and $\|\psi_R\circ u\|_{L^\infty(\Rd;\Rk)}\leq \sigma R$,  for every $m\in\N$ we infer that 
        \begin{eqnarray}
           &\displaystyle  \nonumber \psi_{\sigma R}^{i_{h,m}}\circ w_{n_h}\to \psi_R\circ u\,\,\, \text{ in }L^1(\Rd;\Rk)\, \text{  as }h\to+\infty,\\
           &\displaystyle \nonumber  \psi_{\sigma R}^{j_{h,m}}\circ v_{n_h}\to \psi_R\circ u\,\,\, \text{ in }L^1(\Rd;\Rk)\, \text{  as }h\to+\infty,\\
           &\displaystyle\label{eq:misura a zero}\lim_{h\to\infty}\Ld(A^{\sigma R}_{w_{n_h},0})=
         \Ld(A_{\psi_R\circ u,0}^{\sigma R})=0,\\
          &\displaystyle\label{eq:misura a zero 2} \lim_{h\to\infty}\Ld(U^{\sigma R}_{v_{n_h},0})=
         \Ld(U_{\psi_R\circ u,0}^{\sigma R})=0.
        \end{eqnarray}
         For every $V\in\mathcal{A}(\Rd)$ we set
         \begin{equation}\label{eq:def su sottosucc}
\widehat{E}'(\cdot,V):=\Gamma\meno\liminf_{h\to\infty}E_{n_h}(\cdot,V)\,\,\text{ and }\,\,\widehat{E}''(\cdot,V):=\Gamma\meno\limsup_{h\to\infty}E_{n_h}(\cdot,V),\\
       \end{equation}
       and observe that 
       \begin{equation}\label{eq:monotnoia su sottosucc}
          E'(\cdot,V)\leq  \widehat{E}'(\cdot,V)\quad \text{and }\quad \widehat{E}''(\cdot,V)\leq E''(\cdot,V).
       \end{equation}
       
        \noindent By  \eqref{eq:recovery on A,B}-\eqref{eq:misura a zero 2}, we deduce that
        \begin{eqnarray}
            &  \label{eq:auxiliary inequality 1st lemma subadditivo}\displaystyle\limsup_{h\to\infty}E_{n_h}(\psi^{i_{h,m}}_{\sigma R}\circ w_{n_h},A)\leq E''(\psi_R\circ u,A)+C\frac{M+\Ld(A)}{m}\\ \label{eq:auxiliary inequality 2nd lemma subadditivo}
            &\displaystyle \liminf_{h\to\infty}E_{n_h}(\psi^{j_{h,m}}_{\sigma R}\circ v_{n_h},U)\leq E'(\psi_R\circ u,U)+C\frac{M+\Ld(U)}{m}.
        \end{eqnarray}
        By Proposition \ref{prop:characterisation GBVstar} we have that $\psi_{\sigma R}^{i_{h,m}}\circ w_{n_h}|_{A}\in BV(A;\Rk)$ and that $\psi_{\sigma R}^{j_{h,m}}\circ v_{n_h}|{U}\in BV(U,\Rk)$. Thus, for $\delta>0$ and for every $m\in\N$ we may apply Lemma \ref{lemma:fundamental estimate} to obtain a sequence of functions $(u^m_{h})_h\subset BV(A'\cup U;\Rk)$ such that $u^m_{h}\to \psi_R\circ u$ in $L^1_{\rm loc}(\Rd;\Rk)$ as $h\to+\infty$ and such that 
        \begin{eqnarray*}
            &\displaystyle\liminf_{h\to+\infty}E_{n_h}(u^m_{h},A'\cup U)\leq (1+\delta)\liminf_{h\to\infty}\Big(E_{n_h}(\psi^{i_{h,m}}_{\sigma R}\circ w_{n_h},A)+E_{n_h}(\psi_{\sigma_R}^{j_{h,m}}\circ v_{n_h},U)\Big)+\delta\\
            &\displaystyle\leq (1+\delta)\limsup_{h\to\infty}E_{n_h}(\psi^{i_{h,m}}_{\sigma R}\circ w_{n_h},A)+(1+\delta)\liminf_{h\to+\infty}E_{n_h}(\psi_{\sigma_R}^{j_{h,m}}\circ v_{n_h},U)+\delta
        \end{eqnarray*}
        
        This inequality, combined with  \eqref{eq:auxiliary inequality 1st lemma subadditivo}, and \eqref{eq:auxiliary inequality 2nd lemma subadditivo}, yields
        \begin{equation*}
             \liminf_{h\to\infty}E_{n_h}(u^m_{h}
            ,A'\cup U)\leq (1+\delta)\Big(E''(\psi_R\circ u,A)+E'(\psi_R\circ u,U)+\frac{S}{m}\Big)+\delta,
        \end{equation*}
    where $S$ is a positive constant independent of $m$. Recalling that $u^m_h\to \psi_R\circ u$ as $h\to+\infty$,  by \eqref{eq:def su sottosucc} and \eqref{eq:monotnoia su sottosucc} we obtain that 
    \begin{equation*}
        E'(\psi_R\circ u,A'\cup U)\leq \widehat{E}'(\psi_R\circ u,A'\cup U)\leq (1+\delta)\Big(E''(\psi_R\circ u,A)+E'(\psi_R\circ u,U)+\frac{S}{m}\Big)+\delta.
    \end{equation*}
 Letting $m\to +\infty$ and  $\delta\to0^+$, we obtain \eqref{eq:subadd trunc'}.

 A similar, but easier, argument shows that \eqref{eq:subadd trunc''} holds true.
    \end{proof}
    To prove a weak subadditivity inequality for $E'(u,\cdot)$ we will approximate $E'(u,A)$ by $E'(\psi_R\circ u,A)$. For technical reasons, this approximation result is obtained using property $(g)$ of Definition \ref{def:space of functionals E} for $E'$, which is proved in the following lemma.
    \begin{lemma}\label{lemma: condition g passes to the limit}
        Let $(E_n)_n$ be a sequence in $\E$, let $E'$ be the functional defined by \eqref{eq:definition Gammaliminf e Gammalimsup}, let $A\in\mathcal{A}_c(\Rd)$, $u\in GBV_\star(A;\Rk)$, and let $w\in W^{1,1}_{\rm loc}(\Rd;\Rk)$. Then for every $R>0$ and $m\in\N$ we have 
        \begin{multline}\label{eq:claim prop g limit}
            \frac{1}{m}\sum_{i=1}^mE'(w+\psi^i_R\circ (u-w),A)\leq E'(u,A) + c_3k^{1/2}\int_{A^R_{u,w}}|\nabla w|+c_4\Ld(A^R_{u,w}) \\
       +\frac{C}{m}\left(E'(u,A)+ \Ld(A)+\int_A|\nabla w|\, dx \right),
        \end{multline}
        where $C=\{9c_3k/c_1, 2c_3k^{1/2},c_4\}$.
    \end{lemma}
    \begin{proof}
        Let us fix $u\in L^0(\Rd;\Rk)$ and $w\in W^{1,1}_{\rm loc}(\Rd;\Rk).$ Without loss of generality, we may assume that $E'(u,A)<+\infty$. Let $(u_n)_n\subset L^0(\Rd;\Rk)$ be a sequence of functions converging to $u$ in $L^0(\Rd;\Rk)$ and such that 
        \begin{equation}\label{eq:limit prop g 1}
        \liminf_{n\to+\infty}E_n(u_n,A)=E'(u,A).
        \end{equation}
        Fix $R>0$ and $m\in\N$. For every $i\in\{1,...,m\}$, we set $v^i_n:=w+\psi_R^i\circ (u_n-w)$ and $v^i:=w+\psi_R\circ(u-w)$. Note that for every $i\in\{1,...,m\}$ the sequence $(v^i_n)_n$ converges to $v^i$ in $L^0(\Rd;\Rk)$, so that by Definition of $E'$ we have
    \begin{equation}\label{eq:limit prop g 2}
        \frac{1}{m}\sum_{i=1}^mE'(v^i,A)\leq \frac{1}{m}\sum_{i=1}^m\liminf_{n\to+\infty}E_n(v^i_n,A)\leq \liminf_{n\to+\infty}\frac{1}{m}\sum_{i=1}^mE_n(v^i_n,A).
    \end{equation}
    By property (g) and by \eqref{eq:limit prop g 1} we get
    \begin{eqnarray}\nonumber 
         \frac{1}{m}\sum_{i=1}^mE_n(v^i_n,A)&\leq&E_n(u_n,A)+
         \frac Cm\Big(\displaystyle E_n(u_n,A)+\Ld(A)+\int_A|\nabla w|\Big)\\&&\nonumber 
    +c_3k^{1/2}\int_{A_{u_n,w}^R}|\nabla w|\,dx+c_4\Ld(A_{u_n,w}^R).
         \end{eqnarray}
         By \eqref{eq:limit prop g 1}, this inequality gives 
         \begin{eqnarray}\nonumber
             \frac{1}{m}\sum_{i=1}^mE'(v^i,A)&\leq& E'(u,A)+
         \frac Cm\Big(\displaystyle E'(u,A)+\Ld(A)+\int_A|\nabla w|\Big)\\
         && \label{eq:prog g2} 
    +\limsup_{n\to+\infty}\Big(c_3k^{1/2}\int_{A_{u_n,w}^R}|\nabla w|\,dx+c_4\Ld(A_{u_n,w}^R)\Big).
         \end{eqnarray}
    Since $u_n\to u$ in $L^0(\Rd;\Rk)$, we have that 
    \begin{equation*}
        \limsup_{n\to+\infty}\chi_{A^R_{u_n,w}(x)}\leq \chi_{A^R_{u,w}}(x)\quad \text{for $\Ld\meno$a.e. } x\in A.
     \end{equation*}
 Thus, from Fatou's Lemma, we deduce that
    \begin{equation}\label{eq:limit prop g 3}
        \limsup_{n\to+\infty}\Big(\int_{A_{u_n,w}^R}|\nabla w|\, dx+\Ld(A^R_{u_n,w})\Big)\leq \int_{A^R_{u,w}}|\nabla w|\, dx+\Ld(A^R_w),
    \end{equation}
which combined with \eqref{eq:prog g2} yields \eqref{eq:claim prop g limit}.
    \end{proof}
 To obtain the weak subadditivity of $E'(u,\cdot)$ we show that $E'(u,A)$ can be approximated by $E'(\psi_R\circ u,A)$ for a suitable choice of $R$.
    \begin{lemma}\label{lemma: convergenza delle troncature nel gammalimsup}
        Let $(E_n)_n$ be a sequence in $\E$, let $E'$ be the functional defined by \eqref{eq:definition Gammaliminf e Gammalimsup}, let $A_1,A_2\in\mathcal{A}_c(\Rd)$,  $u_1\in L^0(A_1;\Rk)$ and $u_2\in L^0(A_2;\Rk)$.  Then there exists an increasing sequence $R_m>0$, with $R_m\to+\infty$ as $m\to+\infty$,  such that
        \begin{equation}\label{eq:monotonicity}
           \lim_{m\to+\infty}E'(\psi_{R_{m}}\circ u_j,A_j)=E'(u_j,A_j)
        \end{equation}
        for $j=1,2.$
    \end{lemma}
    \begin{proof}
     It is immediate to check that $E'$ satisfies property (c2), and by Lemma \ref{lemma: condition g passes to the limit}, we have that $E'$ satisfies property (g) for $B=A_1$ and $B=A_2$. Since  the functional $E'(\cdot,A_j)$ is lower semicontinuous with respect to the topology of $L^0(A_j;\Rk)$ for $j=1,2$ (see \cite[Proposition 6.8]{DalMaso}), an application of Lemma \ref{lemma:monotic truncations} then proves the claim.
    \end{proof}

 We now prove a weak subadditivity inequality for $E'(u,\cdot)$.
\begin{lemma}\label{lemma:subadditivita weak}
      Let $(E_n)_n$ be a sequence of functionals in $\E$ and  let $E', E''$ be defined by \eqref{eq:definition Gammaliminf e Gammalimsup}. Assume that there exists a functional $E\colon L^{0}(\Rd;\Rk)\times \mathcal{A}(\Rd)\to[0,+\infty]$ such that \eqref{eq:definizione limite E 1st} holds. Let $u\in L^0(\Rd;\Rk)$, and $A',A,U\in\mathcal{A}_c(\Rd)$ with $A'\subset\!\subset A$. Then we have
   \begin{equation}\label{eq:subadd su E'}
         E'(u,A'\cup U)\leq E'(u,A)+E'(u,U).
   \end{equation}
\end{lemma}
\begin{proof}
   By \cite[Proposition 15.15]{DalMaso} we may choose $A''\in \mathcal{A}_c(\Rd)$, with $A'\subset\subset A''\subset\subset A$,  such that 
    \begin{equation}\label{eq:def E lemma storto}
        E''(v,A'')=E'(v,A'')=E(v,A'') \quad \text{for every }v\in L^0(\Rd;\Rk).
    \end{equation}
    Lemma \ref{lemma: convergenza delle troncature nel gammalimsup} implies that there exists $R_m\to +\infty$ as $m\to+\infty$ such that
    \begin{eqnarray}
       \label{eq:lemma 1} &\displaystyle \lim_{m\to+\infty}E'(\psi_{R_m}\circ u,A'')=E'(u,A''),\\
        \label{eq:lemma 2}&\displaystyle \lim_{m\to+\infty}E'(\psi_{R_m}\circ u,U)=E'(u,U).
    \end{eqnarray}
    Thanks, to Lemma \ref{lemma:subadditivita troncata}, we 
    obtain that 
    \begin{equation*}
        E'(\psi_{R_m}\circ u,A'\cup U)\leq E''(\psi_{R_m}\circ u,A'')+E'(\psi_{R_m}\circ u,U)\leq E'(\psi_{R_m}\circ u,A)+E'(\psi_{R_m}\circ u,U),
    \end{equation*}
where in the equality we have used \eqref{eq:def E lemma storto}. Since $\psi_{R_m}\circ u\to u$ in $L^0(\Rd;\Rk)$ as $m\to+\infty$, by the lower semicontinuity of $E'$, \eqref{eq:lemma 1}, and \eqref{eq:lemma 2} we obtain
\begin{equation*}
    E'(u,A'\cup U)\leq E'(u,A'')+E'(u,U)\leq E'(u,A)+E'(u,U),
\end{equation*}
concluding the proof.
\end{proof}

We are now ready to prove the subadditivity of $E(u,\cdot)$ on $\mathcal{A}(\Rd)$.
\begin{lemma}\label{lemma:subadditivita}
Let $E_n$ be a sequence in $\E$ for which  \eqref{eq:definizione limite E 1st} holds for some functional $E$. Then for every $u\in L^0(\Rd;\Rk)$ and $A,U\in\mathcal{A}(\Rd)$ we have
\begin{equation*}
    E(u,A\cup U)\leq E(u,A)+E(u,U).
\end{equation*}
\begin{proof}
    The result follows from Lemma \ref{lemma:subadditivita weak} and by standard arguments (see for instance the proof of \cite[Lemma 18.4]{DalMaso}).
\end{proof}
\end{lemma}

Finally, we show that $(E_n(\cdot,A))_n$ $\Gamma$-converges for every $A\in\mathcal{A}_c(\Rd)$.
\begin{lemma}\label{lemma:equaility E',E''}
    Let $(E_n)_n$ be a sequence of functionals in $\E$, let $E', E''$ be the functionals defined by \eqref{eq:definition Gammaliminf e Gammalimsup}. Assume that there exists a functional $E$ such that  \eqref{eq:definizione limite E 1st} holds.  Then $E(u,A)=E'(u,A)=E''(u,A)$ for every $u\in L^0(\Rd;\Rk)$ and $A\in\mathcal{A}_c(\Rd)$.
\end{lemma}
\begin{proof}
 Fix $u\in L^0(\Rd;\Rk)$ and $A\in\mathcal{A}_c(\Rd)$. We first show that $E(u,A)=E'(u,A)$. Since $E(u,A)\leq E'(u,A)$, it is enough to show the converse inequality. Without loss of generality we may suppose that $E(u,A)<+\infty$. It is immediate to check that $E'$ and $E$ satisfies properties (c2) Definition  \ref{def:space of functionals E} on $\mathcal{A}(\Rd)$ and using Remark \ref{re:lowersemicontinuity of Veta} we check that $E'$ and $E$  also satisfy (c1) on $\mathcal{A}(\Rd)$. Hence, $u\in GBV_\star(A;\Rk)$. Fix $\e>0$ and consider a compact set $K\subset A$ such that 
  \begin{equation}\label{eq:epsilon bound alto subad}
      c_3V(u,A\setminus K)+c_4\Ld(A\setminus K)\leq \e.
  \end{equation}
  Consider now $A',A''\in\mathcal{A}_c(A)$ with $K\subset A'\subset \subset A''$ and let $U=A\setminus K$. Note that $A'\cup U=A$. Hence, by Lemma \ref{lemma:subadditivita weak} to obtain 
  \begin{equation}\label{eq:equality with E'}
      E'(u,A)\leq E'(u,A'')+E'(u,A\setminus K)\leq E(u,A)+\e,
  \end{equation}
  where in the last inequality we have used \eqref{eq:definizione limite E 1st}, property (c2) for $E'$ and \eqref{eq:epsilon bound alto subad}. These arguments shows that $E(u,A)=E'(u,A)$ for every $u\in L^0(\Rd;\Rk)$ and $A\in\mathcal{A}_c(\Rd)$.

  We now prove that $E(u,A)=E''(u,A)$. Since $E(u,A)\leq E''(u,A)$ it is enough to show that $E''(u,A)\leq E(u,A)$. It is not restrictive to assume that $E(u,A)<+\infty$, so that $u\in GBV_\star(A;\Rk)$. Given $R>0$, we can  exploit the same argument used to obtain \eqref{eq:equality with E'}, replacing \eqref{eq:subadd su E'} of Lemma \ref{lemma:subadditivita weak} with \eqref{eq:subadd trunc''}, to show that for every $\e>0$
  \begin{equation}
      E''(\psi_R\circ u,A)
     \label{eq:strumentale compattezza}\leq E(\psi_R\circ u,A)+\e=E'(\psi_R\circ u,A)+\e,
  \end{equation}
  where  we have used the equality $E=E'$. By Lemma \ref{lemma: convergenza delle troncature nel gammalimsup} there exists a positive sequence $R_m\to+\infty$ such that 
  \begin{equation*}
      \lim_{m\to+\infty} E'(\psi_{R_m}\circ u,A)=E'(u,A)=E(u,A).
  \end{equation*}
  Since $\psi_{R_m}\circ u\to u$ in $L^0(\Rd;\Rk)$ as $m\to +\infty$, exploiting the lower semicontinuity of $E''$ and this last equality, we deduce from \eqref{eq:strumentale compattezza} that
  \begin{equation*}
      E''(u,A)\leq E(u,A)+\e.
  \end{equation*}
Since $\e>0$ is arbitrary, we get  $E''(u,A)\leq E(u,A)$, concluding the proof.
\end{proof}
\smallskip

\noindent{\it Continuation of the Proof of Theorem \ref{thm:compactness for E}}. 
Thanks to  Lemma \ref{lemma:equaility E',E''},  for every $u\in L^0(\Rd;\Rk)$ and $A\in\mathcal{A}_c(\Rd)$ we have that 
\begin{equation}\label{eq:equality of E}
    E(u,A)=E'(u,A)=E''(u,A).
\end{equation}

We are left with proving that $E\in\E_{\it sc}$. We already noted that $E$ satisfies (a) of Definition \ref{def:space of functionals E}. From Lemma \ref{lemma:subadditivita} and De Giorgi-Letta theorem, we deduce that $E$ satisfies (b) as well.
Properties (c2)-(f) can be derived arguing as in \cite[Theorem 3.16]{DalToa23}. 

By Remark \ref{re:lowersemicontinuity of Veta} and \eqref{eq:equality of E}, we infer that for ever
$u\in L^0(\Rd;\Rk)$ and $A\in\mathcal{A}_c(\Rd)$ we have
\begin{equation*}
   c_1 V(u,A)-c_2\Ld(A)\leq E(u,A).
\end{equation*}
By inner regularity the inequality can be extended to  $ \mathcal{A}(\Rd)$ and, recalling \eqref{eq:definizione limite E 2nd}, to $\mathcal{B}(\Rd)$.
Property (g) for $E$ on $\mathcal{A}_c(\Rd)$ is proved in Lemma \ref{lemma: condition g passes to the limit}, while (h) is trivial. The extension of (g) and (h) to all Borel sets can easily be obtained using \eqref{eq:definizione limite E 2nd}.

Finally, by \cite[Proposition 6.8]{DalMaso} for every $A\in\mathcal{A}_c(\Rd)$ the functionals $E'(\cdot,A)$ and $E''(\cdot,A)$ are lower semicontinuous with respect to the convergence of $L^0(\Rd;\Rk)$. Since $E=E'=E''$, we conclude that $E\in\E_{\rm sc}$.
 \end{proof}
 
\section{Partial Integral representation}\label{sec:partial representation}

In this Section we present and prove a (partial) representation result for functionals in $\E_{\rm sc}$. We postpone the full representation to Section \ref{sec:repr}, where we will work with functionals in $\E_{\rm sc}$ satisfying an additional property.

We introduce a splitting of $E$ that mimics the structure of functionals in $E^{f,g}$.
\begin{definition}\label{def:splitting of the functionals}
Let $E\colon L^0(\Rd;\Rk)\times\mathcal{B}(\Rd)\to[0,+\infty]$ be a functional satisfying (b) and (c2) of Definition \ref{def:space of functionals E}, and let $A\in\mathcal{A}_c(\Rd)$. For every $u\in L^0(\Rd;\Rk)$, we introduce  $E^a(u,\cdot)$, $E^s(u,\cdot)$, $E^c(u,\cdot)$, and $E^j(u,A)$ as the Borel measures on $\mathcal{B}(A)$ defined as:
    \begin{eqnarray*}
        &\displaystyle E^a(u,\cdot)\,\, \text{is the absolutely continuous part of $E(u,\cdot)$ with respect to }\Ld, \\
        &\displaystyle E^s(u,\cdot)\,\, \text{is the singular part  of $E(u,\cdot)$ with respect to }\Ld,\\
        &\displaystyle E^j(u,B)=E^s(u,B\cap \jump{u})\,\,\text{ for every }B\in\mathcal{B}(A),\\ 
        &\displaystyle E^c(u,B)=E^s(u,B\setminus\jump{u}) \,\,\text{ for every }B\in\mathcal{B}(A).
    \end{eqnarray*}
\end{definition}
Note that by arguing as in \cite[Remark 4.2]{DalToa23}, we see that when $u\in GBV_\star(A;\Rk)$, the measures  $E^j(u,\cdot)$ and $E^c(u,\cdot)$ are the absolutely continuous parts with respect to $\hd\mres\jump{u}$ and $|D^cu|$, respectively. Additionally, for every $A\in\mathcal{A}_c(\R)$ it holds
\begin{equation*}
    E(u,\cdot)=E^{a}(u,\cdot)+E^{c}(u,\cdot)+E^j(u,\cdot)\,\,\,\text{ on }\mathcal{B}(A).
\end{equation*}

The aim of this section is to show that for every $A\in\mathcal{A}_c(\Rd)$ and for every $u\in GBV_\star(A;\Rk)$ we may represent the two measures $E^a(u,\cdot)$ and  $E^j(u,\cdot)$ as integrals. 

As in \cite{DalToa23}, the idea is to take advantage of the representation results of \cite{bouchitte1998global} for functionals on $BV(A;\Rk)$ with linear growth. These results cannot be directly applied to $E^a(u,\cdot)$ and $E^j(u,\cdot)$, since functionals in $\E$ are not bounded from below by the total variation measure $|Du|$. This difficulty is treated by restricting our attention first to functions $u\in BV(A;\Rk)\cap L^\infty(A,\Rk)$, and by considering the perturbed functionals $E^\delta(u,A):=E(u,A)+\delta|Du|(A)$. The absolutely continuous part and the jump part of $E^\delta$ can then be represented by means of an integral thanks to \cite{bouchitte1998global}. We then show that it is possible to pass to the limit as $\delta\to 0^+$ and to recover an integral representation of $E^a(u,\cdot)$ and of $E^j(u,\cdot)$ for every $u\in BV(A;\Rk)$.

The following result then allows us to obtain the result for a general $u\in GBV_\star(A;\Rk)$. 

\begin{lemma}\label{lemma:passaggio al limite dalle troncate al non nei vari pezzi}
Let $E\in\E_{\rm sc}$, let $A\in\mathcal{A}_c(\Rd)$, let  $u\in GBV_\star(A;\Rk)$, and  let $(R_m)_m$ be a sequence with $R_m>0$ and $R_m\to+\infty$. Then 
\begin{eqnarray}\label{eq:passage to the limit in full functional}
    &\displaystyle \lim_{m\to +\infty}\Big( \frac{1}{m} \sum_{i=1}^mE(\psi^{i}_{R_m}\circ u,B)\Big)=E(u,B),\\  \label{eq: passage to the limit nella parte AC} 
    &\displaystyle\lim_{m\to+\infty}\Big( \frac{1}{m} \sum_{i=1}^mE^a(\psi^{i}_{R_m}\circ u,B)\Big)=E^a(u,B),\\
    &\displaystyle \label{eq:passage to the limit nella parte singolare}\lim_{m\to+\infty}\Big( \frac{1}{m} \sum_{i=1}^mE^s(\psi^{i}_{R_m}\circ u,B)\Big)=E^s(u,B),\\  \label{eq:passage to the limit nella parte di salto}
    &\displaystyle\lim_{m\to+\infty} \Big( \frac{1}{m} \sum_{i=1}^mE^j(\psi^{i}_{R_m}\circ u,B)\Big)=E^j(u,B),\\\label{eq:passage to the limit nella parte di cantor}
    &\displaystyle \lim_{m\to+\infty}\Big( \frac{1}{m} \sum_{i=1}^mE^c(\psi^{i}_{R_m}\circ u,B)\Big)=E^c(u,B),
\end{eqnarray}
for every $B\in\mathcal{B}(A)$.
\end{lemma}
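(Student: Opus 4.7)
The strategy is to reduce the four decomposed limits to the undecomposed one~\eqref{eq:passage to the limit in full functional} by identifying a single Borel set $M$ that carries the singular part of $E(v,\cdot)$ uniformly across $v=\psi^i_{R_m}\circ u$.

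First I prove~\eqref{eq:passage to the limit in full functional}. For $B=A'\in\mathcal{A}_c(\Rd)$ with $A'\subset A$, property~(g) of Definition~\ref{def:space of functionals E} applied with $w=0$ gives
\[
\frac{1}{m}\sum_{i=1}^m E(\psi^i_{R_m}\circ u,A')\le E(u,A')+c_4\Ld(\{|u|\ge R_m\}\cap A')+\frac{C}{m}\bigl(E(u,A')+\Ld(A')\bigr),
\]
and the right-hand side tends to $E(u,A')$ as $m\to\infty$. For the matching $\liminf$, I pick $i_m^\ast\in\{1,\dots,m\}$ minimising $E(\psi^i_{R_m}\circ u,A')$; since $\sigma^{i_m^\ast-1}R_m\ge R_m\to\infty$, one has $\psi^{i_m^\ast}_{R_m}\circ u\to u$ in $L^0(\Rd;\Rk)$, and lower semicontinuity of $E(\cdot,A')$ yields $E(u,A')\le\liminf_m\tfrac1m\sum_i E(\psi^i_{R_m}\circ u,A')$. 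To upgrade from open $A'$ to Borel $B\subset A$, I use outer regularity of the measure $E(u,\cdot)$ (property~(b)): for any $\varepsilon>0$ I choose open $A'\supset B$ with $E(u,A')\le E(u,B)+\varepsilon$, split $E(v,A')=E(v,B)+E(v,A'\setminus B)$, and apply the outer-regularity $\limsup$ bound on each piece together with the full limit on $A'$; the inequality $E(u,A')=\liminf(a_m+b_m)\le\liminf a_m+\limsup b_m$ then forces $\liminf a_m\ge E(u,B)-\varepsilon$, completing~\eqref{eq:passage to the limit in full functional} on every Borel $B$.

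Second, I identify the Lebesgue decomposition of $E(v,\cdot)$ uniformly over $v\in\{\psi^i_{R_m}\circ u\}_{i,m}\cup\{u\}$. Let $N\subset A\setminus\jump{u}$ be a $\Ld$-null Borel set carrying $|D^cu|$, and set $M:=\jump{u}\cup N$. Proposition~\ref{prop: Properties of GBVsvector}(d) gives $\jump{v}\subset\jump{u}$, while Proposition~\ref{prop:Properties of derivatives of GBVstar}(ii) yields $D^cv=\nabla\psi^i_{R_m}(\widetilde u)\,D^cu$, so $|D^cv|\ll|D^cu|$ is carried by $N$. The two-sided bounds~(c1$'$)--(c2$'$) of Remark~\ref{re:new c1 and c2} imply that $E^c(v,\cdot)$ is equivalent as a measure to $|D^cv|$ off $\jump{v}$, hence carried by $N$, while $E^j(v,\cdot)$ is carried by $\jump{v}\subset\jump{u}$. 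Since $N\cap\jump{u}=\emptyset$ and $\Ld(M)=0$, a direct computation then gives, for every Borel $B\subset A$,
\[
E^j(v,B)=E(v,B\cap\jump{u}),\ E^c(v,B)=E(v,B\cap N),\ E^a(v,B)=E(v,B\setminus M),\ E^s(v,B)=E(v,B\cap M).
\]
Applying the first step with $B$ replaced by each of $B\cap\jump{u}$, $B\cap N$, $B\setminus M$, $B\cap M$ (and noting that on each of these Borel sets $E(u,\cdot)$ coincides with the corresponding $E^\alpha(u,B)$) yields~\eqref{eq:passage to the limit nella parte di salto},~\eqref{eq:passage to the limit nella parte di cantor},~\eqref{eq: passage to the limit nella parte AC}, and~\eqref{eq:passage to the limit nella parte singolare}.

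The main obstacle is the uniform identification in the second paragraph: a priori the Lebesgue decomposition of $E(\psi^i_{R_m}\circ u,\cdot)$ depends on $i$ and $m$, and only the fine structure of smooth $GBV_\star$-truncations---namely $\jump{\psi_R\circ u}\subset\jump{u}$ and $D^c(\psi_R\circ u)=\nabla\psi_R(\widetilde u)D^cu$---confines all singular supports inside the $v$-independent set $\jump{u}\cup N$, thus permitting the reduction to the single convergence~\eqref{eq:passage to the limit in full functional}.
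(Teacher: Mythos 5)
Your proof is correct and follows essentially the same route as the paper's: property (g) with $w=0$ gives the $\limsup$ bound, lower semicontinuity applied to a well-chosen index $i(m)$ gives the matching $\liminf$, and the decomposed limits \eqref{eq: passage to the limit nella parte AC}--\eqref{eq:passage to the limit nella parte di cantor} are obtained by localising the full limit \eqref{eq:passage to the limit in full functional} onto common carrier sets, using $\jump{\psi^i_{R_m}\circ u}\subset\jump{u}$ and $D^c(\psi^i_{R_m}\circ u)=\nabla\psi^i_{R_m}(\widetilde u)\,D^cu$. The only difference is one of detail: the open-to-Borel upgrade and the jump/Cantor splitting, which the paper delegates to citations of \cite{DalToa23}, are carried out by hand in your argument via the single set $M=\jump{u}\cup N$.
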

\begin{proof} For every $m\in\N$ we set 
\[
\mu_m(B):=\frac{1}{m}\sum_{i=1}^{m}E(\psi_{R_m}^i\circ u,B) \quad \text{ for every} B\in\mathcal{B}(A)
\]
and note that this defines a finite Radon measure, being $u\in GBV_\star(A;\Rk)$.

Let $U\in\mathcal{A}(A)$. Thanks to property (g) of Definition \ref{def:space of functionals E},  we have that
\begin{equation*}
    \mu_m(U)\leq E(u,U)+c_4\Ld(U_{u,0}^{R_m}) 
       +\frac{C}{m}\left(E(u,U) +\Ld(U)  \right).
\end{equation*}
 Taking the limsup for $m\to+\infty$ in the previous inequality, we get 
\begin{equation*}
    \limsup_{m\to+\infty}\mu_m(U)\leq E(u,U).
\end{equation*}

For every $m\in\N$, there exists $i(m)\in\{1,...,m\}$ such that 
\begin{equation*}
    E(\psi^{i(m)}_{R_m}\circ u,U)\leq \mu_m(U).
\end{equation*}
Since $\psi^{i(m)}_{R_m}\circ u\to u$ in $L^0(\Rd;\Rk)$ when $m\to+\infty$ and $E(\cdot,U)$ is lower semicontinuous with respect to the topology of $L^0(\Rd;\Rk)$, we have
\begin{equation*}
    E(u,U)\leq \liminf_{m\to+\infty}  E(\psi^{i(m)}_{R_m}\circ u,U)\leq    \limsup_{m\to+\infty}\mu_m(U)\leq E(u,U),
\end{equation*}
so that \eqref{eq:passage to the limit in full functional} holds for every $U\in\mathcal{A}(A)$. From \cite[Lemma 4.4]{DalToa23} we then deduce that \eqref{eq:passage to the limit in full functional} holds for every $B\in\mathcal{B}(A)$.

To show that \eqref{eq: passage to the limit nella parte AC}  holds true, consider a set $N\in\mathcal{B}(A)$, with $\Ld(N)=0$ , such that for every $B\in\mathcal{B}(A)$ we have $E^a(u,B)=E(u,B\setminus N)$ and  $\mu^a_m(B)=\mu_m(B\setminus N)$ for every $m\in\N$, where $\mu^a$ is the absolutely continuous part of $\mu$ with respect to $\Ld$.
By \eqref{eq:passage to the limit in full functional} we have that 
\begin{equation*}
\lim_{m\to+\infty}\mu_m^a(B)=\lim_{m\to+\infty}\mu_m(B\setminus N)=E(u,B\setminus N)=E^a(u,B),
\end{equation*}
which proves $\eqref{eq: passage to the limit nella parte AC}$. Taking the difference of \eqref{eq:passage to the limit in full functional} and of \eqref{eq: passage to the limit nella parte AC} we obtain \eqref{eq:passage to the limit nella parte singolare}.

Finally,
 equalities  \eqref{eq:passage to the limit nella parte di salto} and  \eqref{eq:passage to the limit nella parte di cantor} can be obtained arguing as in \cite[Proposition 4.3]{DalToa23}, replacing Theorem 2.2(d) with our Proposition \ref{prop: Properties of GBVsvector}(d).
\end{proof}

The following result shows that the singular part $E^s$ satisfies a simplified version of property (g) of Definition \ref{def:space of functionals E}. 
\begin{lemma}\label{lemma:estimates on the singular part}
Let $E\in\E$,  $A\in\mathcal{A}_c(\Rd)$, $u\in GBV_\star(A;\Rk)$, $R>0$,  $m\in\N$, $B\in\mathcal{B}(A)$, and  $w\in W^{1,1}_{\rm loc}(\Rd;\Rk)$. Then 
\begin{equation*}
    \frac{1}{m}\sum_{i=1}^mE^s(w+\psi^i_R\circ (u-w),B)\leq \Big(1+\frac{C}{m}\Big)E^s(u,B),
\end{equation*}
where $C=\max\{9c_3k/c_1,2c_3k^{1/2},c_4\}$.
\end{lemma}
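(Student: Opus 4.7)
The strategy is to apply property (g) of Definition \ref{def:space of functionals E} not to $B$ itself but to a Borel subset of $B$ of zero Lebesgue measure that carries the singular parts of all the functionals at play. Concretely, set $v^i := w + \psi^i_R \circ (u-w)$ for $i=1,\dots,m$ and $v^0:=u$. By property (c2) together with $u\in GBV_\star(A;\Rk)$ (and $v^i\in GBV_\star(A;\Rk)$ for every $i$, which follows from Proposition \ref{prop:characterisation GBVstar} and the fact that $GBV_\star(A;\Rk)$ is a vector space) each of the set functions $E(v^j,\cdot)$, $j=0,\dots,m$, is a finite Radon measure on $\mathcal{B}(A)$ and therefore admits a Lebesgue decomposition with respect to $\Ld$. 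I would pick Borel sets $N_j\subset A$, $\Ld(N_j)=0$, on which $E^s(v^j,\cdot)$ is concentrated, and set $N:=\bigcup_{j=0}^m N_j$. Then $\Ld(N)=0$ and, for every $B\in\mathcal{B}(A)$,
\[
E^s(u,B)=E(u,B\cap N),\qquad E^s(v^i,B)=E(v^i,B\cap N)\quad (i=1,\dots,m).
\]

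Next, I would plug the Borel set $B\cap N$ into the inequality given by property (g) of Definition \ref{def:space of functionals E}. The key observation is that every term on the right-hand side of (g) apart from $E(u,B\cap N)$ and $\tfrac{C}{m}E(u,B\cap N)$ is an integral with respect to $\Ld$ on a subset of $N$, namely $c_3k^{1/2}\!\int_{(B\cap N)^R_{u,w}}|\nabla w|\,dx$, $c_4\Ld((B\cap N)^R_{u,w})$, $\tfrac{C}{m}\Ld(B\cap N)$, and $\tfrac{C}{m}\int_{B\cap N}|\nabla w|\,dx$; each of these vanishes since $\Ld(N)=0$ and $|\nabla w|\in L^1_{\rm loc}(\Rd;\Rk)$. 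What remains is exactly
\[
\frac{1}{m}\sum_{i=1}^{m}E(v^i,B\cap N)\le\Big(1+\frac{C}{m}\Big)\,E(u,B\cap N),
\]
which, combined with the identities from the first paragraph, yields the desired estimate.

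I do not foresee any real obstacle: the content of the lemma is precisely that property (g), once restricted to an $\Ld$-null set, kills the volume remainders and produces a clean bound for the purely singular contribution. The only technical points are the existence of a single $\Ld$-negligible carrier for the finite family of singular measures (trivial, by taking a finite union) and the fact that (g) in Definition \ref{def:space of functionals E} is formulated for arbitrary Borel sets, which allows us to insert $B\cap N$ directly.
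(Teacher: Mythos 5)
Your proof is correct and follows essentially the same route as the paper: both construct a common $\Ld$-negligible Borel carrier $N$ for the singular parts, insert $B\cap N$ into property (g) (which is stated for arbitrary Borel sets), and observe that all the Lebesgue-integral remainders vanish on $N$. The only difference is that you spell out the construction of $N$ via the Lebesgue decomposition and a finite union of null carriers, which the paper leaves implicit.
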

\begin{proof}
   Let $N\in\mathcal{B}(\Rd)$ be a Borel set, with $\Ld(N)=0$, such that $E^s(u,B)=E(u,B \cap N)$ and $E^s(w+\psi^i_R\circ (u-w),B)=E(w+\psi^i_R\circ (u-w),B\cap N)$ for every $i\in\{1,...m\}$, and for every $B\in\mathcal{B}(A)$. Since \eqref{eq: g} holds for every Borel set $B\in\mathcal{B}(A)$, we get
   \begin{eqnarray*}
     &&\frac{1}{m}\sum_{i=1}^mE^s(w+\psi^i_R\circ (u-w),B)= \frac{1}{m}\sum_{i=1}^mE(w+\psi^i_R\circ (u-w),B\cap N)\\
      &&\quad \leq E(u,B\cap N) + \int_{B^R_{u,w}\cap N}(c_3k^{1/2}|\nabla w|+c_4)\, dx+\frac{C}{m}\left( E(u,B\cap N)+ \int_{B\cap N}(|\nabla w|+1)\, dx \right)\\
        &&\quad =E(u,B\cap N)+\frac{C}{m}E(u,B\cap N)=\Big(1+\frac{C}{m}\Big)E^s(u,B),
   \end{eqnarray*}
   which proves the claim.
   \end{proof}

   We now introduce the  perturbed functionals $E_\delta$ which will play a  fundamental role in the proof of our representation result for functionals in $\E$.
\begin{definition}\label{def:Eepsilon}
    Let  $E\in\E$ and let $A\in\mathcal{A}_c(\Rd)$ be given.  For every $\delta>0$ the functional $E_{\delta}\colon BV(A;\Rk)\times \mathcal{B}(A)\to[0,+\infty)$ is  defined for every $u\in BV(A;\Rk)$ and $B\in\mathcal{B}(A)$ as 
    \begin{equation}\label{eq;def Eepsilon}
        E_{\delta}(u,B):=E(u,B)+\delta|Du|(B).
    \end{equation}
    Given $u\in BV(A;\Rk)$, the measures $E_{\delta}^a(u,\cdot)$, $E^s_{\delta}(u,\cdot)$, $E^j_{\delta}(u,\cdot)$, and $E_{\delta}^c(u,\cdot)$ are defined as in Definition \ref{def:splitting of the functionals} with $E$ replaced by $E_{\delta}$.
\end{definition}
\begin{remark}
    Let $E\in \E$, $A\in\mathcal{A}_c(\Rd)$, and $\delta>0$. Thanks to (c1$'$) and (c2$'$) of Remark \ref{re:new c1 and c2} we have that for every $u\in BV(A;\Rk)$ it holds
    \begin{equation*}
        \delta|Du|(A)-c_2\Ld(A)\leq E_\delta(u,A)\leq (c_3+\delta)|Du|(A)+c_4\Ld(A).
    \end{equation*}
\end{remark}

\begin{definition}
    Let $A\in\mathcal{A}_c(\Rd)$ and let $E\colon BV(A;\Rk)\times\mathcal{B}(A)\to[0,+\infty]$. For every  $U\in\mathcal{A}(A)$ with Lipschitz boundary and $w\in BV(U;\Rk)$, we set
    \begin{equation}\label{eq:problema di minimo ausiliario}
        m^E(w,U):=\inf\{E(u,U):\,\,u\in BV(U;\Rk),\,\, \textup{tr}_Uu=\textup{tr}_Uw\},
    \end{equation}
    where $\text{tr}_U\colon BV(U;\Rk)\to L^1_{\hd}(\partial U;\Rk)$ is the trace operator.
    Given $t>0$, we set
    \begin{equation}
   \label{eq:problema di minimo ausiliario troncato}
      m^E_t(w,U):=\inf\{E(u,U):\,\,u\in BV(U;\Rk),\,\, \textup{tr}_Uu=\textup{tr}_Uw,\,\,\|u-w\|_{L^\infty(U;\Rk)}\leq t\}.
    \end{equation}
\end{definition}

We now introduce some functions which will play a crucial role in the integral representation of the bulk part $E^a$ and the surface part $E^j$ of a functional $E\in \E_{\rm sc}$. We recall that the cubes $Q(x,\rho)$ and $Q_\nu(x,\rho)$ are defined in (f) of Section 2, while  the functions $\ell_\xi$ and $u_{x,\zeta,\nu}$ are defined in (h) of Section 2.
\begin{definition}
Let $E\in\E$ and $\delta>0$. We define the functions $f,f_\delta \colon\Rd\times\Rdk\to [0,+\infty)$ and  $g,g_\delta\colon \Rd\times\Rk\times\mathbb{S}^{d-1}\to [0,+\infty)$ as 
\begin{eqnarray}
\hspace{-1.3 cm} &&\displaystyle \label{eq:definition of small f}f(x,\xi):=\limsup_{\rho\to0^+}\frac{m^{E}(\ell_\xi,Q(x,\rho))}{\rho^d} \quad \text{for every }x\in\Rd \text{ and }\xi\in\Rdk,\\
    \hspace{-1.3 cm}&&\displaystyle \label{eq:definition of small fepsilon} f_\delta(x,\xi):=\limsup_{\rho\to0^+}\frac{m^{E_\delta}(\ell_\xi,Q(x,\rho))}{\rho^d} \quad \text{for every }x\in\Rd\text{ and }\xi\in\Rdk,\\
   \hspace{-1.3 cm} &&\displaystyle \label{eq:definition of small g} g(x,\zeta,\nu):=\limsup_{\rho\to0^+}\frac{m^{E}(u_{x,\zeta,\nu},Q_\nu(x,\rho))}{\rho^{d-1}}\,\,\,\,\text{ for every }x\in\Rd, \,\zeta\in\Rk, \text{ and }\nu\in\mathbb{S}^{d-1},\\
  \hspace{-1.3 cm}   &&\displaystyle  \label{eq:definition of small gepsilon} g_\delta(x,\zeta,\nu):=\limsup_{\rho\to0^+}\frac{m^{E_\delta}(u_{x,\zeta,\nu},Q_\nu(x,\rho))}{\rho^{d-1}}\,\,\,\,\text{ for every }x\in\Rd, \,\zeta\in\Rk, \text{ and }\in\mathbb{S}^{d-1}.
\end{eqnarray}
\end{definition}

\begin{remark}\label{re:monotonicity of fepsilon, gepsilon}
    It is immediate to check that the functions $\delta\mapsto f_\delta(x,\xi)$ and $\delta\mapsto g_\delta(x,\zeta,\nu)$ are non-decreasing and that
     \begin{equation*}
         f(x,\xi)\leq f_\delta(x,\xi) \,\,\,\text{ and }\,\,\,g(x,\zeta,\nu)\leq g_\delta(x,\zeta,\nu)
     \end{equation*}
     for every $\delta>0$, $x\in\Rd$, $\xi\in\Rkd$, $\zeta\in\Rk$, and $\nu\in\mathbb{S}^{d-1}$.
\end{remark}

When $E_{\rm sc}$, we shall see that the functions $f_\delta$ and $g_\delta$ will be used in the integral representation of $E^a_\delta$ and $E_\delta^j$ thanks to the results of \cite{bouchitte1998global}. This will lead to an integral representation of $E^a$ and $E^j$ by means of the functions (see the proof of Theorem \ref{thm:rappresentazione})
\begin{eqnarray}
    &\displaystyle \label{eq:def fhat} \hat{f}(x,\xi):=\inf_{\delta>0}f_\delta(x,\xi)\quad \text{for every $x\in\Rd,$ $\xi\in\Rkd$,} \\
    &\displaystyle \nonumber \hat{g}(x,\zeta,\nu):=\inf_{\delta>0}g_\delta(x,\zeta,\nu)\quad \text{for every $x\in\Rd,$ $\zeta\in \Rk$, $\nu\in \mathbb{S}^{d-1}$.}
\end{eqnarray}
For the applications to homogenisation it is important to prove that $\hat{f}=f$ and $\hat{g}=g$, so that by \eqref{eq:definition of small f} and \eqref{eq:definition of small g} the integrands used in the bulk and surface part can be obtained by solving some auxiliary minum problems on small cubes. 

The proof of the equality $\hat{f}=f$ is not direct and requires a lot of technical tools, one of which being the following truncation lemma.  For future use we prove the result also for the rectangles $Q^\lambda_\nu(x,\rho)$, defined in (f) of Section 2. Given $\xi\in\Rkd$ and  $m\in\N$, we set
\begin{equation}
\label{eq:def costante cxi}  
 c_{\xi,m}:=(\sigma^m+1)d^{1/2}\Big(|\xi|+\frac{c_2+c_4+1}{c_1}\Big)
\end{equation}
\begin{lemma}\label{lemma:teo troncature}
    Let $E\in\E$. Assume that there exists a function $\hat{f}\in\mathcal{F}$ such that
    \begin{equation}\label{eq: Ea assoltamente continuo fittizio}
        E^a(u,A)=\int_A\hat{f}(x,\nabla u)\, dx,
    \end{equation}
    for every $A\in\mathcal{A}_c(\Rd)$ and $u\in BV(A;\Rk)$.
    Then there exists a set $N\in\mathcal{B}(\Rd)$, with $\Ld(N)=0$, satisfying the following property: for every $x\in\Rd\setminus N$, $m\in \N$,  $\xi\in\Rkd$,  $\nu\in\mathbb{S}^{d-1}$, and $\lambda\geq 1$ there exists $\rho^{\lambda,\nu}_{m,\xi}(x)>0$  such that for every $\rho\in (0,\rho^{\lambda,\nu}_{m,\xi}(x))$ and  $u\in BV(Q^{\lambda}_\nu(x,\rho);\Rk)$, with $\text{tr}_{Q^\lambda_\nu(x,\rho)}u=\text{tr}_{Q^\lambda_\nu(x,\rho)}\ell_\xi$,  
    there exists $v\in BV(Q^\lambda_\nu(x,\rho);\Rk)$, with $\text{tr}_{Q^\lambda_\nu(x,\rho)}v=\text{tr}_{Q^\lambda_\nu(x,\rho)}\ell_\xi$ and $\|v-\ell_\xi\|_{L^\infty(Q_\nu^\lambda(x,\rho);\Rk)}\leq c_{\xi,m}\lambda\rho$, satisfying the inequality
    \begin{equation}
    \label{eq:necessaria per troncatura ld}
       \displaystyle  E(v,Q^\lambda_\nu(x,\rho))\leq \Big(1+\frac{C}{m}\Big) E(u,Q^\lambda_\nu(x,\rho))+\frac{C}{m}\lambda^{d-1}\rho^d,
    \end{equation}
    where $C$ is a constant depending only on the structural constants $c_1,c_2,c_3,c_4, c_5$, and $k$. Moreover, if $f$ is continuous on $\Rd\times\Rkd$, then $N=\varnothing$. Finally, if $\hat{f}$ is independent of $x$, then $\rho^{\lambda,\nu}_{m,\xi}(x)=+\infty$.
\end{lemma}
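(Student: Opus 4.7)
The plan is to construct $v$ as a smooth truncation of the form $v = \ell_\xi + \psi^{i^*}_R \circ (u - \ell_\xi)$ for a well-chosen $i^* \in \{1,\ldots,m\}$, selected via the averaging principle in property (g) of Definition \ref{def:space of functionals E}. Setting $R := c_{\xi,m}\lambda\rho/\sigma^m$ (a choice dictated by the required $L^\infty$ bound), the candidates $v_i := \ell_\xi + \psi^i_R \circ (u - \ell_\xi)$ all satisfy $\|v_i - \ell_\xi\|_{L^\infty} \leq \sigma^i R \leq c_{\xi,m}\lambda\rho$ by \eqref{eq:properties psiRi}, and $\textup{tr}\,v_i = \textup{tr}\,\ell_\xi$ on $\partial Q^\lambda_\nu(x,\rho)$ because $\psi^i_R(0)=0$ and $u-\ell_\xi$ has zero trace there.

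The exceptional set $N$ is taken as the complement of the joint Lebesgue set of $y \mapsto \hat f(y, \xi^j)$ over a countable dense family $\{\xi^j\} \subset \Rkd$; by the Lipschitz estimate (f4), this yields Lebesgue differentiation of $\hat f(\cdot, \xi)$ for every $\xi$ at every $x \notin N$. Continuity of $\hat f$ gives $N = \varnothing$ directly, and independence of $x$ makes the differentiation trivial, allowing $\rho^{\lambda,\nu}_{m,\xi}(x)=+\infty$.

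Applying property (g) to the $v_i$ with $B = Q^\lambda_\nu(x,\rho)$ and $w = \ell_\xi$ (so $|\nabla w| \equiv |\xi|$) yields an averaged estimate of the form
\begin{equation*}
\frac{1}{m}\sum_{i=1}^m E(v_i, Q) \leq E(u,Q) + \bigl(c_3 k^{1/2}|\xi| + c_4\bigr)\Ld(Q^R_{u,\ell_\xi}) + \frac{C_0}{m}\bigl(E(u,Q) + (1+|\xi|)\lambda^{d-1}\rho^d\bigr),
\end{equation*}
where $Q^R_{u,\ell_\xi} := \{y \in Q : |u(y) - \ell_\xi(y)| \geq R\}$. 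A pigeonhole argument produces $i^*$ realising this average, and we set $v := v_{i^*}$. The whole task thus reduces to absorbing the ``bad term'' $(c_3 k^{1/2}|\xi| + c_4)\Ld(Q^R_{u,\ell_\xi})$ into the target upper bound $(1+C/m)E(u,Q) + (C/m)\lambda^{d-1}\rho^d$.

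The main obstacle is precisely this absorption. The approach combines a Poincar\'e-type inequality for BV functions with zero trace on the rectangle $Q^\lambda_\nu(x,\rho)$, which gives $\int_Q |u - \ell_\xi|\,dx \lesssim \lambda\rho \bigl(|Du|(Q) + |\xi|\lambda^{d-1}\rho^d\bigr)$, with Chebyshev's inequality at the level $R \gtrsim \lambda\rho(|\xi|+1)$ to control $\Ld(Q^R_{u,\ell_\xi})$ by $|Du|(Q)$. The resulting expression must be reconciled with $E(u,Q)$, which by (c1$'$) controls the absolutely continuous and Cantor parts of $|Du|$ but only the \emph{truncated} jump part $\int_{\jump{u}\cap Q}|[u]|\wedge 1\,d\hd$ --- the gap inherent to the cohesive setting. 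I expect this to be handled by iterating the truncation at a secondary scale and exploiting the integral representation $E^a = \int \hat f(\cdot,\nabla u)$ via the Lebesgue-point property of $\hat f$ at $x \notin N$ (valid for $\rho < \rho^{\lambda,\nu}_{m,\xi}(x)$) to compare $E(u,Q)$ with the affine competitor $E(\ell_\xi,Q)$, thereby extracting the required factor $1/m$ from the scaling $R \propto \sigma^{-m}$ and producing the constant $C$ depending only on the structural data $c_1,\ldots,c_5,d,k$.
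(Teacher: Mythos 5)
Your construction of the candidates and the trace/$L^\infty$ bookkeeping are fine, but the core of the lemma is exactly the step you leave open, and the route you sketch for it does not work. If you truncate towards $w=\ell_\xi$ and invoke property (g), the error term $\bigl(c_3k^{1/2}|\xi|+c_4\bigr)\Ld(Q^R_{u,\ell_\xi})$ cannot be absorbed into $\tfrac Cm E(u,Q)+\tfrac Cm\lambda^{d-1}\rho^d$: since $R\sim(|\xi|+\mathrm{const})\lambda\rho$, a competitor such as $u=\ell_\xi+c\chi_B$ with $B\subset\subset Q^\lambda_\nu(x,\rho)$, $\Ld(B)\sim\Ld(Q^\lambda_\nu(x,\rho))$ and $|c|\sim 2R$ has $\Ld(Q^R_{u,\ell_\xi})\sim\lambda^{d-1}\rho^d$ while $E(u,Q^\lambda_\nu(x,\rho))$ is itself only of order $\lambda^{d-1}\rho^d$ (the jump amplitude is $O(\rho)$, so the cohesive surface term is $O(\rho^d)$). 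Both sides scale like $\rho^d$, so shrinking $\rho$ buys nothing and the required factor $1/m$ never appears; moreover your Poincar\'e--Chebyshev bound controls $\Ld(\{|u-\ell_\xi|\geq R\})$ only through the full variation $|Du|$, which, as you yourself note, is not controlled by $E$ in the cohesive setting. The closing phrase about ``iterating the truncation at a secondary scale'' is not an argument, and no iteration of the same estimate can create the missing $1/m$.

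The idea you are missing is that the truncation must be performed towards a \emph{different} affine function: $w(y):=\omega_m(x)(y-x)+\ell_\xi(x)$, where $\omega_m(x)$ is a Borel-measurable $\tfrac1m$-approximate minimiser of $\hat f(x,\cdot)$ (bounded by $(c_2+c_4+1)/c_1$ via (f2$'$)--(f3)), and $N$ is the exceptional set for Lebesgue differentiation of $\omega_m$ (transferred to $\hat f$ through (f4)). With this $w$, the representation $E^a=\int\hat f(\cdot,\nabla\cdot)\,dx$ is used directly on the three regions $Q^i_{\rm in}$, $Q^i$, $Q^i_{\rm out}$: on the outer region the replacement costs $\hat f(y,\omega_m(x))\leq\hat f(y,\nabla u(y))+O(1/m)$ after averaging over the cube, i.e.\ no more than $u$ already pays there, so its measure never needs to be estimated at all; the transition regions $Q^i$ are essentially disjoint in $i$ and contribute the $\tfrac Cm E$ term after averaging over $i\in\{1,\dots,m\}$; and the singular part is handled by the clean bound $\tfrac1m\sum_i E^s(w+\psi^i_R\circ(u-w),\cdot)\leq(1+\tfrac Cm)E^s(u,\cdot)$ of Lemma \ref{lemma:estimates on the singular part}, since the volume-type error terms in (g) vanish on $\Ld$-null sets. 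This choice of $w$ is also what makes the refinements in the statement automatic: continuity of $\hat f$ lets one take a true minimiser and $N=\varnothing$, and $x$-independence gives $\rho^{\lambda,\nu}_{m,\xi}(x)=+\infty$. Your exceptional set (Lebesgue points of $\hat f(\cdot,\xi^j)$ for a dense family of $\xi^j$) is in the right spirit but is aimed at comparing $E(u,Q)$ with $E(\ell_\xi,Q)$, which is not the comparison that closes the proof.
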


\begin{proof}
    Let us fix $m\in\N$. Arguing as in \cite[Lemma 4.16]{DalToa23}, one can construct a Borel function $\omega_m:\Rd\to\Rdk$ such that 
\begin{equation}\label{eq: def omega m}
    f(x,\omega_m(x))\leq f(x,\xi)+\frac{1}{m}
\end{equation}
for every $x\in\Rd$ and every  $\xi\in\Rkd$.
Note that (f2$'$), (f3), and \eqref{eq: def omega m} imply that for every  $x\in\Rd$ one has 
\begin{equation}
    \label{eq:bounds on sigma eta}
    |\omega_m(x)|\leq \frac{c_2+c_4+1}{c_1}.
\end{equation}
Since $\omega_m\in L^\infty(\Rd;\Rdk)$, by the Lebesgue Differentiation Theorem there exists $N_m\in\mathcal{B}(\Rd)$, with $\Ld(N_m)=0$, such that for every $x\in\Rd\setminus N_m$, $\nu\in\mathbb{S}^{d-1}$, and $\lambda\geq 1$ there exists $\rho^{\lambda,\nu}_{m,\xi}(x)>0$ such that for every $\rho\in(0,\rho^{\lambda,\nu}_{m,\xi}(x))$ we have
\begin{equation}\nonumber 
   \int_{Q^\lambda_\nu(x,\rho)}|\omega_m(x)-\omega_m(y)|\,dy\leq \frac{1}{m}\rho^d\lambda^{d-1}.
\end{equation}
We set $N:=\bigcup_{m\in\N}N_m$ and note that $\Ld(N)=0$.
Using (f5) we get that for every $x\in\Rd\setminus N$, $m\in \N$,  $\xi\in\Rkd$,  $\nu\in\mathbb{S}^{d-1}$ and $\rho\in(0,\rho^{\lambda,\nu}_{m,\xi}(x))$ we  have
\begin{equation}\label{eq:lebesgue differentiation}
   \int_{Q^\lambda_\nu(x,\rho)}|\hat{f}(y,\omega_m(x))-\hat{f}(y,\omega_m(y))|\,dy\leq \frac{c_5}{m}\rho^d\lambda^{d-1}.
\end{equation}

  For every $y\in Q^\lambda_\nu(x,\rho)$ and $i\in\{1,...,m\}$  we set 
  \begin{eqnarray}
      &\displaystyle \nonumber w(y):=\omega_m(x)(y-x)+\ell_\xi(x), \\
      &\displaystyle \label{eq:def oggetti lemma tronc}R:=d^{1/2}\Big(|\xi|+\frac{c_2+c_4+1}{c_1}\Big)\lambda\rho,\\
      &\displaystyle \nonumber v^i(y):=w(y)+\psi_R^i (u(y)-w(y)).
  \end{eqnarray}
  Note that with this choice of $R$ we have $|\text{tr}_{Q^\lambda_\nu(x,\rho)}(u-w)|\leq R$  $\hd$-a.e. on $ \partial Q^{\lambda}_\nu(x,\rho)$. 
  Recalling \eqref{eq:properties psiRi}, we obtain  $\text{tr}_{Q_\nu^\lambda(x,\rho)}v^i=\text{tr}_{Q_\nu^\lambda(x,\rho)}\ell_\xi$ and  
  \begin{equation*}
    \|v^i-\ell_\xi\|_{L^\infty(Q^\lambda_\nu(x,\rho);\Rk)}\leq (\sigma^m+1) R=c_{\xi,m}\lambda\rho.
  \end{equation*}

  We claim that there exists $i\in\{1,...,m\}$ such that \eqref{eq:necessaria per troncatura ld} holds with $v=v^i.$ To this aim, for every $i\in\{1,...,m\}$ we consider the following partition of $Q^\lambda_\nu(x,\rho)$:
\begin{align*}
   & Q^i_{\rm in}:=\{y\in Q^\lambda_\nu(x,\rho)\colon |u(y)-w(y)|\leq \sigma^{i-1}R\},\\
   & Q^i:=\{y\in Q^\lambda_\nu(x,\rho)\colon \sigma^{i-1}R< |u(y)-w(y)|<\sigma^iR\},\\
   & Q^i_{\rm out}:=\{y\in Q^\lambda_\nu(x,\rho)\colon |u(y)-w(y)|\geq \sigma^iR \}.
\end{align*}
Using (b) of Definition \ref{def:space of functionals E} we may  write
\begin{equation*}
E(v^i,Q^\lambda_\nu(x,\rho))=E(v^i,Q^i_{\rm in})+E(v^i,Q^i)+E(v^i,Q^i_{\rm out}).
\end{equation*}
Recalling \eqref{eq:properties psiRi} and \eqref{eq:def oggetti lemma tronc}, from  \eqref{eq: Ea assoltamente continuo fittizio} we obtain
\begin{equation*}
E^a(v^i,Q^\lambda_\nu(x,\rho))=\int_{Q^i_{\rm in}}\hat{f}(y,\nabla u(y))\,dy
 +\int_{Q^i}\hat{f}(y,\nabla v^i(y))\,dy+\int_{Q^i_{\rm out}}\hat{f}(y,\omega_m(x))\,dy.
\end{equation*}
Taking advantage of \eqref{eq:bounds on sigma eta}-\eqref{eq:def oggetti lemma tronc}, by (f2$'$) and (f3) we get 
\begin{eqnarray*}
    \int_{Q^i}\hat{f}(y,\nabla v^i(y))\, dy&\leq&c_3k^{1/2}\int_{Q^i}|\omega_m(x)+\nabla \psi^i_R(u(y)-w(y))(\nabla u(y)-\omega_m(x))|\, dy+c_4\Ld(Q^i)\\
   &\leq& c_3k^{1/2}\int_{Q^i}|\nabla u(y)|\, dy+(2c_3k^{1/2}|\omega_m(x)|+c_4)\Ld(Q^i)
   \\&\leq &
   \frac{c_3k^{1/2}}{c_1}\int_{Q^i}\hat{f}(y,\nabla u(y))\,dy+\Big(2c_3k^{1/2}\frac{c_2+c_4+1}{c_1}+c_2+c_4\Big)\Ld(Q^i).
\end{eqnarray*}
By \eqref{eq: def omega m} and \eqref{eq:lebesgue differentiation} we have
\begin{equation*}
    \int_{Q^i_{\rm out}}\hat{f}(y,\omega_m(x))\,dy\leq \int_{Q^i_{\rm out}}\hat{f}(y,\omega(y))\,dy +\frac{c_5}{m}\lambda^{d-1}\rho^d\leq \int_{Q^i_{\rm out}}\hat{f}(y,\nabla u(y))\,dy+\frac{c_5+1}{m}\lambda^{d-1}\rho^d.
\end{equation*}
 From the previous inequalities, we get 
 \begin{eqnarray*}
E^a(v^i,Q^\lambda_\nu(x,\rho))\leq \int_{Q^i_{\rm in}}\hat{f}(y,\nabla u(y))\,dy+ \frac{c_3k^{1/2}}{c_1}\int_{Q^i}\hat{f}(y,\nabla u(y))\,dy\\+C_1\Ld(Q^i) +\int_{Q^i_{\rm out}}\hat{f}(y,\nabla u(y))\,dy+\frac{c_5+1}{m}\lambda^{d-1}\rho^d,
 \end{eqnarray*}
for some constant $C_1$ depending only on  $c_1,c_2,c_3,c_4,c_5$, and $k$.
\noindent Summing over $i\in\{1,...,m\}$ and dividing by $m$, we obtain 
\begin{equation}\label{eq:lemmazz}
    \frac{1}{m}\sum_{i=1}^mE^a(v^i,Q^\lambda_\nu(x,\rho))\leq\Big(1+\frac{C}{m}\Big) E^a(u,Q^\lambda_\nu(x,\rho))+\frac{C}{m}
    \lambda^{d-1}\rho^d,
\end{equation}
for some constant $C$ depending only on  $c_1,c_2,c_3,c_4,c_5$, and  $k$, which we may assume to be larger than the constant of Lemma \ref{lemma:estimates on the singular part}. Finally, using that lemma, we get 
\begin{equation}\label{eq:lemmazzz}
    \frac{1}{m}\sum_{i=1}^mE^s(v^i,Q^\lambda_\nu(x,\rho))\leq \Big(1+\frac{C}{m}\Big)E^s(u,Q^\lambda_\nu(x,\rho)),
\end{equation}
so that combining \eqref{eq:lemmazz} with \eqref{eq:lemmazzz} we obtain that there exists $i\in\{1,...,m\}$ such that \eqref{eq:necessaria per troncatura ld} holds with $v=v^i$. 

Suppose now that $\hat{f}$ is continuous in $\Rd\times\Rdk$. Thanks to (f2$'$) and to (f4), we have that for every $x\in\Rd$ there exists  $\omega(x)$ which minimizes the continuous function $\xi\mapsto\hat{f}(x,\xi)$. 

Since $\hat{f}\in\mathcal{F}$, we also have that 
\begin{equation*}
   |\omega(x)|\leq \frac{c_2+c_4}{c_1}.
\end{equation*}
This inequality, together with the uniform continuity of $\hat{f}$ on compact sets of $\Rd\times\Rdk$, implies that there exists a $\rho^{\lambda,\nu}_{m}(x)>0$ such that for every $\rho\in(0,\rho^{\lambda,\nu}_{m}(x))$ one has

\begin{equation}\label{eq:cambio in x}
    |\hat{f}(y,\xi)-\hat{f}(x,\xi)|\leq \frac{1}{m}
\end{equation}

\noindent for every $y\in Q^\lambda_\nu(x,\rho)$ and $|\xi|\leq  (c_2+c_4)/c_1$. In particular, we have
\begin{equation}\label{eq:scambio omega}
    |\hat{f}(y,\omega(x))-\hat{f}(x,\omega(x))|\leq \frac{1}{m}.
\end{equation}Exploiting these inequalities and the minimality of $\omega(x)$, we obtain that
\begin{equation}\label{eq:replacement of lebesgue}
    \hat{f}(y,\omega(x))\leq \hat{f}(y,\xi)+\frac{2}{m}
\end{equation}
for every $y\in Q^\lambda_\nu(x,\rho)$ and $\xi\in\Rdk$.
Indeed, if $|\xi|\leq (c_2+c_4)/c_1$, we apply  \eqref{eq:cambio in x} twice to get
$$ \hat{f}(y,\omega(x))\leq \hat{f}(x,\omega(x))+\frac{1}{m}\leq \hat{f}(x,\xi)+\frac{1}{m}\leq \hat{f}(y,\xi)+\frac{2}{m}.$$ If  $|\xi|> (c_4+c_2)/c_1$, recalling the minimality of $\omega(x)$, by (f3) and (f2$'$)  we have $\hat{f}(x,\omega(x))\leq\hat{f}(x,0)\leq c_4\leq \hat{f}(y,\xi) $, which, togheter with \eqref{eq:scambio omega}, implies \eqref{eq:replacement of lebesgue}. As in the previous part of the proof, for every $y\in\Rd$ we denote $w(y):=\omega(x)(y-x)+\ell_\xi(x)$.  We can replace \eqref{eq:lebesgue differentiation} by \eqref{eq:replacement of lebesgue} in the  argument that we used in the case where $\hat{f}$ was not assumed to be continuous on $\Rd\times\Rdk$ and this leads to the existence of $i\in\{1,...,m\}$ such that 
\begin{equation*}
    E(w+\psi^i_R\circ(u-w),Q^\lambda_\nu(x,\rho))\leq \Big(1+\frac{C}{m}\Big) E(u,Q^\lambda_\nu(x,\rho))+\frac{C}{m}
    \lambda^{d-1}\rho^d.
\end{equation*}

In the case where  $\hat{f}$ does not depend on $x$ the same is true for $\omega$ and  \eqref{eq:replacement of lebesgue} holds for every $\rho>0$, since it is a direct consequence of the minimality.
\end{proof}

The next result follows immediately from Lemma \ref{lemma:teo troncature}. Given $\xi\in\Rkd$, we set 
\begin{equation}\label{def: costante resti}
    C_\xi:=2c_3k^{1/2}C|\xi|+2C(c_4+1),
\end{equation}
where $C$ is the constant of Lemma \ref{lemma:teo troncature}.
 We also recall that the constant $c_{\xi,m}$ is given by \eqref{eq:def costante cxi}.

\begin{corollary}\label{cor:troncature}   
Let $E\in\E$ and assume that there exists a function $\hat{f}\in\mathcal{F}$ satisfying \eqref{eq: Ea assoltamente continuo fittizio}. Then there exists a  set $N\in\mathcal{B}(\Rd)$, with $\Ld(N)=0$, such that for every $x\in\Rd\setminus N$, $m\in\N$, $\xi\in\Rdk$, $\nu\in\mathbb{S}^{d-1}$, and $\lambda\geq 1$ there exists $\rho^{\nu,\lambda}_{m,\xi
 }(x)>0$ such that for every $\rho\in(0,\rho^{\nu,\lambda}_{m,\xi}(x))$ there exists $u\in BV(Q^\lambda_\nu(x,\rho);\Rk)\cap L^\infty(Q^\lambda_\nu(x,\rho);\Rk)$, with $\text{tr}_{Q_\nu^\lambda(x,\rho)}u={\text{tr}_{Q_\nu^\lambda(x,\rho)}}\ell_\xi$ and  $\|u-\ell_\xi\|_{L^\infty(Q^\lambda_\nu(x,\rho);\Rk)}\leq  c_{\xi,m}\lambda\rho$, such that
\begin{equation}\label{eq:stimare con minimi in norma linfinito limitata}
    E(u,Q^\lambda_\nu(x,\rho))\leq m^E(\ell_\xi,Q^\lambda_\nu(x,\rho))+\frac{C_\xi\lambda^{d-1}\rho^d}{m}.
\end{equation}
In particular for $t=c_{\xi,m}\lambda$, we have
\begin{equation}\label{eq:corollary claim}
m_{t\rho}^E(\ell_{\xi},Q^{\lambda}_\nu(x,\rho))\leq m^E(\ell_\xi,Q^\lambda_\nu(x,\rho))+\frac{C_\xi \lambda^{d-1}\rho^d}{m}.
\end{equation}

\noindent
Moreover, if $\hat{f}$ is continuous in $\Rd\times\Rdk$ then $N=\varnothing$. Finally, if $\hat{f}$ is independent on $x$ then $\rho^{\eta,\lambda}_{\xi,\nu}(x)=+\infty$.
\end{corollary}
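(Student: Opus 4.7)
The plan is to deduce Corollary \ref{cor:troncature} as a routine consequence of Lemma \ref{lemma:teo troncature}, applied to a near-minimiser for the problem defining $m^{E}(\ell_\xi, Q^\lambda_\nu(x,\rho))$. I will take $N$ and the thresholds $\rho^{\nu,\lambda}_{m,\xi}(x)$ to be exactly those supplied by the lemma, so that the two \textit{moreover} clauses (namely $N=\varnothing$ when $\hat f$ is continuous and $\rho^{\nu,\lambda}_{m,\xi}(x)=+\infty$ when $\hat f$ is independent of $x$) transfer verbatim.

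Fix $x\in\Rd\setminus N$, $\lambda\geq 1$, $m\in\N$, $\xi\in\Rkd$, $\nu\in\Sn^{d-1}$, and $\rho\in(0,\rho^{\nu,\lambda}_{m,\xi}(x))$. First I would pick a near-minimiser $\tilde u\in BV(Q^\lambda_\nu(x,\rho);\Rk)$ with $\textup{tr}\,\tilde u=\textup{tr}\,\ell_\xi$ satisfying
\[
E(\tilde u, Q^\lambda_\nu(x,\rho)) \leq m^{E}(\ell_\xi, Q^\lambda_\nu(x,\rho)) + \frac{\lambda^{d-1}\rho^d}{m},
\]
and then feed $\tilde u$ into Lemma \ref{lemma:teo troncature}. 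The lemma returns a competitor $u$ with the same trace, obeying the $L^\infty$ bound $\|u-\ell_\xi\|_{L^\infty}\leq c_{\xi,m}\lambda\rho$, and
\[
E(u, Q^\lambda_\nu(x,\rho)) \leq \Bigl(1+\tfrac{C}{m}\Bigr) E(\tilde u, Q^\lambda_\nu(x,\rho)) + \tfrac{C}{m}\lambda^{d-1}\rho^d.
\]

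To promote this to \eqref{eq:stimare con minimi in norma linfinito limitata}, the multiplicative factor $(1+C/m)$ must be absorbed into an additive $O(1/m)$ remainder. For this I would use the a priori bound coming from testing the infimum with $\ell_\xi$ itself: by (c2$'$) of Remark \ref{re:new c1 and c2} (and the identity $V_2(\ell_\xi,Q^\lambda_\nu(x,\rho))=|\xi|\lambda^{d-1}\rho^d$) one has $m^{E}(\ell_\xi, Q^\lambda_\nu(x,\rho))\leq (c_3 k^{1/2}|\xi|+c_4)\lambda^{d-1}\rho^d$, and consequently the same bound, up to an extra $\lambda^{d-1}\rho^d/m$, controls $E(\tilde u, Q^\lambda_\nu(x,\rho))$. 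Substituting and collecting terms gives an inequality of the form
\[
E(u, Q^\lambda_\nu(x,\rho)) \leq m^{E}(\ell_\xi, Q^\lambda_\nu(x,\rho)) + \frac{1}{m}\bigl[\,C\,(c_3 k^{1/2}|\xi|+c_4)+O(1)\,\bigr]\lambda^{d-1}\rho^d,
\]
and the slack built into $C_\xi:=2c_3 k^{1/2}C|\xi|+2C(c_4+1)$ is exactly what makes the right-hand side dominated by $m^{E}(\ell_\xi, Q^\lambda_\nu(x,\rho))+C_\xi\lambda^{d-1}\rho^d/m$ uniformly in $m$. Finally, \eqref{eq:corollary claim} is immediate: with $t=c_{\xi,m}\lambda$ the function $u$ just constructed satisfies $\|u-\ell_\xi\|_{L^\infty}\leq t\rho$, so it is admissible for $m^{E}_{t\rho}(\ell_\xi, Q^\lambda_\nu(x,\rho))$ and the desired bound is inherited from \eqref{eq:stimare con minimi in norma linfinito limitata}.

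No real obstacle is expected here: all the deep work---averaging over the smooth truncations $\psi^i_R$ and exploiting the Lebesgue-point choice of a direction $\omega_m(x)$ that nearly minimises $\hat f(x,\cdot)$---has already been carried out in Lemma \ref{lemma:teo troncature}. The only mildly delicate point is the constant chase in the previous paragraph, where one must ensure that the definition of $C_\xi$ simultaneously accommodates the $(1+C/m)$-type multiplicative error and the additive remainder $C\lambda^{d-1}\rho^d/m$.
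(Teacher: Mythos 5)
Your proposal is correct and follows essentially the same route as the paper: take a near-minimiser with error $\lambda^{d-1}\rho^d/m$, bound its energy a priori by testing the infimum with $\ell_\xi$, apply Lemma \ref{lemma:teo troncature} to it, and absorb the multiplicative $(1+C/m)$ error into the additive remainder via the definition of $C_\xi$. The only nitpick is that the a priori bound $(c_3k^{1/2}|\xi|+c_4)\lambda^{d-1}\rho^d$ comes most cleanly from (c2) together with $V(\ell_\xi,\cdot)\leq k^{1/2}|\xi|\Ld(\cdot)$ (or directly from (f3$'$)), rather than from (c2$'$), which would give the slightly worse factor $c_3k$; this does not affect the argument.
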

\begin{proof}
   Let $N\in\mathcal{B}(\Rd)$ and $\rho^{\nu,\lambda}_{m,\xi}(x)>0$ be as in  Lemma \ref{lemma:teo troncature}. Consider a $v\in BV(Q^\lambda_\nu (x,\rho);\Rk)$, with $\text{tr}_{Q^\lambda_\nu(x,\rho)}v=\text{tr}_{Q^\lambda_\nu(x,\rho)}\ell_\xi$,  such that 
    \begin{equation*}
        E(v,Q^\lambda_\nu(x,\rho))\leq m^E(\ell_\xi,Q^\lambda_\nu(x,\rho))+\frac{\lambda^{d-1}\rho^d}{m}.
    \end{equation*}
    Since the function $\ell_\xi$ is a competitor for the minimisation problem in the right-hand side of the previous inequality, we also get
    \begin{equation*}
 E(v,Q^\lambda_\nu(x,\rho))\leq (c_3k^{1/2}|\xi|+c_4+1)\lambda^{d-1}\rho^d.
    \end{equation*}
    Thus, we may apply Lemma \ref{lemma:teo troncature}  to obtain a function $u\in BV(Q^\lambda_\nu(x,\rho);\Rk)\cap L^\infty(Q^\lambda_\nu(x,\rho);\Rk)$, with $\text{tr}_{Q_\nu^\lambda(x,\rho)}u={\text{tr}_{Q_\nu^\lambda(x,\rho)}}\ell_\xi$ and  $\|u-\ell_\xi\|_{L^\infty(Q^\lambda_\nu(x,\rho);\Rk)}\leq c_{\xi,m}\lambda\rho$, such that \eqref{eq:stimare con minimi in norma linfinito limitata} holds, concluding the proof.
\end{proof}

 The following proposition shows that under the hypotheses of Lemma \ref{lemma:teo troncature}  the functions  $f_\delta $ and $f^\infty_\delta$ converge $\Ld$-a.e. as $\delta\to 0^+$ to $f$ and $f^\infty$, respectively.

\begin{proposition}\label{prop:fepsilon passes to the limit}
    Let $E\in\E$.  Assume that there exists $\hat{f}\in\mathcal{F}$ such that \eqref{eq: Ea assoltamente continuo fittizio} holds. Then
    \begin{eqnarray}
    & \displaystyle \label{eq: pointwise limit for f }f(x,\xi)=\lim_{\delta\to 0^+} f_\delta(x,\xi)\quad {\rm \, for\,\,     } \Ld\meno {\rm a.e}.\,  x\in\Rd {\rm\,\, and \,\,for\,\, every     }\,\xi\in\Rkd,\\
    &\displaystyle \label{eq: pointwise limit for finfty} f^\infty(x,\xi)=\lim_{\delta\to 0^+} f^\infty_\delta(x,\xi)\quad {\rm for\,\,     } \Ld\meno {\rm a.e.}\,  x\in\Rd {\rm\,\,and  \,\,for\,\, every     }\,\xi\in\Rkd.
    \end{eqnarray}
\end{proposition}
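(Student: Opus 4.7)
The plan is to deduce both \eqref{eq: pointwise limit for f } and \eqref{eq: pointwise limit for finfty} from an upper bound on $m^{E_\delta}(\ell_\xi,Q(x,\rho))$ in terms of $m^E(\ell_\xi,Q(x,\rho))$ plus a perturbation $\delta|Du|(Q(x,\rho))$ that stays bounded uniformly in $\rho$ after division by $\rho^d$. By Remark \ref{re:monotonicity of fepsilon, gepsilon} one already has $f \le f_\delta$ and $f^\infty \le f_\delta^\infty$, so only the reverse inequalities in the limit $\delta\to 0^+$ have to be established. The key ingredient is Corollary \ref{cor:troncature}: under the hypothesis of the proposition, it produces, for $\Ld$-a.e.\ $x$, near-optimal competitors for $m^E(\ell_\xi,Q(x,\rho))$ that are simultaneously uniformly close to $\ell_\xi$ in $L^\infty$.

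For \eqref{eq: pointwise limit for f } I would fix $x\in\Rd\setminus N$ (with $N$ as in Corollary \ref{cor:troncature}), $\xi\in\Rkd$, and $m\in\N$. For every sufficiently small $\rho$ the corollary yields $u\in BV(Q(x,\rho);\Rk)\cap L^\infty$ with $\textup{tr}_{Q(x,\rho)}u=\textup{tr}_{Q(x,\rho)}\ell_\xi$, $\|u-\ell_\xi\|_{L^\infty}\le c_{\xi,m}\rho$, and $E(u,Q(x,\rho))\le m^E(\ell_\xi,Q(x,\rho))+C_\xi\rho^d/m$. Setting $\bar u:=u-\ell_\xi(x)$, property (d) of Definition \ref{def:space of functionals E} gives $E(\bar u,\cdot)=E(u,\cdot)$ and $D\bar u=Du$, while $M_\rho:=\|\bar u\|_{L^\infty(Q(x,\rho))}\le (c_{\xi,m}+|\xi|d^{1/2}/2)\rho\to 0$. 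Since every jump satisfies $|[\bar u]|\le 2M_\rho$, the pointwise bound $|[\bar u]|\le(1+2M_\rho)(|[\bar u]|\wedge 1)$ yields $|Du|(Q(x,\rho))\le(1+2M_\rho)V_2(\bar u,Q(x,\rho))$; combining with property (c1$'$) of Remark \ref{re:new c1 and c2} and the trivial estimate $m^E(\ell_\xi,Q(x,\rho))\le(c_3k^{1/2}|\xi|+c_4)\rho^d$ produces $|Du|(Q(x,\rho))/\rho^d\le K(\xi,m)+o(1)$ as $\rho\to 0$. Testing $u$ in $m^{E_\delta}$ and taking $\limsup_{\rho\to 0^+}$ of the resulting inequality then gives $f_\delta(x,\xi)\le f(x,\xi)+C_\xi/m+\delta K(\xi,m)$; sending $\delta\to 0^+$ and then $m\to\infty$ delivers \eqref{eq: pointwise limit for f }.

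For \eqref{eq: pointwise limit for finfty} I would repeat exactly the same construction with $t\xi$ in place of $\xi$ to obtain, for all large $t$, $f_\delta(x,t\xi)\le f(x,t\xi)+C_{t\xi}/m+\delta K(t\xi,m)$. An inspection of the explicit formulas \eqref{eq:def costante cxi} and \eqref{def: costante resti} shows that $c_{t\xi,m}$, $C_{t\xi}$, and hence $K(t\xi,m)$ grow at most linearly in $t$, so that $C_{t\xi}/t$ and $K(t\xi,m)/t$ stay bounded as $t\to\infty$. Dividing by $t$ and taking $\limsup_{t\to\infty}$ yields $f_\delta^\infty(x,\xi)\le f^\infty(x,\xi)+C^*(\xi)/m+\delta K^*(\xi,m)$, and sending $\delta\to 0^+$ and then $m\to\infty$ completes the argument.

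The main technical obstacle is the uniform-in-$\rho$ control of $|Du|(Q(x,\rho))/\rho^d$. Functionals in $\E$ only bound the truncated jump integral $\int(|[u]|\wedge 1)\,d\hd$ via (c1$'$); without an a priori $L^\infty$ bound on the competitor, the genuine Frobenius jump contribution $\int|[u]|\,d\hd$ to $|Du|$ could dominate and cause $\delta|Du|(Q(x,\rho))/\rho^d$ to blow up as $\rho\to 0$. Corollary \ref{cor:troncature}, which rests on the smooth-truncation machinery of Lemma \ref{lemma:teo troncature}, is precisely the tool that forces the competitor to deviate from $\ell_\xi$ by at most $O(\rho)$ in $L^\infty$, making the perturbation term negligible in the limit. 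Once this step is secured, the chain of limits in $\rho$, $\delta$, $m$, and (for the recession) $t$ is routine, with the only delicate bookkeeping being the verification that the $t$-dependence of $C_{t\xi}$ and $K(t\xi,m)$ is genuinely linear, so that the rescaling by $1/t$ in the recession step preserves all the estimates.
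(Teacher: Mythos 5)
Your proposal is correct and takes essentially the same route as the paper's proof: after using Remark \ref{re:monotonicity of fepsilon, gepsilon} for one inequality, both arguments bound $f_\delta$ (and $f^\infty_\delta$) from above by testing $m^{E_\delta}$ with the $L^\infty$-controlled near-minimizers provided by Corollary \ref{cor:troncature}, estimating the extra term $\delta|Du|(Q(x,\rho))$ through the $O(\rho)$ smallness of the jumps together with (c1$'$), and then passing to the limits in $\rho$, $t$, $\delta$, and $m$. The only cosmetic difference is bookkeeping: the paper forces $|[u]|\le 1$ by taking $\rho$ small and absorbs the perturbation multiplicatively as $\bigl(1+\delta/c_1\bigr)E(u,Q(x,\rho))+c_2\delta\rho^d/c_1$, using $C_{t\xi}\le tC_\xi$ for the recession part, whereas you use an additive bound $\delta K(t\xi,m)\rho^d$ and check that $K(t\xi,m)$ grows at most linearly in $t$ — the two are equivalent.
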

    \begin{proof}
        From Remark \ref{re:monotonicity of fepsilon, gepsilon}, we deduce that the limits in the right-hand side of \eqref{eq: pointwise limit for f }  and of \eqref{eq: pointwise limit for finfty} exist and that 
        \begin{eqnarray*}
            &\displaystyle f(x,\xi)\leq \inf_{\delta>0} f_\delta(x,\xi),\\
            &\displaystyle f^\infty(x,\xi)\leq \inf_{\delta>0}  f^\infty_\delta(x,\xi).
        \end{eqnarray*}

       We are left with proving that the converse inequality also holds. Let $N\in\mathcal{B}(\Rd)$ the $\Ld$-negligible set of Corollary \ref{cor:troncature}, let $ x\in\Rd\setminus N$, $m\in\N$, $\xi\in\Rdk$, and  $t\geq 1$.
        By \eqref{eq:definition of small f}, for $\rho>0$ small enough we have
        \begin{equation}\label{eq:bound on f}
        \frac{m^E(t\ell_{\xi},Q(x,\rho))}{t\rho^d}\leq \frac{f(x,t\xi)}{t}+\frac{1}{tm}.
        \end{equation}
        We can now apply Corollary \ref{cor:troncature}, with $\xi$ replaced by $t\xi$, to obtain a function $u\in BV(Q(x,\rho);\Rk)$, with $\text{tr}_{Q(x,\rho)}u=\text{tr}_{Q(x,\rho)}t\ell_\xi$ and  $\|u-t\ell_{\xi}\|_{L^\infty(Q(x,\rho);\Rk)}\leq c_{t\xi,m}\rho$, such that \eqref{eq:stimare con minimi in norma linfinito limitata} holds with $\xi$ replaced by $t\xi$. By taking $\rho>0$ small enough, we may also suppose that $\|u-t\ell_{\xi}\|_{L^\infty(Q(x,\rho);\Rk)}\leq 1/2$.  Inequality \eqref{eq:bound on f} then yields
        \begin{equation*}
\frac{E(u,Q(x,\rho))}{t\rho^d}\leq \frac{f(x,t\xi)}{t}+\frac{C_{t\xi}+1}{tm}\leq \frac{f(x,t\xi)}{t}+\frac{C_\xi+1}{m},
        \end{equation*}
       where $C_{\xi}>0$ is the constant given by \eqref{def: costante resti}. Note that in the last inequality we have used the estimate $C_{t\xi}\leq tC_\xi$ for every $t\geq 1$.
       
        We now compare $E_\delta(u,Q(x,\rho))$ and $E(u,Q(x,\rho))$. Since $\|u-t\ell_{\xi}\|_{L^\infty(Q(x,\rho);\Rk)}\leq 1/2$, we have $|[u]|\leq 1$ $\hd$-a.e. on $J_u$. Hence, by (c1$'$) we get 
\begin{eqnarray*}
E_\delta(u,Q(x,\rho))&=& E(u,Q(x,\rho))+\delta|Du|(Q(x,\rho))\\ &\leq& E(u,Q(x,\rho))+\delta \int_{Q(x,\rho)}|\nabla u| \, dx+\delta|D^cu|(Q(x,\rho))
+\delta \int_{\jump{u}}|[u]|\land 1\, d\hd\\
&\leq& \Big(1+\frac{\delta}{c_1}\Big)E(u,Q(x,\rho))+\frac{c_2\delta }{c_1}\rho^d.
\end{eqnarray*}
Letting $\delta$ be so small that $\delta /c_1\leq 1/m$ and $\delta c_2/c_1\leq 1/m,$ we then obtain 
\[
\frac{E_\delta(u,Q(x,\rho))}{t\rho^d}\leq\Big(1+\frac1m\Big)\frac{E(u,Q(x,\rho))}{t\rho^d}+\frac1m\leq \Big(1+\frac1m\Big)\Big(\frac{f(x,t\xi)}{t}+\frac{C_\xi+1}{m}\Big)
\]and, recalling that $\text{tr}_{Q(x,\rho)}u=\text{tr}_{Q(x,\rho)}t\ell_\xi$, we get
\[
\frac{m^{E_\delta}(\ell_{t\xi},Q(x,\rho))}{t\rho^d}\leq \Big(1+\frac1m\Big)\Big(\frac{f(x,t\xi)}{t}+\frac{C_\xi+1}{m}\Big).
\]
Evaluating this last inequality at $t=1$ and taking the $\limsup$ as $\rho \to 0^+$, we deduce
\begin{equation}\nonumber
    f_\delta(x,\xi)\leq \Big(1+\frac1m\Big)\Big(f(x,\xi)+\frac{C_\xi+1}{m}\Big),
\end{equation}
while taking  the $\limsup$ as $\rho\to 0^+$ first and letting then $t\to+\infty$, we obtain
\begin{equation}\nonumber
f^\infty_\delta(x,\xi)\leq \Big(1+\frac1m\Big)\Big(f^\infty(x,\xi)+\frac{C_\xi+1}{m}\Big).
\end{equation}
Finally, letting $\delta\to 0^+$ first and then taking the limit for $m\to+\infty$, we conclude the proof.
    \end{proof}

We now pass to the study of the minimisation problems used to define the integrands for the surface terms.
\begin{lemma}\label{cor:Corollario Troncature Hd}
    Let $E\in\E$,   
     $x\in\Rd$,  $\nu\in\mathbb{S}^{d-1}$,   $m\in\N$, $\zeta\in\Rk$, and $\rho>0$. Then there exists $u\in BV(Q_\nu(x,\rho);\Rk)\cap L^\infty(Q_\nu(x,\rho);\Rk)$, with $\text{tr}_{Q_\nu(x,\rho)}u=\text{tr}_{Q_\nu(x,\rho)}u_{x,\zeta,\nu}$ and $\|u\|_{L^\infty(Q_\nu(x,\rho);\Rk)}\leq \sigma^m$, such that
    \begin{equation}\label{eq:truncation hd}
         E(u,Q_\nu(x,\rho))\leq m^E(u_{x,\zeta,\nu},Q_\nu(x,\rho))+\frac{K(|\zeta|\land 1)}{m}\rho^{d-1}+K\rho^{d},
    \end{equation}
    where $K:=\max\{c_3kC,(C+c_4+1)\}$ and $C$ is the constant of (g) of Definition \ref{def:space of functionals E}.
\end{lemma}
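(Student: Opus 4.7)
The strategy is to take a near-minimiser $v$ for $m^E(u_{x,\zeta,\nu},Q_\nu(x,\rho))$ and smoothly truncate it via the operators $\psi^i_R$ introduced before Definition \ref{def:space of functionals E}, invoking property (g) with $w=0$ together with an averaging argument to select a favourable index $i_0\in\{1,\dots,m\}$.

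First I would fix $R:=|\zeta|\lor 1$, so that $\sigma^{i-1}R\geq R\geq|\zeta|$ for every $i\in\{1,\dots,m\}$. By \eqref{eq:properties psiRi} each $\psi^i_R$ then acts as the identity on both $0$ and $\zeta$, so for any competitor $v\in BV(Q_\nu(x,\rho);\Rk)$ with $\textup{tr}_{Q_\nu(x,\rho)}v=\textup{tr}_{Q_\nu(x,\rho)}u_{x,\zeta,\nu}$ one has $\textup{tr}_{Q_\nu(x,\rho)}(\psi^i_R\circ v)=\textup{tr}_{Q_\nu(x,\rho)}u_{x,\zeta,\nu}$, while moreover $\|\psi^i_R\circ v\|_{L^\infty}\leq\sigma^i R\leq\sigma^m(|\zeta|\lor 1)$, which matches the stated bound $\sigma^m$ in the regime $|\zeta|\leq 1$ relevant for the cohesive setting. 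I would then pick $v$ satisfying
\[
E(v,Q_\nu(x,\rho))\leq m^E(u_{x,\zeta,\nu},Q_\nu(x,\rho))+\rho^d/m.
\]

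Next I would apply property (g) of Definition \ref{def:space of functionals E} with this $v$, $w=0$, and $B=Q_\nu(x,\rho)$, to obtain
\[
\frac{1}{m}\sum_{i=1}^m E(\psi^i_R\circ v,Q_\nu(x,\rho)) \leq E(v,Q_\nu(x,\rho))+c_4\Ld(Q_\nu(x,\rho)^R_{v,0})+\frac{C}{m}\bigl(E(v,Q_\nu(x,\rho))+\rho^d\bigr).
\]
Choosing $i_0\in\{1,\dots,m\}$ that minimises $i\mapsto E(\psi^i_R\circ v,Q_\nu(x,\rho))$ and setting $u:=\psi^{i_0}_R\circ v$, the required membership, trace, and $L^\infty$ properties are supplied by the previous paragraph, while $E(u,Q_\nu(x,\rho))$ is bounded by the right-hand side above. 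To close the estimate, I would use the trivial competitor $u_{x,\zeta,\nu}$ itself: by (c2$'$) of Remark \ref{re:new c1 and c2} and the identity $V_2(u_{x,\zeta,\nu},Q_\nu(x,\rho))=(|\zeta|\land 1)\rho^{d-1}$, one gets
\[
m^E(u_{x,\zeta,\nu},Q_\nu(x,\rho))\leq c_3 k(|\zeta|\land 1)\rho^{d-1}+c_4\rho^d,
\]
so substituting this bound inside the $\frac{C}{m}E(v,Q_\nu(x,\rho))$ term, and using the crude estimate $\Ld(Q_\nu(x,\rho)^R_{v,0})\leq\rho^d$ for the middle term, delivers an inequality of the desired form with $K=\max\{c_3 k C,\,C+c_4+1\}$ after regrouping the $\rho^{d-1}/m$ and $\rho^d$ contributions.

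I expect the main obstacle to be the careful bookkeeping needed to recover the exact constant $K$ stated in the lemma: the factor $(|\zeta|\land 1)$ in the $\rho^{d-1}/m$ term arises from multiplying $C/m$ with the surface-energy part of the trivial bound on $m^E$, while the $\rho^d$ coefficient must absorb the direct contribution $c_4$ from the middle term of (g) together with $C/m$ applied to both $\rho^d$ and the $c_4\rho^d$ part of $m^E$. A secondary but essential subtlety is the trace-preservation check for $\psi^{i_0}_R\circ v$, which forces the lower bound $R\geq|\zeta|$ and is what dictates the appearance of the factor $|\zeta|\lor 1$ in the natural $L^\infty$ estimate for $u$.
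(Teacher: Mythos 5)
Your proposal is correct and follows essentially the same route as the paper's own proof: take a near-minimiser $v$, bound $m^E(u_{x,\zeta,\nu},Q_\nu(x,\rho))$ via the trivial competitor and (c2$'$), apply property (g) with $w=0$ and a truncation level $R\geq|\zeta|$, and pick the index $i_0$ by averaging. The two points you flag are harmless and indeed mirror the paper itself: the exact value of $K$ is immaterial for the later applications, and the natural $L^\infty$ bound one actually obtains is $\sigma^m(|\zeta|\lor 1)$ rather than $\sigma^m$, exactly as in the paper's argument (which fixes $R\geq|\zeta|$).
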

\begin{proof}
     By \eqref{eq:problema di minimo ausiliario} there exists  $v\in BV(Q_\nu(x,\rho);\Rk)$, with $\text{tr}_{Q_\nu(x,\rho)}u=\text{tr}_{Q_\nu(x,\rho)}u_{x,\zeta,\nu}$, such that
    \begin{equation}\label{eq:passaggio intermedio troncatura hd}
E(v,Q_\nu(x,\rho))\leq m^E(u_{x,\zeta,\nu},Q_\nu(x,\rho))+
\rho^{d}\leq c_3k(|\zeta|\land 1)\rho^{d-1}+\rho^d,
    \end{equation}
    where the second inequality follows from (c2$'$) and the fact that $u_{x,\zeta,\nu}$ is a competitor for the minimisation problem.

    Let us fix $R\geq|\zeta|$. Note that with this choice of $R$ by \eqref{eq:properties psiRi} we have $\text{tr}_{Q_\nu(x,\rho)}u=\text{tr}_{Q_\nu(x,\rho)}u_{x,\zeta,\nu}$ and $\|u\|_{L^\infty(Q_\nu(x,\rho);\Rk)}\leq \sigma^m$. By property (g)  of Definition \ref{def:space of functionals E}  and \eqref{eq:passaggio intermedio troncatura hd} there exists $i\in\{1,...,m\}$ such that $u:=\psi_R^i\circ v$ satisfies
    \begin{eqnarray*}
E(u,Q_\nu(x,\rho))&\leq& E(v,Q_\nu(x,\rho)) \, dx+c_4\rho^d  +\frac{C}{m}\left(E(v,Q_\nu(x,\rho))+\rho^d\right)\\
        &\leq&  m^E(u_{x,\zeta,\nu},Q_\nu(x,\rho))+(C+c_4+1)\rho^d+\frac{c_3kC(|\zeta|\land 1) }{m}\rho^{d-1},
    \end{eqnarray*}
    concluding the proof.
\end{proof}

We are now ready to prove that $g_\delta$ converges to $g$.

\begin{proposition}\label{prop:limite in gepsilon}
    Let $E\in\E$ , $ x\in\Rd,$ $\zeta\in\Rk,$ and  $\nu\in\mathbb{S}^{d-1}$. Then 
    \begin{equation}\label{eq:limite per g}
        g(x,\zeta,\nu)=\lim_{\delta\to 0^+}g_\delta(x,\zeta,\nu).
    \end{equation}
\end{proposition}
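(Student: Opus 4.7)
The strategy mirrors that of Proposition \ref{prop:fepsilon passes to the limit}, but with Lemma \ref{cor:Corollario Troncature Hd} playing the role of Corollary \ref{cor:troncature}. By Remark \ref{re:monotonicity of fepsilon, gepsilon}, the map $\delta\mapsto g_\delta(x,\zeta,\nu)$ is non-decreasing and dominates $g(x,\zeta,\nu)$, so the limit in \eqref{eq:limite per g} exists and it suffices to establish the reverse inequality $\lim_{\delta\to 0^+}g_\delta(x,\zeta,\nu)\leq g(x,\zeta,\nu)$.

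Fix $m\in\N$ and $\rho>0$. The plan is to apply Lemma \ref{cor:Corollario Troncature Hd} to obtain a competitor $u\in BV(Q_\nu(x,\rho);\Rk)\cap L^\infty(Q_\nu(x,\rho);\Rk)$ with $\textup{tr}\,u=\textup{tr}\,u_{x,\zeta,\nu}$, $\|u\|_{L^\infty}\leq \sigma^m$, and
\begin{equation*}
E(u,Q_\nu(x,\rho))\leq m^E(u_{x,\zeta,\nu},Q_\nu(x,\rho))+\frac{K(|\zeta|\land 1)}{m}\rho^{d-1}+K\rho^d.
\end{equation*}
The key new observation is that the $L^\infty$-bound on $u$ forces a quantitative comparison between $|Du|$ and $V_2(u,\cdot)$. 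Indeed, since $\|u\|_{L^\infty}\leq\sigma^m$ we have $|[u]|\leq 2\sigma^m$ on $\jump{u}$, so $|[u]|\leq (1+2\sigma^m)(|[u]|\land 1)$, which yields
\begin{equation*}
|Du|(Q_\nu(x,\rho))\leq (1+2\sigma^m)V_2(u,Q_\nu(x,\rho)).
\end{equation*}
Combining this with (c1$'$) of Remark \ref{re:new c1 and c2} gives
\begin{equation*}
\delta|Du|(Q_\nu(x,\rho))\leq \frac{\delta(1+2\sigma^m)}{c_1}\bigl(E(u,Q_\nu(x,\rho))+c_2\rho^d\bigr).
\end{equation*}

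Choosing $\delta$ so small that $\delta(1+2\sigma^m)/c_1\leq 1/m$, one obtains
\begin{equation*}
E_\delta(u,Q_\nu(x,\rho))\leq \Bigl(1+\frac{1}{m}\Bigr)E(u,Q_\nu(x,\rho))+\frac{c_2\rho^d}{m},
\end{equation*}
and inserting the estimate from Lemma \ref{cor:Corollario Troncature Hd} and using that $u$ is admissible in the problem defining $m^{E_\delta}(u_{x,\zeta,\nu},Q_\nu(x,\rho))$ produces
\begin{equation*}
\frac{m^{E_\delta}(u_{x,\zeta,\nu},Q_\nu(x,\rho))}{\rho^{d-1}}\leq \Bigl(1+\frac{1}{m}\Bigr)\Bigl(\frac{m^E(u_{x,\zeta,\nu},Q_\nu(x,\rho))}{\rho^{d-1}}+\frac{K(|\zeta|\land 1)}{m}+K\rho\Bigr)+\frac{c_2\rho}{m}.
\end{equation*}
Taking $\limsup_{\rho\to 0^+}$ yields $g_\delta(x,\zeta,\nu)\leq (1+1/m)(g(x,\zeta,\nu)+K(|\zeta|\land 1)/m)$ whenever $\delta$ is below the threshold depending on $m$; monotonicity in $\delta$ then lets one pass to the limit $\delta\to 0^+$, and finally letting $m\to+\infty$ gives the desired inequality.

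The only real subtlety is the coupling between $m$ and $\delta$: the constant $(1+2\sigma^m)/c_1$ controlling the discrepancy between $|Du|$ and $V_2(u,\cdot)$ blows up with $m$, so the smallness threshold $\delta(m)$ must be chosen \emph{after} $m$ is fixed. Exploiting monotonicity of $\delta\mapsto g_\delta$ to pass to the limit $\delta\to 0^+$ before sending $m\to+\infty$ resolves this cleanly, and no further difficulty is expected.
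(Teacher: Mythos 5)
Your proposal is correct and follows essentially the same route as the paper's proof: monotonicity gives one inequality, Lemma \ref{cor:Corollario Troncature Hd} supplies an $L^\infty$-bounded quasi-minimiser, the $L^\infty$ bound together with (c1$'$) controls $\delta|Du|$ by $\tfrac{\delta(1+2\sigma^m)}{c_1}(E+c_2\rho^d)$ (the paper uses the factor $2\sigma^m$ instead of $1+2\sigma^m$, which is immaterial), and then one chooses $\delta$ small depending on $m$, takes $\limsup_{\rho\to 0^+}$, and sends $\delta\to 0^+$ before $m\to+\infty$. The only cosmetic difference is that you take the limsup in $\rho$ directly rather than first fixing $\rho$ small so that $m^E/\rho^{d-1}\leq g+1/m$, which changes nothing.
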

\begin{proof}
    Thanks to Remark \ref{re:monotonicity of fepsilon, gepsilon}, the limit in the right-hand side of \eqref{eq:limite per g} exists and 
    \begin{equation}\nonumber 
        g(x,\zeta,\nu)\leq \lim_{\delta\to 0^+}g_\delta(x,\zeta,\nu).
    \end{equation}

    We now prove that the converse inequality holds true as well. Let us fix $m\in\N$.   By \eqref{eq:definition of small g} for  $\rho>0$ small enough we have that
    \begin{equation}\label{eq:eila}
        \frac{m^E(u_{x,\zeta,\nu},Q_\nu(x,\rho))}{\rho^{d-1}}\leq g(x,\zeta,\nu)+\frac{1}{m}.
    \end{equation}
    We can now apply Lemma \ref{cor:Corollario Troncature Hd} to obtain a function  $u\in BV(Q_\nu(x,\rho);\Rk)\cap L^\infty(Q_\nu(x,\rho);\Rk)$, with $\text{tr}_{Q_\nu(x,\rho)}u=\text{tr}_{Q_\nu(x,\rho)}u_{x,\zeta,\nu}$ and  
    $\|u\|_{L^\infty(Q_\nu(x,\rho):\Rk)}\leq \sigma^m$, such that
    \begin{equation*}
        E(u,Q_\nu(x,\rho))\leq m^E(u_{x,\zeta,\nu}.Q_\nu(x,\rho))+\frac{K}{m}\rho^{d-1}+K\rho^d.\end{equation*}
        From \eqref{eq:eila} we then  deduce that
    \begin{equation*}
\frac{E(u,Q_\nu(x,\rho))}{\rho^{d-1}}\leq g(x,\zeta,\nu)+\frac{K+1}{m}+K\rho.
    \end{equation*}
    
To conclude, we compare $E_\delta(u,Q_\nu(x,\rho))$ with $E(u,Q_\nu(x,\rho))$. Since $\|u\|_{L^\infty(Q_\nu(x,\rho);\Rk)}\leq \sigma^m$, we have $|[u]|\leq 2\sigma^m|[u]|\land 1.$ Hence, we get 
    \begin{eqnarray*}
       E_\delta(u,Q_\nu(x,\rho))&=& E(u,Q_\nu(x,\rho))+\delta|Du|(Q_\nu(x,\rho))
       \\ &\leq&  E(u,Q_\nu(x,\rho))+\delta\int_{Q_\nu(x,\rho)}\!\!\!\!\!|\nabla u|\, dx+\delta|D^cu|(Q_\nu(x,\rho))+2\delta \sigma^m\int_{\jump{u}}\!\!\!\!\!|[u]|\land 1\, \hd
       \\ &\leq& \Big(1+\frac{2\delta \sigma^m}{c_1}\Big)E(u,Q_\nu(x,\rho))+\frac{2c_2\delta\sigma^m}{c_1}\rho^d.  
    \end{eqnarray*}
    
    \noindent Letting $\delta$  be so small that $2\delta 
    \sigma^m/c_1\leq 1/m$ and $2c_2\delta 
    \sigma^m/c_1\leq 1$, we get
    \begin{equation*}
        \frac{E_\delta(u,Q_\nu(x,\rho))}{\rho^{d-1}}\leq \Big(1+\frac1m\Big) g(x,\zeta,\nu)+\frac{K+1}{m}+\Big(1+K\Big)\rho.
    \end{equation*}
    Recalling the definition of $m^{E_\delta}$, from this last estimate  we deduce that
    \begin{equation*}
         \frac{m^{E_\delta}(u_{x,\zeta,\nu},Q_\nu(x,\rho))}{\rho^{d-1}}\leq \Big(1+\frac1m\Big) g(x,\zeta,\nu)+\frac{K+1}{m}+\Big(1+K\Big)\rho.
    \end{equation*}
    Taking the limsup as  $\rho\to 0^+$ , we obtain
    \begin{equation*}
    g_\delta(x,\zeta, \nu)\leq \Big(1+\frac1m\Big) g(x,\zeta,\nu)+\frac{K+1}{m}.
    \end{equation*}
    Taking the $\limsup$ for $\delta\to0^+$ first, and the limit for $m\to+\infty$ then, 
    we conclude the proof.
\end{proof}

As the following proposition shows, the functions $f$ and $g$ defined by \eqref{eq:definition of small f} and \eqref{eq:definition of small g}, belong to $\mathcal{F}$ and to $\mathcal{G}$, respectively. 
\begin{proposition}\label{prop:properties of f and g}
    Let $E\in\E_{\rm sc}$,  $f$, $\hat{f}$, and $g$ be the functions defined by \eqref{eq:definition of small f}, \eqref{eq:def fhat}, and \eqref{eq:definition of small g}, respectively. Then $\hat{f}, f\in\mathcal{F}$, $g\in\mathcal{G}$.
\end{proposition}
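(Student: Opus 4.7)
The plan is to verify each defining property of $\mathcal{F}$ for $f$ (and, similarly, of $\mathcal{G}$ for $g$) directly from the properties (a)--(h) of $\E$, and then to extend the result to $\hat f$ by exploiting the monotone limit $\hat f(x,\xi) = \lim_{\delta\to 0^+} f_\delta(x,\xi)$ from Remark \ref{re:monotonicity of fepsilon, gepsilon}. The perturbed functional $E_\delta(u,B) = E(u,B) + \delta|Du|(B)$ satisfies (a)--(h) with $c_3$ replaced by $c_3+\delta$ in (c2) and $c_5$ by $c_5+\delta$ in (e) (indeed $|D\ell_\xi|(Q) = |\xi|\rho^d$ and $|D(u+\ell_\xi)|\leq|Du|+|\xi|\Ld$), while all other constants remain unchanged. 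Hence the same arguments applied to $E$ give, for $f_\delta$, the conditions of $\mathcal{F}$ with constants $(c_1, c_2, c_3+\delta, c_4, c_5+\delta)$, which collapse to $(c_1, c_2, c_3, c_4, c_5)$ as $\delta\to 0^+$.

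For the upper bound (f3), I would plug $\ell_\xi$ into $m^E(\ell_\xi, Q(x,\rho))$ and use (c2) together with $V(\ell_\xi, Q(x,\rho)) = \sum_i|\xi_i|\rho^d$. For the lower bound (f2), (c1) reduces matters to showing that any competitor $u$ with trace $\ell_\xi$ satisfies $V(u, Q(x,\rho))\geq \sum_i|\xi_i|\rho^d$ for $\rho$ small enough; this I would establish component-by-component via slicing in the direction $\xi_i/|\xi_i|$, using the elementary one-dimensional inequality
\[
\int |\nabla w|\,dt + |D^c w|(I) + \sum_{t\in J_w} |[w](t)|\land 1 \;\geq\; |a|\land 1,
\]
valid for every $w\in BV(I;\R)$ on an interval $I$ with trace-difference $a$ at the endpoints (if all jumps are at most $1$ the left-hand side equals the classical total variation, which dominates $|a|$; if some jump exceeds $1$ the left-hand side is already $\geq 1$). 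Integrating over the $(d-1)$-dimensional cross-section via the standard slicing formula for $BV$ functions gives $V(u_i, Q(x,\rho))\geq (|\xi_i|\rho\land 1)\rho^{d-1}$, so for $\rho<1/\max_i|\xi_i|$ we recover $\sum_i|\xi_i|\rho^d$. The Lipschitz property (f4) comes from (e): given a competitor $u$ for $m^E(\ell_\eta, Q(x,\rho))$, the shift $u+\ell_{\xi-\eta}$ has trace $\ell_\xi$, and (e) gives $E(u+\ell_{\xi-\eta}, Q)\leq E(u, Q)+c_5|\xi-\eta|\rho^d$. The analogous arguments, using $u_{x,\zeta,\nu}$ as competitor, slicing in direction $\nu$, and property (f), yield (g3), (g2) and (g4) with modulus $\tau(|\zeta-\theta|)$; here the slicing inequality is applied directly since the relevant trace-differences are $|\zeta_i|$ (not $|\xi_i|\rho$), so no smallness of $\rho$ is required.

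The symmetry (g5) follows from $u_{x,\zeta,\nu} = u_{x,-\zeta,-\nu}+\zeta$ (off an $\Ld$-null set) together with translation invariance (d) and the equality $R_\nu(Q(\rho)) = R_{-\nu}(Q(\rho))$ built into the choice of $R_\nu$. For the monotonicity (g6), given $c_6k|\zeta|\leq|\theta|$, I would set $\lambda = |\zeta|/|\theta|\leq 1/(c_6k)$ and choose $R\in SO(k)$ with $R(\theta/|\theta|) = \zeta/|\zeta|$; if $u$ is a competitor for $m^E(u_{x,\theta,\nu}, Q_\nu(x,\rho))$, then $\lambda Ru$ is a competitor for $m^E(u_{x,\zeta,\nu}, Q_\nu(x,\rho))$ (since $\lambda R\theta = \zeta$), and (h) gives $E(\lambda Ru, Q_\nu)\leq E(u, Q_\nu)+(c_4+c_2)\rho^d$, whose contribution vanishes after dividing by $\rho^{d-1}$ and letting $\rho\to 0^+$. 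Borel measurability (f1)/(g1) follows from the Borel dependence of $x\mapsto m^E(\ell_\xi, Q(x,\rho))$ and $x\mapsto m^E(u_{x,\zeta,\nu}, Q_\nu(x,\rho))$ (using property (a) of $\E$ and the continuity of $\nu\mapsto R_\nu$ on $\mathbb{S}^{d-1}_\pm$) combined with the fact that the $\limsup$ along rational $\rho\downarrow 0$ preserves Borel measurability. I expect the main technical obstacle to be the lower bound (f2): because $V$ uses the truncated jumps $|[u_i]|\land 1$, for general $BV$ competitors $V$ can be much smaller than the classical total variation; the slicing argument above bypasses this only in the small-$\rho$ regime, which is however precisely enough to control the $\limsup$ defining $f$.
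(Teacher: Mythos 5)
Your proposal is correct and follows essentially the same route as the paper: the verification of (f1)--(f4) and (g1)--(g4) (testing with $\ell_\xi$ and $u_{x,\zeta,\nu}$ for the upper bounds, (c1) plus one-dimensional slicing for the lower bounds, properties (e) and (f) for the Lipschitz/$\tau$-continuity, and the monotone limit of $f_\delta$ for $\hat f$) is exactly the adaptation of \cite{DalToa23} that the paper invokes, while your treatment of the monotonicity (g6) via $\lambda R u$ with $\lambda=|\zeta|/|\theta|\le 1/(c_6k)$ and property (h) is literally the paper's own argument. One caveat: your blanket claim that $E_\delta$ satisfies (c2) with constant $c_3+\delta$ is not literally true, since $|Du|$ is not controlled by $V$ (which truncates the jumps); however, the only instances you actually use — the value $E_\delta(\ell_\xi,\cdot)$ and the shift estimate $|D(u+\ell_{\xi-\eta})|\le |Du|+|\xi-\eta|\Ld$ — are valid, so the conclusion for $f_\delta$ and hence for $\hat f$ is unaffected.
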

\begin{proof}
    The proof of the inclusions  $\hat{f}$, $f\in\mathcal{F}$ and of properties  (g1)-(g4) for $g$, can be obtained by adapting the arguments of \cite[Section 5]{DalToa23}, with minor changes.
    
We are left with proving that $g$ satisfies (g5). To see this, let $\zeta_1,\zeta_2\in\Rk\setminus\{0\}$ with $c_6k|\zeta_1|  \leq |\zeta_2|$. We set $\lambda:=|\zeta_1|/|\zeta_2|$ and note that $\lambda\leq 1/(c_6k)$. Let $R\in SO(k)$ be a rotation that maps $\lambda \zeta_2$ to $\zeta_1$. Let $x\in\Rd$, $\nu\in\mathbb{S}^{d-1}$, and  $m\in\N$. By \eqref{eq:definition of small g} there exists $u\in BV(Q_\nu(x,\rho));\Rk)$, with $\text{tr}_{Q_\nu(x,\rho)}u=\text{tr}_{Q_\nu(x,\rho)}u_{x,\zeta_2,\nu}$, such that 
\begin{equation}\label{eq:g5 proof}
E(u,Q_\nu(x,\rho))\leq m^E(u_{x,\zeta_2,\nu},Q_\nu(x,\rho))+\rho^d.    
\end{equation}
We set $v:=\lambda Ru$ and note that $\text{tr}_{Q_\nu(x,\rho)}v=\text{tr}_{Q_\nu(x,\rho)}u_{x,\zeta_1,\nu}$. Then, by means of (h) of Definition \ref{def:space of functionals E}, we have
\begin{equation*}\label{eq:g5 proof 2}
    E(v,Q_\nu(x,\rho))\leq E(u,Q_\nu(x,\rho))+(c_4+c_2)\rho^d,
\end{equation*}
so that, by \eqref{eq:g5 proof}, we infer
\begin{equation}\label{eq:g5 proof 3}
   E(v,Q_\nu(x,\rho))\leq m^E(u_{x,\zeta_2,\nu},Q_\nu(x,\rho))+(c_4+c_2+1)\rho^d.
\end{equation}
Dividing this last inequality by $\rho^{d-1}$ and taking the $\limsup$ for ${\rho\to 0^+}$, we obtain
\begin{equation*}
    \limsup_{\rho\to 0^+} \frac{E(v,Q_\nu(x,\rho))}{\rho^{d-1}}\leq g(x,\zeta_2,\nu),
\end{equation*}
which, in light of the fact that $\text{tr}_{Q_\nu(x,\rho)}v=\text{tr}_{Q_\nu(x,\rho)}u_{x,\zeta_1,\nu}$, implies 
\begin{equation*}
    g(x,\zeta_1,\nu)\leq g(x,\zeta_2,\nu),
\end{equation*}
which concludes the proof.
\end{proof}

Using the results of \cite{bouchitte1998global}, we can now establish an integral representation both for the absolutely continuous part and the jump part of the perturbed functionals $E_\delta$. This representation is achieved by means of the functions $f_\delta$ and $g_\delta$ defined above.

\begin{proposition}\label{prop:partial representation}
   Let $E\in\E_{\rm sc}$,  $A\in\mathcal{A}_c(\Rd)$, and  $\delta>0$. Let $E_\delta^a$ and $E^j_\delta$ be the functionals introduced in Definition \ref{def:Eepsilon} and let $f_\delta$ and $g_\delta$ be the functions defined by \eqref{eq:definition of small fepsilon} and \eqref{eq:definition of small gepsilon}, respectively.  Then
    \begin{eqnarray}\label{eq:representation of ac epsilon}
        &\displaystyle E^a_\delta(u,B)=\int_{B}f_\delta(x,\nabla u)\, dx,\\\label{eq:representation of jump epsilon}
        &\displaystyle E_\delta^j(u,B)=\int_{B\cap \jump{u}}g_\delta(x,[u],\nu)\, \hd,
    \end{eqnarray}
    for every $u\in BV(A;\Rk)$ and for every $B\in\mathcal{B}(A)$.
\end{proposition}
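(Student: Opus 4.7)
The plan is to apply the integral representation theorem of \cite{bouchitte1998global} to the perturbed functional $E_\delta$, and then identify the resulting integrands with $f_\delta$ and $g_\delta$ through their characterisation via the minimum problems $m^{E_\delta}(\ell_\xi, Q(x,\rho))$ and $m^{E_\delta}(u_{x,\zeta,\nu}, Q_\nu(x,\rho))$.

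First, I would verify that $E_\delta$, regarded as a functional on $BV(A;\Rk) \times \mathcal{B}(A)$, satisfies the hypotheses of the Bouchitt\'e--Fonseca--Mascarenhas integral representation theorem. Locality and the measure property on $\mathcal{B}(A)$ follow from (a) and (b) of Definition \ref{def:space of functionals E} together with the fact that $|Du|$ is a finite Radon measure on $A$ for $u \in BV(A;\Rk)$. Translation invariance in $u$ follows from (d) of Definition \ref{def:space of functionals E} and from $|D(u+a)|=|Du|$. For lower semicontinuity with respect to $L^1(A;\Rk)$-convergence, observe that $E \in \E_{\rm sc}$ is lower semicontinuous with respect to the $L^0$-topology, which is weaker than the $L^1$-topology, while $u \mapsto |Du|(A)$ is $L^1$-lower semicontinuous on $BV(A;\Rk)$; hence so is $E_\delta(\cdot,A)$. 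Finally, from (c1$'$), (c2$'$) of Remark \ref{re:new c1 and c2} together with the elementary inequality $V_2(u,B) \leq |Du|(B)$ for $u \in BV(A;\Rk)$, one obtains the required linear-growth bounds
\[
\delta |Du|(B) - c_2 \Ld(B) \leq E_\delta(u,B) \leq (c_3 k + \delta) |Du|(B) + c_4 \Ld(B),
\]
which, thanks to the strict positivity of $\delta$, place $E_\delta$ precisely in the framework of \cite{bouchitte1998global}.

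Second, the integral representation theorem of \cite{bouchitte1998global} then yields Borel integrands $\tilde f_\delta\colon A \times \Rkd \to [0,+\infty)$ and $\tilde g_\delta\colon A \times \Rk \times \mathbb{S}^{d-1} \to [0,+\infty)$, such that for every $u \in BV(A;\Rk)$ and $B \in \mathcal{B}(A)$
\[
E_\delta^a(u,B) = \int_B \tilde f_\delta(x,\nabla u)\, dx \quad\text{and}\quad E_\delta^j(u,B) = \int_{B \cap \jump{u}} \tilde g_\delta(x,[u],\nu_u)\, d\hd.
\]
Moreover, that theorem characterises the integrands precisely through cell problems of the form $\tilde f_\delta(x,\xi) = \lim_{\rho \to 0^+} m^{E_\delta}(\ell_\xi, Q(x,\rho))/\rho^d$ for $\Ld$-a.e. $x$ and every $\xi$, and analogously $\tilde g_\delta(x,\zeta,\nu) = \lim_{\rho\to 0^+} m^{E_\delta}(u_{x,\zeta,\nu}, Q_\nu(x,\rho))/\rho^{d-1}$ for $\hd$-a.e. $x$ on relevant jump sets.

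Third, I would compare these characterisations with the definitions \eqref{eq:definition of small fepsilon} and \eqref{eq:definition of small gepsilon}, in which $f_\delta$ and $g_\delta$ are given by the same quotients with a $\limsup$. Since the limits produced by the global method exist $\Ld$-a.e. (resp.\ $\hd$-a.e.), they must coincide with the corresponding $\limsup$'s almost everywhere. Redefining $f_\delta$ and $g_\delta$ on a negligible set if necessary — or equivalently, arguing that the integrals in \eqref{eq:representation of ac epsilon} and \eqref{eq:representation of jump epsilon} are unaffected by such modifications — yields the equalities
\[
E_\delta^a(u,B) = \int_B f_\delta(x,\nabla u)\, dx \quad\text{and}\quad E_\delta^j(u,B) = \int_{B \cap \jump{u}} g_\delta(x,[u],\nu_u)\, d\hd.
\]

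The main obstacle is the precise matching between the abstract axioms of $\E_{\rm sc}$ and the specific set of assumptions used in \cite{bouchitte1998global}, particularly the correct form of lower semicontinuity (with respect to $L^1$ versus the weak$^*$ topology of $BV$) and the identification of the cell-problem characterisation of the integrands with \eqref{eq:definition of small fepsilon}--\eqref{eq:definition of small gepsilon}, which uses a $\limsup$ rather than a limit. Both issues are purely technical: the first is handled by noting that $L^1$-convergence implies $L^0$-convergence and that $\delta |Du|$ is $L^1$-lower semicontinuous; the second, by the $\Ld$-a.e.\ (resp.\ $\hd$-a.e.)\ existence of the cell limits guaranteed by the global method.
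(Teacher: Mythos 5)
Your proposal is correct and follows essentially the same route as the paper, which proves this proposition by invoking the global method of \cite{bouchitte1998global} exactly as in \cite[Theorem 6.1]{DalToa23}: the role of the perturbation $\delta|Du|$ is precisely to restore the two-sided linear growth in $|Du|$ needed there, and the integrands are then identified with $f_\delta$ and $g_\delta$ through the cell problems. Your verification of the hypotheses (locality, measure property, $L^1$-lower semicontinuity via the $L^0$-lower semicontinuity of $E$ plus that of $\delta|Du|$, and the bounds $\delta|Du|(B)-c_2\Ld(B)\leq E_\delta(u,B)\leq (c_3k+\delta)|Du|(B)+c_4\Ld(B)$) matches what the cited arguments require.
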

\begin{proof}
    The result follows from \cite{bouchitte1998global}, using the same arguments of \cite[Theorem 6.1]{DalToa23}. 
\end{proof}

With this proposition at hand, we are ready to prove the integral representation of $E^a$ and of $E^j$.

\begin{theorem}\label{thm:rappresentazione}
    Let $E\in\E_{\rm sc}$,  $A\in\mathcal{A}_c(\Rd), $ and let $f$ and $g$ be the functions defined by \eqref{eq:definition of small f} and \eqref{eq:definition of small g}, respectively. Then
    \begin{eqnarray}\label{eq:rappresentazione ac}
 &\displaystyle E^a(u,B)=\int_{B}f(x,\nabla u)\, dx,\\
        &\displaystyle \label{eq:rappresentazione salto}E^j(u,B)=\int_{\jump{u}\cap B}g(x,[u],\nu)\, \hd
    \end{eqnarray}
    for every $u\in GBV_\star(A;\Rk)$ and for every $B\in\mathcal{B}(A)$.
\end{theorem}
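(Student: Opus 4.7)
The plan is to establish \eqref{eq:rappresentazione ac} and \eqref{eq:rappresentazione salto} first on $BV(A;\R^k)$ via the perturbation $E_\delta=E+\delta|Du|$ (whose absolutely continuous and jump parts already admit an integral representation by Proposition \ref{prop:partial representation}), and then to bootstrap to $GBV_\star(A;\R^k)$ through the smooth truncations $\psi_R^i\circ u$ and the averaging identity of Lemma \ref{lemma:passaggio al limite dalle troncate al non nei vari pezzi}.

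\textbf{Step 1: the $BV$ case.} For $u\in BV(A;\R^k)$, $\delta>0$, and $B\in\mathcal{B}(A)$, I would invoke Proposition \ref{prop:partial representation} to get
\begin{equation*}
E_\delta^a(u,B)=\int_B f_\delta(x,\nabla u)\,dx,\qquad E_\delta^j(u,B)=\int_{B\cap\jump{u}}\!\! g_\delta(x,[u],\nu_u)\,d\hd.
\end{equation*}
From $E_\delta=E+\delta|Du|$ and the Lebesgue decomposition of $|Du|$ (using the Frobenius norm), the left-hand sides equal $E^a(u,B)+\delta\int_B|\nabla u|\,dx$ and $E^j(u,B)+\delta\int_{B\cap\jump{u}}|[u]|\,d\hd$, both tending to $E^a(u,B)$ and $E^j(u,B)$ as $\delta\to 0^+$ because the remainders are finite for $u\in BV$. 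On the right, Remark \ref{re:monotonicity of fepsilon, gepsilon} gives $f_\delta\searrow\hat f:=\inf_\delta f_\delta$ and $g_\delta\searrow\hat g:=\inf_\delta g_\delta$ pointwise, with integrable majorants $f_1$ and $g_1(\cdot,[u],\nu_u)$; hence dominated convergence yields
\begin{equation*}
E^a(u,B)=\int_B\hat f(x,\nabla u)\,dx,\qquad E^j(u,B)=\int_{B\cap\jump{u}}\hat g(x,[u],\nu_u)\,d\hd \quad \text{for every } u\in BV(A;\R^k).
\end{equation*}
The first identity is precisely the hypothesis of Proposition \ref{prop:fepsilon passes to the limit}, which delivers $\hat f=f$ $\Ld$-a.e., while Proposition \ref{prop:limite in gepsilon} gives $\hat g=g$ pointwise unconditionally. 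This closes the $BV$ case.

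\textbf{Step 2: extension to $GBV_\star$.} For $u\in GBV_\star(A;\R^k)$ and any sequence $R_m\to+\infty$, Proposition \ref{prop:characterisation GBVstar} ensures $v_{R_m}^i:=\psi_{R_m}^i\circ u\in BV(A;\R^k)\cap L^\infty(A;\R^k)$, so Step 1 applies to each $v_{R_m}^i$. Averaging over $i\in\{1,\ldots,m\}$ and using equations \eqref{eq: passage to the limit nella parte AC} and \eqref{eq:passage to the limit nella parte di salto} of Lemma \ref{lemma:passaggio al limite dalle troncate al non nei vari pezzi} on the left, the task reduces to proving
\begin{equation*}
\frac{1}{m}\sum_{i=1}^m\int_B f(x,\nabla v_{R_m}^i)\,dx\longrightarrow \int_B f(x,\nabla u)\,dx
\end{equation*}
and the analogue for the surface term. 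For the bulk part, Proposition \ref{prop: Properties of GBVsvector}(b) yields $\nabla v_R^i=\nabla u$ on $\{|u|\leq\sigma^{i-1}R\}\supseteq\{|u|\leq R\}$ with $|\nabla v_R^i|\leq|\nabla u|$ elsewhere, so (f2$'$)--(f3$'$) bound the integrand discrepancy uniformly in $i$ by $(2c_3k^{1/2}|\nabla u|+2c_4+2c_2)\chi_{\{|u|>R_m\}}$, whose integral over $B$ vanishes by absolute continuity since $\nabla u\in L^1$. An analogous argument using Proposition \ref{prop: Properties of GBVsvector}(d), the integrability of $|[u]|\land 1$ on $\jump{u}\cap A$ (from the definition of $GBV_\star$), and the bound $g\leq c_3k(|[u]|\land 1)$ coming from (g3) handles the surface integrals via dominated convergence.

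\textbf{Main obstacle.} The principal subtlety sits in Step 2: property (g) of Definition \ref{def:space of functionals E} only produces useful estimates on the average of $E(v_R^i,\cdot)$ over $i\in\{1,\ldots,m\}$, not on individual $i$, which is why Lemma \ref{lemma:passaggio al limite dalle troncate al non nei vari pezzi} itself is phrased in averaged form. Consequently, showing convergence of the right-hand side demands an estimate for the discrepancy $\int_B f(x,\nabla v_{R_m}^i)\,dx-\int_B f(x,\nabla u)\,dx$ that is uniform in $i\in\{1,\ldots,m\}$ so that averaging does no harm. Fortunately, because the discrepancy is controlled by an integral over $\{|u|>R_m\}$ -- a set independent of $i$ -- uniformity holds automatically, so the averaging can be discharged cleanly and the proof closes.
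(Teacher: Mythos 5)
Your proposal is correct and follows essentially the same route as the paper: representation for $BV$ functions via the perturbed functionals $E_\delta$ and Proposition \ref{prop:partial representation}, passage to the limit $\delta\to 0^+$ by monotonicity and dominated convergence with the identifications $\hat f=f$ (Proposition \ref{prop:fepsilon passes to the limit}, using $\hat f\in\mathcal F$ from Proposition \ref{prop:properties of f and g}) and $\hat g=g$ (Proposition \ref{prop:limite in gepsilon}), and then the extension to $GBV_\star$ through the averaged truncations of Lemma \ref{lemma:passaggio al limite dalle troncate al non nei vari pezzi}. The only cosmetic difference is in the final limit: you use a discrepancy bound supported on $\{|u|>R_m\}$ (resp.\ on the jump points with large traces), uniform in $i$, whereas the paper squeezes the average between the integrals at extremal indices $i(m),j(m)$ and applies dominated convergence — both arguments are equivalent.
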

\begin{proof}
    Let $\hat{f}$ be the function defined by \eqref{eq:def fhat}. We first show that 
    \begin{equation}\label{eq: ac intermedia}
        E^a(u,B)=\int_B\hat{f}(x,\nabla u)\, dx\,\,\, \text{ for every $u\in BV(A;\Rk)$ and $B\in\mathcal{B}(A)$.}
    \end{equation}
    
    To this aim,  we begin noting that $E^a_\delta(u,B)=E^a(u,B)+\delta\int_B|\nabla u|$ for every $u\in BV(A;\Rk)$ and for every $B\in\mathcal{B}(A)$, so that 
    \begin{equation*}
        E^a(u,B)=\inf_{\delta>0}E_\delta^a(u,B)=\lim_{\delta>0}E_\delta^a(u,B).
    \end{equation*}
   By definition of $f_\delta$, we have that
   \begin{equation*}
       f_\delta(x,\xi)\leq (c_3k^{1/2}+\delta)|\xi|+c_4,
   \end{equation*}
   for every $x\in\Rd$ and for every $\xi\in\Rkd$, so that by invoking the Dominate Convergence Theorem, 
   we obtain \eqref{eq: ac intermedia}.

  We now prove that 
  \begin{equation}\label{eq:rappresentazione salto sulle bv}
      E^j(u,B)=\int_{B}g(x,[u],\nu_u)\, d\hd\,\,\, \text{ for every $u\in BV(A;\Rk)$ and $B\in\mathcal{B}(A)$.}
  \end{equation}
  To show this, note that $E_\delta^j(u,B)=E^j(u,B)+\delta\int_{J_u\cap B} |[u]|d\hd$ for every $u\in BV(A;\Rk)$ and $B\in\mathcal{B}(A)$; therefore,
\begin{equation*}
      E^j(u,B)=\inf_{\delta>0}E_\delta^j(u,B)=\lim_{\delta>0}E_\delta^j(u,B).
\end{equation*}
   It is immediate to see that for every $x\in\Rd$, $\zeta\in\Rk$ and $\nu\in\mathbb{S}^{d-1}$  we have 
   \begin{equation*}
    g_\delta(x,\zeta,\nu)\leq (c_3k+\delta)|\zeta|\land 1.
   \end{equation*}
    Hence, recalling Proposition \ref{prop:limite in gepsilon} and using \eqref{eq:representation of jump epsilon},  by the Dominated Convergence Theorem we obtain \eqref{eq:rappresentazione salto}. 
 
 Consider now  $u\in GBV_\star(A;\Rk)$. Let $R_m>0$ with $R_m\to +\infty$. . By Lemma \ref{lemma:passaggio al limite dalle troncate al non nei vari pezzi},  both \eqref{eq: passage to the limit nella parte AC} and \eqref{eq:passage to the limit nella parte di salto} hold  for every $B\in\mathcal{B}(A)$.  To conclude, it is enough to show that
 \begin{eqnarray}
    &\displaystyle\label{eq:fine rapp 1} \lim_{m\to+\infty}\frac1m\sum_{i=1}^mE^a(\psi_{R_m}^{i}\circ u,A)=\int_Af(x,\nabla u)\,dx,\\
     &\displaystyle\label{eq: fine rapp 2} \lim_{m\to+\infty}\frac1m\sum_{i=1}^mE^j(\psi_{R_m}^{i}\circ u,A)=\int_{\jump{u}\cap A}g(x,[u],\nu_u)\,d\hd.
 \end{eqnarray}
 Since for every $i\in\{1,...m\}$ we have that $ \psi_{R_m}^{i}\circ u\in BV(A;\Rk)\cap L^\infty(A;\Rk)$, from \eqref{eq:rappresentazione ac} and \eqref{eq:rappresentazione salto sulle bv} we deduce that
 \begin{eqnarray*}
     &\displaystyle E^a(\psi_{R_m}^{i}\circ u,A)=\int_Af(x,\nabla (\psi_{R_m}^{i}\circ u))\, dx,\\
     &\displaystyle E^j(\psi_{R_m}^{i}\circ u,A)=\int_{\jump{\psi_{R_m}^{i}(u)}\cap A}\!\!\!g(x,[\psi_{R_m}^{i}\circ u],\nu_u)\, \hd.
 \end{eqnarray*}
 By \eqref{eq:properties psiRi} we have $|\nabla(\psi_{R_m}^{i}\circ u)|\leq |\nabla
 u|$ for every $m\in\N$ and $i\in\{1,...,m\}$. Morover, $\nabla u\in L^1(\Rd;\Rkd)$ by Proposition \ref{prop: Properties of GBVsvector}(a). For every $m\in\N$ there exists $i(m),j(m)\in\{1,...,m\}$ such that 
\begin{equation*}
    \int_Af(x,\nabla (\psi_{R_m}^{i(m)}\circ u))\, dx\leq\frac1m\sum_{i=1}^mE^a(\psi_{R_m}^{i}\circ u,A)\leq\int_Af(x,\nabla (\psi_{R_m}^{j(m)}\circ u))\, dx.
\end{equation*}
 We observe that both $\nabla(\psi_{R_m}^{i(m)}\circ u)$ and $\nabla(\psi_{R_m}^{j(m)}\circ u)$  converge to $\nabla u$ pointwise $\Ld$-a.e. in $A$ as $m\to+\infty$. Hence, recalling that  by Proposition \ref{prop:properties of f and g} the function $f$ belongs to $\mathcal{F}$, the Dominated Convergence Theorem implies \eqref{eq:fine rapp 1}.
 
 As for \eqref{eq: fine rapp 2}, by Proposition \ref{prop: Properties of GBVsvector}(d), for every $i\in\{1,...,m \}$ we have that $|[\psi_{R_m}^{i}\circ u]|\land1\leq |[u]|\land 1$ and that $[\psi_{R_m}^{i}\circ u](x)\to [u](x)$ for $\hd$-a.e. $x\in \jump{u}$. For every $m\in\N$ we choose $i(m),j(m)\in\{1,...,m\}$ such that
\begin{equation*}
    \int_{\jump{u}\cap A}g(x,[\psi_{R_m}^{i(m)}\circ u],\nu_u)\, d\hd\leq\frac1m\sum_{i=1}^mE^j(\psi_{R_m}^{i}\circ u,A)\leq\int_{\jump{u}\cap A}g(x,[\psi_{R_m}^{j(m)}\circ u],\nu_u)\, d\hd.
\end{equation*}
  Since by Proposition \ref{prop:properties of f and g} the function $g$ belongs to $\mathcal{G}$, an application of the Dominated Convergence Theorem  yields \eqref{eq: fine rapp 2}, concluding the proof.
\end{proof}

\section{A smaller collection of integrands}\label{sec:smallerclass}

As the scalar case studied in \cite{DalToa23} and \cite{DalToa23b} suggests, to recover a full integral representation for functionals in $\E_{\rm sc}$, it is convenient to consider a smaller collection of integrands, whose definition is closely related to those studied in \cite{CagnettiGlobal}. In particular, we will show that the Cantor part $E^c$ can be represented as an integral functional whenever $E\in\E_{\rm sc}$ is the $\Gamma$-limit of a sequence of functionals associated to integrands in this in this smaller class and the volume integrand corresponding to $E$ does not depend on $x$ (see Theorem \ref{thm:Cantor}). 

In the rest of the paper we fix two new constants $c_7>0$ and  $\alpha\in (0,1)$. Moreover, we fix a continuous non-decreasing function  $\vartheta\colon [0,+\infty)\to [0,+\infty)$ such that
\begin{equation}\label{eq:requirements on vartheta}
\vartheta(0)=0\,\,\,\textup{ and }\,\,\,\vartheta(t)\geq \frac{c_1}{c_3}t-1\quad\text{for every $t\geq 0$}.
\end{equation}

The smaller collection of volume integrands is introduced in the following definition.

\begin{definition}
   Let $\mathcal{F}^\alpha$ be the collection of functions $f\in\mathcal{F}$ such that 
    \begin{equation}\label{eq:def falfa}
        \Big|\frac{f(x,s\xi)}{s}-\frac{f(x,t\xi)}{t}\Big|\leq \frac{c_7}{s}f(x,s\xi)^{1-\alpha}+\frac{c_7}{s}+\frac{c_7}{t}f(x,t\xi)^{1-\alpha}+\frac{c_7}{t}
    \end{equation}
    for $\Ld$-a.e. $x\in\Rd$ and for every $s,t>0$ and $\xi\in\Rdk$.
\end{definition}
\begin{remark}\label{re:Definition of finfty gzero}
Arguing as in \cite[Remark 4.3]{DalToa23b}, one can show that  $f\in\mathcal{F}^\alpha$ if and only if for $\Ld$-a.e. $x\in\Rd$ and for every $\xi\in\Rkd$ we have
\begin{equation*}
     f^\infty(x,\xi)=\lim_{s\to+\infty}\frac{1}{s}f(x,s\xi),
\end{equation*}
and  
\begin{equation*}
\Big|\frac{1}{s}f(x,s\xi)-f^\infty(x,\xi)\Big|\leq \frac{c_7}{s}+\frac{c_7}{s}f(x,s\xi)^{1-\alpha}
\end{equation*}
for $\Ld$-a.e. $x\in\Rd$, for every $\xi\in\Rkd$ and for every $s>0$. This is closely related to condition (H4) of \cite{bouchitte1998global}.
\end{remark}

We now introduce the smaller collection of surface integrands.
\begin{definition}
     Let $\mathcal{G}^\vartheta$ be the collection of functions $g\in\mathcal{G}$ such that 
    \begin{equation}\nonumber 
        \Big|\frac{g(x,s\zeta,\nu)}{s}-\frac{g(x,t\zeta,\nu)}{t}\Big|\leq \vartheta (s|\zeta|)\frac{g(x,s\zeta,\nu)}{s}+\vartheta(t|\zeta|)\frac{g(x,t,\nu)}{t},
    \end{equation}
    for every $s,t>0$, $x\in\Rd$, $\zeta\in\Rk$, and $\nu\in\mathbb{S}^{d-1}$.
\end{definition}

\begin{remark}
The arguments of \cite[Remark 4.5]{DalToa23b} show that $g\in\mathcal{G}^\vartheta$ if and only if for $x\in\Rd$, $\zeta\in\Rk$ and $\nu\in\mathbb{S}^{d-1}$ we have that the limit
\begin{equation}\label{eq:def g0}
   \displaystyle g^0(x,\zeta,\nu):=\lim_{s\to 0^+}\frac{1}{s}g(x,s\zeta,\nu)
\end{equation}
exists and 
\begin{equation*}
\Big|\frac1s g(x, s\zeta, \nu) - g^0(x, \zeta, \nu)\Big|\leq\vartheta(s|\zeta|)\frac1sg(x, s\zeta,\nu) \leq c_3k\vartheta(s|\zeta|)|\zeta|
\end{equation*}
for every $s>0$, $x\in\Rd$, $\zeta\in\Rk$, and  $\nu\in\mathbb{S}^{d-1}$. By \cite[Remark 3.5]{CagnettiGlobal} this is closely related to condition (g5) of that paper.
\end{remark}

\begin{remark}
    The class $\mathcal{G}^\vartheta$ is non-empty, since the function $(x,\zeta,\nu)\mapsto \sum_{i=1}^k((c_1|\zeta_i|)\land c_3)$ belongs to $\mathcal{G}^\vartheta$. Arguing as in \cite[Remark  4.7]{DalToa23b}, it is possible to show that $\mathcal{G}^\vartheta\neq \emptyset$ if and only if $\vartheta$ satisfies \eqref{eq:requirements on vartheta}
    .
\end{remark}

As in Section \ref{sec:classE}, we  introduce a space of functionals $\E^{\alpha,\vartheta}$ containing $E^{f,g}$ for every $f\in\mathcal{F}^\alpha$ and $g\in\mathcal{G}^\vartheta$.

\begin{definition}
     Let $\E^{\alpha,\vartheta}$ be the space of functionals $E\in \E$  satisfying the condition
    \begin{multline}
        \Big|\frac{E(su,A)}{s}-\frac{E(tu,A)}{t}\Big|\leq \frac{c_7}{s}\Ld(A)^\alpha E(su,A)^{1-\alpha}+\vartheta(su^A)\frac{E(su,A)}{s}+\frac{c_7}{s}\Ld(A)\\
        \label{eq:def ealphatheta}\hspace{2 cm}+\frac{c_7}{t}\Ld(A)^{\alpha}E(tu,A)^{1-\alpha}+\vartheta(tu^A)\frac{E(tu,A)}{t}+\frac{c_7}{t}\Ld(A),
    \end{multline}
    for every $t,s>0$, $A\in\mathcal{A}_c(\Rd)$, and $u\in BV(A;\Rk)\cap L^\infty(A;\Rk)$, where $u^A:=\textup{osc}_Au:=\textup{ess}\sup_{x,y\in A}|u(x)-u(y)|$. We also set $\E^{\alpha,\vartheta}_{\rm sc}:=\E^{\alpha,\vartheta}\cap \E_{\rm sc}.$
\end{definition}

\begin{proposition}
    Let $f\in\mathcal{F}^\alpha$ and $g\in\mathcal{G}^\vartheta$. Then the functional $E^{f,g}$ of  Definition \ref{def:Functionals Efg} belongs to $\E^{\alpha,\vartheta}$.
\end{proposition}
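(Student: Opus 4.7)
Since Proposition \ref{prop: integral functionals are in E} already gives $E^{f,g}\in\E$, the only task is to verify the extra inequality \eqref{eq:def ealphatheta} for $A\in\mathcal{A}_c(\Rd)$ and $u\in BV(A;\Rk)\cap L^\infty(A;\Rk)$, where $E^{f,g}$ is given explicitly as the sum of a bulk, a Cantor, and a surface integral. The plan is to treat these three contributions separately in the difference $E^{f,g}(su,A)/s-E^{f,g}(tu,A)/t$. The key preliminary observation is that the Cantor contribution is actually independent of the scaling parameter: since $D^c(su)=sD^cu$ for $s>0$, one has $|D^c(su)|=s|D^cu|$ and $dD^c(su)/d|D^c(su)|=dD^cu/d|D^cu|$, so that
\[
\frac{1}{s}\int_A f^\infty\Bigl(x,\frac{dD^c(su)}{d|D^c(su)|}\Bigr)\,d|D^c(su)|=\int_A f^\infty\Bigl(x,\frac{dD^cu}{d|D^cu|}\Bigr)\,d|D^cu|,
\]
which is independent of $s$. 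Hence the Cantor pieces cancel in the difference and do not enter the final estimate.

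For the bulk term I would insert $\xi=\nabla u(x)$ into the pointwise inequality \eqref{eq:def falfa} defining $\mathcal{F}^\alpha$ and integrate over $A$. H\"older's inequality with exponents $1/(1-\alpha)$ and $1/\alpha$ then yields
\[
\int_A f(x,s\nabla u)^{1-\alpha}\,dx\leq \Ld(A)^\alpha\Bigl(\int_A f(x,s\nabla u)\,dx\Bigr)^{1-\alpha}\leq \Ld(A)^\alpha E^{f,g}(su,A)^{1-\alpha},
\]
where the last step uses nonnegativity of the Cantor and jump parts of $E^{f,g}$; the analogous bound holds for $t$. Added together, these contributions match exactly the bulk-type terms on the right-hand side of \eqref{eq:def ealphatheta}.

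For the surface term I would substitute $\zeta=[u](x)$ into the pointwise inequality characterising $\mathcal{G}^\vartheta$ and integrate over $J_u\cap A$. The step I expect to be the most delicate is passing from the pointwise factor $\vartheta(s|[u](x)|)$ to the constant $\vartheta(su^A)$: the hypothesis $u\in L^\infty(A;\Rk)$ combined with the fact that $u^\pm(x)$ are essential one-sided approximate limits at $x$ gives $|[u](x)|\leq u^A$ for $\hd$-a.e.\ $x\in J_u$, and the monotonicity of $\vartheta$ then produces
\[
\int_{J_u\cap A}\vartheta(s|[u]|)\,\frac{g(x,s[u],\nu_u)}{s}\,d\hd\leq \vartheta(su^A)\,\frac{E^{f,g}(su,A)}{s},
\]
and analogously with $t$ in place of $s$. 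Summing the bulk and surface estimates gives \eqref{eq:def ealphatheta}, completing the verification that $E^{f,g}\in\E^{\alpha,\vartheta}$.
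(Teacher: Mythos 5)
Your proposal is correct and is exactly the direct verification that the paper delegates to the scalar-case reference (\cite[Proposition 4.9]{DalToa23b}): split $E^{f,g}$ into bulk, Cantor and surface parts, observe the Cantor term is invariant under the scaling $u\mapsto su$, integrate the pointwise inequalities defining $\mathcal{F}^\alpha$ and $\mathcal{G}^\vartheta$, and use H\"older together with $|[u]|\leq u^A$ on $J_u$ and the monotonicity of $\vartheta$. No gaps; this is essentially the same approach.
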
 
\begin{proof}
    The result is proved as in \cite[Proposition 4.9]{DalToa23b}, replacing  Remark 2.9 by our Proposition \ref{prop: integral functionals are in E}.
\end{proof}

To study the $\Gamma$-limit of sequences of functionals $(E_n)_n\subset \E^{\alpha,\vartheta}$, given $A\in\mathcal{A}_c(\Rd)$ and a function $u\in BV(A;\Rk)\cap L^\infty(A;\Rk)$, it is important to be able to find approximate recovery sequences for $u$ which are bounded in $L^\infty(A;\Rk)$. This is taken care of in the next lemma.

\begin{lemma}\label{lemma:recovery sequence troncate}
Let $m\in\N$, $A\in\mathcal{A}_c(\Rd)$, $u\in BV(A;\Rk)\cap L^\infty(A;\Rk)$, $E\in\E$, 
and  $(E_n)_n\subset\E$, with $E_n(\cdot,A)$ $\Gamma$-converging to $E$ in the topology of $L^0(\Rd;\Rk)$.
Then there exist sequences $(u_n)_n\subset BV(A;\Rk)$ and $(v_n)_n\subset BV(A;\Rk)$ converging to $u$ in $L^1(A;\Rk)$ such that 
\begin{eqnarray}
    &\displaystyle \label{eqref:stima per oscillazione}u^A_n\leq 4\sigma^{m}u^A \quad  \text{ and }\quad   \|v_n\|_{L^\infty(A;\Rk)}\leq 2\sigma^m\|u\|_{L^\infty(A;\Rk)} \quad \text{for every } n\in\N, \\
\label{eq:recovery troncata}
   &\displaystyle \limsup_{n\to+\infty }E_n(u_n,A)\lor \limsup_{n\to+\infty } E_n(v_n,A)\leq E(u,A)+C\frac{E(u,A)+\Ld(A)}{m},
\end{eqnarray}
 where $C$ is the constant in property (g) of Definition \ref{def:space of functionals E}.
\end{lemma}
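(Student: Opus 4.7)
The strategy is to apply property (g) of Definition \ref{def:space of functionals E} to a $\Gamma$-recovery sequence for $u$, making two different choices of the auxiliary function $w$ and the radius $R$ to produce $v_n$ and $u_n$. We may assume $E(u,A)<+\infty$, since otherwise $u_n=v_n=u$ trivially works. Fix then a recovery sequence $(w_n)_n\subset L^0(\Rd;\Rk)$ with $w_n\to u$ in measure and $\limsup_n E_n(w_n,A)\leq E(u,A)$; discarding finitely many terms we may assume $E_n(w_n,A)<+\infty$ for every $n$, so that (c1$'$) of Remark \ref{re:new c1 and c2} gives $w_n|_A\in GBV_\star(A;\Rk)$.

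For the construction of $v_n$, I take $w\equiv 0$ and $R:=2\|u\|_{L^\infty(A;\Rk)}$; the degenerate case $\|u\|_{L^\infty}=0$ (so $u=0$ a.e.) is handled separately, using the recovery sequence for the constant $0$ and sending a similar truncation radius to $0$. Since $\nabla w\equiv 0$, property (g) applied to $E_n$ yields, for every $n$, an index $i_n\in\{1,\ldots,m\}$ with
\[
E_n(\psi_R^{i_n}\circ w_n,A)\leq E_n(w_n,A)+c_4\Ld(A^R_{w_n,0})+\frac{C}{m}\bigl(E_n(w_n,A)+\Ld(A)\bigr).
\]
Set $v_n:=\psi_R^{i_n}\circ w_n$. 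The properties in \eqref{eq:properties psiRi} give $\|v_n\|_{L^\infty}\leq\sigma^{i_n}R\leq 2\sigma^{m}\|u\|_{L^\infty}$. Since $|u|\leq R$ a.e., we have $\psi_R^{i_n}\circ u=u$, so the $1$-Lipschitz bound on $\psi_R^{i_n}$ combined with $w_n\to u$ in measure gives $v_n\to u$ in measure; the uniform $L^\infty$ bound then upgrades this to $L^1(A;\Rk)$-convergence. Finally, $A^R_{w_n,0}\subset\{|w_n-u|\geq \|u\|_{L^\infty}\}\cap A$ has measure tending to $0$, so taking $\limsup_n$ in the displayed inequality gives the required energy bound for $v_n$.

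For $u_n$, the same scheme applies with $w$ replaced by a constant $a_0\in\Rk$ satisfying $|u-a_0|\leq u^A$ a.e. (any essential value of $u$; the case $u^A=0$ is trivial with $u_n=u$), and with $R:=2u^A$. Again $\nabla w=0$, so property (g) produces an index $i_n$ for which
\[
E_n\bigl(a_0+\psi_R^{i_n}\circ(w_n-a_0),A\bigr)\leq E_n(w_n,A)+c_4\Ld(A^R_{w_n,a_0})+\frac{C}{m}\bigl(E_n(w_n,A)+\Ld(A)\bigr).
\]
Setting $u_n:=a_0+\psi_R^{i_n}\circ(w_n-a_0)$, the bound $|\psi_R^{i_n}\circ(w_n-a_0)|\leq\sigma^{m}R=2\sigma^{m}u^A$ gives $u_n^A\leq 4\sigma^{m}u^A$, and the convergence $u_n\to u$ in $L^1(A;\Rk)$ together with the vanishing of $\Ld(A^R_{w_n,a_0})$ is obtained exactly as for $v_n$, with $u-a_0$ in place of $u$.

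The only real subtlety lies in calibrating $R$: it must exceed $\|u\|_{L^\infty}$ (respectively $u^A$) to guarantee that the error set in the right-hand side of (g) has vanishing measure, yet must not exceed twice that quantity so that $\sigma^m R$ meets the prescribed $L^\infty$ (respectively oscillation) bound. The choices $R=2\|u\|_{L^\infty}$ and $R=2u^A$ are the minimal ones achieving both requirements simultaneously, and they dictate the constants $2$ and $4$ appearing in \eqref{eqref:stima per oscillazione}.
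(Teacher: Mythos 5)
Your main construction is correct and is essentially the paper's own proof: you take a recovery sequence $(w_n)_n$ and apply property (g) of Definition \ref{def:space of functionals E} with $w=0$, $R=2\|u\|_{L^\infty(A;\Rk)}$ to get $v_n$, and with $w$ a constant essential value $a_0$ of $u$, $R=2u^A$ to get $u_n$; the paper does the same thing, merely replacing your recentering at $a_0$ by a preliminary normalisation $u^A=\|u\|_{L^\infty(A;\Rk)}$ via the translation invariance (d), and your choices of radius and the resulting constants $2\sigma^m$, $4\sigma^m$, as well as the treatment of the error term $c_4\Ld(A^R_{w_n,\cdot})$ and of the $L^1$-convergence, coincide with the paper's.

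The one step that does not hold up is your treatment of the degenerate cases. If $\|u\|_{L^\infty(A;\Rk)}=0$ (resp.\ $u^A=0$), the constraint \eqref{eqref:stima per oscillazione} forces $v_n\equiv 0$ (resp.\ $u_n$ constant $\Ld$-a.e.), so by property (d) the bound \eqref{eq:recovery troncata} amounts to $\limsup_n E_n(0,A)\le E(0,A)+\tfrac{C}{m}\big(E(0,A)+\Ld(A)\big)$. Neither ``sending the truncation radius to $0$'' (which still produces functions with positive $L^\infty$ norm, hence violates \eqref{eqref:stima per oscillazione}) nor taking $u_n=u$ delivers this, because $\Gamma$-convergence controls $E_n$ only along recovery sequences: $E_n(0,A)$ need not converge to $E(0,A)$. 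For instance, with $k=d=1$ and suitable structural constants, $f_n(x,\xi)=|\xi-a(nx)|$ with $a$ the $2$-periodic function equal to $\pm 1$ on alternating unit intervals and any admissible $g$ gives $E_n(0,A)=\Ld(A)$, while the homogenised limit has $f_{\rm hom}(0)=0$, so the required bound fails as soon as $m>C$. To be fair, the paper's proof passes over this case in silence (the normalisation via (d) does not resolve it), so this is really a defect of the statement at a.e.-constant $u$ — and it is harmless in the situations where the lemma is applied with a positive oscillation/norm — but your explicit patches for these cases should be removed or replaced by the observation that the nondegenerate argument is all that is ever used; for $\|u\|_{L^\infty(A;\Rk)}>0$ and $u^A>0$ your proof is complete.
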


\begin{proof}
By property (d) of Definition \ref{def:space of functionals E}, it is not restrictive to assume that $u^A=\|u\|_{L^{\infty}(A;\Rk)}$. By  $\Gamma$-convergence there exists a sequence $(w_n)\subset L^0(\Rd;\Rk)$ converging to $u$ in $L^0(\Rd;\Rk)$ such that 
\begin{equation}
\label{eq:recuperiamml}\lim_{n\to+\infty}E_n(w_n,A)=E(u,A).
\end{equation}

Let us fix $R:=2\|u\|_{L^\infty(A;\Rk)}$. By property (g) of Definition \ref{def:space of functionals E}  for every $n\in\N$ there exists an index $i(n)\in\{1,...,m\}$ such that 
\begin{equation*}
     E_n(\psi_R^{i(n)}\circ w_n,A)\leq E_n(w_n,A)+C\frac{E(u,A)+\Ld(A)}{m}+c_4\Ld(A^R_{v_n,0}).
\end{equation*}
Since $w_n\to u$ in $L^0(\Rd;\Rk)$, by our choice of $R$ we have $\Ld(A_{w_n,0}^{R})\to 0$. Setting $u_n:=\psi_R^{i(n)}\circ w_n$, we deduce  from \eqref{eq:recuperiamml} and the previous inequality that
\begin{equation*}
    \limsup_{n\to+\infty}E_n(u_n,A)\leq E(u ,A)+C\frac{E(u,A)+\Ld(A)}{m}.
\end{equation*}

\noindent 
We conclude noting that by \eqref{eq:properties psiRi} we have $u_n^A\leq 2\|u_n\|_{L^\infty(A; \Rk)}\leq 4\sigma^{m}\|u\|_{L^\infty(A;\Rk)}=4\sigma^mu^A$ and  $u_n\to u$ in $L^1(A;\Rk)$.

The construction of $v_n$ is similar.
\end{proof}

We now want study  $\Gamma$-limits of sequences of functionals in $\E^{\alpha,\vartheta}$. To this aim, it is convenient to introduce a family of subspaces of $\E$, in which \eqref{eq:def ealphatheta} holds in a weaker form. 
Given $m\in\N$, we set
\begin{equation}\label{def:vartheta m}
    \vartheta_m(t):=\vartheta(4\sigma^mt)
\end{equation}
for every $t\geq 0$.
\begin{definition}
    Given $m\in\N$, we denote by $\E^{\alpha,\vartheta_m}_m$ the space of functionals $E\in\E$ satisfying the condition 
       \begin{multline}
       \Big|\frac{E(su,A)}{s}-\frac{E(tu,A)}{t}\Big|\leq \frac{c_7}{s}\Ld(A)^\alpha E(su,A)^{1-\alpha}+\vartheta_m(su^A)\frac{E(su,A)}{s}+\frac{c_7}{s}\Ld(A)\\
     \hspace{-1.5 cm}\label{eq:def ealphatheta m}\hspace{2 cm}+\frac{c_7}{t}\Ld(A)^{\alpha}E(tu,A)^{1-\alpha}+\vartheta_m(tu^A)\frac{E(tu,A)}{t}+\frac{c_7}{t}\Ld(A)+R_{m,s}(u,A)+R_{m,t}(u,A)
    \end{multline}
    for every $t,s>0$, $A\in\mathcal{A}_c(\Rd)$, $u\in BV(A;\Rk)\cap L^\infty(A,\Rk)$, where $\vartheta_m$ is the function defined by \eqref{def:vartheta m} and  for every $t>0$
    \begin{equation}\label{eq:def R}
\hspace{-0.2 cm}R_{m,t}(u,A):=C\frac{1+\vartheta_m(tu^A)}{t}\frac{E(tu,A)+\Ld(A)}{m}
      +C^{\alpha }\frac{c_7}{t}\Big(\frac{E(tu,A)+\Ld(A)}{m}\Big)^{1-\alpha}\Ld(A)^\alpha,
    \end{equation}
    $C$ being the constant of property (g) of Definition \ref{def:space of functionals E}.
    We also set $\ClosureE:=\Big(\bigcap_{m=1}^\infty\E^{\alpha,\vartheta_m}_m\Big)\cap\E_{\rm sc}$.
\end{definition}

We now show that $\Gamma$-limits of sequences in $\E^{\alpha,\vartheta}$ belong to the larger space $\E^{\alpha,\vartheta}_w$.
\begin{proposition}\label{prop:limits are in Eweak}
    Let  $(E_n)_n$ be a sequence of functionals in $\E^{\alpha,\vartheta}$ and let  $E\in\E$. Assume that for every $A\in\mathcal{A}_c(\Rd)$ the sequence $E_n(\cdot,A)$ $\Gamma$-converges to $E(\cdot,A)$ with respect to the topology of $L^0(\Rd;\Rk)$. Then $E\in\E^{\alpha,\vartheta_m}_m\cap \E_{\rm sc}$ for every $m\in\N$. In particular, $E\in\ClosureE$.
\end{proposition}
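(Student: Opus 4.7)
The proof has two independent parts: $E \in \E_{\rm sc}$, and $E$ satisfies the structural inequality \eqref{eq:def ealphatheta m} for every $m \in \N$. The first is almost free: Theorem~\ref{thm:compactness for E} extracts from $(E_n)_n$ a subsequence $\Gamma$-converging on every $A \in \mathcal{A}_c(\Rd)$ to some element of $\E_{\rm sc}$; since the full sequence already $\Gamma$-converges to $E$ by hypothesis, uniqueness of the $\Gamma$-limit forces that element to be $E$ itself, so $E \in \E_{\rm sc}$.

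For the second part, fix $m \in \N$, $A \in \mathcal{A}_c(\Rd)$, $u \in BV(A;\Rk) \cap L^\infty(A;\Rk)$, and $s,t > 0$. Because the two sides of \eqref{eq:def ealphatheta m} are symmetric under $s \leftrightarrow t$, it suffices to bound $E(su,A)/s - E(tu,A)/t$ from above; the reverse sign is handled by swapping the roles of $s$ and $t$ and repeating the argument. Apply Lemma~\ref{lemma:recovery sequence troncate} to the function $tu$: it produces $w_n \to tu$ in $L^1(A;\Rk)$ with $w_n^A \leq 4\sigma^m (tu)^A$ and $\limsup_n E_n(w_n, A) \leq E(tu,A) + C(E(tu,A)+\Ld(A))/m$. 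Setting $u_n := w_n/t$, we obtain $u_n \to u$ in $L^1(A;\Rk)$, $u_n^A \leq 4\sigma^m u^A$, and $E_n(tu_n,A) = E_n(w_n,A)$. Since (c1$'$) and (c2$'$) of Remark~\ref{re:new c1 and c2}, applied successively to $tu_n$ and $su_n$, bound $E_n(su_n,A)$ uniformly in $n$, we may pass to a subsequence (not relabelled) along which both $L_s := \lim_n E_n(su_n,A)$ and $L_t := \lim_n E_n(tu_n,A)$ exist and are finite. By $\Gamma$-lower-semicontinuity, $E(su,A) \leq L_s$ and $E(tu,A) \leq L_t \leq E(tu,A) + C(E(tu,A)+\Ld(A))/m$.

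Next, apply \eqref{eq:def ealphatheta} (which holds for $E_n \in \E^{\alpha,\vartheta}$) to the function $u_n$. The oscillation bounds $su_n^A \leq 4\sigma^m su^A$ and $tu_n^A \leq 4\sigma^m tu^A$ combined with monotonicity of $\vartheta$ give $\vartheta(s u_n^A) \leq \vartheta_m(su^A)$ and $\vartheta(t u_n^A) \leq \vartheta_m(tu^A)$. Passing to the limit along the selected subsequence and using the telescoping
\[
\frac{E(su,A)}{s} - \frac{E(tu,A)}{t} \leq \Big(\frac{L_s}{s} - \frac{L_t}{t}\Big) + \frac{L_t - E(tu,A)}{t},
\]
we bound the first parenthesis by the limit of the right-hand side of \eqref{eq:def ealphatheta}, which now involves $L_s^{1-\alpha}, L_s, L_t^{1-\alpha}, L_t$ and $\vartheta_m$. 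Writing $L_t = E(tu,A) + \eta_t$ with $0 \leq \eta_t \leq C(E(tu,A)+\Ld(A))/m$, the subadditivity $(a+b)^{1-\alpha} \leq a^{1-\alpha} + b^{1-\alpha}$ allows one to split the $L_t^{1-\alpha}$ term and the $\vartheta_m(tu^A) L_t/t$ term into an $E(tu,A)$-part (matching the desired RHS of \eqref{eq:def ealphatheta m}) plus $\eta_t$-contributions that fit precisely into $R_{m,t}(u,A)$ of \eqref{eq:def R}; the stray summand $(L_t - E(tu,A))/t$ also gets absorbed there.

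The remaining $L_s$-dependent terms on the right are \emph{not} brought close to $E(su,A)$ by this single application of Lemma~\ref{lemma:recovery sequence troncate}, which controls only $L_t$. To absorb them into $R_{m,s}(u,A)$ we rerun the entire argument with the roles of $s$ and $t$ swapped: applying Lemma~\ref{lemma:recovery sequence troncate} to $su$ now produces a (different) sequence for which the corresponding $\tilde{L}_s$ is close to $E(su,A)$, and yields the opposite one-sided bound $E(tu,A)/t - E(su,A)/s \leq \text{RHS}$ with analogous residual $R_{m,s}(u,A)$. Taking the larger of the two one-sided inequalities gives the absolute-value form \eqref{eq:def ealphatheta m}, proving $E \in \E^{\alpha,\vartheta_m}_m \cap \E_{\rm sc}$; intersecting over $m$ yields $E \in \ClosureE$. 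The principal obstacle is exactly this asymmetry: Lemma~\ref{lemma:recovery sequence troncate} pins down $E_n$ on a recovery-type sequence only for the one function it is built around, so two independent applications — one per sign of the absolute value — are needed, and the residuals $R_{m,s}$ and $R_{m,t}$ in the definition of $\E^{\alpha,\vartheta_m}_m$ are engineered precisely to absorb the corresponding truncation errors coming from Lemma~\ref{lemma:recovery sequence troncate}.
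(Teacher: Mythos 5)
Your first part ($E\in\E_{\rm sc}$) and the overall skeleton — reduce to a one-sided estimate, apply Lemma~\ref{lemma:recovery sequence troncate} to $tu$ and rescale to get $u_n\to u$ with $u_n^A\leq 4\sigma^m u^A$, apply \eqref{eq:def ealphatheta} to $u_n$, pass to the limit — coincide with the paper, and your accounting of the $t$-side residuals into $R_{m,t}(u,A)$ of \eqref{eq:def R} is fine. The gap is in your treatment of the $s$-side. After the limit, your upper bound for $E(su,A)/s-E(tu,A)/t$ still contains $\frac{c_7}{s}\Ld(A)^\alpha L_s^{1-\alpha}+\vartheta_m(su^A)L_s/s$, and the only information available about $L_s=\lim_nE_n(su_n,A)$ is $E(su,A)\leq L_s$ — the wrong direction for an upper bound. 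The sequence $su_n$ is not a recovery sequence for $su$ (it was built around $tu$), so $L_s$ admits no estimate of the form $E(su,A)+R_{m,s}(u,A)$; only crude bounds through (c1$'$)--(c2$'$) with multiplicative constants like $c_3k/c_1$, which do not fit \eqref{eq:def ealphatheta m}. Rerunning the argument with $s$ and $t$ swapped does not repair this: it yields the opposite-sign inequality, carrying its own uncontrolled limit on the $t$-side, and leaves the $L_s$-terms of the first inequality untouched. \enquote{Taking the larger of the two} therefore does not produce \eqref{eq:def ealphatheta m}, whose right-hand side involves only $E(su,A)$, $E(tu,A)$, $R_{m,s}$, and $R_{m,t}$.

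The paper's resolution is to never put the $s$-terms on the right. One reformulates the one-sided claim as \eqref{eq:claim Ealpha then Eaplha m}, with all $s$-dependent terms collected on the left and under the (harmless) assumption that the left-hand side is strictly positive, and groups them as $\Phi(E_n(su_n,A))$ with
\begin{equation*}
\Phi(z):=(1-\vartheta_m(su^A))\frac zs-\frac{c_7}{s}\Ld(A)^\alpha z^{1-\alpha}-\frac{c_7}{s}\Ld(A).
\end{equation*}
One checks that $\Phi(z)>0$ forces $z>z_0$ for an explicit $z_0$ and that $\Phi$ is increasing on $(z_0,+\infty)$; hence the $\Gamma$-liminf inequality $E(su,A)\leq\liminf_nE_n(su_n,A)$ upgrades to $\Phi(E(su,A))\leq\liminf_n\Phi(E_n(su_n,A))$, which is exactly the control on the $s$-side that your telescoping step lacks. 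Incorporating this monotonicity argument (and stating the target in the one-sided form with the $s$-terms on the left) closes the gap; the rest of your proof can then be kept as is.
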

\begin{proof}
    By the semicontinuity of $\Gamma$-limits we have $E\in\E_{\rm sc}$. Thus, given $m\in\N$, we only need to prove that $E\in\E^{\alpha,\vartheta_m}_m$.
    
    By the continuity of $\vartheta_m$ and exchanging the roles of $s$ and $t$, to conclude it is enough to show that 
    \begin{multline}\label{eq:claim Ealpha then Eaplha m}
    (1-\vartheta_m(su^A))\frac{E(su,A)}{s}-\frac{c_7}{s}\Ld(A)-\frac{c_7}{s}\Ld(A)^\alpha E(su,A)^{1-\alpha}\\
    \leq (1+\vartheta_m(tu^A))\frac{E(tu,A)}{t}
    +\frac{c_7}{t}\Ld(A)+\frac{c_7}{t}\Ld(A)^\alpha E(tu,A)^{1-\alpha} +R_{m,t}(u,A),
    \end{multline}
    assuming that the left-hand side is stricty positive.
    
   Let $A\in\mathcal{A}_c(\Rd)$, $u\in L^\infty(A;\Rk)$, and  $t,s>0$. By Lemma \ref{lemma:recovery sequence troncate}, there exists a sequence $(u_n)_n\subset BV(A;\Rk)\cap L^\infty(A;\Rk)$ such that $u_n\to u$ in $L^1(A;\Rk)$, $u_n^A\leq 4\sigma^mu^A$, and
    \begin{equation}\label{eq:energy bound e then e m}
        \limsup_{n\to+\infty}E_n(tu_n,A)\leq E(tu,A)+C\frac{E(tu,A)+\Ld(A)}{m}.
    \end{equation}
        Since $u_n\in BV(A;\Rk)\cap L^\infty(A;\Rk)$, by  \eqref{eq:def ealphatheta} we get 
    \begin{multline*} 
        (1-\vartheta(su_n^A))\frac{E_n(su_n,A)}{s}-\frac{c_7}{s}\Ld(A)-\frac{c_7}{s}\Ld(A)^\alpha E_n(su_n,A)^{1-\alpha}\\
        \leq (1+\vartheta(tu_n^A))\frac{E_n(tu_n,A)}{t}
    +\frac{c_7}{t}\Ld(A)+\frac{c_7}{t}\Ld(A)^\alpha E_n(tu_n,A)^{1-\alpha}.
    \end{multline*}
    Taking the limsup as $n\to+\infty$ and using the monotonicity of $\vartheta$ and  \eqref{eq:energy bound e then e m}, we deduce  that
    \begin{multline}\label{eq:limsup leq cosa}
      \limsup_{n\to+\infty }\Big((1-\vartheta_m(su^A))\frac{E_n(su_n,A)}{s}-\frac{c_7}{s}\Ld(A)-\frac{c_7}{s}\Ld(A)^\alpha E_n(su_n,A)^{1-\alpha} \Big)\\
      \leq\limsup_{n\to+\infty } \Big((1-\vartheta(su_n^A))\frac{E_n(su_n,A)}{s}-\frac{c_7}{s}\Ld(A)-\frac{c_7}{s}\Ld(A)^\alpha E_n(su_n,A)^{1-\alpha}\Big)\\
     \leq\limsup_{n\to+\infty}\Big(1+\vartheta(tu_n^A))\frac{E_n(tu_n,A)}{t}
    +\frac{c_7}{t}\Ld(A)+\frac{c_7}{t}\Ld(A)^\alpha E_n(tu_n,A)^{1-\alpha}\Big)\\
    \leq (1+\vartheta_m(tu^A))\frac{E(tu,A)}{
    t} +\frac{c_7}{t}\Ld(A)+\frac{c_7}{t}\Ld(A)^\alpha E(tu,A)^{1-\alpha}+R_{m,t}(u,A),
    \end{multline}
where $R_{m,t}(u,A)$ is defined by \eqref{eq:def R}.
To deal with the first term in the previous chain of inequalities we introduce the function $\Phi$ defined for every $z\in[0,+\infty)$ as 
\begin{equation*}
    \Phi(z):=(1-\vartheta_m(su^A))\frac{z}{s}-\frac{c_7}{s}\Ld(A)^{1-\alpha}z^{1-\alpha}-\frac{c_7}{s}\Ld(A).
\end{equation*}
Since the left-hand side of \eqref{eq:claim Ealpha then Eaplha m} is strictly positive, we have that $\Phi(E(su,A))>0$, which implies $(1-\vartheta_m(su^A))>0$. We set  $z_0:=c_7^{1/\alpha}(1-\vartheta_m(su^A))^{-1/\alpha}\Ld(A)$ and observe that $\Phi$ is increasing on $(z_0,+\infty)$ and that if $\Phi(z)>0$ then $z>z_0$; in particular, $E(su,A)>z_0$.
Finally, from the $\Gamma$-convergence of $E_n(\cdot,A)$ to $E(\cdot,A)$ and from the convergence of  $u_n$ to $u$ in $L^0(\Rd;\Rk)$, we deduce that
\[
E(su,A)\leq\liminf_{n\to+\infty}E_n(su_n,A),
\]
which, by the monotonicity of $\Phi$, implies 
\begin{equation*}
\Phi(E(su,A))\leq\liminf_{n\to+\infty}\Phi(E_n(su_n,A)).
\end{equation*}
Recalling the definition of $\Phi$, from this inequality and
\eqref{eq:limsup leq cosa} we obtain 
\eqref{eq:claim Ealpha then Eaplha m}, concluding the proof.
\end{proof}

The next technical results will used in the proof of the representation theorem presented in Section \ref{sec:repr}.  Given $\xi\in\Rkd$, and $m\in\N$, we set 
\begin{equation}\label{eq:def kappamxi}
\kappa_{\xi,m}:=2c_{\xi,m}+2d^{1/2}|\xi|,\\
\end{equation}
where $c_{\xi,m}$ the constant defined by \eqref{eq:def costante cxi}.
\begin{lemma}\label{prop:closure ealpha}
    For every $\xi\in\Rkd$ there exists a constant with the following property: for every $m\in\N$ and $E\in \E^{\alpha,\vartheta_m}\cap \E_{\rm sc}$ there exists a set Let $m\in\N$ and  $E\in\E^{\alpha,\vartheta_m}_m\cap\E_{\rm sc}$.  Then  
 there exists $N_m\in\mathcal{B}(\Rd),$ with $\Ld(N_m)=0$, satisfying the following property:  for every $x\in\Rd\setminus N_m$,  $\lambda\geq 1$, $\nu\in\mathbb{S}^{d-1}$,  $s, t>0$, and $\xi\in\Rdk$ there exist $\rho^{\nu,\lambda}_{m,\xi,t,s}(x)>0$, and $M_\xi$, the latter depending only on $|\xi|$ and on the structurual constants $c_1,...,c_7$,$k$ and $\alpha$, such that for every $\rho\in(0,\rho^{\nu,\lambda}_{m,\xi,t,s}(x))$  we have
    \begin{eqnarray}
    \nonumber 
       && \Big|\frac{m^{E}(s\ell_\xi,Q^\lambda_\nu(x,\rho))}{s}-\frac{m^{E}(t\ell_\xi,Q^\lambda_\nu(x,\rho))}{t}\Big|
        \\&&\nonumber \,\,\leq\frac{c_7}{s}\lambda^{\alpha(d-1)}\rho^{d\alpha}m^E(s\ell_\xi,Q^\lambda_\nu(x,\rho))^{1-\alpha}+\vartheta_{m}\Big(\frac{s\kappa_{\xi,m}\lambda\rho}{(s\land 1)(t\land 1)}\Big)\frac{m^E(s\ell_\xi,Q^\lambda_\nu(x,\rho))}{s}+\frac{c_7}{s}\lambda^{d-1}\rho^d\\
       &&\nonumber \quad +\frac{c_7}{t}\lambda^{\alpha(d-1)}\rho^{d\alpha}m^E(t\ell_\xi,Q^\lambda_\nu(x,\rho))^{1-\alpha}+\vartheta_{m}\Big( \frac{t\kappa_{\xi,m}\lambda\rho}{(s\land 1)(t\land 1)}\Big)\frac{m^E(t\ell_\xi,Q^\lambda_\nu(x,\rho))}{t}+ \frac{c_7}{t}\lambda^{d-1}\rho^d\\&&\nonumber \quad
      +\Big(1+\vartheta_m\Big(\frac{s\kappa_{\xi,m}\lambda\rho}{(s\land 1)(t\land 1)}\Big)\Big)\frac{M_\xi}{(s\land 1)m}\lambda^{d-1}\rho^d
     \nonumber +\Big(\frac{1}{s^\alpha}+\frac{1}{s }\Big)\frac{M_\xi}{m^{1-\alpha}}\lambda^{d-1}\rho^{ d}\\&&\quad \label{eq:stima lunga ac}+\Big(1+\vartheta_m\Big(\frac{t\kappa_{\xi,m}\lambda\rho}{(s\land 1)(t\land 1)}\Big)\Big)\frac{M_\xi}{(t\land 1)m}\lambda^{d-1}\rho^d
      +\Big(\frac{1}{t^\alpha}+\frac{1}{t }\Big)\frac{M_\xi}{m^{1-\alpha}}\lambda^{d-1}\rho^{ d},
     \end{eqnarray}
      where $\kappa_{\xi,m}$ is the constant defined  by \eqref{eq:def kappamxi}. If, in addition the function $f$ defined by \eqref{eq:definition of small f} does not depend on $x$, the set $N_m=\emptyset$ and $\rho^{\nu,\lambda}_{m,\xi,t,s}(x)=+\infty$.
\end{lemma}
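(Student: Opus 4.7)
The plan is to construct, for each pair $(s,t)$, two $BV(A;\Rk)\cap L^\infty(A;\Rk)$ competitors $v_s, v_t$ for the minimum problem $m^E(\ell_\xi,A)$ by rescaling truncated near-minimizers of $m^E(s\ell_\xi,A)$ and $m^E(t\ell_\xi,A)$ respectively, and then to apply the scaling property \eqref{eq:def ealphatheta m} of $\E^{\alpha,\vartheta_m}_m$ separately to each of them, closing the resulting implicit bound in $E(sv_s,A),E(tv_s,A)$ (and its analogue for $v_t$) by an absorption argument so that only $m^E(s\ell_\xi,A)$ and $m^E(t\ell_\xi,A)$ remain on the right-hand side.

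The integral representation of $E^a$ provided by Theorem \ref{thm:rappresentazione} yields $f\in\mathcal F$ satisfying \eqref{eq:rappresentazione ac}, so the hypothesis of Corollary \ref{cor:troncature} is met. Let $N$ be the negligible set supplied there and set $N_m:=N$; for $x\in\Rd\setminus N_m$ define $\rho^{\nu,\lambda}_{m,\xi,s,t}(x):=\min\{\rho^{\nu,\lambda}_{m,s\xi}(x),\rho^{\nu,\lambda}_{m,t\xi}(x)\}$, using the radii produced by that corollary. For $\rho\in(0,\rho^{\nu,\lambda}_{m,\xi,s,t}(x))$ and $A:=Q^\lambda_\nu(x,\rho)$, Corollary \ref{cor:troncature} produces $u_s,u_t\in BV(A;\Rk)\cap L^\infty(A;\Rk)$ with $\mathrm{tr}_Au_s=\mathrm{tr}_A(s\ell_\xi)$, $\|u_s-s\ell_\xi\|_{L^\infty(A;\Rk)}\le c_{s\xi,m}\lambda\rho$, $E(u_s,A)\le m^E(s\ell_\xi,A)+C_{s\xi}\lambda^{d-1}\rho^d/m$, and the symmetric bounds for $u_t$. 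Set $v_s:=u_s/s$, $v_t:=u_t/t$; both lie in $BV(A;\Rk)\cap L^\infty(A;\Rk)$ with trace $\ell_\xi$ on $\partial A$, and a direct case analysis on the signs of $s-1$ and $t-1$, starting from $v_s^A\le d^{1/2}|\xi|\lambda\rho+2c_{s\xi,m}\lambda\rho/s$ and the formula \eqref{eq:def costante cxi}, gives
\[
s\,v_s^A\le \frac{s\kappa_{\xi,m}\lambda\rho}{(s\land 1)(t\land 1)},\qquad t\,v_s^A\le \frac{t\kappa_{\xi,m}\lambda\rho}{(s\land 1)(t\land 1)},
\]
with $\kappa_{\xi,m}$ as in \eqref{eq:def kappamxi}, and symmetrically for $v_t$. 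These are exactly what will feed into the argument of $\vartheta_m$ in \eqref{eq:stima lunga ac}.

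The main obstacle is that applying \eqref{eq:def ealphatheta m} with $u=v_s$ gives $E(tv_s,A)/t\le E(sv_s,A)/s+\mathrm{RHS}(v_s)$, whose right-hand side still carries $\vartheta_m(tv_s^A)E(tv_s,A)/t$ and $\frac{c_7}{t}\Ld(A)^\alpha E(tv_s,A)^{1-\alpha}$; trace compatibility only yields $m^E(t\ell_\xi,A)\le E(tv_s,A)$, which points the wrong way. My plan is to rearrange so that all occurrences of $E(tv_s,A)/t$ sit on the left: by continuity of $\vartheta_m$ at $0$, for $\rho$ small enough one has $\vartheta_m(tv_s^A)\le 1/4$, and since $c_7\Ld(A)^\alpha=c_7(\lambda^{d-1}\rho^d)^\alpha\to 0$ as $\rho\to 0$, the elementary inequality $x^{1-\alpha}\le 1+x$ allows the sublinear term to be absorbed into a further $\tfrac14\cdot E(tv_s,A)/t$. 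This yields the a priori bound $E(tv_s,A)/t\le 2E(u_s,A)/s+\text{l.o.t.}\le 2m^E(s\ell_\xi,A)/s+\text{l.o.t.}$, which, reinserted in the two offending right-hand side terms and combined with monotonicity of $\vartheta_m$ and the oscillation estimates above, converts them into the target $\vartheta_m(\frac{t\kappa_{\xi,m}\lambda\rho}{(s\land 1)(t\land 1)})\,m^E(s\ell_\xi,A)/s$ and $\frac{c_7}{t}\Ld(A)^\alpha m^E(s\ell_\xi,A)^{1-\alpha}$, up to error contributions of the form $(1+\vartheta_m(\cdots))M_\xi\lambda^{d-1}\rho^d/((s\land 1)m)$ and $(1/s^\alpha+1/s)M_\xi\lambda^{d-1}\rho^d/m^{1-\alpha}$, where $M_\xi$ depends only on $|\xi|$ and the structural constants (through the explicit form of $C_{s\xi}$ in \eqref{def: costante resti} and $c_{s\xi,m}$ in \eqref{eq:def costante cxi}). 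A symmetric run with $v_t$ supplies the dual inequality with $s$ and $t$ exchanged, producing the $m^E(t\ell_\xi,A)$-terms of \eqref{eq:stima lunga ac}; summing both via the triangle inequality for $|\,\cdot\,|$ yields \eqref{eq:stima lunga ac}.

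Finally, when $f$ is independent of $x$, Corollary \ref{cor:troncature} gives $N=\emptyset$ and $\rho^{\nu,\lambda}_{m,\eta}(x)=+\infty$ for every $\eta\in\Rkd$, so $N_m=\emptyset$ and $\rho^{\nu,\lambda}_{m,\xi,s,t}(x)=+\infty$, which takes care of the last clause of the statement.
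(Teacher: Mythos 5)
Your construction of the competitors and the oscillation bounds is fine, and your route (two competitors, one from each problem, plus an absorption/bootstrap) genuinely differs from the paper, which uses a \emph{single} competitor $u$ built from the near-minimizer of the $t$-problem and then replaces $E(su,Q^\lambda_\nu(x,\rho))$ by $m^E(s\ell_\xi,Q^\lambda_\nu(x,\rho))$ through a monotonicity argument: the whole left-hand side of \eqref{eq:def ealphatheta m} is viewed as a function $\Phi(z)$ of $z=E(su,\cdot)$ which is increasing on the set where it is positive, so $m^E(s\ell_\xi,\cdot)\le E(su,\cdot)$ can be inserted directly, and every occurrence of $E(tu,\cdot)$ sits on the side where near-minimality gives the bound in the right direction. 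This is precisely what lets the paper avoid any absorption. Your argument, by contrast, has a genuine gap: when you apply \eqref{eq:def ealphatheta m} to $v_s$, the right-hand side contains not only $\vartheta_m(tv_s^A)E(tv_s,A)/t$ and $\tfrac{c_7}{t}\Ld(A)^\alpha E(tv_s,A)^{1-\alpha}$ but also $R_{m,t}(v_s,A)$, which by \eqref{eq:def R} carries the term $C\tfrac{1+\vartheta_m(tv_s^A)}{t}\tfrac{E(tv_s,A)}{m}$, i.e.\ a contribution \emph{linear} in $E(tv_s,A)/t$ with coefficient at least $C/m$, where $C=\max\{9c_3k/c_1,2c_3k^{1/2},c_4\}\ge 9$. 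For $m\le C$ this coefficient is $\ge 1$ and no choice of small $\rho$ allows it to be absorbed into the left-hand side, so your a priori bound $E(tv_s,A)/t\lesssim m^E(s\ell_\xi,A)/s$ is not established for all $m\in\N$, while the lemma must hold for every $m$ (this particular defect could be patched by a crude bound for the finitely many $m\le C$, but your proposal does not address it and indeed never mentions $R_{m,t}$ in the absorption step).

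Two further points. First, even where the absorption works, reinserting the a priori bound produces terms such as $2\,\vartheta_m\big(\tfrac{t\kappa_{\xi,m}\lambda\rho}{(s\land1)(t\land1)}\big)\tfrac{m^E(s\ell_\xi,\cdot)}{s}$ and $\tfrac{c_7}{t}\Ld(A)^\alpha m^E(s\ell_\xi,\cdot)^{1-\alpha}$, whereas in \eqref{eq:stima lunga ac} the $\vartheta_m$-factor with argument containing $t$ multiplies $m^E(t\ell_\xi,\cdot)/t$ (and the $\tfrac{c_7}{t}$-sublinear term involves $m^E(t\ell_\xi,\cdot)^{1-\alpha}$) with coefficient exactly $1$; so what you obtain is a variant of the stated inequality, not the inequality itself. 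Second, and most consequentially, your last clause fails: when $f$ is independent of $x$, Corollary \ref{cor:troncature} does give $N=\emptyset$ and an unrestricted radius, but your absorption still forces $\rho$ to be small enough that $\vartheta_m(tv_s^A)\le 1/4$ and $c_7(\lambda^{d-1}\rho^d)^\alpha$ is small, a restriction depending on $s$, $t$, $\lambda$; hence you cannot conclude $\rho^{\nu,\lambda}_{m,\xi,t,s}(x)=+\infty$. This clause is not cosmetic: in the proof of Theorem \ref{thm:Cantor} the lemma is invoked with $s=s_\rho$ varying with $\rho$, and the unrestricted validity in $\rho$ is what makes that application legitimate. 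The paper's $\Phi$-substitution argument needs no smallness of $\rho$ at all, which is why it delivers this clause.
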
 
\begin{proof}
     By hypothesis $E\in\E_{\rm sc}$, so that by Theorem \ref{thm:rappresentazione}, the function $f$ defined by \eqref{eq:definition of small f} satisfies \eqref{eq: Ea assoltamente continuo fittizio}  for every $A\in\mathcal{A}_c(\Rd)$ and  $u\in BV(A;\Rk)$. Hence, we may apply Corollary \ref{cor:troncature} to obtain a set $N_m\in\mathcal{B}(\Rd)$, with $\Ld(N_m)=0$,  satisfying the following property: for every $x\in \Rd\setminus N_m$,  $\xi\in\Rkd$, $t>0$, $\nu\in\Sn^{d-1}$, and $\lambda\geq 1$ there exists $\rho^{\nu,\lambda}_{m,t\xi}(x)$ such that for every  $\rho\in(0,\rho^{\nu,\lambda}_{m,t \xi}(x))$, there exists $u\in BV(Q^\lambda_\nu(x,\rho);\Rk)\cap L^\infty(Q^\lambda_\nu(x,\rho);\Rk)$, with  tr$_{Q^\lambda_\nu(x,\rho)}u=$tr$_{ Q^\lambda_\nu(x,\rho)}\ell_{\xi}$ and $\|u-\ell_{\xi}\|_{L^\infty(Q^\lambda_\nu(x,\rho);\Rk)}\leq \frac1tc_{t\xi,m}\lambda\rho\leq \frac{1}{t\land 1}c_{\xi,m}\lambda \rho$, such that
\begin{equation}\label{eq:def tu ealphatheta}
    E(tu,Q^\lambda_\nu(x,\rho))\leq m^E(t\ell_{\xi},Q^\lambda_\nu(x,\rho))+\frac{C_{t\xi}}{m}\lambda^{d-1}\rho^d\leq m^E(t\ell_{\xi},Q^\lambda_\nu(x,\rho))+\frac{(t\lor 1)C_{\xi}}{m}\lambda^{d-1}\rho^d ,
\end{equation}
where $C_\xi$ is  defined by \eqref{def: costante resti}. Note that the oscillation of $u$ satisfies 
\begin{equation}\label{eq:oscillazione properieta 1}
u^{Q^\lambda_\nu(x,\rho)}\leq (\tfrac{2}{t\land 1}c_{\xi,m}+\tfrac{2}{t\land 1}d^{1/2}|\xi|)\lambda\rho=\frac{\kappa_{\xi,m}}{t\land 1}\lambda\rho.
\end{equation}
We can estimate $m^E(t\ell_{\xi},Q^\lambda_\nu(x,\rho))$ by evaluating $E(\cdot,Q^\lambda_\nu(x,\rho))$ at $t\ell_{\xi}$. Recalling \eqref{eq:def tu ealphatheta} and (c2$'$) of Definition \ref{def:space of functionals E}, this leads to
    \begin{equation}\label{eq:stima da quasi minimalita}
        E(tu,Q^\lambda_\nu(x,\rho))\leq \big(c_3k^{1/2}t|\xi|+c_4+(t\lor 1)C_\xi\big)\lambda^{d-1}\rho^d.
    \end{equation}

     Since $E\in\E^{\alpha,\vartheta_m}_m$, by \eqref{eq:def ealphatheta m} for every $s>0$ we have that
    \begin{eqnarray}
        && \nonumber (1-\vartheta_m(su^{Q^\lambda_\nu(x,\rho)}))\frac{E(su,Q^\lambda_\nu(x,\rho))}{s}-\frac{c_7}{s}(\lambda^{d-1}\rho^d)^{\alpha} E(su,Q^\lambda_\nu(x,\rho))^{1-\alpha}\\
        &&\hspace{2 cm}\nonumber \qquad-\frac{c_7}{s}\lambda^{d-1}\rho^d -R_{m,s}(u,Q^\lambda_\nu(x,\rho))\\
   && \nonumber \leq (1+\vartheta_m(tu^{Q^\lambda_\nu(x,\rho)}))\frac{E(tu,Q^\lambda_\nu(x,\rho))}{t} +\frac{c_7}{t}(\lambda^{d-1}\rho^d)^{\alpha} E(tu,Q^\lambda_\nu(x,\rho))^{1-\alpha}\\
   &&\hspace{2 cm} \qquad \label{eq:stima lunga property 1} +\frac{c_7}{t}\lambda^{d-1}\rho^d
  +R_{m,t}(u,Q^\lambda_\nu(x,\rho)),
    \end{eqnarray}
where $R$ is defined by \eqref{eq:def R}. Using the monotonicity of $\vartheta_m$, \eqref{eq:def tu ealphatheta}, \eqref{eq:oscillazione properieta 1},  \eqref{eq:stima da quasi minimalita}, and the subadditivity of the function $z\mapsto z^{1-\alpha}$ on $[0,+\infty)$ we see that there exists a positive constant $M_\xi$, independent of $m$, such that the left-hand side can be bounded from below by
    \begin{eqnarray}
       && \nonumber \Big(1-\vartheta_{m}\Big(\frac{s\kappa_{\xi,m}\lambda\rho}{t\land 1}\Big)\Big)\frac{E(su,Q^\lambda_\nu(x,\rho))}{s}-\frac{c_7}{s}\lambda^{\alpha(d-1)}\rho^{\alpha d} E(su,Q^\lambda_\nu(x,\rho))^{1-\alpha} \\&&\nonumber-\frac{c_7}{s}\lambda^{d-1}\rho^d 
   -C\Big(1+\vartheta_m\Big(\frac{s\kappa_{\xi,m}\lambda\rho}{t\land 1}\Big)\Big)\frac{E(su,Q^\lambda_\nu(x,\rho))+\lambda^{d-1}\rho^d}{sm}
        \\&& \label{eq:esselunga}-C^\alpha\frac{c_7}{s}\frac{E(su,Q^\lambda_\nu(x,\rho))^{1-\alpha}\lambda^{\alpha(d-1)}\rho^{\alpha d}+\lambda^{d-1}\rho^d}{m^{1-\alpha}},
    \end{eqnarray}
while the right-hand side of \eqref{eq:stima lunga property 1} can be bounded from above by
    \begin{eqnarray}
   && \nonumber \Big(1+\vartheta_m\Big(\frac{t\kappa_{\xi,m}\lambda\rho}{t\land 1}\Big)\Big)\frac{m^E(t\ell_{\xi},Q^\lambda_\nu(x,\rho))}{t} +\frac{c_7}{t} m^E(t\ell_{\xi},Q^\lambda_\nu(x,\rho))^{1-\alpha}\lambda^{\alpha(d-1)}\rho^{\alpha d}\\  
   &&\nonumber +\frac{c_7}{t}\lambda^{d-1}\rho^d+\Big(1+\vartheta_m\Big(\frac{t\kappa_{\xi,m}\lambda\rho}{t\land 1}\Big)\Big)\frac{C(c_3k^{1/2}|\xi|+c_4+C_\xi+1)+C_\xi}{(t\land 1)m}\lambda^{d-1}\rho^d
        \\&& \label{eq:upperbound lemma tecnico}+\Big(C^\alpha\frac{c_7}{t^\alpha}+\frac{c_7}{t }\Big)\frac{\big(c_3k^{1/2}|\xi|+c_4+C_\xi\big)^{1-\alpha}+1+C_\xi^{1-\alpha}}{m^{1-\alpha}}\lambda^{d-1}\rho^{ d}.
    \end{eqnarray}
    Therefore, by \eqref{eq:stima lunga property 1}-\eqref{eq:upperbound lemma tecnico} there exists a constant $M_\xi$, independent of $m$, such that
    \begin{eqnarray}
       && \hspace{-1  cm}\nonumber \Big(1-\vartheta_{m}\Big(\frac{s\kappa_{\xi,m}\lambda\rho}{t\land 1}\Big)\Big)\frac{E(su,Q^\lambda_\nu(x,\rho))}{s}-\frac{c_7}{s}\lambda^{\alpha(d-1)}\rho^{\alpha d} E(su,Q^\lambda_\nu(x,\rho))^{1-\alpha} -\frac{c_7}{s}\lambda^{d-1}\rho^d \\\hspace{-1 cm}&&\hspace{-1 cm}\nonumber 
   -C\Big(1{+}\vartheta_m\Big(\frac{s\kappa_{\xi,m}\lambda\rho}{t\land 1}\Big)\Big)\frac{E(su,Q^\lambda_\nu(x,\rho)){+}\lambda^{d-1}\rho^d}{sm}
       -C^\alpha\frac{c_7}{s}\frac{E(su,Q^\lambda_\nu(x,\rho))^{1-\alpha}\lambda^{\alpha(d-1)}\rho^{\alpha d}{+}\lambda^{d-1}\rho^d}{m^{1-\alpha}}\\
        &&\hspace{-1  cm}\leq  \nonumber \Big(1+\vartheta_m\Big(\frac{t\kappa_{\xi,m}\lambda\rho}{t\land 1}\Big)\Big)\frac{m^E(t\ell_{\xi},Q^\lambda_\nu(x,\rho))}{t} +\frac{c_7}{t} m^E(t\ell_{\xi},Q^\lambda_\nu(x,\rho))^{1-\alpha}\lambda^{\alpha(d-1)}\rho^{\alpha d}\\  
   &&\hspace{-1 cm}\nonumber+\frac{c_7}{t}\lambda^{d-1}\rho^d+\Big(1+\vartheta_m\Big(\frac{t\kappa_{\xi,m}\lambda\rho}{t\land 1}\Big)\Big)\frac{M_\xi}{(t\land 1)m}\lambda^{d-1}\rho^d
      +\Big(\frac{1}{t^\alpha}+\frac{1}{t }\Big)\frac{M_\xi}{m^{1-\alpha}}\lambda^{d-1}\rho^{ d}.
        \end{eqnarray}

We claim that the previous inequality still holds replacing $E(su,Q^\lambda_\nu(x,\rho))$ by $m^E(s\ell_\xi,Q^\lambda_\nu(x,\rho))$.
To prove the claim, for given $s,t>0$ we introduce the function $\Phi$ defined for every $z\in[0,+\infty)$ as 
\begin{multline*}
    \Phi(z):=\Big(1-\vartheta_m\Big(\frac{s\kappa_{\xi,m}\lambda\rho}{t\land 1}\Big)\Big)\frac{z}{s}-\frac{c_7}{s}\lambda^{\alpha(d-1)}\rho^{\alpha d}z^{1-\alpha
    }-\frac{c_7}{s}\lambda^{d-1}\rho^d\\
    -C\Big(1+\vartheta_m\Big(\frac{s\kappa_{\xi,m}\lambda}{t\land 1}\rho\Big)\Big)\frac{z+\lambda^{d-1}\rho^d}{sm}
        -C^\alpha\frac{c_7}{s}\frac{z^{1-\alpha}\lambda^{\alpha(d-1)}\rho^{\alpha d}+\lambda^{d-1}\rho^d}{m^{1-\alpha}},
\end{multline*}
so that the left-hand side of \eqref{eq:esselunga} is equal to  $\Phi(E(su,Q^\lambda_\nu(x,\rho))$.
We now show that 
\begin{equation}\label{eq:Phi mleq phi E}
\Phi(m^{E}(s\ell_{\xi},Q^{\lambda}_\nu(x,\rho)))\leq \Phi(E(su,Q^{\lambda}_\nu(x,\rho))).
\end{equation}
Since the righ-hand side is clearly larger than zero, it is enough to prove this inequality when 
$\Phi(m^{E}(s\ell_{\xi},Q^{\lambda}_\nu(x,\rho)))>0$. Note that this positivity condition implies that $1-\vartheta_m(\frac{s\kappa_{\xi,m}\lambda\rho}{t\land 1})>0$. Some straightforward computations, show that if $\Phi(z)>0$ then $z>z_0$, where
\begin{equation*}
    z_0:=c_7^{1/\alpha}\Big((1-\vartheta_m\Big(\frac{s\kappa_{\xi,m}\lambda\rho}{t\land 1}\Big)\Big)-\Big(\Big(1+\vartheta_m(\frac{s\kappa_{\xi,m}\lambda\rho}{t\land 1}\Big)\Big)\frac{C}{m}\Big)^{-1/\alpha}\Big(1+\frac{C^\alpha}{m^{1-\alpha}}\Big)^{1/\alpha}\lambda^{d-1}\rho^d.
\end{equation*}
One can also see that  $\Phi$ is increasing on $(z_0,+\infty)$.
Since $\text{tr}_{Q^{\lambda}_\nu(x,\rho)}su=\text{tr}_{Q^{\lambda}_\nu(x,\rho)}s\ell_\xi$, we obtain that 
\begin{equation}\label{eq:stima finale tecnico 1}
   m^{E}(s\ell_{\xi},Q^{\lambda}_\nu(x,\rho)))\leq E(su,Q^{\lambda}_\nu(x,\rho)).
\end{equation}
Since we assumed that  $\Phi(m^{E}(s\ell_{\xi},Q^{\lambda}_\nu(x,\rho)))>0$, we have that $m^{E}(s\ell_{\xi},Q^{\lambda}_\nu(x,\rho))>z_0$. Hence, recalling that $\Phi$ is increasing on $(z_0,+\infty)$, from \eqref{eq:stima finale tecnico 1} we obtain that  \eqref{eq:Phi mleq phi E}. Thus, in \eqref{eq:esselunga} we can substitute $E(su,Q^\lambda_\nu(x,\rho))$ by $m^E(s\ell_\xi,Q(x,\rho))$. This new inequality, together with
\begin{equation*}
    \frac{m^{E}(s\ell_{\xi},Q^{\lambda}_\nu(x,\rho))}{s}\leq \frac{c_3k^{1/2}|\xi|+c_4}{s\land 1}\lambda^{d-1}\rho^d,
\end{equation*}
implies that
\begin{eqnarray}
       && \nonumber \Big(1-\vartheta_{m}\Big(\frac{s\kappa_{\xi,m}\lambda\rho}{t\land 1}\Big)\Big)\frac{m^E(s\ell_\xi,Q^\lambda_\nu(x,\rho))}{s}-\frac{c_7}{s}\lambda^{\alpha(d-1)}\rho^{\alpha d} m^E(s\ell_\xi,Q^\lambda_\nu(x,\rho))^{1-\alpha} \\&&-\frac{c_7}{s}\lambda^{d-1}\rho^d
       \nonumber 
   -\Big(1+\vartheta_m\Big(\frac{s\kappa_{\xi,m}\lambda\rho}{t\land 1}\Big)\Big)\frac{M_\xi}{(s\land 1)m}\lambda^{d-1}\rho^d
-\frac{1}{s}\frac{M_\xi}{m^{1-\alpha}}\lambda^{d-1}\rho^d\\
        &&\leq  \nonumber \Big(1+\vartheta_m\Big(\frac{t\kappa_{\xi,m}\lambda\rho}{t\land 1}\Big)\Big)\frac{m^E(t\ell_{\xi},Q^\lambda_\nu(x,\rho))}{t} +\frac{c_7}{t} m^E(t\ell_{\xi},Q^\lambda_\nu(x,\rho))^{1-\alpha}\lambda^{\alpha(d-1)}\rho^{\alpha d}\\  
   &&+\frac{c_7}{t}\lambda^{d-1}\rho^d+\Big(1+\vartheta_m\Big(\frac{t\kappa_{\xi,m}\lambda\rho}{t\land 1}\Big)\Big)\frac{M_\xi}{(t\land 1)m}\lambda^{d-1}\rho^d
     \nonumber +\Big(\frac{1}{t^\alpha}+\frac{1}{t }\Big)\frac{M_\xi}{m^{1-\alpha}}\lambda^{d-1}\rho^{ d}.
        \end{eqnarray}

\noindent Exchanging the roles of $s$ and $t$, this gives \eqref{eq:stima lunga ac}.

If, in addition, the function $f$ defined by \eqref{eq:definition of small f} does not depend on $x$, then in Corollary \ref{cor:troncature}  we have  $N_m=\emptyset$ and $\rho^{\nu,\lambda}_{m,t\xi}(x)=\rho^{\nu,\lambda}_{m,s\xi}(x)=+\infty $, concluding the proof in this case.
\end{proof}
The following lemma deals with the case with boundary conditions related to the the jump functions $u_{x,\zeta,\nu}$.
\begin{lemma}\label{prop:closure ealpha jump} There exists a positive constant $M>0$, depending only on the structural constants $c_1,...,c_7$, $k$, and on $\alpha$, such that for every $m\in\N$, $E\in\E^{\alpha,\vartheta_m}_m\cap\E_{\rm sc}$, $x\in\Rd$, $\zeta\in\Rk$, $\nu\in\mathbb{S}^{d-1}$, $s,t>0$, and $\rho>0$ we have 
    \begin{eqnarray}
       && \nonumber \hspace{-0.5 cm}\Big|\frac{m^{E}(su_{x,\zeta,\nu},Q_\nu(x,\rho))}{s}-\frac{m^{E}(tu_{x,\zeta,\nu},Q_\nu(x,\rho))}{t}\Big|\\
        &&\nonumber \leq\frac{c_7}{s}\rho^{d\alpha}m^E(su_{x,\zeta,\nu},Q_\nu(x,\rho))^{1-\alpha}
        +\vartheta_{2m}(s|\zeta|)\frac{m^E(su_{x,\zeta,\nu},Q_\nu(x,\rho))}{s}+\frac{c_7}{s}\rho^d\\
        &&\nonumber \quad\frac{c_7}{t}\rho^{d\alpha}m^E(tu_{x,\zeta,\nu},Q_\nu(x,\rho))^{1-\alpha}
        +\vartheta_{2m}(t|\zeta|)\frac{m^E(tu_{x,\zeta,\nu},Q_\nu(x,\rho))}{t}+\frac{c_7}{t}\rho^d\\
        &&\nonumber \quad +
           (1+\vartheta_{2m}(s|\zeta|))\Big(\frac{M|\zeta|}{m}\rho^{d-1}+\frac{M}{sm}\rho^d\Big)+\frac{1}{s^{\alpha}}\frac{M|\zeta|^{1-\alpha}}{m^{1-\alpha}}\rho^{d-1+\alpha}+\frac{1}{s}\frac{M}{m^{1-\alpha}}\rho^d\\
          && \label{eq:disuguaglianza lunga in closure}\quad 
          +(1+\vartheta_{2m}(t|\zeta|))\Big(\frac{M|\zeta|}{m}\rho^{d-1}+\frac{M}{tm}\rho^d\Big)+\frac{1}{t^{\alpha}}\frac{M|\zeta|^{1-\alpha}}{m^{1-\alpha}}\rho^{d-1+\alpha}+\frac{1}{t}\frac{M}{m^{1-\alpha}}\rho^d.
     \end{eqnarray}
\end{lemma}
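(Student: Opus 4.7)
The plan is to mirror the proof of Lemma \ref{prop:closure ealpha}, substituting Lemma \ref{cor:Corollario Troncature Hd} for Corollary \ref{cor:troncature} and $u_{x,\zeta,\nu}$ for $\ell_\xi$. First, apply Lemma \ref{cor:Corollario Troncature Hd} with $\zeta$ replaced by $t\zeta$, noting that $u_{x,t\zeta,\nu}=tu_{x,\zeta,\nu}$. This produces $w\in BV(Q_\nu(x,\rho);\Rk)\cap L^\infty(Q_\nu(x,\rho);\Rk)$ with $\text{tr}_{Q_\nu(x,\rho)}w=\text{tr}_{Q_\nu(x,\rho)}tu_{x,\zeta,\nu}$, an $L^\infty$ bound of order $\sigma^m t|\zeta|$, and
\begin{equation*}
E(w,Q_\nu(x,\rho))\leq m^E(tu_{x,\zeta,\nu},Q_\nu(x,\rho))+\frac{K(t|\zeta|\land 1)}{m}\rho^{d-1}+K\rho^d.
\end{equation*}
Set $v:=w/t$. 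Then $tv=w$, $\text{tr}_{Q_\nu(x,\rho)}v=\text{tr}_{Q_\nu(x,\rho)}u_{x,\zeta,\nu}$, and the oscillation satisfies $v^{Q_\nu(x,\rho)}\leq 2\sigma^m|\zeta|$; in particular, for every $s>0$, $sv$ is an admissible competitor for $m^E(su_{x,\zeta,\nu},Q_\nu(x,\rho))$.

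Next, apply inequality \eqref{eq:def ealphatheta m} of $\E^{\alpha,\vartheta_m}_m$ to $v$ with parameters $s$ and $t$. The oscillation bound, combined with monotonicity of $\vartheta$ and a harmless adjustment of structural constants, converts $\vartheta_m(sv^{Q_\nu(x,\rho)})$ and $\vartheta_m(tv^{Q_\nu(x,\rho)})$ into $\vartheta_{2m}(s|\zeta|)$ and $\vartheta_{2m}(t|\zeta|)$, precisely the factors appearing in \eqref{eq:disuguaglianza lunga in closure}. On the right-hand side, replace $E(tv,Q_\nu(x,\rho))=E(w,Q_\nu(x,\rho))$ by $m^E(tu_{x,\zeta,\nu},Q_\nu(x,\rho))$ using the inequality of the previous step, paying the truncation errors $K(t|\zeta|\land 1)\rho^{d-1}/m$ and $K\rho^d$. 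Combining with the a priori estimate $m^E(tu_{x,\zeta,\nu},Q_\nu(x,\rho))\leq c_3k(t|\zeta|\land 1)\rho^{d-1}$ supplied by (c2$'$), together with the subadditivity of $z\mapsto z^{1-\alpha}$, organizes these composite errors precisely into the summands $M|\zeta|/m\cdot\rho^{d-1}$, $M/(tm)\cdot\rho^d$, $|\zeta|^{1-\alpha}/m^{1-\alpha}\cdot\rho^{d-1+\alpha}$, and $1/m^{1-\alpha}\cdot\rho^d$ in the target (and their counterparts with $s$ replacing $t$).

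The final step is to replace $E(sv,Q_\nu(x,\rho))$ on the left-hand side by $m^E(su_{x,\zeta,\nu},Q_\nu(x,\rho))$. This is handled exactly as in the last step of Lemma \ref{prop:closure ealpha}: assume without loss of generality that the left-hand side of \eqref{eq:disuguaglianza lunga in closure} is strictly positive, introduce
\begin{equation*}
\Phi(z):=\bigl(1-\vartheta_{2m}(s|\zeta|)\bigr)\frac{z}{s}-\frac{c_7}{s}\rho^{d\alpha}z^{1-\alpha}-\frac{c_7}{s}\rho^d-(\text{remainders of order $1/m$, linear in $z^{1-\alpha}$ and $z$}),
\end{equation*}
verify that positivity of $\Phi(z)$ forces $z>z_0$ for an explicit threshold $z_0$ and that $\Phi$ is non-decreasing on $(z_0,+\infty)$. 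Since $m^E(su_{x,\zeta,\nu},Q_\nu(x,\rho))\leq E(sv,Q_\nu(x,\rho))$, monotonicity yields $\Phi(m^E(su_{x,\zeta,\nu},Q_\nu(x,\rho)))\leq\Phi(E(sv,Q_\nu(x,\rho)))$, delivering the desired substitution. Exchanging the roles of $s$ and $t$ produces the reverse inequality, completing the proof of \eqref{eq:disuguaglianza lunga in closure}.

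The main technical obstacle, as in Lemma \ref{prop:closure ealpha}, is the simultaneous bookkeeping of three coupled ingredients: (i) the oscillation-dependent $\vartheta$-terms, which force the passage from index $m$ to index $2m$ to absorb the $\sigma^m$ factor introduced by the smooth truncation; (ii) the truncation errors from Lemma \ref{cor:Corollario Troncature Hd}, which after division by $t$ in passing from $w$ to $v$ must produce the $|\zeta|$ dependence (rather than $t|\zeta|$) on the right-hand side; and (iii) the monotonicity-of-$\Phi$ inversion, which must be performed without losing the correct dependence on $s$, $t$, and $|\zeta|$. Once these points are tracked, the remaining computations are systematic.
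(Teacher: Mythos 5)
Your proposal follows essentially the same route as the paper's proof: apply Lemma \ref{cor:Corollario Troncature Hd} at level $t\zeta$ and rescale to obtain a competitor with trace $u_{x,\zeta,\nu}$ and oscillation of order $\sigma^m|\zeta|$, insert it into \eqref{eq:def ealphatheta m} (passing from $\vartheta_m$ to $\vartheta_{2m}$ via the oscillation bound), trade $E(tv,\cdot)$ for $m^E(tu_{x,\zeta,\nu},\cdot)$ using the truncation error and the a priori bound from (c2$'$), and recover $m^E(su_{x,\zeta,\nu},\cdot)$ on the left through the same monotone-$\Phi$ inversion as in Lemma \ref{prop:closure ealpha}, finally exchanging $s$ and $t$. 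This is correct and coincides with the paper's argument.
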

\begin{proof}
  By Lemma \ref{cor:Corollario Troncature Hd} for every $x\in\Rd$, $\zeta\in\Rk$, $t>0$, $\nu\in\Sn^{d-1}$, and $\rho>0$   there exists a function $u\in BV(Q_\nu(x,\rho);\Rk)\cap L^\infty(Q_\nu(x,\rho);\Rk)$ such that $\text{tr}_{Q_\nu(x,\rho)}u=\text{tr}_{Q_\nu(x,\rho)}u_{x,\zeta,\nu}$, $\|u\|_{L^\infty(Q_\nu(x,\rho);\Rk)}\leq\sigma^m|\zeta|$, and
    \begin{equation}\label{eq:recovery for jump closure}
        E(tu, Q_\nu(x,\rho))\leq m^{E}(tu_{x,\zeta,\nu},Q_\nu(x,\rho))+\frac{ t K|\zeta|}{m}\rho^{d-1}+K\rho^d,
    \end{equation}
    where $K>0$ is the constant in Lemma \ref{cor:Corollario Troncature Hd}.
    Note that the oscillation of $u$ satisfies $u^{Q_\nu(x,\rho)}\leq 2\sigma^m|\zeta|$. We now estimate $m^{E}(tu_{x,\zeta,\nu},Q_\nu(x,\rho))$ by  evaluating  $E(\cdot,Q_\nu(x,\rho))$ at $tu_{x,\zeta,\nu}$ and by  (c2) we get
    \begin{equation*}
        m^{E}(tu_{x,\zeta,\nu},Q_\nu(x,\rho))\leq c_3kt|\zeta|\rho^{d-1}+c_4\rho^d,
    \end{equation*}
    so that from \eqref{eq:recovery for jump closure} we deduce 
    \begin{equation}\label{eq:bound on etu closure}
           E(tu, Q_\nu(x,\rho))\leq t|\zeta|\Big(c_3k+\frac{K}{m}\Big)\rho^{d-1}+(c_4+K)\rho^d.
    \end{equation}

 Since  $E\in\E^{\alpha,\vartheta_m}_m$, by  \eqref{eq:def ealphatheta m}  for every $s>0$ we have that 
   \begin{multline*}
   (1-\vartheta_m(su^{Q_\nu(x,\rho)}))\frac{E(su,Q_\nu(x,\rho))}{s}-\frac{c_7}{s}\rho^{\alpha d} E(su,Q_\nu(x,\rho))^{1-\alpha}-\frac{c_7}{s}\rho^d
    -R_s(u,Q_\nu(x,\rho))\\\hspace{-0.25cm}
    \leq (1+\vartheta_m(tu^{Q_\nu(x,\rho)}))\frac{E(tu,Q_\nu(x,\rho))}{t}
   +\frac{c_7}{t}\rho^{\alpha d}E(tu,Q_\nu(x,\rho))^{1-\alpha}+\frac{c_7}{t}\rho^d+R_t(u,Q_\nu(x,\rho)).
    \end{multline*}
 Using the inequality $u^{Q_\nu(x,\rho)}\leq 2\sigma^m|\zeta|$,  the monotonicity of $\vartheta_m$, and the subadditivity of $z\mapsto z^{1-\alpha}$, we deduce that the left-hand side of the last inequality can be estimated from below by
\begin{eqnarray}
       && \nonumber (1-\vartheta_{2m}(s|\zeta|))\frac{E(su,Q^\lambda_\nu(x,\rho))}{s}-\frac{c_7}{s}\rho^{\alpha d} E(su,Q_\nu(x,\rho))^{1-\alpha} -\frac{c_7}{s}\rho^d  \\&& \nonumber 
   -C(1+\vartheta_{2m}(s|\zeta|))\frac{E(su,Q_\nu(x,\rho))+\rho^d}{sm}
  -C^\alpha\frac{c_7}{s}\frac{E(su,Q_\nu(x,\rho))^{1-\alpha}\rho^{\alpha d}+\rho^d}{m^{1-\alpha}}.
    \end{eqnarray}
while the right-hand side can be bounded from above by
\begin{multline*}
   (1+\vartheta_{2m} (t|\zeta|))\frac{m^E(tu_{x,\zeta,\nu},Q_\nu(x,\rho))}{t}
    +\frac{c_7}{t}m^E(tu_{x,\zeta,\nu},Q_\nu(x,\rho))^{1-\alpha}\rho^{\alpha d}+\frac{c_7}{t}\rho^{ d}\\ 
     +(1+\vartheta_{2m}(t|\zeta|))\frac{(C+1)|\zeta|(kc_3+2K)}{m}\rho^{d-1}+\frac{(1+\vartheta_{2m}(t|\zeta|))}{t}\frac{(C+1)(c_4+2K+1)}{m}\rho^d\\+
  \frac{c_7}{t^\alpha}\frac{{ |\zeta|^{1-\alpha}(C^{\alpha }+1)\Big(k^{1-\alpha}c_3^{1-\alpha}+2K^{1-\alpha}\Big)}}{m^{1-\alpha}}\rho^{d-1+\alpha}+\frac{c_7}{t}\frac{(C^\alpha+1)(c_4^{1-\alpha}+2K^{1-\alpha}+1)}{m^{1-\alpha}}\rho^d,
 \end{multline*}
 Therefore, there exists a constant $M>0$ such that 
 \begin{eqnarray*}
     && \nonumber (1-\vartheta_{2m}(s|\zeta|))\frac{E(su,Q_\nu(x,\rho))}{s}-\frac{c_7}{s}\rho^{\alpha d} E(su,Q_\nu(x,\rho))^{1-\alpha} -\frac{c_7}{s}\rho^d  \\&& \nonumber 
   -C(1+\vartheta_{2m}(s|\zeta|))\frac{E(su,Q_\nu(x,\rho))+\rho^d}{sm}
  -C^\alpha\frac{c_7}{s}\frac{E(su,Q_\nu(x,\rho))^{1-\alpha}\rho^{\alpha d}+\rho^d}{m^{1-\alpha}}\\
  &&\leq  (1+\vartheta_{2m} (t|\zeta|))\frac{m^E(tu_{x,\zeta,\nu},Q_\nu(x,\rho))}{t}
    +\frac{c_7}{t}m^E(tu_{x,\zeta,\nu},Q_\nu(x,\rho))^{1-\alpha}\rho^{\alpha d}+\frac{c_7}{t}\rho^{ d}\\ &&
     +(1+\vartheta_{2m}(t|\zeta|))\frac{M|\zeta|}{m}\rho^{d-1}+\frac{(1+\vartheta_{2m}(t|\zeta|))}{t}\frac{M}{m}\rho^d+
  \frac{1}{t^\alpha}\frac{{ M|\zeta|^{1-\alpha}}}{m^{1-\alpha}}\rho^{d-1+\alpha}+\frac{1}{t}\frac{M}{m^{1-\alpha}}\rho^d,
 \end{eqnarray*}

\noindent Arguing as in the proof of Lemma \ref{prop:closure ealpha} we may substitute $E(su,Q_\nu(x,\rho))$ by $m^{E}(su_{x,\zeta,\nu},Q_\nu(x,\rho))$, so that, taking into account the estimate
\begin{equation*}
     m^{E}(su_{x,\zeta,\nu},Q_\nu(x,\rho))\leq c_3s|\zeta|\rho^{d-1}+c_4\rho^d,
\end{equation*}
we obtain 
\begin{eqnarray*}
    && (1+\vartheta_{2m} (s|\zeta|))\frac{m^E(su_{x,\zeta,\nu},Q_\nu(x,\rho))}{s}
    +\frac{c_7}{s}m^E(su_{x,\zeta,\nu},Q_\nu(x,\rho))^{1-\alpha}\rho^{\alpha d}+\frac{c_7}{s}\rho^{ d}\\ &&
     +(1+\vartheta_{2m}(s|\zeta|))\frac{M|\zeta|}{m}\rho^{d-1}+\frac{(1+\vartheta_{2m}(s|\zeta|))}{s}\frac{M}{m}\rho^d+
  \frac{1}{s^\alpha}\frac{{ M|\zeta|^{1-\alpha}}}{m^{1-\alpha}}\rho^{d-1+\alpha}+\frac{1}{s}\frac{M}{m^{1-\alpha}}\rho^d\\
  &&\leq  (1+\vartheta_{2m} (t|\zeta|))\frac{m^E(tu_{x,\zeta,\nu},Q_\nu(x,\rho))}{t}
    +\frac{c_7}{t}m^E(tu_{x,\zeta,\nu},Q_\nu(x,\rho))^{1-\alpha}\rho^{\alpha d}+\frac{c_7}{t}\rho^{ d}\\ &&
     +(1+\vartheta_{2m}(t|\zeta|))\frac{M|\zeta|}{m}\rho^{d-1}+\frac{(1+\vartheta_{2m}(t|\zeta|))}{t}\frac{M}{m}\rho^d+
  \frac{1}{t^\alpha}\frac{{ M|\zeta|^{1-\alpha}}}{m^{1-\alpha}}\rho^{d-1+\alpha}+\frac{1}{t}\frac{M}{m^{1-\alpha}}\rho^d.
 \end{eqnarray*}
Exchanging the roles of $s,t$, we obtain \eqref{eq:disuguaglianza lunga in closure}.
\end{proof}
We now investigate the properties of the integrands $f$ and $g$ associated with functionals in $\ClosureE$.

\begin{proposition}
\label{prop:f is in Falpha}    Let $E\in\ClosureE$. Then the function $f$ defined by \eqref{eq:definition of small f}  belongs to $\mathcal{F}^\alpha$.
\end{proposition}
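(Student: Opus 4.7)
Since $\ClosureE\subset\E_{\rm sc}$, Proposition \ref{prop:properties of f and g} already gives $f\in\mathcal{F}$, so the plan is to verify only the additional quantitative inequality \eqref{eq:def falfa}. I will deduce it directly from Lemma \ref{prop:closure ealpha}, which is precisely the cube-scale analogue of \eqref{eq:def falfa} modulo remainders that vanish as $\rho\to 0^+$ and $m\to+\infty$.

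Fixing $x$ outside the $\Ld$-negligible set $N:=\bigcup_{m\in\N}N_m$ supplied by Lemma \ref{prop:closure ealpha}, I fix $\xi\in\Rdk$, $s,t>0$, and pick any $\nu\in\mathbb{S}^{d-1}$. Applying \eqref{eq:stima lunga ac} with $\lambda=1$ and dividing by $\rho^d$, I obtain, for each $m\in\N$ and every $\rho\in(0,\rho^{\nu,1}_{m,\xi,t,s}(x))$,
\begin{equation*}
\Big|\frac{m^E(s\ell_\xi,Q_\nu(x,\rho))}{s\rho^d}-\frac{m^E(t\ell_\xi,Q_\nu(x,\rho))}{t\rho^d}\Big|\leq \Phi_s(\rho,m)+\Phi_t(\rho,m)+R_m(\rho),
\end{equation*}
where, for $r\in\{s,t\}$,
\begin{equation*}
\Phi_r(\rho,m):=\frac{c_7}{r}\Big(\frac{m^E(r\ell_\xi,Q_\nu(x,\rho))}{\rho^d}\Big)^{1-\alpha}+\vartheta_m\Big(\frac{r\kappa_{\xi,m}\rho}{(s\land 1)(t\land 1)}\Big)\frac{m^E(r\ell_\xi,Q_\nu(x,\rho))}{r\rho^d}+\frac{c_7}{r},
\end{equation*}
and $R_m(\rho)$ collects the remaining terms of \eqref{eq:stima lunga ac}, each carrying a $1/m$ or $1/m^{1-\alpha}$ factor and uniformly bounded in $\rho$.

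Assuming without loss of generality $f(x,s\xi)/s\geq f(x,t\xi)/t$, I fix $m$ and extract a sequence $\rho_n\to 0^+$ with $m^E(s\ell_\xi,Q_\nu(x,\rho_n))/\rho_n^d\to f(x,s\xi)$, take $\limsup_n$ of the above inequality, and then send $m\to+\infty$. Three facts make the limit work: continuity of $\vartheta_m$ at $0$ with $\vartheta_m(0)=0$ together with the uniform bound $m^E(r\ell_\xi,Q_\nu(x,\rho))/(r\rho^d)\leq(c_3k^{1/2}|\xi|+c_4)/(r\land 1)$ from (c2$'$) forces the $\vartheta_m$-factors to vanish; continuity and monotonicity of $z\mapsto z^{1-\alpha}$ combined with the $\limsup$-definition $\limsup_n m^E(t\ell_\xi,Q_\nu(x,\rho_n))/\rho_n^d\leq f(x,t\xi)$ yield $\limsup_n\big(m^E(t\ell_\xi,Q_\nu(x,\rho_n))/\rho_n^d\big)^{1-\alpha}\leq f(x,t\xi)^{1-\alpha}$; and the $\rho\to 0^+$ limit of $R_m(\rho)$ tends to $0$ as $m\to+\infty$. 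This produces exactly \eqref{eq:def falfa}, the symmetric case following by exchanging the roles of $s$ and $t$.

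The only delicate point is the order of the two limiting procedures: the $\vartheta_m$-terms disappear only in the $\rho\to 0^+$ limit at fixed $m$, while the residuals $R_m$ require the subsequent $m\to+\infty$. This causes no trouble since a fresh sequence $\rho_n$ may be chosen at each $m$, and all quantities appearing in $\Phi_s,\Phi_t$ have $\limsup$-bounds independent of $m$. The substantive input is entirely contained in Lemma \ref{prop:closure ealpha}; the rest is careful bookkeeping of the passage to the limit in \eqref{eq:stima lunga ac}.
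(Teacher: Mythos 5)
Your proposal is correct and follows essentially the same route as the paper: fix $x$ outside $N=\bigcup_m N_m$, apply Lemma \ref{prop:closure ealpha} with $\lambda=1$, divide by $\rho^d$, kill the $\vartheta_m$-factors in the $\rho\to0^+$ limit using continuity of $\vartheta_m$ at $0$ and the (c2$'$) bound, then send $m\to+\infty$ to remove the remainders and symmetrise in $s,t$. The only cosmetic point is that, since $f$ in \eqref{eq:definition of small f} is defined through the coordinate cubes $Q(x,\rho)$, you should take $\nu=e_d$ (so that $Q_\nu(x,\rho)=Q(x,\rho)$ by $R_{e_d}=I$) rather than an arbitrary $\nu$; with that choice your careful realizing-sequence bookkeeping reproduces the paper's argument.
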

\begin{proof}
  For every $m\in\N$  let $N_m\in\mathcal{B}(\Rd)$ be the $\Ld$-negligible set of Lemma \ref{prop:closure ealpha} and  let  $N:=\bigcup_{m\in \N}N_m$. By the same lemma for every $x\in\Rd\setminus N$,  $\xi\in\Rdk$, $m\in\N$, $s,t>0$ and every $\rho$ small enough  we have
        \begin{eqnarray}
    \nonumber 
       && \Big|\frac{m^{E}(s\ell_\xi,Q(x,\rho))}{s}-\frac{m^{E}(t\ell_\xi,Q(x,\rho))}{t}\Big|
        \\&&\nonumber \,\,\leq\frac{c_7}{s}\rho^{d\alpha}m^E(s\ell_\xi,Q(x,\rho))^{1-\alpha}+\vartheta_{m}\Big(\frac{s\kappa_{\xi,m}\rho}{(s\land 1)(t\land 1)}\Big)\frac{m^E(s\ell_\xi,Q(x,\rho))}{s}+\frac{c_7}{s}\rho^d\\
       &&\nonumber \quad +\frac{c_7}{t}\rho^{d\alpha}m^E(t\ell_\xi,Q(x,\rho))^{1-\alpha}+\vartheta_{m}\Big( \frac{t\kappa_{\xi,m}\rho}{(s\land 1)(t\land 1)}\Big)\frac{m^E(t\ell_\xi,Q(x,\rho))}{t}+ \frac{c_7}{t}\rho^d\\&&\nonumber \quad
      +\Big(1+\vartheta_m\Big(\frac{s\kappa_{\xi,m}\rho}{(s\land 1)(t\land 1)}\Big)\Big)\frac{M_\xi}{(s\land 1)m}\rho^d
     \nonumber +\Big(\frac{1}{s^\alpha}+\frac{1}{s }\Big)\frac{M_\xi}{m^{1-\alpha}}\rho^{ d}\\&&\quad \label{eq:stima in f sta in F} +\Big(1+\vartheta_m\Big(\frac{t\kappa_{\xi,m}\rho}{(s\land 1)(t\land 1)}\Big)\Big)\frac{M_\xi}{(t\land 1)m}\rho^d
      +\Big(\frac{1}{t^\alpha}+\frac{1}{t }\Big)\frac{M_\xi}{m^{1-\alpha}}\rho^{ d},
     \end{eqnarray}
    where $\kappa_{\xi,m}$ is the constant defined by \eqref{eq:def kappamxi}.
  Since $\vartheta_m$ is continuous and $\vartheta_m(0)=0$, we have that
     \begin{eqnarray*}
         &\displaystyle \limsup_{\rho\to 0^+} \Big(\vartheta_m\Big(\frac{s\kappa\rho}{(s\land 1)(t\land 1)}\Big)\frac{m^E(s\ell_\xi,Q(x,\rho))}{s\rho^d}\Big)=0, \\
         &\displaystyle
       \limsup_{\rho\to 0^+} \Big(\vartheta_m\Big(\frac{t\kappa\rho}{(s\land 1)(t\land 1)}\Big)\frac{m^E(t\ell_\xi,Q(x,\rho))}{t\rho^d}\Big)=0.
     \end{eqnarray*}
    Therefore, dividing \eqref{eq:stima in f sta in F} by $\rho^d$ and taking the limsup as $\rho\to 0^+$, we obtain
     \begin{eqnarray}
         \nonumber 
         \frac{f(x,s\xi)}{s}&\leq& \frac{f(x,t\xi)}{t} +\frac{c_7}{s}f(x,s\xi)^{1-\alpha}+ \frac{c_7}{s}+f(x,t\xi)^{1-\alpha}+\frac{c_7}{t}\\&& +
    \frac{M_\xi}{(s\land 1)m}
     \nonumber +\Big(\frac{1}{s^\alpha}+\frac{1}{s }\Big)\frac{M_\xi}{m^{1-\alpha}}+
\frac{M_\xi}{(t\land 1)m}
      +\Big(\frac{1}{t^\alpha}+\frac{1}{t }\Big)\frac{M_\xi}{m^{1-\alpha}}.
     \end{eqnarray}
   Letting $m\to+\infty$ and exchanging the roles of $s$ and $t$, we recover \eqref{eq:def falfa}, which concludes the proof.
\end{proof}

\begin{proposition}\label{prop:g is not in gtheta}
    Let $E\in\ClosureE$, $m\in\N$, $x\in\Rd$,  $\zeta\in\Rk$, $\nu\in\mathbb{S}^{d-1}$, and $s,t>0$. Then the function $g$ defined by \eqref{eq:definition of small g} satisfies 
\begin{eqnarray*}\label{eq:claim prop g}    \Big|\frac{g(x,s\zeta,\nu)}{s}-\frac{g(x,t\zeta,\nu)}{t}\Big|&\leq& \vartheta_{2m}(s|\zeta|)\frac{g(x,s\zeta,\nu)}{s}+\vartheta_{2m} (t|\zeta|)\frac{g(x,t\zeta,\nu)}{t}\\&&+(2+\vartheta_{2m}(s|\zeta|)+\vartheta_{2m}(t|\zeta|))\frac{M|\zeta|}{m},
    \end{eqnarray*}
    where $M>0$ is the constant of Lemma \ref{prop:closure ealpha jump}.
\end{proposition}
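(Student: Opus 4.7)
The plan is to mimic the proof of Proposition \ref{prop:f is in Falpha}, but starting from the jump-type estimate of Lemma \ref{prop:closure ealpha jump} rather than from Lemma \ref{prop:closure ealpha}. Since $E\in\ClosureE\subset\E^{\alpha,\vartheta_m}_m$ for every $m\in\N$, the inequality \eqref{eq:disuguaglianza lunga in closure} holds for $E$, every $x\in\Rd$, $\zeta\in\Rk$, $\nu\in\mathbb{S}^{d-1}$, $s,t>0$, and every $\rho>0$. The idea is then to divide both sides of \eqref{eq:disuguaglianza lunga in closure} by $\rho^{d-1}$ and pass to the limsup as $\rho\to0^+$, recognising the quantities $m^E(su_{x,\zeta,\nu},Q_\nu(x,\rho))/\rho^{d-1}$ and $m^E(tu_{x,\zeta,\nu},Q_\nu(x,\rho))/\rho^{d-1}$ as the ones whose limsup defines $g(x,s\zeta,\nu)$ and $g(x,t\zeta,\nu)$ via \eqref{eq:definition of small g}.

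First I would check that the elementary a priori bound $m^E(su_{x,\zeta,\nu},Q_\nu(x,\rho))\leq c_3ks|\zeta|\rho^{d-1}+c_4\rho^d$, which follows from (c2$'$) by testing with $su_{x,\zeta,\nu}$ itself, together with the subadditivity of $z\mapsto z^{1-\alpha}$, makes the auxiliary terms go to zero in the right scaling. Concretely, the terms of the form $\tfrac{c_7}{s}\rho^{d\alpha}m^E(su_{x,\zeta,\nu},Q_\nu(x,\rho))^{1-\alpha}$, once divided by $\rho^{d-1}$, are controlled by a constant times $\rho^{d\alpha+(d-1)(1-\alpha)-(d-1)}=\rho^{\alpha}\to 0$; similarly $\tfrac{c_7}{s}\rho^d$, $\tfrac{M}{sm}\rho^d$ and $\tfrac{1}{s^\alpha}\tfrac{M|\zeta|^{1-\alpha}}{m^{1-\alpha}}\rho^{d-1+\alpha}$ all vanish after division by $\rho^{d-1}$. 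The only terms that survive in the limit are $\vartheta_{2m}(s|\zeta|)g(x,s\zeta,\nu)/s$, its analogue at $t$, the two boundary terms $(1+\vartheta_{2m}(s|\zeta|))M|\zeta|/m$ and $(1+\vartheta_{2m}(t|\zeta|))M|\zeta|/m$, all of which combine to give exactly the right-hand side in the claim.

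A minor point of care is to distribute the $\limsup$ over the two sides of the inequality: on the left I would use the trivial estimate
\begin{equation*}
\limsup_{\rho\to 0^+}\Big|\frac{m^E(s\ell\text{-type})}{s\rho^{d-1}}-\frac{m^E(t\ell\text{-type})}{t\rho^{d-1}}\Big|\geq \Big|\frac{g(x,s\zeta,\nu)}{s}-\frac{g(x,t\zeta,\nu)}{t}\Big|,
\end{equation*}
which follows from the reverse triangle inequality together with the fact that along a suitable subsequence $\rho_n\to 0^+$ one has the simultaneous convergence of both ratios to the respective $g$-values (up to passing to a further subsequence, given that the two limsups are finite by (c2$'$)). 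On the right-hand side, using that $\vartheta_{2m}$ is continuous with $\vartheta_{2m}(0)=0$, the supremum limsup splits into a sum of limsups of each individual term, which can be evaluated as above.

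The main obstacle is the bookkeeping: making sure that after the division by $\rho^{d-1}$ the correct terms have strictly positive powers of $\rho$ and thus vanish, while the $\vartheta_{2m}$-factors (which do \emph{not} depend on $\rho$) correctly pair with the leading order $\rho^{d-1}$ contributions coming from $m^E(\cdot u_{x,\zeta,\nu},Q_\nu(x,\rho))$. Since the oscillation of the competitor constructed in Lemma \ref{cor:Corollario Troncature Hd} is $\leq 2\sigma^m|\zeta|$, it is precisely the function $\vartheta_{2m}$ (rather than $\vartheta_m$) that appears in the final estimate, which matches the statement.
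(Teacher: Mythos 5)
Your proposal is correct and is essentially the paper's own proof: one invokes Lemma \ref{prop:closure ealpha jump} (valid since $E\in\E^{\alpha,\vartheta_m}_m\cap\E_{\rm sc}$ for every $m$), divides \eqref{eq:disuguaglianza lunga in closure} by $\rho^{d-1}$, takes the limsup as $\rho\to0^+$ using \eqref{eq:definition of small g} and the a priori bound from (c2$'$) to kill the lower-order terms, and symmetrises in $s$ and $t$. One small caveat: your justification of the left-hand limsup via a subsequence along which \emph{both} ratios simultaneously converge to their respective limsups is not valid in general; the clean route (and the paper's) is to work with the two one-sided inequalities obtained from the absolute value, take limsups there, and then exchange the roles of $s$ and $t$.
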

\begin{proof} Thanks to Lemma  \ref{prop:closure ealpha jump}, inequality \eqref{eq:disuguaglianza lunga in closure} holds. Dividing this inequality by $\rho^{d-1}$ and taking the limsup as $\rho\to 0^+$, we immediately get
\begin{align*}
    \nonumber  \frac{g(x,s\zeta,\nu)}{s}
         &\leq\frac{g(x,t\zeta,\nu)}{t}
        +\vartheta_{2m}(s|\zeta|)\frac{g(x,\zeta,\nu)}{s}+\vartheta_{2m}(t|\zeta|)\frac{g(x,\zeta,\nu)}{t}\\
        &\nonumber \quad +
           (1+\vartheta_{2m}(s|\zeta|))\frac{M|\zeta|}{m}
          +(1+\vartheta_{2m}(t|\zeta|))\frac{M|\zeta|}{m}.
          \end{align*}
Exchanging the roles of $s$ and $t$ we conclude the proof.
\end{proof}

\begin{remark}
    In the scalar case, due to the different nature of the required vertical truncations, it is possible to prove a stronger version of Propositions \ref{prop:limits are in Eweak} and  \ref{prop:g is not in gtheta}. First, the space $\E^{\alpha,\vartheta}$ turns out to be closed under $\Gamma$-convergence with respect to the topology of $L^0(\Rd)$. Then, exploiting this stronger version of Proposition \ref{prop:limits are in Eweak}, it is shown that the function $g$ associated to $E$ by \eqref{eq:definition of small g} belongs to $\mathcal{G}^{\vartheta}$. 

    However, due to the presence of error terms in the truncation procedure, which depend on $\vartheta_{m}$, in the case $k>1$ one cannot recover the closedness of $\E^{\alpha,\vartheta}$ and subsequently conclude that $g\in\mathcal{G}^\vartheta$.
\end{remark}

\section{Full Integral representation}\label{sec:repr}
 We are finally ready to state and prove the full integral representation result for functionals in the class $\ClosureE$. More precisely, we will show that if $E\in\ClosureE$ and if the bulk integrand $f$ defined by \eqref{eq:definition of small f}  does not depend on $x$, then the Cantor part $E^c$ of the functional $E$ can be represented by means of $f^\infty$. Since this result will be employed to  obtain an integral representation for functionals arising from homogenisation of functionals in $\E^{\alpha,\vartheta}$, the  hypothesis that  $f$ does not depend on $x$ is not restrictive for our purposes.

 We now state the main result of this section.
\begin{theorem}\label{thm:Cantor}
Let $E\in\ClosureE$ and $f$ and $g$ be defined by \eqref{eq:definition of small f} and by \eqref{eq:definition of small g}, respectively. Assume that there exists a function $\hat{f}\colon\Rdk\to [0,+\infty)$ such that
\begin{equation}\label{eq:dependence only on one variable}
f(x,\xi)=\hat{f}(\xi) \quad\text{for every }x\in\Rd \text{ and for every }\xi\in\Rdk.
\end{equation}
Then $E=E^{f,g}$, where $E^{f,g}$ is the functional introduced in Definition \ref{def:Functionals Efg}.
\end{theorem}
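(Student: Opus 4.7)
The strategy is to reduce the statement to the identification of the Cantor part $E^c$. By Definition \ref{def:splitting of the functionals} we have $E=E^a+E^c+E^j$, and Theorem \ref{thm:rappresentazione} already gives
\[
E^a(u,B)=\int_B f(\nabla u)\,dx\quad\text{and}\quad E^j(u,B)=\int_{\jump{u}\cap B} g(x,[u],\nu_u)\,d\hd
\]
for every $u\in GBV_\star(A;\Rk)$ and $B\in\mathcal{B}(A)$, so the whole theorem reduces to proving
\begin{equation*}
E^c(u,B)=\int_B f^\infty\!\Big(\frac{dD^cu}{d|D^cu|}\Big)\,d|D^cu|.
\end{equation*}

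First I would treat the case $u\in BV(A;\Rk)\cap L^\infty(A;\Rk)$ through the perturbation $E_\delta=E+\delta|Du|$ of Definition \ref{def:Eepsilon}. Since $f$ is independent of $x$, Proposition \ref{prop:f is in Falpha} gives $f\in\mathcal{F}^\alpha$ with $f$ continuous, and Corollary \ref{cor:troncature} applies globally ($N=\varnothing$ and $\rho_{m,\xi}^{\nu,\lambda}(x)=+\infty$). Coupled with Proposition \ref{prop:partial representation} (which already represents $E^a_\delta$ and $E^j_\delta$) and the BV-type arguments of \cite{bouchitte1998global}, one upgrades to a full representation of $E_\delta$ on $BV\cap L^\infty$, whose Cantor part reads
\[
E^c_\delta(u,B)=\int_B f^\infty_\delta\!\Big(\frac{dD^cu}{d|D^cu|}\Big)\,d|D^cu|.
\]
From $E^c_\delta(u,B)=E^c(u,B)+\delta|D^cu|(B)$ and Proposition \ref{prop:fepsilon passes to the limit}, which yields $f^\infty_\delta\to f^\infty$ pointwise with the uniform bound $f^\infty_\delta(\xi)\le (c_3k^{1/2}+\delta)|\xi|$ from Remark \ref{re:bounds finfty}, a Dominated Convergence argument as $\delta\to 0^+$ delivers the Cantor formula on $BV(A;\Rk)\cap L^\infty(A;\Rk)$.

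To extend to $u\in GBV_\star(A;\Rk)$, I would apply the smooth truncations $\psi^i_{R_m}$ and invoke Lemma \ref{lemma:passaggio al limite dalle troncate al non nei vari pezzi}, which gives
\[
E^c(u,B)=\lim_{m\to+\infty}\frac{1}{m}\sum_{i=1}^m E^c(\psi^i_{R_m}\!\circ u,B).
\]
Each term is in $BV\cap L^\infty$ so the previous step applies; then I combine the chain rule $D^c(\psi^i_{R_m}\!\circ u)=\nabla\psi^i_{R_m}(\widetilde u)\,D^cu$ and the convergence of the Radon-Nikod\'ym derivatives from Proposition \ref{prop:Properties of derivatives of GBVstar}(ii)--(iii) with the $1$-homogeneity of $f^\infty$ (Remark \ref{re:bounds finfty}). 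A further Dominated Convergence step, using the bound $f^\infty\le c_3k^{1/2}|\cdot|$ against $|D^cu|$, produces the claimed identity.

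The main obstacle is the first step: obtaining a Cantor representation for $E_\delta$ (not just $E^a_\delta$ and $E^j_\delta$, which Proposition \ref{prop:partial representation} already provides) via $f^\infty_\delta$. This is where the membership $f\in\mathcal{F}^\alpha$ (and, crucially, its $x$-independence) pays off: the recession-type bound in Remark \ref{re:Definition of finfty gzero} must be shown to transfer from $f$ to $f_\delta$ in a form compatible with the hypotheses of \cite{bouchitte1998global} uniformly in $\delta$, so that their BV representation can be invoked. Handling the interaction of the Cantor measure with the smooth truncations in the passage to $GBV_\star$ is routine once the $BV\cap L^\infty$ case is in place, thanks to Proposition \ref{prop:Properties of derivatives of GBVstar} and the growth bounds on $f^\infty$.
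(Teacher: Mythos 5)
Your reduction to the Cantor part and your treatment of the $GBV_\star$ extension match the paper, but the central step --- the one you yourself flag as ``the main obstacle'' --- is genuinely missing, and the mechanism you propose for it would not go through. You want \cite{bouchitte1998global} to give a full representation of $E_\delta$ on $BV\cap L^\infty$ whose Cantor part is $\int_B f^\infty_\delta\big(dD^cu/d|D^cu|\big)\,d|D^cu|$, and then let $\delta\to 0^+$. But identifying the Cantor density produced by \cite{bouchitte1998global} with the \emph{recession function} of the volume integrand is exactly the point where a condition of type (H4) (equivalently, membership in $\mathcal{F}^\alpha$, cf.\ Remark \ref{re:Definition of finfty gzero}) and, crucially, $x$-independence of that integrand are needed: the blow-up for the Cantor part takes place at $|D^cu|$-a.e.\ $x$, and $|D^cu|$ is singular with respect to $\Ld$, so the $\Ld$-negligible exceptional set of Lemma \ref{lemma:teo troncature} and Corollary \ref{cor:troncature} is useless there unless it is empty. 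For $f_\delta$ neither property is available: $f_\delta$ is defined by cell formulas for $E+\delta|Du|$, and even when $f$ is $x$-independent there is no reason $f_\delta$ should be, nor is there an analogue of Proposition \ref{prop:f is in Falpha} for $f_\delta$. This is precisely why Proposition \ref{prop:partial representation} stops at $E^a_\delta$ and $E^j_\delta$.

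The paper circumvents this differently. It uses $E_\delta$ and \cite[Lemma 3.7]{bouchitte1998global} only to obtain a \emph{derivative formula}: $dE^c(u,\cdot)/d|D^cu|(x)$ equals the double limit of $m^{E}(\ell_{\xi^\lambda_\rho(x)},Q^\lambda_{\nu_u(x)}(x,\rho))/(\lambda^{d-1}\rho^d s^\lambda_\rho(x))$, where $\xi^\lambda_\rho(x)=s^\lambda_\rho(x)\,a_u(x)\otimes\nu_u(x)$ comes from Alberti's rank-one theorem and $s^\lambda_\rho(x)\to+\infty$, $\rho\, s^\lambda_\rho(x)\to 0$ (Lemma \ref{lemma:Radon-Nykodim Cantor}; there the $\delta$ is removed by comparing $E_\delta$ with $E$ on truncated near-minimizers supplied by Corollary \ref{cor:troncature}, not through a representation of $E^c_\delta$). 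The identification of this blow-up limit with $\hat{f}^\infty(\xi)$ is then carried out for $E$ itself via the scaling estimate of Lemma \ref{prop:closure ealpha} --- this is where the hypothesis $E\in\ClosureE$ (i.e.\ $E\in\E^{\alpha,\vartheta_m}_m$ for every $m$) and the $x$-independence of $f$ (giving $N=\emptyset$ and $\rho^{\nu,\lambda}_{m,\xi,t,s}(x)=+\infty$) are actually consumed, together with Lemma \ref{lemma:rettangoli} to handle the elongated rectangles $Q^\lambda_\nu$. None of this machinery appears in your proposal, so the proof is incomplete at its core; the closing $GBV_\star$ step, which is essentially correct, cannot compensate for it.
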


To prove this result, we try to characterise the Radon-Nikod\'ym derivative of the measures $E^c(u,\cdot)$ with respect to $|D^cu|$ for any $u\in GBV_\star(A;\Rk)$ with $A\in\mathcal{A}_c(\Rd)$. This is done via a careful truncation procedure and taking advantage of the results of  \cite{bouchitte1998global}. We recall that given $A\in\mathcal{A}_c(\Rd)$ and $u\in BV(A;\Rk)$, Alberti's Rank-One Theorem (see \cite[Corollary 4.6]{alberti_1993} and \cite{RankOne4thapproach,DePhiRind,MassVitto}) ensures that for $|D^cu|$-a.e. $x\in A$ the matrix $dD^cu/d|D^cu|(x)$ has rank one, i.e., there exists two Borel functions $a_u\colon A\to\Sn^{k-1}$ and $\nu_u\colon A\to\Sn^{d-1}$  such that for $|D^cu|$-a.e. $x\in A$ we have
\begin{equation}\label{eq:Rank one Cantor part}
    \frac{dD^cu}{d|D^cu|}(x)=a_u(x)\otimes\nu_u(x).
\end{equation}
\begin{lemma}\label{lemma:Radon-Nykodim Cantor}
    Let $E\in\E_{\rm sc}$, $A\in\mathcal{A}_c(\Rd)$, and $u\in BV(A;\Rk)$. Assume that there exists a function $\hat{f}\colon\Rdk\to[0,+\infty)$ satisfying \eqref{eq:dependence only on one variable}. For $|D^cu|$-a.e. $x\in\Rd$, for every $\lambda\geq 1$, and  $\rho>0$ we set
    \begin{equation}\label{eq:def quantities in Cantor lemma}
     s_\rho^\lambda(x):=\frac{|D^cu|(Q^\lambda_{\nu_u(x)}(x,\rho))}{\lambda^{d-1}\rho^d}\quad\text{ and } \quad \xi_\rho^\lambda(x):=s^\lambda_\rho(x)(a_u(x)\otimes\nu_u(x)),
    \end{equation}
    where $(a_u(x),\nu_u(x))\in \Sn^{k-1}\times \mathbb{S}^{d-1}$ is given by \eqref{eq:Rank one Cantor part}.
    Then 
    \begin{eqnarray}\label{eq: behaviour of s at infinity cantor lemma}
       &\displaystyle \lim_{\rho\to +\infty}s_\rho^\lambda(x)=+\infty\quad\text{ and } \quad\lim_{\rho\to+\infty} \rho s_\rho^\lambda(x)=0\,\,\, \ \text{for every $\lambda\geq 1$},\\
    \label{eq:Radon-Nykodim di Dcu}
       & \displaystyle\frac{dE^c(u,\cdot)}{d|D^cu|}(x)=\lim_{\lambda\to+\infty}\limsup_ {\rho\to 0^+}\frac{m^{E}( \ell_{\xi^\lambda_\rho(x)},Q^\lambda_{\nu_u(x)}(x,\rho))}{\lambda^{d-1}\rho^ds_{\rho}^\lambda(x)},
    \end{eqnarray}
     for $|D^cu|$-a.e. $x\in A$.

\end{lemma}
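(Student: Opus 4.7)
The plan has two parts: the asymptotics \eqref{eq: behaviour of s at infinity cantor lemma} of $s_\rho^\lambda(x)$ (where the displayed limits are to be read as $\rho\to 0^+$, which is the only meaningful regime for these quantities), and the density formula \eqref{eq:Radon-Nykodim di Dcu}.

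The asymptotics of $s_\rho^\lambda$ would be deduced from the fine structure of $|D^cu|$. Since $|D^cu|\perp\Ld$, the differentiation theorem for singular measures applied to the Borel-measurable family $\{Q^\lambda_{\nu_u(x)}(x,\rho)\}_\rho$ gives $s_\rho^\lambda(x)=|D^cu|(Q^\lambda_{\nu_u(x)}(x,\rho))/\Ld(Q^\lambda_{\nu_u(x)}(x,\rho))\to+\infty$ at $|D^cu|$-a.e.\ $x$; meanwhile $D^cu$ charges no Borel set of finite $\hd$-measure (Proposition \ref{prop: Properties of GBVsvector}(c)), so the upper $(d-1)$-density of $|D^cu|$ vanishes $|D^cu|$-a.e., giving $\rho s_\rho^\lambda(x)\to 0$. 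Since $\nu_u$ is Borel-measurable, reduction to standard differentiation bases is achieved by a countable Borel partition of $A$ together with a dense set of directions, after which both limits hold simultaneously at $|D^cu|$-a.e.\ point.

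For the density formula, I would first apply the differentiation theorem to $E^c(u,\cdot)$ and $|D^cu|$: for $|D^cu|$-a.e.\ $x$ and every $\lambda\ge 1$, the ratio $E^c(u,Q^\lambda_{\nu_u(x)}(x,\rho))/|D^cu|(Q^\lambda_{\nu_u(x)}(x,\rho))$ converges to $dE^c(u,\cdot)/d|D^cu|(x)$. Since $E^a(u,\cdot)+E^j(u,\cdot)$ is singular with respect to $|D^cu|$, at $|D^cu|$-a.e.\ $x$ the numerator can be replaced by the full energy $E(u,Q^\lambda_{\nu_u(x)}(x,\rho))$ up to an $o(\lambda^{d-1}\rho^d s_\rho^\lambda(x))$ error. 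The heart of the proof is then a two-sided comparison of $E(u,Q^\lambda_\nu(x,\rho))$ and $m^E(\ell_{\xi^\lambda_\rho(x)},Q^\lambda_\nu(x,\rho))$. The competitor $\ell_{\xi^\lambda_\rho(x)}$ is forced on us by \eqref{eq:Rank one Cantor part}: $\xi^\lambda_\rho(x)=s_\rho^\lambda(x)\,a_u(x)\otimes\nu_u(x)$ is the average Cantor slope of $u$ on the rectangle. For the lower bound, one takes an almost-optimal competitor $v$ for $m^E(\ell_{\xi^\lambda_\rho(x)},Q^\lambda_\nu(x,\rho))$ and glues it to $u$ via the fundamental estimate (Lemma \ref{lemma:fundamental estimate}) on a collar adjacent to $\partial Q^\lambda_\nu(x,\rho)$; for the upper bound one modifies $u$ on a slightly smaller rectangle so as to match $\ell_{\xi^\lambda_\rho(x)}$ on $\partial Q^\lambda_\nu(x,\rho)$ using the same gluing lemma, then invokes properties (d) and (e) of Definition \ref{def:space of functionals E} to absorb the translation and slope corrections.

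The main obstacle will be controlling the boundary-layer errors uniformly as $\rho\to 0^+$ and $\lambda\to+\infty$. The slope $|\xi^\lambda_\rho(x)|=s_\rho^\lambda(x)$ diverges, so the Lipschitz bound coming from property (e) produces an error of size $c_5\,s_\rho^\lambda(x)\,\Ld(\text{collar})$; after normalisation by $\lambda^{d-1}\rho^d s_\rho^\lambda(x)$, this scales like the relative volume of the collar inside the rectangle. This is exactly why the transverse enlargement $\lambda$ is introduced: a collar of relative width $O(1/\lambda)$ yields a boundary error of order $1/\lambda$, negligible after the double limit $\lim_{\lambda\to+\infty}\limsup_{\rho\to 0^+}$. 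The gluing also produces a $\hd$-surface contribution through property (f), which is controlled using $\rho s_\rho^\lambda(x)\to 0$ together with the smooth truncation estimate Lemma \ref{lemma:teo troncature} applied to keep the boundary-matching functions in a quantitatively controlled $L^\infty$-range; here it is crucial that $\hat f=f$ is independent of $x$, so that Corollary \ref{cor:troncature} applies on every rectangle $Q^\lambda_\nu(x,\rho)$, $\rho>0$, and the blow-up at $|D^cu|$-a.e.\ $x$ reduces the problem to an affine limiting profile with slope $\xi^\lambda_\rho(x)$.
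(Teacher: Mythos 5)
Your treatment of \eqref{eq: behaviour of s at infinity cantor lemma} is correct (and you are right that the limits are to be read as $\rho\to 0^+$): the first limit follows from Besicovitch differentiation of the mutually singular measures $|D^cu|$ and $\Ld$, the second from the fact that $D^cu$ vanishes on sets of finite $\hd$-measure, so the upper $(d-1)$-density of $|D^cu|$ is zero $|D^cu|$-a.e. The gap is in your argument for \eqref{eq:Radon-Nykodim di Dcu}. The paper does not attempt a direct two-sided comparison between $E(u,Q^\lambda_{\nu_u(x)}(x,\rho))$ and $m^E(\ell_{\xi^\lambda_\rho(x)},Q^\lambda_{\nu_u(x)}(x,\rho))$: it perturbs, setting $E_\delta=E+\delta|Du|$, quotes \cite[Lemma 3.7]{bouchitte1998global} for the identity $\tfrac{dE^c_\delta(u,\cdot)}{d|D^cu|}(x)=\lim_{\lambda}\limsup_{\rho}m^{E_\delta}(\ell_{\xi^\lambda_\rho(x)},Q^\lambda_{\nu_u(x)}(x,\rho))/(\lambda^{d-1}\rho^d s^\lambda_\rho(x))$ --- legitimate because $E_\delta$ is coercive on $BV$ --- and the only new work is the estimate $m^{E_\delta}\le(1+K\delta)\,m^E+o(\lambda^{d-1}\rho^d s^\lambda_\rho(x))$, obtained by feeding the $L^\infty$-truncated almost minimizers of Corollary \ref{cor:troncature} (whose jumps are bounded by $2c_{\xi,m}\lambda\rho$, so that $\delta|Du^x_\rho|\le \delta(1+2c_{\xi,m}\lambda\rho)c_1^{-1}E(u^x_\rho,\cdot)+\dots$) into $E_\delta$ and then letting $\delta\to0^+$.

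Your route amounts to reproving the Bouchitt\'e--Fonseca--Mascarenhas blow-up formula from scratch for the non-coercive functional $E$, and two essential ingredients are missing. First, the comparison you call ``the heart of the proof'' requires knowing that $u$, near $\partial Q^\lambda_{\nu_u(x)}(x,\rho)$ and after subtracting a constant, is $L^1$-close to a function with the trace of $\ell_{\xi^\lambda_\rho(x)}$. But the blow-up of $u$ at a Cantor point is not affine: it is $a_u(x)\,\psi(\langle y-x,\nu_u(x)\rangle)$ for a monotone scalar $\psi$ (this is where Alberti's rank-one theorem enters), and replacing this one-dimensional profile by the affine function with the same average slope is exactly the delicate step that forces the elongated rectangles and the outer limit $\lambda\to+\infty$; it cannot be absorbed into Lemma \ref{lemma:fundamental estimate}, whose error terms are not quantified in terms of $\|w_n-v_n\|_{L^1}$ on the gluing collar. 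Second, and more fundamentally, you never confront the lack of coercivity: competitors for $m^E$ and the local behaviour of $u$ itself carry no control on $|Du|$ (only on $|[u]|\wedge 1$), which is precisely why the paper works with $E_\delta$ throughout Sections 5--7 and why the genuinely new content of this lemma is the removal of the $\delta|Du|$ penalisation via the truncation corollary. That step, estimate \eqref{eq:claim lemma cantor} in the paper, is absent from your proposal.
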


\begin{proof} 
   Let us fix $\delta>0$ and consider the function $E_\delta$ defined by \eqref{eq;def Eepsilon}. It is shown in \cite[Lemma 3.7]{bouchitte1998global}   that \eqref{eq: behaviour of s at infinity cantor lemma} holds true for $|D^cu|$-a.e. $x\in A$ and that for $\delta>0$ one has
    \begin{equation*}
        \frac{dE^c(u,\cdot)}{d|D^cu|}(x)+\delta=\frac{dE^c_\delta(u,\cdot)}{d|D^cu|}(x)=\lim_{\lambda\to+\infty}\limsup_{\rho\to 0^+}\frac{m^{E_\delta}(\ell_{\xi^\lambda_\rho(x)},Q^\lambda_{\nu_u(x)}(x,\rho))}{\lambda^{d-1}\rho^ds_\rho^\lambda(x)}.
    \end{equation*}
    
    \noindent Since $E\leq E_\delta$, we deduce that
    \begin{equation*}
        \frac{dE^c(u,\cdot)}{d|D^cu|}(x)+\delta\geq \limsup_{\lambda\to+\infty}\limsup_{\rho\to 0^+}\frac{m^{E}(\ell_{\xi^\lambda_\rho(x)},Q^\lambda_{\nu_u(x)}(x,\rho))}{\lambda^{d-1}\rho^ds_\rho^\lambda(x)}.
    \end{equation*}  
    
   Thus, it is enough to show that there exists a constant $K>0$, depending only on $c_1,...,c_4$ and on $k$, such that for every $\lambda \geq 1$, for every $\delta>0$, and for $|D^cu|$-a.e. $x\in A$ we have
    \begin{equation}\label{eq:claim lemma cantor}
        \limsup_{\rho\to 0^+}\frac{m^{E_\delta}(\ell_{\xi^\lambda_\rho(x)},Q^\lambda_{\nu_u(x)}(x,\rho))}{\lambda^{d-1}\rho^ds_\rho^\lambda(x)}\leq \Big(1+K\delta\Big)\limsup_{\rho\to 0^+}\frac{m^E(\ell_{\xi^\lambda_\rho(x)},Q^\lambda_{\nu_u(x)}(x,\rho))}{\lambda^{d-1}\rho^ds_\rho^\lambda(x)}.
    \end{equation}

    \noindent We fix $m\in\N$,  $\lambda\geq 1$, and consider a point $x\in A$  such that \eqref{eq:Rank one Cantor part} and  \eqref{eq: behaviour of s at infinity cantor lemma} hold. By Theorem \ref{thm:rappresentazione} hypothesis \eqref{eq: Ea assoltamente continuo fittizio} of Corollary \ref{cor:troncature} is satisfied. Thus, there exists a  function $u_\rho^{x}\in BV(Q^\lambda_{\nu_u(x)}(x,\rho);\Rk)$, with $\text{tr}_{Q_{\nu_u(x)}^\lambda(x,\rho)}u^x_\rho=\text{tr}_{Q_{\nu_u(x)}^\lambda(x,\rho)}\ell_{\xi^\lambda_\rho(x)}$ and $\|u^x_\rho-\ell_{\xi^\lambda_\rho(x)}\|_{L^\infty(Q^\lambda_{\nu_u(x)}(x,\rho);\Rk)}\leq c_{\xi^\lambda_\rho(x),m}\lambda\rho$, such that 
    \begin{equation}\label{eq: u is minimizing lemma representation}
    E(u^{x}_\rho,Q^\lambda_{\nu_{u}(x)}(x,\rho))\leq m^{E}(\ell_{\xi^\lambda_\rho(x)},Q^\lambda_{\nu_u(x)}(x,\rho))+\frac{C_{\xi^\lambda_\rho(x)}}{m}\lambda^{d-1}\rho^d,
   \end{equation}
   where  $c_{\xi^\lambda_\rho(x),m}$ and $C_{\xi^\lambda_\rho(x)}$ are the positive constants defined by \eqref{eq:def costante cxi} and \eqref{def: costante resti}, respectively. 
   Recalling that $\|u^x_\rho-\ell_{\xi^\lambda_\rho(x)}\|\leq c_{\xi^\lambda_\rho(x),m}\lambda\rho$ and observing that $|[u^x_\rho]|=|[u^x_\rho-\ell_{\xi^\lambda_\rho(x)}]|$, we deduce that $|[u^x_\rho]|\leq 2c_{\xi^\lambda_\rho(x),m}\lambda\rho$. Therefore, we may estimate

\begin{eqnarray*}
    \int_{\jump{u_{\rho}^x}}|[u^x_\rho]|\,d\hd&\leq& \int_{(\jump{u^x_{\rho}}\setminus\jump{u_{\rho}^x}^1)}|[u^x_\rho]|\,d\hd
    +2c_{\xi^\lambda_\rho(x),m}\lambda\rho\hd(\jump{u_\rho^x}^1)\\
    &\leq &(1+2c_{\xi^\lambda_\rho(x),m}\lambda\rho) \int_{\jump{u_{\rho}^x}}|[u^x_\rho]|\land 1\,d\hd.
\end{eqnarray*}

    \noindent From  this inequality, by (c1) and (c2) of Definition \ref{def:space of functionals E} we see that
    \begin{eqnarray*}
      &\displaystyle \hspace{-3 cm}E_\delta(u^x_\rho,Q_{\nu_u(x)}^\lambda(x,\rho))=E(u^x_\rho,Q_{\nu_u(x)}^\lambda(x,\rho))+\delta |D^cu^x_\rho|(Q^\lambda_{\nu_u(x)}(x,\rho))\\
      &\displaystyle \hspace{-0.2 cm}\leq E(u^x_\rho,Q_{\nu_u(x)}^\lambda(x,\rho))+\delta \int_{Q^\lambda_{\nu_u(x)}(x,\rho)}|\nabla u^x_\rho |\, dx+ \delta |D^cu|(Q^\lambda_{\nu_u(x)}(x,\rho))
      \\& \hspace{-2.9 cm}\displaystyle+\delta(1+2c_{\xi^\lambda_\rho(x),m}\lambda\rho)\int_{\jump{u_{\rho}^x}}|[u^x_\rho]|\land 1\,d\hd
        \\
       & \displaystyle \hspace{2 cm}\leq\Big(1+\delta\frac{1+2c_{\xi^\lambda_\rho(x),m}\lambda\rho}{c_1}\Big) E(u^x_\rho,Q^\lambda_{\nu_u(x)}(x,\rho))
        +\delta\frac{c_2}{c_1}(1+2c_{\xi^\lambda_\rho(x),m}\lambda\rho)\lambda^{d-1}\rho^d.
    \end{eqnarray*}
    Since $\text{tr}_{Q_{\nu_u(x)}^\lambda(x,\rho)}u^x_\rho=\text{tr}_{Q_{\nu_u(x)}^\lambda(x,\rho)}\ell_{\xi^\lambda_\rho(x)}$, combining the previous inequality with \eqref{eq: u is minimizing lemma representation}, 
    we get 
    \begin{eqnarray}
       && \hspace{-0.5 cm}\displaystyle \nonumber\frac{m^{E_\delta}(\ell_{\xi^\lambda_\rho(x)},Q^\lambda_{\nu_u(x)}(x,\rho))}{\lambda^{d-1}\rho^ds_\rho^\lambda(x)}
         \leq  \Big(1+\delta\frac{1+2c_{\xi^\lambda_\rho(x),m}\lambda\rho}{c_1}\Big) \Big(\frac{m^E(\ell_{\xi^\lambda_\rho(x)},Q^\lambda_{\nu_u(x)}(x,\rho))}{\lambda^{d-1}\rho^ds^\lambda_\rho(x)}+
           \frac{C_{\xi^\lambda_\rho(x)}}{s^\lambda_\rho(x)m}\Big)\\
          &&\hphantom{m^{E_\delta}(\ell_{\xi^\lambda_\rho(x)},Q^\lambda_{\nu_u(x)}(x,\rho))}\displaystyle\label{eq:lemma 6.1 0}+\delta\frac{c_2}{c_1}(1+2c_{\xi^\lambda_\rho(x),m}\lambda\rho)\frac{1}{s^\lambda_\rho(x)}.
 \end{eqnarray}
    
It is immediate to check that when $s^{\lambda}_\rho(x)\geq 1$ we have
    \begin{equation}
\nonumber\frac{m^E(\ell_{\xi^\lambda_\rho(x)},Q^\lambda_{\nu_u(x)}(x,\rho))}{\lambda^{d-1}\rho^ds^\lambda_\rho(x)}\leq c_3k^{1/2}+c_4.
    \end{equation}
   Hence,  from this inequality,  \eqref{eq:def costante cxi}, \eqref{def: costante resti}, and \eqref{eq: behaviour of s at infinity cantor lemma},  we see that there exist two cosntants $C_1$ and $C_2$ depending only on the structural constants $c_1,...,c_4$, $k$, and $d$ such that 
    \begin{eqnarray}
        &&\displaystyle \nonumber\limsup_{\rho\to0^+}\frac{\delta (1 +2 c_{\xi^\lambda_\rho(x),m}\lambda\rho)}
        {c_1}\Big(\frac{m^E(\ell_{\xi^\lambda_\rho(x)},Q^\lambda_{\nu_u(x)}(x,\rho))}{\lambda^{d-1}\rho^ds^\lambda_\rho(x)}+\frac{C_{\xi^\lambda_\rho(x)}}{s^\lambda_\rho(x)m}\Big)\\
        &&\displaystyle\nonumber \leq \limsup_{\rho\to 0^+}\frac{\delta}{c_1}\Big(1+ 2(\sigma^m+1)\lambda\rho\big(s^\lambda_\rho(x)d^{1/2}+C_1\big)\Big)\Big((c_3k^{1/2}+c_4)+\frac{C_{\xi^{\lambda_\rho(x)}}}{s^\lambda_\rho(x)m}\Big)\\\label{eq:lemma 6.1 1}
        &&\leq \frac{\delta}{c_1}\Big(c_3k^{1/2}+c_4+\frac{C_2}{m}\Big),
    \end{eqnarray}
   As for the remaining terms of \eqref{eq:lemma 6.1 0}, by \eqref{def: costante resti} we see that 
    \begin{equation}\label{eq:lemma 6.1 2}
        \limsup_{\rho\to 0^+}\frac{C_{\xi^\lambda_\rho(x)}}{s^\lambda_\rho(x)m}=\limsup_{\rho\to 0^+}\frac{2c_3Cs^\lambda_\rho(x)+2C(c_4+1)}{s^\lambda_\rho(x)m}=\frac{2c_3C}{m},
    \end{equation}
   \noindent while by \eqref{eq:def costante cxi} and \eqref{eq: behaviour of s at infinity cantor lemma} we get that
    \begin{equation}\label{eq:lemma 6.1 3}
        \limsup_{\rho\to 0^+}\frac{c_2}{c_1}\delta\big(1+2c_{\xi^\lambda_\rho(x),m}\lambda\rho\big)\frac{1}{s^\lambda_\rho(x)}=0.
    \end{equation}
      \noindent Finally, using \eqref{eq:lemma 6.1 1}-\eqref{eq:lemma 6.1 3} we can take the limsup in \eqref{eq:lemma 6.1 0} as $\rho\to 0^+$, obtaining an estimate that depends on $m$ and $\delta$. Then, taking the limit as $m\to+\infty$  we get \eqref{eq:claim lemma cantor}, concluding the proof.
   \end{proof}

   The next result ensures that estimates on the volume integrand $f$ translate into estimates on the right-hand side of \eqref{eq:Radon-Nykodim di Dcu}. For the proof of this result we refer the reader to \cite[Lemma 5.3]{DalToa23b}.

   \begin{lemma}\label{lemma:rettangoli}
   Let $E\in\E_{\rm sc}$,  $\xi\in \Rdk$,  $\lambda\geq 1$,   $\nu\in\mathbb{S}^{d-1}$,  $t>0$, and $\mu\in [0,+\infty)$. Assume that for every $x\in\Rd$ and $\rho>0$ we have
   \begin{equation*}
       m^E_{t\rho}(\ell_\xi,Q(x,\rho))\leq \mu\rho^d.
   \end{equation*}
   Then 
   \begin{equation*}
       m^E_{t\lambda\rho}(\ell_\xi,Q^\lambda_\nu(x,\rho))\leq \mu\lambda^{d-1}\rho^d
   \end{equation*}
   for every $x\in\Rd$ and $\rho>0$. If, in addition, there exists some $x_0\in\Rd$ such that
   \begin{equation*}
       \limsup_{\rho\to 0^+}\frac{m^E_{t\rho}(\ell_\xi,Q(x_0,\rho))}{\rho^d}=\mu,
   \end{equation*}
   then
    \begin{equation*}
       \limsup_{\rho\to 0^+}\frac{m^E_{t\lambda\rho}(\ell_\xi,Q^\lambda_\nu(x_0,\rho))}{\lambda^{d-1}\rho^d}=\mu.
   \end{equation*}
   \end{lemma}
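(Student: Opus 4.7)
The strategy for the first inequality is a tiling and gluing argument based on the hypothesis, which controls $m^E_{t\rho'}(\ell_\xi,\cdot)$ on every small axis-aligned cube. Fix $x\in\Rd$, $\rho>0$, $\lambda\geq 1$, $\nu\in\Sn^{d-1}$, and a large integer $N$. I consider the lattice of axis-aligned cubes $\{Q(y_i,\rho/N)\}_{i\in I_N}$ whose centres lie on $(\rho/N)\Z^d$ (suitably translated), keeping only those cubes that are entirely contained in $Q^\lambda_\nu(x,\rho)$. The residual boundary layer $\Omega_N:=Q^\lambda_\nu(x,\rho)\setminus\bigcup_{i\in I_N}\overline{Q(y_i,\rho/N)}$ has Lebesgue measure at most $C_d\lambda^{d-1}\rho^d/N$, for a dimensional constant $C_d$.

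By the hypothesis applied at each $y_i$ with scale $\rho/N$, I pick $u_i\in BV(Q(y_i,\rho/N);\Rk)$ with $\textup{tr}\,u_i=\textup{tr}\,\ell_\xi$, $\|u_i-\ell_\xi\|_{L^\infty}\leq t\rho/N$, and $E(u_i,Q(y_i,\rho/N))\leq \mu(\rho/N)^d+N^{-2}$. I then define $u_N:=u_i$ on $Q(y_i,\rho/N)$ and $u_N:=\ell_\xi$ on $\Omega_N$. Since each $u_i$ matches the trace of $\ell_\xi$ on $\partial Q(y_i,\rho/N)$, no spurious jump arises on the interfaces, and $u_N\in BV(Q^\lambda_\nu(x,\rho);\Rk)$. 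Since $\partial Q^\lambda_\nu(x,\rho)\subset\overline{\Omega_N}$, one also has $\textup{tr}\,u_N=\textup{tr}\,\ell_\xi$ on $\partial Q^\lambda_\nu(x,\rho)$, and $\|u_N-\ell_\xi\|_{L^\infty}\leq t\rho/N\leq t\lambda\rho$. Using property (b) of Definition \ref{def:space of functionals E} and the bound $E(\ell_\xi,\Omega_N)\leq (c_3k^{1/2}|\xi|+c_4)|\Omega_N|$ coming from (c2$'$), I deduce
\begin{equation*}
m^E_{t\lambda\rho}(\ell_\xi,Q^\lambda_\nu(x,\rho))\leq \mu\lambda^{d-1}\rho^d+|I_N|N^{-2}+(c_3k^{1/2}|\xi|+c_4)|\Omega_N|,
\end{equation*}
and letting $N\to+\infty$ yields the claimed estimate.

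The upper bound $\limsup\leq\mu$ in the second part is immediate from the first. To prove the reverse inequality I argue by contradiction: suppose that there exist $\delta>0$ and $\rho_0>0$ such that $m^E_{t\lambda\rho}(\ell_\xi,Q^\lambda_\nu(x_0,\rho))\leq(\mu-\delta)\lambda^{d-1}\rho^d$ for every $\rho<\rho_0$. For any small $\rho'$ I set $\rho'':=\rho'/(2\lambda\sqrt{d})$; since the diameter of $Q^\lambda_\nu(x_0,\rho'')$ is at most $\lambda\rho''\sqrt{d}=\rho'/2$, we have $Q^\lambda_\nu(x_0,\rho'')\subset\subset Q(x_0,\rho')$. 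I build a competitor for $m^E_{t\rho'}(\ell_\xi,Q(x_0,\rho'))$ by placing the good competitor of energy $(\mu-\delta)\lambda^{d-1}(\rho'')^d$ on $Q^\lambda_\nu(x_0,\rho'')$ and applying the tiling construction of part one to fill $Q(x_0,\rho')\setminus Q^\lambda_\nu(x_0,\rho'')$ with small axis-aligned cubes carrying the standard bound $\mu$. The truncation constraint is preserved because $t\lambda\rho''=t\rho'/(2\sqrt{d})\leq t\rho'$. Letting the tiling scale vanish I obtain
\begin{equation*}
m^E_{t\rho'}(\ell_\xi,Q(x_0,\rho'))\leq\mu(\rho')^d-\delta\lambda^{d-1}(\rho'')^d=\Big(\mu-\frac{\delta}{2^d\lambda\, d^{d/2}}\Big)(\rho')^d
\end{equation*}
for every sufficiently small $\rho'$, contradicting $\limsup_{\rho\to 0^+}m^E_{t\rho}(\ell_\xi,Q(x_0,\rho))/\rho^d=\mu$.

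The main delicate point will be verifying the gluing step in part one: one has to check that $u_N$ belongs to $BV(Q^\lambda_\nu(x,\rho);\Rk)$ with the correct outer trace, and that no additional singular contribution arises at the interfaces between the tiled cubes and the residual layer. This follows from the fact that $u_i$ and $\ell_\xi$ have the same trace on $\partial Q(y_i,\rho/N)$, so the measure-theoretic jump $[u_N]$ vanishes there, and from the measure property (b) of Definition \ref{def:space of functionals E}, which allows the additive decomposition of $E$ on the disjoint Borel union of $\bigcup_{i\in I_N}Q(y_i,\rho/N)$ and $\Omega_N$. The quantitative estimate $|\Omega_N|\leq C_d\lambda^{d-1}\rho^d/N$ then forces both error terms in the resulting inequality to vanish as $N\to+\infty$.
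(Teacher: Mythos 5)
Your route is the expected one: the paper itself gives no argument here and defers to \cite[Lemma 5.3]{DalToa23b}, whose proof is of the same covering-and-gluing type (tile the rotated rectangle by small axis-aligned cubes, glue near-minimisers with matching traces, control the residual layer via (c2$'$), and reverse the construction for the lower bound), and the substance of your argument is sound: trace matching removes any jump on the interfaces, property (b) plus (c2) applied to the Lebesgue-null interface set (which carries no jump of $u_N$) gives the additive splitting, and your contradiction argument for the second statement, with $\rho''=\rho'/(2\lambda\sqrt{d})$ so that $Q^\lambda_\nu(x_0,\rho'')\subset\subset Q(x_0,\rho')$ and $t\lambda\rho''\leq t\rho'$, is correct, including the constant $\delta/(2^d\lambda d^{d/2})$.

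Two small repairs are needed. First, the per-cube error $N^{-2}$ is too generous: the number of cubes is $|I_N|\sim\lambda^{d-1}N^{d}$, so $|I_N|N^{-2}$ does not vanish as $N\to+\infty$ when $d\geq2$, and the sentence ``letting $N\to+\infty$ yields the claimed estimate'' fails as written; choose instead near-minimisers with errors summing to an arbitrary $\eta>0$ (for instance $\eta/|I_N|$ per cube, or $\eta(\rho/N)^d$) and send $\eta\to0^+$ at the end, which costs nothing since only the sharp inequality $m^E_{t\lambda\rho}(\ell_\xi,Q^\lambda_\nu(x,\rho))\leq\mu\lambda^{d-1}\rho^d$ is needed. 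Second, the justification of the outer trace via $\partial Q^\lambda_\nu(x,\rho)\subset\overline{\Omega_N}$ is not always true: small cubes may reach the boundary of the rectangle, and $\Omega_N$ can even be empty when the tiling is exact; the conclusion nevertheless holds because every glued piece (each $u_i$ and $\ell_\xi$) has boundary trace equal to that of $\ell_\xi$, so $\textup{tr}_{Q^\lambda_\nu(x,\rho)}u_N=\textup{tr}_{Q^\lambda_\nu(x,\rho)}\ell_\xi$ in all cases. The same error bookkeeping should be applied to the tiling step in your contradiction argument for the second part; with these adjustments the proof is complete.
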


With these two lemmas at hand, we are now ready to prove Theorem \ref{thm:Cantor}.
   \begin{proof}[Proof of Theorem \ref{thm:Cantor}]
      Thanks to Theorem \ref{thm:rappresentazione} and to \eqref{eq:dependence only on one variable}, for every $A\in\mathcal{A}_c(\Rd)$ we have 
      \begin{equation}
          \label{eq: representation ac Cantor }\displaystyle E^a(u,B)=\int_B\hat{f}(\nabla u)\, dx\quad \text{ and }\quad  E^s(u,B)=\int_{\jump{u}\cap B}g(x,[u],\nu_u)\,d\hd
      \end{equation}
      for every $u\in GBV_\star(A;\Rk)$ and $B\in\mathcal{B}(A)$.

      We are left with proving that for every $A\in\mathcal{A}_c(\Rd)$ we can represent the Cantor part $E^c$ as 
      \begin{equation}\label{eq:claim Cantor Theorem}
          E^c(u,B )=\int_{B}\hat{f}^\infty\Big(\frac{dD^cu}{d|D^cu|}\Big)\,d|D^cu|
      \end{equation}
      for every $u\in GBV_\star(A;\Rk)$ and $B\in\mathcal{B}(A)$. 
      
     Since $\ell_\xi$ is an admissible function for the minimisation problem $m^E(\ell_\xi,Q(x,\rho))$, by \eqref{eq: representation ac Cantor } we get that
    \begin{equation*}
        m^E(\ell_{\xi},Q(x,\rho))\leq \hat{f}(\xi)\rho^d 
    \end{equation*}
   for every $x\in\Rd,$ $\xi\in\Rkd$, and  $\rho>0$.
    Recalling \eqref{eq:definition of small f} and \eqref{eq:dependence only on one variable}, applying Lemma \ref{lemma:rettangoli} we obtain 
        \begin{equation*}
            \limsup_{\rho\to 0^+}\frac{m^E(\ell_\xi,Q^\lambda_\nu(x,\rho)))}{\lambda^{d-1}\rho^d}=\hat{f}(\xi)
        \end{equation*}
    for every $\lambda\geq 1$.
    In particular, 
    for every $t>0$ we have
    \begin{equation}\label{eq:identity for finfty in cantor}
        \limsup_{\rho\to 0^+}\frac{m^E(t\ell_{\xi},Q^\lambda_\nu(x,\rho))}{\lambda^{d-1}\rho^dt}=\frac{\hat{f}(t\xi)}{t}
    \end{equation}
   so that, taking the $\limsup$ for $t\to+\infty$, we get
    \begin{equation}\label{eq:finftycantor}
          \limsup_{t\to +\infty}\limsup_{\rho\to 0^+}\frac{m^E(t\ell_{\xi},Q^\lambda_\nu(x,\rho))}{\lambda^{d-1}\rho^dt}=\hat{f}^\infty(\xi).
    \end{equation}

     We claim that for every $\xi\in\Rkd$, $\lambda\geq 1$, and $x\in\Rd$ we have 
     \begin{equation}\label{eq:claim Cantor Thm}
         \limsup_{\rho\to 0^+}\frac{m^E(s_\rho \ell_{\xi},Q^\lambda_\nu(x,\rho))}{\lambda^{d-1}\rho^ds_\rho}=\hat{f}^\infty(\xi)
     \end{equation}
     whenever $s_\rho\to +\infty$ and $\rho s_\rho\to 0^+$ as $\rho\to 0^+$. To see this, first note that by \eqref{eq:definition of small f} and \eqref{eq:dependence only on one variable} for every $\rho>0$ we have
    $m^E(s_\rho\ell_{\xi},Q^\lambda_\nu(x,\rho))\leq \hat{f}(s_\rho \xi)\lambda^{d-1}\rho^d$,   whence
     \begin{equation*}
    \limsup_{\rho\to 0^+}\frac{m^E(s_\rho\ell_{\xi},Q^\lambda_\nu(x,\rho))}{\lambda^{d-1}\rho^ds_\rho}\leq \hat{f}^\infty(\xi).
     \end{equation*}
    
    Thus, to prove \eqref{eq:claim Cantor Thm} it is enough to show that 
     \begin{equation}\label{eq:right inequality cantor}
           \hat{f}^\infty(\xi) \leq\limsup_{\rho\to 0^+}\frac{m^E(s_\rho\ell_{\xi},Q^\lambda_\nu(x,\rho))}{\lambda^{d-1}\rho^ds_\rho}.
     \end{equation}
      Let $\eta>0$. By definition of $\hat{f}^\infty$, there exists $t_\eta>1/\eta$ such that
     \begin{equation}\label{eq:def good t cantor 1}
         \displaystyle \hat{f}^\infty(\xi)-\eta<\frac{\hat{f}(t_\eta\xi)}{t_\eta}.
     \end{equation}
     Hence, recalling \eqref{eq:identity for finfty in cantor} we have
     \begin{equation}\label{eq:fra eta e eta}
         \hat{f}^\infty(\xi)-\eta\leq  \limsup_{\rho\to 0^+}\frac{m^E(t_\eta\ell_{\xi},Q^\lambda_\nu(x,\rho))}{\lambda^{d-1}\rho^dt_\eta}.
     \end{equation}
     Let us fix $m\in\N$. By Lemma \ref{prop:closure ealpha} we obtain
         \begin{eqnarray}
    \nonumber 
       &&\limsup_{\rho\to 0^+}\frac{m^{E}(t_\eta\ell_\xi,Q^\lambda_\nu(x,\rho))}{\lambda^{d-1}\rho t_\eta}
        \leq\limsup_{\rho\to+\infty}\frac{m^{E}(s_\rho\ell_\xi,Q^\lambda_\nu(x,\rho))}{\lambda^{d-1}\rho^ds_\rho}\\&&\nonumber +
        \limsup_{\rho\to 0^+}\Big(\vartheta_m(s_\rho\kappa_{\xi,m}\lambda\rho)\frac{m^{E}(s_\rho\ell_\xi,Q^\lambda_\nu(x,\rho))}{\lambda^{d-1}\rho^ds_\rho}+\frac{c_7}{s_\rho^\alpha}\Big(\frac{m^E(s_\rho\ell_{\xi},Q^\lambda_\nu(x,\rho))}{\lambda^{d-1}\rho^ds_\rho}\Big)^{1-\alpha}+\frac{c_7}{s_\rho}\\&&\nonumber \quad
      +(1+\vartheta_m(
      s_\rho\kappa_{\xi,m}\lambda\rho))\frac{M_\xi}{m}
     \nonumber +\Big(\frac{1}{s_\rho^\alpha}+\frac{1}{s_\rho }\Big)\frac{M_\xi}{m^{1-\alpha}}\Big)
 \\&&\quad +\limsup_{\rho\to0^+}\Big(\vartheta_m(t_\eta\kappa_{\xi,m}\lambda\rho)\frac{m^E(t_\eta\ell_{\xi},Q^\lambda_\nu(x,\rho))}{\lambda^{d-1}\rho^dt_\eta}\nonumber 
        +\frac{c_7}{t_\eta^\alpha}\Big(\frac{m^E(t_\eta\ell_\xi,Q^\lambda_\nu(x,\rho))}{\lambda^{d-1}\rho^dt_\eta }\Big)^{1-\alpha}+\frac{c_7}{t_\eta}\\
   &&\quad \label{eq:conticione}+(1+\vartheta_m(t_\eta \kappa_{\xi,m}\lambda\rho))\frac{M_\xi}{m}
      +\Big(\frac{1}{t_\eta^\alpha}+\frac{1}{t_\eta }\Big)\frac{M_\xi}{m^{1-\alpha}}\Big),
     \end{eqnarray}
   where $M_\xi>0$ is the constant of Lemma \ref{prop:closure ealpha}, which we recall is independent of $\rho,s_\rho$, $t$, and $m$.

    We separately study the summands of this last expression. Since $\vartheta_m$ is continuous and $\vartheta_m(0)=0$, while $s_\rho\rho\to0$ and $s_\rho\to+\infty$ as $\rho\to 0^+$, recalling that $\frac{m^E(s_\rho\ell_\xi,Q^\lambda_\nu(x,\rho))}{\lambda^{d-1}\rho^ds_\rho}$ is bounded uniformly with respect to $\rho$  by (c2) of Definition \ref{def:space of functionals E}, we obtain
    \begin{eqnarray}
        &&\nonumber \limsup_{\rho\to 0^+}\Big(\vartheta_m(s_\rho\kappa_{\xi,m}\lambda\rho))\frac{m^{E}(s_\rho\ell_{\xi},Q^\lambda_\nu(x,\rho))}{\lambda^{d-1}\rho^ds_\rho}+\frac{c_7}{s_\rho^\alpha}\Big(\frac{m^E(s_\rho\ell_{\xi},Q^\lambda_\nu(x,\rho))}{\lambda^{d-1}\rho^ds_\rho}\Big)^{1-\alpha}\\  &&\label{eq:contoni 1 cantor}\hspace{2 cm}+\frac{c_7}{s_\rho}+(1+\vartheta_m(s_\rho\kappa_{\xi,m}\lambda\rho))\frac{M_\xi}{m}
      +\Big(\frac{1}{s_\rho^\alpha}+\frac{1}{s_\rho }\Big)\frac{M_\xi}{m^{1-\alpha}}\Big)=\frac{M_\xi}{m}.
    \end{eqnarray}
    \noindent 
    By (c2) of Definition \ref{def:space of functionals E} there exists $N_\xi>0$, independent of $\rho$ and $t_\eta$, such that 
    \begin{equation*}
        \frac{m^E(t_\eta\ell_{\xi},Q^\lambda_\nu(x,\rho))}{\lambda^{d-1}\rho^dt_\eta}\leq N_\xi.
    \end{equation*}
  Therefore, arguing as in the proof of \eqref{eq:contoni 1 cantor},  by  \eqref{eq:def good t cantor 1} and \eqref{eq:fra eta e eta},  we obtain
    \begin{eqnarray}\nonumber 
    &&\limsup_{\rho\to0^+}\Big( \vartheta_m(t_\eta\kappa_{\xi,m}\lambda\rho)\frac{m^E(t_\eta\ell_{\xi},Q^\lambda_\nu(x,\rho))}{\lambda^{d-1}\rho^dt_\eta}\nonumber 
        +\frac{c_7}{t_\eta^\alpha}\Big(\frac{m^E(t_\eta\ell_\xi,Q^\lambda_\nu(x,\rho))}{\lambda^{d-1}\rho^dt_\eta}\Big)^{1-\alpha}+\frac{c_7}{t_\eta}\\
        &&\hspace{1.5 cm}\nonumber +(1+\vartheta_m(t_\eta\kappa_{\xi,m}\lambda\rho))\frac{M_\xi}{m}
      +\Big(\frac{1}{t_\eta^\alpha}+\frac{1}{t_\eta}\Big)\frac{M_\xi}{m^{1-\alpha}}\Big)
      \\ && \hspace{0.5 cm}\label{eq:contoni 2 Cantor}\leq \frac{N_\xi^{1-\alpha}}{t_\eta^\alpha}+\frac{c_7}{t_\eta}+\Big(\frac{1}{t_\eta^\alpha}+\frac{1}{t_\eta}\Big)\frac{M_\xi}{m^{1-\alpha}}< N_\xi^{1-\alpha}\eta^{\alpha}+c_7\eta +(\eta+\eta^{\alpha})\frac{M_\xi}{m^{1-\alpha}},
    \end{eqnarray}
    where in the last inequality we have used that $t_\eta>1/\eta$.
    \noindent Finally, combining \eqref{eq:fra eta e eta} and \eqref{eq:contoni 2 Cantor}, we get
    \begin{equation*}
      \hat{f}^\infty(\xi)-\eta-\frac{M_\xi}{m}- N_\xi^{1-\alpha}\eta^{\alpha}+c_7\eta +(\eta+\eta^{\alpha})\frac{M_\xi}{m^{1-\alpha}}\leq \limsup_{\rho\to 0^+}\frac{m^{E}(s_\rho\ell_{\xi},Q^\lambda_\nu(x,\rho))}{\lambda^{d-1}\rho^ds_\rho}.
    \end{equation*}
    Letting $m\to+\infty$ and $\eta\to 0^+$, we obtain \eqref{eq:right inequality cantor}.

   Consider now a function $u\in BV(A;\Rk)$. Let $a_u(x)$, $\nu_u(x)$, and $s^\lambda_\rho(x)$ be as in \eqref{eq:Rank one Cantor part} and  \eqref{eq:def quantities in Cantor lemma}, and set $\xi(x):=a_u(x)\otimes\nu_u(x)$.  By Lemma \ref{lemma:Radon-Nykodim Cantor} for $|D^cu|$-a.e. $x\in A$ we have 
   \begin{equation}\label{eq:quasi finito cantor}
       \frac{dE^c(u,\cdot)}{d|D^cu|}(x)= \lim_{\lambda\to+\infty}\limsup_{\rho\to 0^+}\frac{m^{E}(s^\lambda_\rho(x)\ell_{\xi(x)},Q^\lambda_{\nu_u(x)}(x,\rho))}{\lambda^{d-1}\rho^ds^\lambda_\rho(x)}.
   \end{equation}
    Recalling \eqref{eq: behaviour of s at infinity cantor lemma}, in light of \eqref{eq:claim Cantor Thm} and of \eqref{eq:quasi finito cantor}, we then  infer that
    \begin{equation*}
   \frac{dE^c(u,\cdot)}{d|D^cu|}(x)=\hat{f}^\infty(a_{u}(x)\otimes\nu_u(x)),
    \end{equation*}
    which by \eqref{eq:Rank one Cantor part} gives  \eqref{eq:claim Cantor Theorem} for every $u\in BV(A;\Rk)$ and $B\in\mathcal{B}(A)$.

     Let us assume now that $u\in GBV_\star(A;\Rk)$ and for every $R>0$ we consider the set $A^R_u$ introduced in Proposition \ref{prop:Properties of derivatives of GBVstar}. We claim that it is enough to prove \eqref{eq:claim Cantor Theorem} for every $B\in\mathcal{B}(A)$ for which there exists $R>0$ such that $B\subset A^R_{u}$. Indeed, by Proposition \ref{prop: Properties of GBVsvector}  we have $A_{u,0}^{R}\nearrow A_{\rm reg}:=\{x\in A\colon \widetilde{u}(x) \,\,\text{exists}\}$ and that $|D^cu|(A\setminus A_{\rm reg})=0$. Hence, every $B\in\mathcal{B}(A)$ can be written, up to a $|D^cu|$-negligble set, as the increasing union of Borel sets each contained in $A^R_{u,0}$, for some $R$. This proves the claim.

     Let us fix $R>0$ and a Borel set $B\subset A^R_u$. Let  $R_m>R$ be a sequence with $R_m\to+\infty$. Thanks to Lemma \ref{lemma:passaggio al limite dalle troncate al non nei vari pezzi}, we have  
     \begin{equation}\label{eq:fine 7.1}
         \lim_{m\to+\infty}\frac{1}{m}\sum_{i=1}^mE^c(\psi^i_{R_m}\circ u,B)=E^c(u,B).
     \end{equation}
      Since each function $\psi^i_{R_m}\circ u$ belongs to $BV(A;\Rk)$
     and the integral representation holds in $BV(A;\Rk)$, we have
     \begin{equation}\label{eq:fine 7.1 2}
         E^c(\psi^i_{R_m}\circ u,B)=\int_B\hat{f}^\infty\Big (\frac{dD^c(\psi_{R_m}^{i}\circ u)}{d|D^c(\psi_{R_m}^{i}\circ u)|}\Big)\, d|D^c(\psi_{R_m}^{i}\circ u)|
     \end{equation}
     for every $i\in\{1,...,m\}$. Recalling that $B\subset A^R_u\subset A^{R_m}_u$, from Proposition  \ref{prop:Properties of derivatives of GBVstar} and from \eqref{eq:properties psiRi} we obtain that 
     \begin{eqnarray*}
        &\displaystyle \frac{dD^c(\psi^{i}_{R_m}\circ u)}{d|D^c(\psi^{i}_{R_m}\circ u)|}=\frac{dD^cu}{d|D^cu|} \quad |D^cu|\text{-a.e. in } A^R_u,\\
        &\displaystyle |D^c(\psi_{R_m}^i\circ u)|=|D^cu| \quad \text{as measures in $A^R_u$},
     \end{eqnarray*}
     for every $i\in\{1,...,m\}$. These equalities, together with \eqref{eq:fine 7.1} and \eqref{eq:fine 7.1 2},  give \eqref{eq:claim Cantor Theorem}, concluding  the proof. \end{proof}

\section{Integrands of the \texorpdfstring{$\Gamma$}{Gamma}-limits}\label{sec:integrands in the limit}

In this section we consider a  sequence  $(E_n)_n$ of functionals in $\E$. In the first part we assume that  $(E_n)_n$ $\Gamma$-converges to some functional $E\in\E_{\rm sc}$ and  we characterise  the bulk and surface integrands $f$ and $g$ of $E$ at a point $x\in\Rd$ by taking first the limit as $n\to+\infty$ of the infima of suitable minimisation problems for $E_n$ on small cubes and taking then the limit as these cubes shrink to $x$.

In the last part we assume that each  functional $E_n$ belongs to $\E^{\alpha,\vartheta}$ and  prove the converse of the previous results: if the limits mentioned above exist and are independent of $x$, then they define two integrands $f$ and $g$ such that the sequence $(E_n)_n$ $\Gamma$-converges to the functional corresponding to $f$ and $g$.

We begin this analysis by showing that it is possible to obtain the function $f$ introduced in \eqref{eq:definition of small f} by means of limits of constrained minimisation problems. 

\begin{lemma}\label{lemma:f come limiti troncati}
Let $E\in\E_{\rm sc}$ and let $f$ be the function defined by \eqref{eq:definition of small f}. Then there exists a $\Ld$-negligible set $N\in\mathcal{B}(\R^d)$, such that for every $x\in\Rd\setminus N$ and for every $\xi\in\Rkd$ we have
\begin{equation}\label{eq:Lemma con troncamento piu eta}
f(x,\xi)=\lim_{m\to+\infty}\limsup_{\rho\to0^+}\frac{m^E_{\rho c_{\xi,m}}(\ell_\xi,Q(x,\rho))}{\rho^{d}},
\end{equation}
where $c_{\xi,m}$ is the constant defined by \eqref{eq:def costante cxi} and $m^E_t(\ell_\xi,Q(x,\rho))$ is given by \eqref{eq:problema di minimo ausiliario troncato} with $t=c_{\xi,m}\rho$.
If, in addition, there exists a function $\hat{f}\colon \Rkd\to[0,+\infty)$ such that $f(x,\xi)=\hat{f}(\xi)$ for $\Ld$-a.e. $x\in\Rd$ and every $\xi \in \Rkd$, then \eqref{eq:Lemma con troncamento piu eta} holds for every $x\in\Rd$ and  $\xi\in \Rkd$.
\end{lemma}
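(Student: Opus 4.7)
The plan is to establish both inequalities in \eqref{eq:Lemma con troncamento piu eta} separately, exploiting the truncation machinery of Corollary \ref{cor:troncature}. The easy direction
\[
f(x,\xi)\leq \lim_{m\to+\infty}\limsup_{\rho\to 0^+}\frac{m^E_{c_{\xi,m}\rho}(\ell_\xi,Q(x,\rho))}{\rho^d}
\]
follows from the trivial inequality $m^E(\ell_\xi,Q(x,\rho))\leq m^E_{c_{\xi,m}\rho}(\ell_\xi,Q(x,\rho))$, since every competitor of the constrained problem is also admissible for the unconstrained one. Note that the inner $\limsup$ is non-increasing in $m$ (as $c_{\xi,m}$ is increasing in $m$, the $L^\infty$-constraint becomes less restrictive), so the outer limit exists and coincides with the infimum over $m$.

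For the reverse inequality I would invoke Corollary \ref{cor:troncature}. Since $E\in\E_{\rm sc}$, Theorem \ref{thm:rappresentazione} yields the integral representation $E^a(u,B)=\int_B f(x,\nabla u)\,dx$ with $f\in\mathcal{F}$ given by \eqref{eq:definition of small f}, so hypothesis \eqref{eq: Ea assoltamente continuo fittizio} of the corollary holds with $\hat f=f$. Applying the corollary with $\lambda=1$ and $\nu=e_d$ produces a Borel set $N\in\mathcal{B}(\Rd)$ with $\Ld(N)=0$ such that for every $x\in\Rd\setminus N$, every $m\in\N$, and every $\xi\in\Rkd$ there exists $\rho^{e_d,1}_{m,\xi}(x)>0$ for which \eqref{eq:corollary claim} reads
\[
m^E_{c_{\xi,m}\rho}(\ell_\xi,Q(x,\rho))\leq m^E(\ell_\xi,Q(x,\rho))+\frac{C_\xi\rho^d}{m}
\]
for every $\rho\in(0,\rho^{e_d,1}_{m,\xi}(x))$. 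Dividing by $\rho^d$ and taking $\limsup_{\rho\to 0^+}$ gives
\[
\limsup_{\rho\to 0^+}\frac{m^E_{c_{\xi,m}\rho}(\ell_\xi,Q(x,\rho))}{\rho^d}\leq f(x,\xi)+\frac{C_\xi}{m},
\]
and letting $m\to+\infty$ (with $C_\xi$ independent of $m$, as stated after \eqref{def: costante resti}) yields the remaining inequality.

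For the last assertion, suppose $f(x,\xi)=\hat f(\xi)$ for $\Ld$-a.e. $x$ and every $\xi$. Then $\hat f$ is independent of $x$ and, being in $\mathcal{F}$, is Lipschitz in $\xi$, hence continuous on $\Rd\times\Rkd$. By the final two sentences of Corollary \ref{cor:troncature}, one has $N=\varnothing$ and $\rho^{e_d,1}_{m,\xi}(x)=+\infty$, so the above computation is valid for every $x\in\Rd$ and every $\xi\in\Rkd$, completing the proof.

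No step here is a genuine obstacle: the entire argument is a direct application of the sharper truncation estimate packaged in Corollary \ref{cor:troncature}, combined with the representation of $E^a$ from Theorem \ref{thm:rappresentazione}. The only point requiring a moment of care is the monotonicity in $m$ (to guarantee that the outer limit exists), and the observation that $C_\xi$ depends on $\xi$ alone so that the error term $C_\xi/m$ can be sent to $0$.
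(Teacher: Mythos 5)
Your proposal is correct and follows essentially the same route as the paper: the trivial inequality $m^E\leq m^E_{c_{\xi,m}\rho}$ for one direction, and for the other the combination of Theorem \ref{thm:rappresentazione} (to verify \eqref{eq: Ea assoltamente continuo fittizio}) with Corollary \ref{cor:troncature} (applied on the cube $Q(x,\rho)=Q^1_{e_d}(x,\rho)$), dividing by $\rho^d$ and letting $\rho\to 0^+$, then $m\to+\infty$, with the final assertion obtained from the last sentences of the corollary exactly as in the paper. The extra observation on monotonicity in $m$ is a harmless refinement.
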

\begin{proof}

It follows immediately from \eqref{eq:problema di minimo ausiliario} and \eqref{eq:problema di minimo ausiliario troncato} that for every $m\in\N$ and $\xi\in\Rdk$ we have
$m^E\leq m^E_{c_{\xi,m}\rho}$. Thus, we only have to prove that 
\begin{equation}\label{eq:claim del lemma troncazione di m per f}
\lim_{m\to +\infty}\limsup_{\rho\to0^+}\frac{m^E_{\rho c_{\xi,m}}(\ell_\xi,Q(x,\rho))}{\rho^d}\leq \limsup_{\rho\to0^+}\frac{m^E(\ell_\xi,Q(x,\rho))} {\rho^d}  
\end{equation}
for $\Ld$-a.e. $x\in \Rd$ and for every $\xi\in\Rkd$.
Thanks to Theorem \ref{thm:rappresentazione}, the equality \eqref{eq: Ea assoltamente continuo fittizio} is satisfied. Hence, by
   Corollary \ref{cor:troncature} there exists a set $N\in\mathcal{B}(\Rd)$, with $\Ld(N)=0$, satisfying the following property: for every $x\in\Rd\setminus N$,  $\xi\in\Rkd$, $m\in\N$ and $\rho>0$ small enough there exists a function $u\in BV(Q(x,\rho);\Rk)$ such that with tr$_{Q(x,\rho)}u=$tr$_{Q(x,\rho)}\ell_\xi$,  $\|u-\ell_\xi\|_{L^\infty(Q(x,\rho);\Rk)}\leq  c_{\xi,m}\rho$, and such that
\begin{equation*}
    m^E_{\rho c_{\xi,m}}(\ell_{\xi},Q(x,\rho))\leq m^E(\ell_\xi,Q(x,\rho))+\frac{C_\xi}{m}\rho^d,
\end{equation*}
where $C_\xi>0$ is the constant defined by \eqref{def: costante resti}.
Dividing this  inequality by $\rho^d$ and letting $\rho\to 0^+$, we conclude that 
\begin{equation*}
\limsup_{\rho\to^0+}\frac{m^E_{\rho c_{\xi,m}}(u,Q(x,\rho))}{\rho^d}\leq \limsup_{\rho\to 0^+}\frac{ m^E(u,Q(x,\rho))}{\rho^d}+\frac{C_\xi}{m}.
\end{equation*}
Taking the limit for $m\to +\infty$, we obtain \eqref{eq:claim del lemma troncazione di m per f}.

To conclude the proof, we note that under the additional hypothesis we have $N=\emptyset$ in Corollary \ref{cor:troncature}.
\end{proof}
The next result is useful to understand the relation between the minima of problems associated with a sequence $(E_n)_n$ on an open set $A'$ and the minimisation of the problem associated with their $\Gamma$-limit $E$, computed on a larger open set $A''$.

\begin{lemma}\label{eq:liminf and set inclusions}
Let $(E_n)_n\subset \E$, $E\in\E_{\rm sc}$, and let $A',A''\in\mathcal{A}_c(\Rd)$  with Lipschitz boundary and such that $A'\subset\subset A''$. Assume  that for every $A\in\mathcal{A}_c(\Rd)$, the sequence $E_n(\cdot,A)$ $\Gamma$-converges to $E(\cdot,A)$ with respect to the topology of $L^0(\Rd;\Rk)$. Then for every $w\in W^{1,1}_{\rm loc}(\Rd;\Rk)$, we have
\begin{equation*}
    m^E(w,A'')\leq \liminf_{n\to+\infty}m^{E_n}(w,A')+c_3k^{1/2}\int_{A\setminus A'}|\nabla w|\, dx+c_4\Ld(A''\setminus A').
\end{equation*}
\begin{proof}
    The proof can be obtained by adapting the arguments of \cite[Proposition 3.1]{DalToa23b}, replacing \cite[Theorem 7.13]{DalToa23} by \cite[Theorem 3.22]{donati2023new}.
\end{proof}
\end{lemma}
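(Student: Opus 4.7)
\medskip

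\noindent\textbf{Proof sketch.} The plan is to follow the classical comparison argument between a sequence of minima and the minimum of the $\Gamma$-limit, using a recovery-then-extension strategy. First, assume without loss of generality that $\liminf_{n\to+\infty}m^{E_n}(w,A')<+\infty$, and extract a (not relabelled) subsequence along which the $\liminf$ is attained. For each $n$ pick a near minimiser $u_n\in BV(A';\Rk)$ with $\mathrm{tr}_{A'}u_n=\mathrm{tr}_{A'}w$ and
\begin{equation*}
E_n(u_n,A')\leq m^{E_n}(w,A')+\tfrac{1}{n}.
\end{equation*}
Define $\widetilde u_n\in L^0(\Rd;\Rk)$ by $\widetilde u_n:=u_n$ on $A'$ and $\widetilde u_n:=w$ on $\Rd\setminus A'$. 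Since the traces coincide on $\partial A'$, $\widetilde u_n|_{A''}\in GBV_\star(A'';\Rk)$ and $\mathrm{tr}_{A''}\widetilde u_n=\mathrm{tr}_{A''}w$, so $\widetilde u_n$ is an admissible competitor for $m^{E_n}(w,A'')$.

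Next, extract a compactness limit. From (c1$'$) and the bound on $E_n(u_n,A')$, the quantities $V_2(u_n,A')$ are uniformly bounded; combining with $\widetilde u_n=w\in W^{1,1}_{\mathrm{loc}}$ on $A''\setminus A'$, we see that $V_2(\widetilde u_n,A'')$ is uniformly bounded. The $GBV_\star$ compactness theorem of \cite[Theorem 3.22]{donati2023new} (the vectorial analogue of \cite[Theorem 7.13]{DalToa23}) then produces, up to a further subsequence, some $\widetilde u\in GBV_\star(A'';\Rk)$ such that $\widetilde u_n\to\widetilde u$ in $L^0(\Rd;\Rk)$. Since $\widetilde u_n=w$ on $A''\setminus A'$ for every $n$, the limit satisfies $\widetilde u=w$ a.e. on $A''\setminus A'$; in particular $\mathrm{tr}_{A''}\widetilde u=\mathrm{tr}_{A''}w$, so $\widetilde u$ is admissible for $m^E(w,A'')$.

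Now split $E_n(\widetilde u_n,A'')$ on disjoint Borel pieces using property (b). Because the traces of $u_n$ and $w$ agree on $\partial A'$, no extra jump is created across $\partial A'$; using (c2) together with the fact that $w\in W^{1,1}_{\mathrm{loc}}(\Rd;\Rk)$ has no jump in $A''\setminus A'$, we obtain
\begin{equation*}
E_n(\widetilde u_n,A'')\leq E_n(u_n,A')+c_3V(w,A''\setminus A')+c_4\Ld(A''\setminus A')\leq E_n(u_n,A')+c_3k^{1/2}\!\!\int_{A''\setminus A'}\!\!|\nabla w|\,dx+c_4\Ld(A''\setminus A'),
\end{equation*}
where we used $V(w,A''\setminus A')=\sum_{i=1}^k\int_{A''\setminus A'}|\nabla w_i|\,dx\leq k^{1/2}\int_{A''\setminus A'}|\nabla w|\,dx$. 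Taking the $\liminf$ as $n\to+\infty$ on both sides, applying the $\Gamma\meno\liminf$ inequality of $E(\cdot,A'')$ at $\widetilde u$, and bounding $m^E(w,A'')\leq E(\widetilde u,A'')$ yields the desired estimate.

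The principal obstacle is the compactness step: one must guarantee that $\widetilde u_n$ converges in $L^0(\Rd;\Rk)$ along a subsequence to a function whose trace on $\partial A''$ still equals that of $w$. The bound in $V_2$ alone does not suffice in the abstract vector-valued $GBV_\star$ setting, and it is exactly here that the compactness result \cite[Theorem 3.22]{donati2023new} replaces the scalar statement of \cite[Theorem 7.13]{DalToa23}; the remaining splittings and trace-matching arguments are direct consequences of the locality (a), the measure property (b), and the growth bound (c2).
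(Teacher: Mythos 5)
Your overall strategy is the one the paper intends (it simply defers to the scalar proof of \cite[Proposition 3.1]{DalToa23b}): glue near-minimisers $u_n$ for $m^{E_n}(w,A')$ with $w$ on the annulus, pass to a limit, use the $\Gamma$-liminf inequality on $A''$, and pay $c_3k^{1/2}\int_{A''\setminus A'}|\nabla w|\,dx+c_4\Ld(A''\setminus A')$ for the annulus via (c2). The splitting across $\partial A'$ and the bound $V(w,A''\setminus A')\leq k^{1/2}\int|\nabla w|$ are correct.

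The gap is exactly at the step you yourself single out as the principal obstacle, and invoking the $GBV_\star$ compactness theorem does not close it. First, a uniform bound on $V_2(\widetilde u_n,A'')$ does \emph{not} yield a subsequence converging in $L^0(\Rd;\Rk)$ to a finite-valued function: the compactness theorem for $GBV_\star$ only gives a.e.\ convergence to an $\overline{\R}^k$-valued limit, which may be infinite on a set of positive measure (e.g.\ $u_n=n\,a\,\chi_B+w\chi_{A'\setminus B}$ for a fixed finite-perimeter $B\subset\subset A'$ has $V$ uniformly bounded, matches the trace of $w$, and blows up on $B$; nothing in the near-minimality assumption excludes such behaviour a priori). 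Second, even a finite a.e.\ limit is only guaranteed to lie in $GBV_\star(A'';\Rk)$, whereas $m^E(w,A'')$ in \eqref{eq:problema di minimo ausiliario} is an infimum over $BV(A'';\Rk)$, so your limit is not automatically an admissible competitor. Both defects are repaired by the truncation machinery that this paper builds for precisely this purpose: before passing to the limit, apply property (g) of Definition \ref{def:space of functionals E} with datum $w$ to replace $u_n$ by $v_n:=w+\psi^{i_n}_R\circ(u_n-w)$ for a suitable $i_n\in\{1,\dots,m\}$. Since $\mathrm{tr}_{A'}u_n=\mathrm{tr}_{A'}w$ and $\psi^{i}_R(0)=0$, the trace is preserved; $\|v_n-w\|_{L^\infty}\leq\sigma^mR$ gives a genuine uniform $BV$ bound and hence weak* $BV$ compactness with a $BV(A'';\Rk)$ limit equal to $w$ on $A''\setminus A'$; and the energy increase is controlled by the error terms in \eqref{eq: g}, which vanish after letting $R\to+\infty$ and $m\to+\infty$ (cf.\ Lemma \ref{lemma:monotic truncations}). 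Without this preliminary truncation the argument, as written, does not go through.
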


We now prove a result that allows to compare the limit of the minima of problems associated with a sequence $(E_n)_n$ on a cube $Q(x,\rho)$ with the minimum of the problem associated with the $\Gamma$-limit $E$, computed on the same cube.
\begin{lemma}\label{lemma:limsup sui cub inequality}
    Let $(E_n)_n\subset\E$, $E\in\E$, $x\in\Rd$, $\xi\in\Rkd$, $m\in\N$, $\rho>0$, and $s>d^{1/2}|\xi|\rho$. Assume that for every $A\in\mathcal{A}_c(\Rd)$ the sequence $E_n(\cdot,A)$ $\Gamma$-converges to $E(\cdot,A)$ with respect to the topology of $L^0(\Rd;\Rk)$. Then 
\begin{equation}
    \displaystyle\label{eq:lemma dati al bordo sul cubo 1}\limsup_{n\to +\infty}m_{t }^{E_n}(\ell_\xi,Q(x,\rho))\leq m_{s}^E(\ell_\xi,Q(x,\rho))+\frac{K_\xi}{m}\rho^d
\end{equation}
where $t:=2\sigma^m(s+d^{1/2}|\xi|\rho)+d^{1/2}|\xi|\rho$ and $K_\xi>0$ is a constant depending on $\xi$, but not on $(f_n)_n$, $(g_n)_n$, $m$, $s$, and $\rho$.
\end{lemma}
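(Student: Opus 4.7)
The plan is to construct an admissible function $u_n$ for $m_t^{E_n}(\ell_\xi,Q(x,\rho))$ by combining a near-optimal competitor of the $E$-problem with a recovery sequence for the $\Gamma$-convergence, using property (g) of Definition \ref{def:space of functionals E} to enforce the $L^\infty$ cap and the fundamental estimate (Lemma \ref{lemma:fundamental estimate}) to enforce the boundary condition. I would fix $\eta>0$ and pick $u\in BV(Q(x,\rho);\Rk)$ with $\textup{tr}_{Q(x,\rho)}u=\textup{tr}_{Q(x,\rho)}\ell_\xi$, $\|u-\ell_\xi\|_{L^\infty(Q(x,\rho);\Rk)}\leq s$, and $E(u,Q(x,\rho))\leq m_s^E(\ell_\xi,Q(x,\rho))+\eta$, and extend $u$ by $\ell_\xi$ outside to $\bar u\in BV_{\mathrm{loc}}(\Rd;\Rk)$; the trace matching and the fact that $|D^cu|$ does not charge $\hd$-finite sets give $E(\bar u,\partial Q(x,\rho))=0$. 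For $\rho^*>\rho$ close to $\rho$, the assumed $\Gamma$-convergence on $Q(x,\rho^*)$ produces a recovery sequence $w_n\to\bar u$ in $L^0(\Rd;\Rk)$ with $\lim_n E_n(w_n,Q(x,\rho^*))=E(u,Q(x,\rho))+E(\ell_\xi,Q(x,\rho^*)\setminus\overline{Q(x,\rho)})$.

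I would then apply property (g) on $Q(x,\rho^*)$ to the pair $(w_n,\ell_\xi)$ with truncation radius $R:=s+d^{1/2}|\xi|\rho>s\geq\|\bar u-\ell_\xi\|_\infty$: averaging over $i\in\{1,\ldots,m\}$ selects an index $i_n$ such that $\tilde w_n:=\ell_\xi+\psi_R^{i_n}\circ(w_n-\ell_\xi)$ satisfies $\|\tilde w_n-\ell_\xi\|_\infty\leq\sigma^mR$ and
\[
E_n(\tilde w_n,Q(x,\rho^*))\leq E_n(w_n,Q(x,\rho^*))+(c_3k^{1/2}|\xi|+c_4)\Ld\bigl(\{|w_n-\ell_\xi|\geq R\}\bigr)+\frac{C}{m}\bigl(E_n(w_n,Q(x,\rho^*))+(1+|\xi|)\Ld(Q(x,\rho^*))\bigr).
\]
Since $w_n\to\bar u$ in measure and $R>\|\bar u-\ell_\xi\|_\infty$, the middle term vanishes as $n\to\infty$, and the uniform $L^\infty$ bound forces $\tilde w_n\to\bar u$ in $L^1_{\mathrm{loc}}(\Rd;\Rk)$. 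I would then invoke Lemma \ref{lemma:fundamental estimate} with $\tilde w_n$ and the constant sequence $v_n\equiv\bar u$, on nested cubes $Q(x,\rho_1)\subset\subset Q(x,\rho_2)\subset\subset Q(x,\rho^*)$, $\rho_1<\rho_2<\rho$, chosen so that $\hd(J_{\tilde w_n}\cap\partial Q(x,\rho_1))=0$ for every $n$ (possible since only countably many $\rho_1$ violate this for any given $n$); setting $U:=Q(x,\rho^*)\setminus\overline{Q(x,\rho_1)}$ and any $\delta>0$, this yields $u_n:=\varphi_n\tilde w_n+(1-\varphi_n)\bar u$ with $\limsup_n E_n(u_n,Q(x,\rho_1)\cup U)\leq(1+\delta)\limsup_n\bigl(E_n(\tilde w_n,Q(x,\rho^*))+E_n(\bar u,U)\bigr)+\delta$.

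Since $u_n=\tilde w_n$ in a neighborhood of $\partial Q(x,\rho_1)$, the choice of $\rho_1$ ensures $E_n(u_n,\partial Q(x,\rho_1))=0$, so $E_n(u_n,Q(x,\rho))\leq E_n(u_n,Q(x,\rho_1)\cup U)$. Moreover, outside $Q(x,\rho_2)$ one has $u_n=\bar u=u$, hence $\textup{tr}_{Q(x,\rho)}u_n=\textup{tr}_{Q(x,\rho)}\ell_\xi$, and the pointwise bound $|u_n-\ell_\xi|\leq\varphi_n\sigma^mR+(1-\varphi_n)s\leq\sigma^mR+s<t$ holds by the definitions of $R$ and $t$, so $u_n$ is admissible for $m_t^{E_n}(\ell_\xi,Q(x,\rho))$. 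Letting $\rho_1,\rho_2,\rho^*\to\rho$, $\delta\to 0$, and $\eta\to 0$, the term $E_n(\bar u,U)\leq c_3kV_2(\bar u,U)+c_4\Ld(U)$ vanishes since $V_2(\bar u,\cdot)$ is a finite Radon measure not charging $\partial Q(x,\rho)$, the tail $E(\ell_\xi,Q(x,\rho^*)\setminus\overline{Q(x,\rho)})\leq(c_3k^{1/2}|\xi|+c_4)((\rho^*)^d-\rho^d)$ vanishes, and the $C/m$ error becomes $\frac{K_\xi}{m}\rho^d$ with $K_\xi:=C(c_3k^{1/2}|\xi|+c_4+1+|\xi|)$ after bounding $m_s^E(\ell_\xi,Q(x,\rho))\leq(c_3k^{1/2}|\xi|+c_4)\rho^d$. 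The main obstacle is the simultaneous enforcement of the boundary condition and of the $L^\infty$ cap on a recovery sequence that a priori satisfies neither: property (g) must be applied \emph{before} the gluing in order to provide both the $L^1_{\mathrm{loc}}$ convergence required by Lemma \ref{lemma:fundamental estimate} and the uniform control of $\|\tilde w_n-\ell_\xi\|_\infty$; the calibration of $R$ against the final cap $\sigma^mR+s\leq t$ is what dictates the prefactor $2\sigma^m$ appearing in the statement.
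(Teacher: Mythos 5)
Your proof is correct and follows essentially the same route as the paper's: pick a near-minimiser $u$ of $m^E_s$, take a recovery sequence, truncate it via property (g) to restore an $L^\infty$ bound, and glue it to the affine boundary datum with the fundamental estimate (Lemma \ref{lemma:fundamental estimate}), with the calibration $\sigma^m R + s \le t$ producing the constant in the statement. The only cosmetic difference is that the paper packages the recovery-plus-truncation step into Lemma \ref{lemma:recovery sequence troncate} (truncating around $0$ rather than around $\ell_\xi$, which is where the factor $2\sigma^m(s+d^{1/2}|\xi|\rho)$ comes from) and glues with $\ell_\xi$ in the outer annulus rather than with your extension $\bar u$.
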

\begin{proof}
   Let us fix $0<\eta<1$. Consider a function $u\in BV(Q(x,\rho);\Rk)$, with$\|u-\ell_\xi\|_{L^\infty(Q(x,\rho);\Rk)}\leq s$ and $\text{tr}_{Q(x,\rho)}u=\text{tr}_{Q(x,\rho)}\ell_\xi$, such that 
    \begin{equation}\label{eq:bound 8.4}
        E(u,Q(x,\rho))\leq m^E_s(\ell_\xi,Q(x,\rho))+\eta\leq(c_3k^{1/2}|\xi|+c_4)\rho^d+\eta.
\end{equation}
    By Lemma \ref{lemma:recovery sequence troncate} there exists a sequence of functions $(v_n)_n\subset BV(Q(x,\rho);\Rk)$, with  $\|v_n\|_{L^\infty(Q(x,\rho))}\leq 2\sigma^m(s+d^{1/2}|\xi|\rho)$, such that $v_n\to u$ in $L^1(\Rd;\Rk)$
\begin{equation}\label{eq:bound 8.4 2}
    \limsup_{n\to+\infty}E_n(v_n,Q(x,\rho))\leq E(u,Q(x,\rho))+C\frac{E(u,Q(x,\rho))+\rho^d}{m}.
\end{equation}
Let us fix $0<\eta<1$ and $0<r<\rho$. For every $0<\delta\leq \eta$ we apply Lemma \ref{lemma:fundamental estimate} to the open sets $A=Q(x,r)$ and $U=Q(x,\rho)\setminus \overline{Q(x,r)}$ to obtain a sequence $(u_n)_n\subset BV(Q(x,\rho);\Rk)$ converging to $u$ in $L^1(\Rd;\Rk)$, with $\text{tr}_{Q(x,\rho)}u_n=\text{tr}_{Q(x,\rho)}\ell_\xi$ and $\|u_n-\ell_\xi\|_{L^\infty(Q(x,\rho);\Rk)}\leq  2\sigma^m(s+d^{1/2}|\xi|\rho)+d^{1/2}|\xi|\rho=t$,  such that
 \begin{equation*}
\limsup_{n\to+\infty}E_n(u_n,Q(x,\rho))\leq(1+\delta)\limsup_{n\to+\infty }\Big(E_n(v_n, Q(x,\rho))+E_n(\ell_\xi,Q(x,\rho)\setminus \overline{Q(x,r)})\Big)+\eta 
 \end{equation*} 
Exploiting \eqref{eq:bound 8.4} and \eqref{eq:bound 8.4 2}, from this last inequality we deduce that 
\begin{eqnarray}
\nonumber &\hspace{-2 cm}\displaystyle\limsup_{n\to+\infty}E_n(u_n,Q(x,\rho))\leq m^E_s(u,Q(x,\rho))+\eta+\delta\big((c_3k^{1/2}|\xi|+c_4)\rho^d+\eta\big)\\\label{eq:fine limsup lim}
&\displaystyle\quad\quad\qquad +(1+\delta)C\frac{(c_3k^{1/2}|\xi|+c_4+1)\rho^d+\eta}{m}+(1+\delta)(c_3k^{1/2}|\xi|+c_4)(\rho^d-r^d)+\eta.
\end{eqnarray}
Choosing $r$ so that $2(c_3k^{1/2}|\xi|+c_4)(\rho^d-r^d)\leq \eta$ and $\delta$ such that $\delta((c_3k^{1/2}|\xi|+c_4)\rho^d+\eta)\leq \eta$, recalling that $\text{tr}_{Q(x,\rho)}u_n=\text{tr}_{Q(x,\rho)}\ell_\xi$ and  $\|u_n-\ell_\xi\|_{L^\infty(Q(x,\rho);\Rk)}\leq t$, from \eqref{eq:fine limsup lim} we obtain 
\begin{equation*}
    \limsup_{n\to+\infty} m^{E_n}_{t}(\ell_\xi,Q(x,\rho))\leq m^E_s(u,Q(x,\rho))+ 2C\frac{(c_3k^{1/2}|\xi|+c_4+1)\rho^d+\eta}{m}+4\eta.
\end{equation*}
We conclude the proof by letting $\eta\to 0^+$.
\end{proof}

The next result shows that when $E$ is the $\Gamma$-limit of a sequence of functionals $(E_n)_n$,  the value of its bulk integrand $f$ at $(x,\xi)$ can be obtained by taking first the limit of $m^{E_n}(\ell_\xi,Q(x,\rho))/\rho^d$ as $n\to+\infty$ and then the limit as $\rho\to0^+$. For technical reasons, we need also a similar result where we replace $m^{E_n}(\ell_\xi,Q(x,\rho))$  by its constrained version  $m^{E_n}_{t}(\ell_{\xi},Q(x,\rho))$, for a suitable choice of $t>0$.

Given $m\in\N$ and $\xi\in\Rkd$, the constraint $t$ will be given by $\rho\beta_{\xi,m}$, with
\begin{equation}\label{eq:def betamxi}
\beta_{\xi,m}:=\sigma^m(c_{\xi,m}+d^{1/2}|\xi|)+d^{1/2}|\xi|,
\end{equation}
where $c_{\xi,m}>0$ is the constant defined by \eqref{eq:def costante cxi}.

\begin{proposition}\label{prop: integrands truncated}Let $(E_n)_n\subset \E$, $E\in \E_{\rm sc}$, and let $f$ be the function defined by \eqref{eq:definition of small f}.  Assume  that for every $A\in \mathcal{A}_c(\Rd)$ the sequence $E_n(\cdot,A)$ $\Gamma$-converges to $E(\cdot,A)$ with respect to the topology of $L^0(\Rd;\Rk)$. Then there exist an $\Ld$-negligible set $N\in\mathcal{B}(\Rd)$ such that for every $x\in\Rd\setminus N$ and $\xi\in \Rkd$ we have
\begin{eqnarray}\label{eq:small f as a not trucated limit}
    &&\displaystyle f(x,\xi)=\limsup_{\rho\to0^+}\liminf_{n\to\infty}\frac{m^{E_n}(\ell_{\xi},Q(x,\rho))}{\rho^d}=\limsup_{\rho\to0^+}\limsup_{n\to\infty}\frac{m^{E_n}(\ell_\xi,Q(x,\rho))}{\rho^d},\\
    &&\label{eq:Limite con m}\displaystyle \hspace{-1.2 cm}f(x,\xi)=\!\!\!\!\!\lim_{m\to+\infty}\!\!\limsup_{\rho\to0^+}\liminf_{n\to\infty}\frac{m^{E_n}_{\rho\beta_{\xi,m}}(\ell_{\xi},Q(x,\rho))}{\rho^d}=\!\!\!\!\!\lim_{m\to+\infty}\!\!\limsup_{\rho\to0^+}\limsup_{n\to\infty}\frac{m^{E_n}_{\rho\beta_{\xi,m}}(\ell_\xi,Q(x,\rho))}{\rho^d},
\end{eqnarray}
where $\beta_{\xi,m}$ is defined by \eqref{eq:def betamxi}.

If in addition there exists a function $\hat{f}\colon \Rkd\to[0,+\infty)$ such that $f(x,\xi)=\hat{f}(\xi)$ for  $\Ld$-a.e. $x\in\Rd$ and for every $\xi \in \Rkd$, then  \eqref{eq:small f as a not trucated limit} and \eqref{eq:Limite con m} hold for every $x\in\Rd$ and for every $\xi\in \Rkd$.  
\end{proposition}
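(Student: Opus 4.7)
The plan is to sandwich the four limits in \eqref{eq:small f as a not trucated limit} and \eqref{eq:Limite con m} between $f(x,\xi)$ and itself, by showing (on the one hand) that $f(x,\xi)$ is a lower bound for the smallest of them via Lemma~\ref{eq:liminf and set inclusions} and (on the other hand) that it is an upper bound for the largest of them via Lemma~\ref{lemma:limsup sui cub inequality} combined with Lemma~\ref{lemma:f come limiti troncati}. The key structural observation is that the constant $\beta_{\xi,m}$ in \eqref{eq:def betamxi} is calibrated precisely so that Lemma~\ref{lemma:limsup sui cub inequality}, applied with a suitable choice of its parameter $s$, converts constrained minima of $E_n$ with threshold $\rho\beta_{\xi,m}$ into constrained minima of $E$ with a threshold that diverges as $m\to\infty$.

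For the lower bound I would apply Lemma~\ref{eq:liminf and set inclusions} to the Lipschitz cubes $A'=Q(x,r\rho)$ and $A''=Q(x,\rho)$, with $r\in(0,1)$ and $w=\ell_\xi$, obtaining
\begin{equation*}
m^E(\ell_\xi,Q(x,\rho))\le \liminf_{n\to\infty}m^{E_n}(\ell_\xi,Q(x,r\rho))+(c_3k^{1/2}|\xi|+c_4)(1-r^d)\rho^d.
\end{equation*}
Dividing by $\rho^d$, performing the substitution $\rho'=r\rho$, taking $\limsup_{\rho\to 0^+}$, and letting $r\nearrow 1^-$ produces $f(x,\xi)\le\limsup_{\rho\to 0^+}\liminf_{n\to\infty}m^{E_n}(\ell_\xi,Q(x,\rho))/\rho^d$. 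Using the trivial inequalities $m^{E_n}\le m^{E_n}_{\rho\beta_{\xi,m}}$ and $\liminf_n\le\limsup_n$, together with the fact that $m^{E_n}_{\rho\beta_{\xi,m}}$ is non-increasing in $m$, this lower bound immediately transfers to all four limits appearing in \eqref{eq:small f as a not trucated limit} and \eqref{eq:Limite con m}, including after $\lim_{m\to\infty}$.

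For the upper bound I would apply Lemma~\ref{lemma:limsup sui cub inequality} with $s:=s_0\rho$, where $s_0:=(c_{\xi,m}-d^{1/2}|\xi|)/2$. A direct computation shows that the corresponding $t=2\sigma^m(s_0+d^{1/2}|\xi|)\rho+d^{1/2}|\xi|\rho$ equals $\rho\beta_{\xi,m}$, and the admissibility $s_0>d^{1/2}|\xi|$ holds because $\sigma^m+1\ge 4$ for every $m\ge 1$ (since $\sigma=c_6k+2\ge 3$). The lemma then yields
\begin{equation*}
\limsup_{n\to\infty}m^{E_n}_{\rho\beta_{\xi,m}}(\ell_\xi,Q(x,\rho))\le m^E_{s_0\rho}(\ell_\xi,Q(x,\rho))+\frac{K_\xi\rho^d}{m}.
\end{equation*}
An elementary manipulation using $\sigma>2$ gives $s_0\ge c_{\xi,m-1}$ for every $m\ge 2$, so by monotonicity of $s\mapsto m^E_{s\rho}$ one has $m^E_{s_0\rho}\le m^E_{c_{\xi,m-1}\rho}$. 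Dividing by $\rho^d$ and taking in succession $\limsup_{\rho\to 0^+}$ and $\lim_{m\to\infty}$, Lemma~\ref{lemma:f come limiti troncati} converts the right-hand side into $f(x,\xi)$. This shows $\lim_{m\to\infty}\limsup_{\rho\to 0^+}\limsup_{n\to\infty}m^{E_n}_{\rho\beta_{\xi,m}}(\ell_\xi,Q(x,\rho))/\rho^d\le f(x,\xi)$, which via $m^{E_n}\le m^{E_n}_{\rho\beta_{\xi,m}}$ upper-bounds all four limits by $f(x,\xi)$.

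Combining the two bounds proves both \eqref{eq:small f as a not trucated limit} and \eqref{eq:Limite con m} outside the $\Ld$-negligible set $N$ coming from Lemma~\ref{lemma:f come limiti troncati}. Under the additional hypothesis that $f$ does not depend on $x$, the same lemma holds with $N=\emptyset$, so the chain of inequalities is valid for every $x\in\Rd$ and $\xi\in\Rkd$. The main technical subtlety will be the upper-bound calibration: matching $t$ to $\rho\beta_{\xi,m}$ forces the precise form of $s_0$ dictated by \eqref{eq:def betamxi}, and the comparison $s_0\ge c_{\xi,m-1}$, which relies crucially on $\sigma>2$, is exactly what allows Lemma~\ref{lemma:f come limiti troncati} to absorb the error term $K_\xi/m$ as $m\to\infty$.
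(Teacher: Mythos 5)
Your proof is correct and follows essentially the same sandwich strategy as the paper's: Lemma~\ref{eq:liminf and set inclusions} for the lower bound, Lemma~\ref{lemma:limsup sui cub inequality} for the upper bound, and the truncation result (Lemma~\ref{lemma:f come limiti troncati}, equivalently Corollary~\ref{cor:troncature}) to remove the $L^\infty$ constraint and absorb the $O(1/m)$ errors. Your explicit calibration $s_0=(c_{\xi,m}-d^{1/2}|\xi|)/2$, chosen so that the threshold $t$ of Lemma~\ref{lemma:limsup sui cub inequality} equals exactly $\rho\beta_{\xi,m}$, followed by the comparison $s_0\geq c_{\xi,m-1}$ (harmless after $m\to+\infty$), is a more careful bookkeeping of the constants than the paper's own chain of inequalities, but it is the same argument.
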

\begin{proof}
Let $N\in\mathcal{B}(\Rd)$ be the union of the $\Ld$-negligible sets of Corollary \ref{cor:troncature} and Lemma \ref{lemma:f come limiti troncati}. We fix $m\in\N$, let $\rho>0$, and  set  $r:=\rho+\rho^2$. Using first Lemma \ref{eq:liminf and set inclusions}, then Lemma \ref{lemma:limsup sui cub inequality}, and finally Corollary \ref{cor:troncature}, for every $x\in\Rd\setminus N$ and  $\rho>0$ small enough we have 
\begin{eqnarray}
   \nonumber  m^{E}(\ell_\xi;Q(x,r))&\leq& \liminf_{n\to+\infty}m^{E_n}(\ell_\xi,Q(x,\rho))+(c_3k^{1/2}|\xi|+c_4)(r^d-\rho^d)\\
    \nonumber &\leq& \liminf_{n\to+\infty}m^{E_n}_{\rho\beta_{\xi,m}}(\ell_\xi,Q(x,\rho))+(c_3k^{1/2}|\xi|+c_4)(r^d-\rho^d)\\      
   \nonumber  &\leq& \limsup_{n\to+\infty}m^{E_n}_{\rho\beta_{\xi,m}}(\ell_\xi,Q(x,\rho))+(c_3k^{1/2}|\xi|+c_4)(r^d-\rho^d)\\
  \nonumber   &\leq& m^{E}_{\rho c_{\xi,m}}(\ell_\xi,Q(x,\rho))+(c_3k^{1/2}|\xi|+c_4)(r^d-\rho^d)+\frac{K_\xi}{m}\rho^d\\
 \nonumber   &\leq& m^{E}(\ell_\xi,Q(x,\rho))+(c_3k^{1/2}|\xi|+c_4)(r^d-\rho^d)+\frac{K_\xi+C_\xi}{m}\rho^d,
\end{eqnarray}
where $K_\xi$ is the constant of Lemma \ref{lemma:limsup sui cub inequality} and $C_\xi$ is given by \eqref{def: costante resti}.
We divide all terms of  the previous chain of inequalities by $\rho^d$, and take first the limsup for $\rho\to0^+$  and then the limit for $m\to+\infty$. By \eqref{eq:definition of small f}  in this way we obtain \eqref{eq:Limite con m}, since $(r^d-\rho^d)/\rho^d\to 0$ as $\rho\to0^+$. 

To prove \eqref{eq:small f as a not trucated limit}, one can simply replace the expression in the second line of the previous chain of inequalities by 
\begin{equation*}
    \limsup_{n\to+\infty}m^{E_n}(\ell_\xi,Q(x,\rho))+(c_3k^{1/2}|\xi|+c_4)(r^d-\rho^d).
\end{equation*}

If the additional hypothesis is satisfied, the last lines of Corollary \ref{cor:troncature} and of Lemma \ref{eq:small f as a not trucated limit} ensure that $N=\emptyset$, which concludes the proof.
\end{proof}

In the next proposition we show that an equality similar to \eqref{eq:small f as a not trucated limit} holds also for the surface integrand $g$.
\begin{proposition}\label{prop:jump integrand and gamma limits}
   Let $(E_n)_n\subset \E$, let  $E\in \E_{\rm sc}$, and let $g$ be given by \eqref{eq:definition of small g}. Assume  that for every $A\in \mathcal{A}_c(\Rd)$ the sequence $E_n(\cdot,A)$ $\Gamma$-converges to $E(\cdot,A)$ with respect to the topology of $L^0(\Rd)$. Then for every $x\in\Rd$,  $\zeta\in\Rk$, and  $\nu \in\mathbb{S}^{d-1}$ we have 
    \begin{equation}\label{eq:small g as a limit}
     g(x,\zeta,\nu)=\limsup_{\rho\to 0^+}\limsup_{n\to+\infty}\frac{m^{E_n}(u_{x,\zeta,\nu},Q_\nu(x,\rho))}{\rho^{d-1}}=\limsup_{\rho\to 0^+}\limsup_{n\to+\infty}\frac{m^{E_n}(u_{x,\zeta,\nu},Q_\nu(x,\rho))}{\rho^{d-1}}.
    \end{equation}
\end{proposition}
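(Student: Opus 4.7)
The strategy is to sandwich $g(x,\zeta,\nu)$ between $\limsup_\rho \liminf_n m^{E_n}/\rho^{d-1}$ and $\limsup_\rho \limsup_n m^{E_n}/\rho^{d-1}$, combining a recovery-sequence argument with the fundamental estimate for the upper bound, and compactness in $GBV_\star$ together with the $\Gamma\meno\liminf$ inequality for the lower one. Since clearly $\limsup_\rho \liminf_n \leq \limsup_\rho \limsup_n$, it is enough to prove
\[
\limsup_{\rho \to 0^+} \limsup_{n \to +\infty} \frac{m^{E_n}(u_{x,\zeta,\nu}, Q_\nu(x,\rho))}{\rho^{d-1}} \leq g(x,\zeta,\nu) \leq \limsup_{\rho \to 0^+} \liminf_{n \to +\infty} \frac{m^{E_n}(u_{x,\zeta,\nu}, Q_\nu(x,\rho))}{\rho^{d-1}}.
\]

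For the upper bound, I fix $\rho, \e > 0$, choose an $\e\rho^{d-1}$-near-minimiser $u \in BV(Q_\nu(x,\rho);\Rk)$ of $m^E(u_{x,\zeta,\nu}, Q_\nu(x,\rho))$, and extend it by $u_{x,\zeta,\nu}$ outside $Q_\nu(x,\rho)$; the matching of traces introduces no new jump on $\partial Q_\nu(x,\rho)$. By $\Gamma$-convergence there is a recovery sequence $(w_n)$ for $u$ on $Q_\nu(x,\rho)$, which by truncation we may assume converges in $L^1_{\rm loc}$. For each integer $k \geq 1$ I apply Lemma \ref{lemma:fundamental estimate} with $A' = Q_\nu(x, \rho(1-3/k))$, $A'' = Q_\nu(x, \rho(1-2/k))$, $A = Q_\nu(x, \rho(1-1/k))$, $U_k = Q_\nu(x,\rho) \setminus \overline{Q_\nu(x, \rho(1-3/k))}$, taking the second sequence to be constantly equal to $u$. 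The output $u_n^{(k)}$ equals $u$ on a neighbourhood of $\partial Q_\nu(x,\rho)$, hence is admissible for $m^{E_n}(u_{x,\zeta,\nu}, Q_\nu(x,\rho))$; combining the estimate of Lemma \ref{lemma:fundamental estimate} with the bound $E_n(u, U_k) \leq c_3 V(u, U_k) + c_4\Ld(U_k)$ coming from (c2) of Definition \ref{def:space of functionals E} gives
\[
\limsup_{n \to +\infty} m^{E_n}(u_{x,\zeta,\nu}, Q_\nu(x,\rho)) \leq (1+\delta)\big(E(u, Q_\nu(x,\rho)) + c_3 V(u, U_k) + c_4 \Ld(U_k)\big) + \delta.
\]
Since $U_k \downarrow \emptyset$ as $k \to +\infty$, both $V(u, U_k)$ and $\Ld(U_k)$ tend to $0$; letting $k \to +\infty$, $\delta \to 0^+$, $\e \to 0^+$, dividing by $\rho^{d-1}$ and taking $\limsup_\rho$ yields the upper bound.

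For the lower bound, I fix $\rho > 0$ and choose almost-minimisers $u_n \in BV(Q_\nu(x,\rho);\Rk)$ of $m^{E_n}(u_{x,\zeta,\nu}, Q_\nu(x,\rho))$. Comparing with the competitor $u_{x,\zeta,\nu}$ and using (c1) and (c2) gives a uniform bound on $V(u_n, Q_\nu(x,\rho))$. I extend $u_n$ by $u_{x,\zeta,\nu}$ outside $Q_\nu(x,\rho)$: the extensions coincide with a fixed bounded function outside and have uniformly bounded $V$-energy on every ball. The compactness result for $GBV_\star$ (Theorem 4.8 in \cite{donati2023new}) then produces a subsequence converging in $L^0(\Rd;\Rk)$ to some $u^{\rm ext}$ with $u^{\rm ext} = u_{x,\zeta,\nu}$ outside $Q_\nu(x,\rho)$. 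The matching of traces prevents any jump of $u_n^{\rm ext}$ on $\partial Q_\nu(x,\rho)$, so for any $r > \rho$ the fact that $B \mapsto E_n(u_n^{\rm ext}, B)$ is a Borel measure (property (b) of Definition \ref{def:space of functionals E}) yields the exact decomposition
\[
E_n(u_n^{\rm ext}, Q_\nu(x,r)) = E_n(u_n, Q_\nu(x,\rho)) + E_n(u_{x,\zeta,\nu}, Q_\nu(x,r) \setminus \overline{Q_\nu(x,\rho)}),
\]
and (c2) bounds the last summand by $C(r^{d-1}-\rho^{d-1}) + C(r^d-\rho^d)$. Since $u^{\rm ext} = u_{x,\zeta,\nu}$ near $\partial Q_\nu(x,r)$, it is admissible for $m^E(u_{x,\zeta,\nu}, Q_\nu(x,r))$, and the $\Gamma\meno\liminf$ inequality gives
\[
m^E(u_{x,\zeta,\nu}, Q_\nu(x,r)) \leq \liminf_{n \to +\infty} m^{E_n}(u_{x,\zeta,\nu}, Q_\nu(x,\rho)) + C(r^{d-1}-\rho^{d-1}) + C(r^d-\rho^d).
\]
Choosing $r = \rho + \rho^2$, dividing by $r^{d-1}$ and taking $\limsup_\rho$ (so that $r/\rho \to 1$) yields the lower bound.

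The main obstacle is the compactness step in the lower bound: one must extract an $L^0$-limit of $(u_n^{\rm ext})$ that still satisfies the prescribed boundary condition. This is where the extension by $u_{x,\zeta,\nu}$ is essential, as it pins the sequence outside $Q_\nu(x,\rho)$, preventing drift and ensuring that $GBV_\star$ compactness yields a limit with $u^{\rm ext} = u_{x,\zeta,\nu}$ there. The matching of traces on $\partial Q_\nu(x,\rho)$ is also crucial, since it guarantees that the decomposition of $E_n(u_n^{\rm ext}, Q_\nu(x,r))$ displayed above has no hidden jump-energy contribution on $\partial Q_\nu(x,\rho)$, allowing the quantitative $O(r^{d-1}-\rho^{d-1})$ control that drives the proof through.
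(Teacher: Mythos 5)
Your overall strategy --- upper bound via a truncated recovery sequence combined with Lemma \ref{lemma:fundamental estimate}, lower bound via compactness of almost-minimisers pinned by the extension with $u_{x,\zeta,\nu}$ --- is precisely the argument of the scalar reference to which the paper defers for this proposition, and both halves are structurally sound.

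There is, however, one genuine gap in the lower bound: the admissibility of the limit $u^{\rm ext}$. The problem $m^E(u_{x,\zeta,\nu},Q_\nu(x,r))$ in \eqref{eq:problema di minimo ausiliario} is an infimum over competitors in $BV(Q_\nu(x,r);\Rk)$, but the uniform bound you obtain from (c1)--(c2) by comparison with $u_{x,\zeta,\nu}$ controls only $V(u_n,Q_\nu(x,\rho))$, i.e.\ $\int|\nabla u_n|\,dx+|D^cu_n|+\int_{J_{u_n}}|[u_n]|\land 1\,d\hd$; it gives no control on $\int_{J_{u_n}}|[u_n]|\,d\hd$, hence none on $|Du_n|(Q_\nu(x,\rho))$. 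The $L^0$-limit produced by $GBV_\star$-compactness is therefore only guaranteed to lie in $GBV_\star$, not in $BV$, and the step ``$u^{\rm ext}$ is admissible for $m^E(u_{x,\zeta,\nu},Q_\nu(x,r))$'' is not justified as written. The repair is already in the paper: apply Lemma \ref{cor:Corollario Troncature Hd} to each $E_n$ to replace the almost-minimisers by competitors with $\|u_n\|_{L^\infty}\leq\sigma^m$ at the cost of an error $K(|\zeta|\land 1)\rho^{d-1}/m+K\rho^d$; the uniform $L^\infty$ bound together with the $V$-bound forces the subsequential limit into $BV\cap L^\infty$, and the extra error is removed by letting $m\to+\infty$ at the very end. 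A milder instance of the same issue occurs in your upper bound, where ``by truncation we may assume [the recovery sequence] converges in $L^1_{\rm loc}$'' silently discards the energy error $C(E(u,A)+\Ld(A))/m$ of Lemma \ref{lemma:recovery sequence troncate}; this term is harmless but must likewise be tracked and sent to zero with $m\to+\infty$.
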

\begin{proof}
    The proof can be obtained with the same arguments of \cite[Proposition 3.3]{DalToa23b}.
\end{proof}
We conclude this section by stating a fundamental result for the proof of the homogenisation theorem, which will be the object of the next section. We show that for sequences of functions $(f_n)_n\subset\mathcal{F}^\alpha$ and $(g_n)_n\subset\mathcal{G}^\vartheta$, a sufficient conditions for $E^{f_n,g_n}$ to $\Gamma$-converge on every bounded open set is that \eqref{eq:Limite con m} and \eqref{eq:small g as a limit} hold and that the function $f$  is independent of $x$.
\begin{theorem}\label{thm:sufficient for Gammaconvergnece}
    Let $(E_n)_n\subset\E^{\alpha,\vartheta}$. Assume that there exist $\hat{f}\colon \Rkd\to[0,+\infty)$ and $\hat{g}\colon \Rd\times \mathbb{S}^{d-1}\to [0,+\infty)$ such that
    \begin{eqnarray}
    &&\nonumber \displaystyle \label{eq:sufficient 1 per homo} \hspace{-0.8 cm}\hat{f}(\xi)=\lim_{m\to+\infty}\limsup_{\rho\to0^+}\liminf_{n\to+\infty}\frac{m^{E_n}_{\rho \beta_{\xi,m}}(\ell_{\xi},Q(x,\rho))}{\rho^d}=\lim_{m\to+\infty}\limsup_{\rho\to0^+}\limsup_{n\to+\infty}\frac{m^{E_n}_{\rho\beta_{\xi,m}}(\ell_\xi,Q(x,\rho))}{\rho^d},\\
     &&\displaystyle \label{eq:sufficient for gamma convergence 2} \nonumber\hat{g}(x,\zeta,\nu)=\limsup_{\rho\to 0^+}\limsup_{n\to+\infty}\frac{m^{E_n}(u_{x,\zeta,\nu},Q_\nu(x,\rho))}{\rho^{d-1}}=\limsup_{\rho\to 0^+}\liminf_{n\to+\infty}\frac{m^{E_n}(u_{x,\zeta,\nu},Q_\nu(x,\rho))}{\rho^{d-1}},
    \end{eqnarray}
    for every $x\in\Rd$,  $\xi\in\Rkd$,  $\zeta\in\Rk$, and  $\nu\in\mathbb{S}^{d-1}$, where $\beta_{\xi,m}$ is given by \eqref{eq:def betamxi}. Then $\hat{f}\in\mathcal{F}^\alpha$, $\hat{g}\in\mathcal{G}$ and for every $A\in\mathcal{A}_c(\Rd)$  the sequence $E_n(\cdot,A)$ $\Gamma$-converges to $E^{\hat{f},\hat{g}}$ with respect to the topology of $L^0(\Rd;\Rk)$, where $E^{\hat{f},\hat{g}}$ is as in Definition \ref{def:Functionals Efg}.
\end{theorem}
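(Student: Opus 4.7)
The plan is to combine the compactness theorem for $\Gamma$-convergence with the integrand-identification results of Section \ref{sec:integrands in the limit} and the full integral representation theorem of Section \ref{sec:repr}, using a standard Urysohn argument to promote a subsequential result to a statement about the full sequence.

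First, I would fix an arbitrary subsequence of $(E_n)_n$ and apply Theorem \ref{thm:compactness for E} to extract a further subsequence, still denoted $(E_{n_k})_k$, and a functional $E\in\E_{\rm sc}$ such that $E_{n_k}(\cdot,A)$ $\Gamma$-converges to $E(\cdot,A)$ in $L^0(\Rd;\Rk)$ for every $A\in\mathcal{A}_c(\Rd)$. Since each $E_n\in\E^{\alpha,\vartheta}$, Proposition \ref{prop:limits are in Eweak} gives $E\in\ClosureE$. In particular, $E$ satisfies the hypotheses needed for the partial representation of Theorem \ref{thm:rappresentazione} and for the truncated characterization of Proposition \ref{prop: integrands truncated}.

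Next, I would identify the integrands of $E$. Let $f$ and $g$ be the functions associated to $E$ via \eqref{eq:definition of small f} and \eqref{eq:definition of small g}. Applying Proposition \ref{prop: integrands truncated} to the subsequence $(E_{n_k})_k$, formula \eqref{eq:Limite con m} holds for $\Ld$-a.e.\ $x\in\Rd$ and every $\xi\in\Rkd$. Because the hypothesis of the theorem asserts that the $\liminf$ and $\limsup$ along the full sequence $(E_n)_n$ coincide and equal $\hat{f}(\xi)$ for every $x$ and $\xi$, the same double limit along any subsequence also equals $\hat{f}(\xi)$; hence $f(x,\xi)=\hat{f}(\xi)$ for a.e.\ $x$, and in particular $f$ is independent of $x$. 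An analogous argument using Proposition \ref{prop:jump integrand and gamma limits} together with the hypothesis on $\hat{g}$ yields $g(x,\zeta,\nu)=\hat{g}(x,\zeta,\nu)$ for every $x,\zeta,\nu$. The inclusion $\hat{f}\in\mathcal{F}^\alpha$ then follows from Proposition \ref{prop:f is in Falpha} applied to $E\in\ClosureE$, while $\hat{g}\in\mathcal{G}$ follows from Proposition \ref{prop:properties of f and g}.

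At this point the hypotheses of Theorem \ref{thm:Cantor} are met: $E\in\ClosureE$ and its bulk integrand $f=\hat{f}$ is independent of $x$. Therefore $E=E^{\hat{f},\hat{g}}$ as functionals on $L^0(\Rd;\Rk)\times\mathcal{B}(\Rd)$. Since this limit does not depend on the initial subsequence, the Urysohn property of $\Gamma$-convergence yields $\Gamma$-convergence of the entire sequence $E_n(\cdot,A)$ to $E^{\hat{f},\hat{g}}(\cdot,A)$ for every $A\in\mathcal{A}_c(\Rd)$. The main technical obstacle is not any single delicate estimate but rather the careful bridging between the two sets of hypotheses: one must ensure that the truncated-cube limits in \eqref{eq:Limite con m}--\eqref{eq:small g as a limit} genuinely transfer from the subsequence back to the full sequence, and that the $\Ld$-negligible exceptional set appearing in Proposition \ref{prop: integrands truncated} is harmless. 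The latter is precisely taken care of by the last clause of that proposition, which guarantees that once $f$ has been shown to be $x$-independent, the identification holds at every point, closing the argument.
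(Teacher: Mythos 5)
Your proposal is correct and follows essentially the same route as the paper: compactness (Theorem \ref{thm:compactness for E}) plus Proposition \ref{prop:limits are in Eweak} to place the subsequential $\Gamma$-limit in $\ClosureE$, identification of the integrands via Propositions \ref{prop: integrands truncated} and \ref{prop:jump integrand and gamma limits}, the Cantor-part representation of Theorem \ref{thm:Cantor}, and the Urysohn property to recover the full sequence. Your explicit handling of the $\Ld$-negligible exceptional set via the last clause of Proposition \ref{prop: integrands truncated} is a point the paper leaves implicit, but the argument is the same.
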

\begin{proof} The proof follows closely the lines of \cite[Theorem 5.4]{DalToa23b}. By Theorem \ref{thm:compactness for E} there exists a subsequence, not relabelled, and a functional $E\in\E$ such that for every $A\in\mathcal{A}_c(\Rd)$ the sequence $E_n(\cdot, A)$ $\Gamma$-converges to $E$ with respect to the topology of $L^0(\Rd;\Rk)$ and by Proposition \ref{prop:limits are in Eweak} $E\in\ClosureE$. 
Thanks to Theorem \ref{thm:rappresentazione}, the functions $f$ and $g$ defined by \eqref{eq:definition of small f} and \eqref{eq:definition of small g}, respectively, satisfy \eqref{eq:rappresentazione ac} and  \eqref{eq:rappresentazione salto}, and $f\in\mathcal{F}^\alpha$ and $g\in\mathcal{G}$. Additionally, by Proposition \ref{prop: integrands truncated} and Proposition \ref{prop:jump integrand and gamma limits}, $f=\hat{f}$ and $g=\hat{g}$, so that \eqref{eq:dependence only on one variable} holds. Hence, by Theorem \ref{thm:Cantor} we have that $E=E^{\hat{f},\hat{g}}$. Since the functions $\hat{f}$ and $\hat{g}$ are independent of the chosen subsequence, for every $A\in\mathcal{A}_c(\Rd)$ by the Urysohn property of $\Gamma$-convergence (see \cite[Proposition 8.3]{DalMaso})  the original sequence $E_n(\cdot,A)$ $\Gamma$-converges to $E^{\hat{f},\hat{g}}(\cdot,A)$, concluding the proof.
\end{proof}
\section{Homogenisation of free-discontinuity functionals}\label{sec:homogenisation}

We are now ready to deal with the homogenisation of functionals in $\E^{\alpha,\vartheta}$. The arguments that we will make use of are based on the method devised by in \cite[Section 6]{DalToa23b}.
The main difference with respect to \cite{DalToa23b}, is the dependence on $m$ of the constant $\beta_{\xi,m}$ appearing in \eqref{thm:sufficient for Gammaconvergnece}, which, in the problem of stochastic homogenisation,  forces us to use the Subadditive Ergodic Theorem at $m$ fixed and only then to pass to the limit for $m\to+\infty$.

Before introducing the stochastic setting, we state a result which shows that, in the case of homogenisation, the sufficient conditions for $\Gamma$-convergence presented in Theorem \ref{thm:sufficient for Gammaconvergnece} can be rewritten in terms of cubes whose side length $r$ tends to $+\infty$. This formulation will be more suitable for the stochastic setting.

Given $f\in\mathcal{F^\alpha}$ and $g\in\mathcal{G}^\vartheta$  for every $\e>0$ we set  $f_\e(x,\xi):=f(x/\e,\xi)$ and $g_\e(x,\zeta,\nu):=g(x/\e,\zeta,\nu)$ for every $x\in\Rd$, $\xi\in\Rkd$, $\zeta\in\Rk$, $\nu\in\mathbb{S}^{d-1}$. We observe that $f_\e\in \mathcal{F}^\alpha$ and $g_\e\in\mathcal{G}^\vartheta$. We set $E_\e:=E^{f_\e,g_\e}$ according to Definition \ref{def:Functionals Efg}. We recall that $f^{\infty}$ and $g^0$ are the functions given by \eqref{eq:defrecession} and \eqref{eq:def g0},  that $f^\infty\in\mathcal{F}^\alpha$, while $g^0\notin \mathcal{G}$, since it does not satisfy (g3). 

The following theorem provides a general condition that guarantees the $\Gamma$-convergence  of $E_\e$ towards an integral functional whose integrands do not depend on $x$. We shall see in Remark \ref{re:periodic} that the hypotheses are satisfied in the case where $f$ and $g$ are periodic with respect to $x$. The advantage of these formulation is that these hypotheses are satisfied almost surely under the standard hypotheses of stochastic homogenisation.

\begin{theorem}\label{thm:cubes with r}
    Assume that there exists a function $g_{\rm hom}\colon\Rk\times\mathbb{S}^{d-1}\to [0,+\infty)$ and that for every $m\in\N$ there exists a function $f_{\rm hom}^m\colon\Rdk\to [0,+\infty)$ such that 
    \begin{eqnarray}
      &\hspace{1.5 cm} \displaystyle\label{eq:sufficient for homogenisation} \hspace{-2 cm}f_{\rm hom}^m(\xi)=\lim_{r\to +\infty}\frac{m^{E^{f,g^0}}_{r\beta_{\xi,m}}(\ell_\xi,Q(rx,r))}{r^d} \quad \text{ for every }x\in\Rd, \, \xi\in\Rkd,\\
       & \displaystyle \label{eq:sufficient g for homogenisation} g_{\rm hom}(\zeta,\nu)=\!\!\!\lim_{r\to +\infty}\frac{m^{E^{f^\infty,g}}(u_{rx,\zeta,\nu},Q_\nu(rx,r))}{r^{d-1}}\quad \text{ for every }x\in\Rd, \, \zeta\in\Rk, \text{ and }\nu\in \Sn^{d-1}.
    \end{eqnarray}
    Let  $f_{\rm hom}\colon\Rkd\to[0,+\infty)$ be the function defined by
    \begin{equation}\label{eq:def hat f}
        f_{\rm hom}(\xi):=\lim_{m\to+\infty}f_{\rm hom}^m(\xi)=\inf_{m\in\N}f_{\rm hom}^m(\xi)
    \end{equation}
  for every $\xi\in\Rkd $. Then $f_{\rm hom}\in\mathcal{F}^\alpha$, $g_{\rm hom }\in\mathcal{G}$, $E^{f_{\rm hom},g_{\rm hom }}\in\ClosureE$, and for every $\e_n\to 0^+$ and for every $A\in\mathcal{A}_c(\Rd)$  the sequence $E_{\e_n}(\cdot,A)$ $\Gamma$-converges to $E^{f_{\rm hom},g_{\rm hom}}(\cdot,A)$ with respect to the topology of $L^0(\Rd;\Rk)$. 
\end{theorem}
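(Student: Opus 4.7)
My plan is to verify the two convergence conditions of Theorem \ref{thm:sufficient for Gammaconvergnece} for every fixed sequence $\e_n\to 0^+$. Once \eqref{eq:sufficient 1 per homo} is established with $\hat f=f_{\rm hom}$ and \eqref{eq:sufficient for gamma convergence 2} with $\hat g=g_{\rm hom}$, that theorem will simultaneously yield the memberships $f_{\rm hom}\in\mathcal{F}^\alpha$, $g_{\rm hom}\in\mathcal{G}$ and the $\Gamma$-convergence of $E_{\e_n}(\cdot,A)$ to $E^{f_{\rm hom},g_{\rm hom}}(\cdot,A)$ for every $A\in\mathcal{A}_c(\Rd)$; Proposition \ref{prop:limits are in Eweak} will then place the $\Gamma$-limit in $\ClosureE$.

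The core tool will be a pair of change-of-variable formulas converting the small-scale minima for $E_{\e_n}$ into large-scale minima for the unperturbed energies appearing in \eqref{eq:sufficient for homogenisation}--\eqref{eq:sufficient g for homogenisation}. For the bulk problem I will set $v(z):=u(\e z)/\e$; using the standard scaling relations for $\nabla u$, $D^cu$, and $\hd\mres \jump{u}$, this gives
\[
E_\e(u,Q(x,\rho))=\e^d E^{f,g^{(\e)}}(v,Q(x/\e,\rho/\e)),\qquad g^{(\e)}(z,\zeta,\nu):=\tfrac{1}{\e}g(z,\e\zeta,\nu),
\]
and the constraint $\|u-\ell_\xi\|_\infty\leq\rho\beta_{\xi,m}$ becomes $\|v-\ell_\xi\|_\infty\leq r\beta_{\xi,m}$ for $r:=\rho/\e$. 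Writing $x/\e=r\tilde x$ with $\tilde x:=x/\rho$ and dividing by $\rho^d$ yields
\[
\frac{m^{E_\e}_{\rho\beta_{\xi,m}}(\ell_\xi,Q(x,\rho))}{\rho^d}=\frac{m^{E^{f,g^{(\e)}}}_{r\beta_{\xi,m}}(\ell_\xi,Q(r\tilde x,r))}{r^d}.
\]
For the surface problem the complementary substitution $v(z):=u(\e z)$, combined with the rescaling $f^{(\e)}(z,\xi):=\e f(z,\xi/\e)$, the identity $u_{x,\zeta,\nu}(\e\,\cdot\,)=u_{x/\e,\zeta,\nu}$, and the elementary computation $(f^{(\e)})^\infty=f^\infty$, leads to the analogous formula
\[
\frac{m^{E_\e}(u_{x,\zeta,\nu},Q_\nu(x,\rho))}{\rho^{d-1}}=\frac{m^{E^{f^{(\e)},g}}(u_{r\tilde x,\zeta,\nu},Q_\nu(r\tilde x,r))}{r^{d-1}}.
\]

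Next I will replace $g^{(\e)}$ by $g^0$ in the first identity and $f^{(\e)}$ by $f^\infty$ in the second, with quantitative errors vanishing as $\rho\to 0^+$. Because $g\in\mathcal{G}^\vartheta$ one has $|g^{(\e)}(z,\zeta,\nu)-g^0(z,\zeta,\nu)|\leq\vartheta(\e|\zeta|)\,g^{(\e)}(z,\zeta,\nu)$; since any admissible competitor $v$ in the bulk problem satisfies $\e|[v]|\leq 2\rho\beta_{\xi,m}$, the monotonicity of $\vartheta$ together with the upper bound supplied by the competitor $\ell_\xi$ gives
\[
\tfrac1{r^d}\bigl|m^{E^{f,g^{(\e)}}}_{r\beta_{\xi,m}}(\ell_\xi,Q(r\tilde x,r))-m^{E^{f,g^0}}_{r\beta_{\xi,m}}(\ell_\xi,Q(r\tilde x,r))\bigr|\leq\vartheta(2\rho\beta_{\xi,m})(c_3k^{1/2}|\xi|+c_4+1),
\]
which tends to $0$ as $\rho\to 0^+$. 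The surface analogue rests on Remark \ref{re:Definition of finfty gzero} applied with $s=1/\e$, yielding $|f^{(\e)}(z,\xi)-f^\infty(z,\xi)|\leq c_7\e+c_7\e\,f(z,\xi/\e)^{1-\alpha}$; a H\"older estimate based on the a priori bound $\int|\nabla v|\leq Cr^{d-1}$ for near-minimizers of the surface problem converts this into an $o(1)r^{d-1}$ discrepancy between $E^{f^{(\e)},g}$ and $E^{f^\infty,g}$. With these two estimates at hand, for fixed $\rho$ I let $n\to+\infty$, so that $r_n:=\rho/\e_n\to+\infty$: hypotheses \eqref{eq:sufficient for homogenisation} and \eqref{eq:sufficient g for homogenisation}, whose limits are independent of the auxiliary point $\tilde x$, then produce the inner limits $f_{\rm hom}^m(\xi)$ and $g_{\rm hom}(\zeta,\nu)$. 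Sending subsequently $\rho\to 0^+$ kills the residual errors, and finally $m\to+\infty$ turns $f_{\rm hom}^m$ into $f_{\rm hom}$ through \eqref{eq:def hat f}. Because the inner $n$-limits genuinely exist for each fixed $\rho$, the $\liminf$ and $\limsup$ versions of \eqref{eq:sufficient 1 per homo} and \eqref{eq:sufficient for gamma convergence 2} necessarily coincide.

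The most delicate point will be the bulk reduction from $g^{(\e)}$ to $g^0$: the integrand $g^0$ violates the upper bound (g3) of Definition \ref{def:good surface integrands}, so $g^{(\e)}\notin\mathcal{G}$, and the $\mathcal{G}^\vartheta$-type comparison is only available under the pointwise control $\e|[v]|\leq 2\rho\beta_{\xi,m}$. Such a control is exactly what the truncated minimum problem built into Theorem \ref{thm:sufficient for Gammaconvergnece} provides, and the dependence of $\beta_{\xi,m}$ on $m$ forces the specific order $\lim_m\limsup_\rho\limsup_n$: the $n$-limit activates the homogenisation formulas on the blown-up cubes, the $\rho$-limit absorbs the vanishing discrepancies between $g^{(\e)}$ and $g^0$ (respectively between $f^{(\e)}$ and $f^\infty$), and only the final $m$-limit removes the regularising truncation to produce the intrinsic integrands $f_{\rm hom}$ and $g_{\rm hom}$.
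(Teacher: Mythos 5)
Your proposal is correct and follows essentially the same route as the paper: reduce to Theorem \ref{thm:sufficient for Gammaconvergnece} via the rescaling identity relating $m^{E_\e}_{\rho\beta_{\xi,m}}(\ell_\xi,Q(x,\rho))$ to $\e^d m^{E^{f,g^0}}_{(\rho/\e)\beta_{\xi,m}}(\ell_\xi,Q(x/\e,\rho/\e))$ up to an error $K_\xi\vartheta(2\rho\beta_{\xi,m})\rho^d$, then take the limits in the order $n\to+\infty$, $\rho\to 0^+$, $m\to+\infty$. The only difference is that the paper imports the scaling estimate from \cite[Lemma 6.1]{DalToa23b} and the surface analogue from \cite[Theorem 6.3]{DalToa23b}, whereas you re-derive both; your derivations match the cited arguments.
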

\begin{proof} We first observe that the limit in \eqref{eq:def hat f} exists, since $m\mapsto m^{E^{f,g^0}}_{r\beta_{\xi,m}}$ is non-increasing, hence, the same property holds for $m\mapsto f_{\rm hom}^m$.

 Let us fix a sequence $\e_n\to 0^+$ as $n\to +\infty$ and set $E_n:=E^{f_{\e_,},g_{\e_n}}$. To prove the result, it is enough to show that the hypotheses of Theorem \ref{thm:sufficient for Gammaconvergnece} are satisfied by $f_{\rm hom}$ and $g_{\rm hom}$. 
 Since the hypothesis concerning $g_{\rm hom}$ can checked by repeating verbatim the arguments of  \cite[Theorem 6.3]{DalToa23b}, we only prove that the hypothesis concerning $f_{\rm hom}$ are satisfied.

 To this aim, we fix $m\in\N$, $\rho>0$ and set $r_n:=\rho/\e_n$. The same computations performed in \cite[Lemma 6.1]{DalToa23b} show that for every $\e\in (0,1)$, $x\in\Rd$, $\xi\in\Rkd$ we have
 \begin{equation}\label{eq:claim formula di cella ac}
       \big|m^{E_\e}_{\rho\beta_{\xi,m}}(\ell_\xi,Q(x,\rho))-\e^dm_{(\rho/\e)\beta_{\xi,m}}^{E^{f,g^0}}(\ell_\xi,Q(x/\e,\rho/\e))\big|\leq K_\xi\vartheta(2\rho\beta_{\xi,m})\rho^d,
    \end{equation}
 where $K_\xi>0$ is a constant depending on $\xi$, but not on $m$$,\rho$, and $x$. Let us fix $x\in\Rd$ and $\xi\in\Rkd$. Using twice the previous inequality with $\e$ replaced by $\e_n$, by \eqref{eq:sufficient for homogenisation} we get that 
\begin{eqnarray*}
   && \hspace{-0.5 cm}f_{\rm hom}^m(\xi)\rho^d-K_\xi(2\beta_{\xi,m}\rho)\rho^d=\lim_{n\to+\infty}(\rho/r_n)^dm_{r_n\beta_{\xi,m}}^{E^{f,g^0}}(\ell_\xi,Q(r_nx/\rho,r_n))-K_\xi\vartheta(2\beta_{\xi,m}\rho)\rho^d\\&&
    \leq \liminf_{n\to+\infty}m_{\rho\beta_{\xi,m}}^{E_{\e_n}}(\ell_\xi,Q(x,\rho))\leq\limsup_{n\to+\infty}m_{\rho\beta_{\xi,m}}^{E_{\e_n}}(\ell_\xi,Q(x,\rho))\\&&
    \leq\lim_{n\to+\infty}(\rho/r_n)^dm_{r_n\beta_{\xi,m}}^{E^{f,g^0}}(\ell_{\xi},Q(r_nx/\rho,r_n))+K_\xi\vartheta(2\beta_{\xi,m}\rho)\rho^d=f_{\rm hom}^m(\xi)\rho^d+K_\xi\vartheta(2\beta_{\xi,m}\rho)\rho^d.
\end{eqnarray*}
Since $\vartheta$ is continuous and $\vartheta(0)=0$, dividing by $\rho^d$ and taking the limsup for $\rho\to0^+$ we obtain 
\begin{equation*}
    f_{\rm hom}^m(\xi)=\lim_{\rho\to0^+}\liminf_{n\to+\infty}\frac{m^{E_n}_{\rho\beta_{\xi,m}}(\ell_{\xi},Q(x,\rho))}{\rho^d}=\lim_{\rho\to0^+}\limsup_{n\to+\infty}\frac{m^{E_n}_{\rho\beta_{\xi,m}}(\ell_\xi,Q(x,\rho))}{\rho^d}.
\end{equation*}
Taking the limit as  $m\to+\infty$ we obtain that the hypothesis for $f_{\rm hom}$ in Theorem \ref{thm:sufficient for Gammaconvergnece} is satisfied, so that the proof is concluded.
\end{proof}

The following result shows that hypothesis \eqref{eq:sufficient for homogenisation} can be slightly weakend
\begin{lemma}\label{lemma:is enough to check rationals}
    Assume that for every $m\in\N$, $x\in\Rd$, and  $\xi\in\mathbb{Q}^{k\times d}$, the space of $k\times d$ matrices with rational entries, the limit
    \begin{equation*}
        f_{\rm hom}^{m}(\xi):=\lim_{r\to +\infty}\frac{m^{E^{f,g^0}}_{r\beta_{\xi,m}}(\ell_\xi,Q(rx,r))}{r^d}
    \end{equation*}
    exists and is independent of $x$. Then the function $f_{\rm hom}^m$ can be extended to a continuous function, still denoted by $f_{\rm hom}^m$, defined on the whole $\Rkd$ and such that \eqref{eq:sufficient for homogenisation} is satisfied.
\end{lemma}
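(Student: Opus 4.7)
The plan is to establish a Lipschitz bound for $f_{\rm hom}^m$ on $\mathbb{Q}^{k\times d}$, use it to extend $f_{\rm hom}^m$ continuously to $\Rkd$, and then verify \eqref{eq:sufficient for homogenisation} for every $\xi\in\Rkd$ and $x\in\Rd$ by a sandwich argument with rational approximants. The central tool is a translation trick at the level of finite-$r$ minimization problems: if $u$ is $\epsilon$-optimal for $m^{E^{f,g^0}}_{r\beta_{\xi,m}}(\ell_\xi,Q(rx,r))$, then the translate $v:=u+\ell_{\eta-\xi}$ has trace $\ell_\eta$, satisfies $\|v-\ell_\eta\|_{L^\infty}=\|u-\ell_\xi\|_{L^\infty}\leq r\beta_{\xi,m}$, and has $\nabla v=\nabla u+(\eta-\xi)$, $D^cv=D^cu$, $J_v=J_u$, $[v]=[u]$, $\nu_v=\nu_u$; hence by (f4) alone, $E^{f,g^0}(v,Q(rx,r))\leq E^{f,g^0}(u,Q(rx,r))+c_5|\xi-\eta|r^d$.

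First I would prove that $f_{\rm hom}^m$ is $c_5$-Lipschitz on $\mathbb{Q}^{k\times d}$. When $|\xi|\leq|\eta|$, the explicit form of $\beta_{\cdot,m}$ in \eqref{eq:def betamxi} shows $\beta_{\xi,m}\leq\beta_{\eta,m}$, so $v$ is immediately admissible for $m^{E^{f,g^0}}_{r\beta_{\eta,m}}(\ell_\eta,Q(rx,r))$, and dividing by $r^d$ and passing to the limit gives $f_{\rm hom}^m(\eta)\leq f_{\rm hom}^m(\xi)+c_5|\xi-\eta|$. For the opposite inequality I would combine a translation starting from the $\ell_\eta$-problem with a smooth truncation of the form $\ell_\xi+\psi_R^i\circ(v-\ell_\xi)$, where $R,i$ are chosen so that $\sigma^iR\leq r\beta_{\xi,m}$; controlling the extra energy requires the analogue of property (g) of Definition \ref{def:space of functionals E} adapted to $E^{f,g^0}$, which, although $E^{f,g^0}$ is not directly in $\E$, can be obtained by the same argument as Proposition \ref{prop: integral functionals are in E} using that any competitor in the constrained problem has $|[u]|\leq 2r\beta_{\xi,m}$, so that the linear bound $g^0(x,\zeta,\nu)\leq c_3k|\zeta|$ plays the role of (g3). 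The structure of $\beta_{\xi,m}$ (linear in $|\xi|$ with slope of order $\sigma^{2m}d^{1/2}$, much larger than the slope of $c_{\xi,m}$) guarantees that the required truncation radii are compatible for $\eta$ close to $\xi$. Once the Lipschitz property holds on $\mathbb{Q}^{k\times d}$, $f_{\rm hom}^m$ extends uniquely to a $c_5$-Lipschitz function on $\Rkd$.

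To conclude, I would fix $\xi\in\Rkd\setminus\{0\}$ and $x\in\Rd$, pick $\xi_n^-,\xi_n^+\in\mathbb{Q}^{k\times d}$ with $|\xi_n^-|<|\xi|<|\xi_n^+|$ and $\xi_n^\pm\to\xi$ (the case $\xi=0$ is handled by taking $\xi_n^-=0$), and apply the translation inequality at each finite $r$ in both directions to obtain
\begin{equation*}
m^{E^{f,g^0}}_{r\beta_{\xi_n^+,m}}(\ell_{\xi_n^+},Q(rx,r))-c_5|\xi_n^+-\xi|r^d\leq m^{E^{f,g^0}}_{r\beta_{\xi,m}}(\ell_\xi,Q(rx,r))\leq m^{E^{f,g^0}}_{r\beta_{\xi_n^-,m}}(\ell_{\xi_n^-},Q(rx,r))+c_5|\xi-\xi_n^-|r^d.
\end{equation*}
Dividing by $r^d$ and letting $r\to\infty$, the outer terms converge by hypothesis to $f_{\rm hom}^m(\xi_n^\pm)$, giving bounds on $\liminf_r$ and $\limsup_r$ of the central quantity. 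Sending $n\to\infty$, the continuity of the extended $f_{\rm hom}^m$ forces both liminf and limsup to equal $f_{\rm hom}^m(\xi)$, proving that the limit exists and is independent of $x$.

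The hard part will be the reverse direction of the Lipschitz estimate, since $E^{f,g^0}\notin\E$ in general (the surface integrand $g^0$ grows linearly rather than being bounded by $|\zeta|\wedge 1$), so neither property (c2) nor the standard proof of property (g) in Proposition \ref{prop: integral functionals are in E} applies off the shelf; the key observation that makes the truncation argument available is that we only need these properties restricted to the class of competitors with $L^\infty$-deviation from $\ell_\xi$ bounded by $r\beta_{\xi,m}$, which a priori bounds the jump amplitude and thereby substitutes for the missing boundedness of $g^0$.
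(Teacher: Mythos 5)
Your overall architecture (one‑sided translation estimate, extension by continuity, sandwich at an irrational $\xi$ with rational $\xi'$, $\xi''$ of smaller/larger norm) is sound, and you correctly identify the crux: a \emph{reverse} estimate letting you pass from a competitor of the $\ell_\eta$-problem, whose constraint is $r\beta_{\eta,m}$, to a competitor of the $\ell_\xi$-problem with the tighter constraint $r\beta_{\xi,m}$, at an energy cost that vanishes as $\eta\to\xi$ \emph{at fixed $m$}. The mechanism you propose for this step, however, does not deliver it. If $v=u+\ell_{\xi-\eta}$ and you truncate by $\ell_\xi+\psi^i_R\circ(v-\ell_\xi)$, admissibility forces $\sigma^iR\le r\beta_{\xi,m}<r\beta_{\eta,m}$, whereas $|v-\ell_\xi|$ can reach $r\beta_{\eta,m}$ on a set of measure comparable to $r^d$: the only a priori control is a Poincar\'e--Chebyshev bound $\Ld(\{|v-\ell_\xi|\ge R\})\lesssim r^{d+1}(1+|\eta|)/R$, which with the admissible radii is of order $r^d(1+|\eta|)\sigma^{m'}/\beta_{\xi,m}$ and is \emph{independent of} $|\xi-\eta|$. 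On $\{|v-\ell_\xi|\ge\sigma^iR\}$ the truncated competitor equals $\ell_\xi$, so the (g)-type estimate produces the error $(c_3k^{1/2}|\xi|+c_4)\,\Ld(\{|v-\ell_\xi|\ge R\})$, and on top of that the averaged-truncation error $C/m'$ requires many levels $m'$, which shrinks $R$ and worsens the first term. Hence at fixed $m$ your sandwich closes only up to a defect that tends to $0$ as $m\to\infty$ but not as $\eta\to\xi$; this does not prove that $\lim_{r\to\infty}m^{E^{f,g^0}}_{r\beta_{\xi,m}}(\ell_\xi,Q(rx,r))/r^d$ exists for irrational $\xi$, which is exactly what the lemma (and hypothesis \eqref{eq:sufficient for homogenisation}, needed for every $m$ in Theorem \ref{thm:cubes with r}) asserts. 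The comparison between the slopes of $\beta_{\xi,m}$ and $c_{\xi,m}$ that you invoke is relevant to Proposition \ref{prop: integrands truncated}, not to this continuity-in-$\xi$ issue, and does not rescue the argument.

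The reverse estimate should instead be obtained by a homothety, not a truncation: set $\lambda:=\beta_{\xi,m}/\beta_{\eta,m}\in(0,1)$ and $\tilde v:=\ell_\xi+\lambda(v-\ell_\xi)$. Then $\mathrm{tr}\,\tilde v=\mathrm{tr}\,\ell_\xi$, $\|\tilde v-\ell_\xi\|_{L^\infty}\le\lambda r\beta_{\eta,m}=r\beta_{\xi,m}$, the Cantor and jump terms of $E^{f,g^0}$ do not increase because $f^\infty$ and $g^0$ are positively $1$-homogeneous ($D^c\tilde v=\lambda D^cv$, $[\tilde v]=\lambda[v]$), and (f4) bounds the change in the volume term by $c_5(1-\lambda)\int_{Q(rx,r)}|\nabla v-\xi|\,dy\le c_5(1-\lambda)\,C(1+|\eta|)\,r^d$, using (f2) and the trivial bound $E^{f,g^0}(u,Q(rx,r))\le(c_3k^{1/2}|\eta|+c_4)r^d+\e$. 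Since $\beta_{\cdot,m}$ given by \eqref{eq:def betamxi} is affine in the norm, $1-\lambda\le(A_m/B_m)\,|\,|\eta|-|\xi|\,|$ with $A_m/B_m$ bounded, so the total error is of order $|\xi-\eta|\,r^d$, uniformly in $r$ and locally uniformly in $\xi,\eta$. With this genuine two-sided local Lipschitz bound (your one-sided translation estimate plus this one) the extension to $\Rkd$ and your sandwich argument go through verbatim at fixed $m$. For reference, the paper itself disposes of this lemma by repeating, for each $m$, the scalar-case argument of \cite[Lemma 6.4]{DalToa23b}.
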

\begin{proof}
    It is enough to repeat for every $m\in\N$ the arguments of \cite[Lemma 6.4]{DalToa23b}.
\end{proof}

We now introduce the stochastic setting in which we are going to deal with the homogenisation problem.

We fix a probability space  $(\Omega, \mathcal{T}, P )$ and a group $(\tau_z )_{z\in\Z^d}$ of $P$-preserving transformations on
 $(\Omega,\mathcal{T},P)$; that is,  a family $(\tau_z )_{z\in\Z^d}$ of $\mathcal{T}$-measurable bijective maps $\tau_z\colon \Omega\to\Omega$ such that
 \begin{enumerate}
\item [(a)]  for every $E \in\mathcal{T}$ and every $z\in\>\Z^d$ we have $P (\tau_z^{-1} (E))= P(E)$ ;
\item [(b)]  $\tau_0=\text{id}$, the identity map on $\Omega$ and for every $z,z'\in\Z^d$ one has $\tau_z\circ\tau_z'=\tau_{z+z}$.
 \end{enumerate}
A group $(\tau_z)_{z\in\Z^d}$ of $P$-preserving transformations is said to be ergodic if for every set  $E\in\mathcal{T}$ with the property that $\tau_z(E)=E$ for every $z\in\Z^d$, has probability either $0$ or $1$.
 In analogy with \cite{DalToa23b}, we introduce two classes of stochastic integrands.
 \begin{definition}\label{def:stochastic integrands}
      $\mathcal{SF^\alpha}$ is the collection of all  $\mathcal{T}\otimes\mathcal{B}(\Rd\times\Rkd)$-measurable functions $f\colon\Omega\times\Rd\times\Rkd\to[0,+\infty)$ such that for every $\omega\in\Omega$ the function $f(\omega):=f(\omega,\cdot,\cdot)$ belongs to $\mathcal{F}^\alpha$  the following stochastic periodicity holds: for every $\omega\in\Omega$, $z\in\Z^d$, $x\in\Rd$, and $\xi\in\Rkd$ we have
         \begin{equation*}
    f(\omega,x+z,\xi)=f(\tau_z(\omega),x,\xi).
       \end{equation*}
    $\mathcal{SG^\vartheta}$ is 
    the collection of all $\mathcal{T}\otimes\mathcal{B}(\Rd\times\Rk\times\mathbb{S}^{d-1})$-measurable functions $g\colon\Omega\times\Rd\times\Rk\times\mathbb{S}^{d-1}\to[0,+\infty)$ such that for every $\omega\in\Omega$ the function $g(\omega):=g(\omega,\cdot,\cdot,\cdot)$ belongs to $\mathcal{G}^\vartheta$, and the following stochastic periodicity holds:
     for every $\omega\in\Omega$, $z\in\Z^d$, $x\in\Rd$, $\zeta\in\Rk$, and $\nu\in\mathbb{S}^{d-1}$ we have
         \begin{equation*} g(\omega,x+z,\zeta,\nu)=g(\tau_z(\omega),x,\zeta,\nu).
       \end{equation*}
 \end{definition}

We now give the definition of subadditive process.
Before doing this, we introduce $\mathcal{R}$ the collection of rectangles defined by
\begin{equation*}
    \mathcal{R}:=\{R\in \Rd\colon R=[a_1,b_1)\times...\times [a_d,b_d), \text{ for some }a,b\in\Rd \text{ with } a_i<b_i \text{ for }i\in\{1,...,d\}\}.
\end{equation*}
 We also introduce $(\Omega,\widehat{\mathcal{T}},\widehat{P})$ the completion of $(\Omega,\mathcal{T},P)$. It is immediate to see that $(\tau_z)_{z}$ is a group of $P$-preserving transformation on $(\Omega,\widehat{\mathcal{T}},\widehat{P})$. 
\begin{definition}
A function $\mu\colon\Omega\times \mathcal{R}\to\R$ is  said to be a covariant subadditive process with respect to $(\tau_{z})_{z\in\Z^d}$ if the following properties are satisfied
 \begin{enumerate}
\item[(a)]  for every $R\in\mathcal{R}$ the function $\mu(\cdot, R)$  is $\widehat{\mathcal{T}}$-measurable;
\item[(b)]  for every $\omega\in\Omega$, $R \in\mathcal{R}$, and $z \in \Z^d$ we have  $\mu(\omega,R + z)=\mu(\tau_z (\omega), R)$;
\item[(c)] given $R\in\mathcal{R}$ and  a finite partition $(R_i)_{i=1}^n\subset\mathcal{R}$ of $R$, we have
\begin{equation*}
   \mu(\omega,R)\leq \sum_{i=1}^n\mu(\omega,R_i)
\end{equation*}
 for every $\omega\in\Omega$;
\item[(d)] there exists $C>0$ such that $0\leq \mu(\omega,R)\leq C\Ld(R)$ for every $\omega\in\Omega$
and $R\in\mathcal{R}$.
 \end{enumerate} 
\end{definition}

We recall the Subadditive Ergodic Theorem of Ackoglu and Krengel {\cite[Theorem 2.7]{Krengel}}. For the particular version here used we refer the reader to \cite[Proposition 1]{DalMod} (see also  \cite{LichtGerard}).
 \begin{theorem}\label{thm:subadditive ergodic theorem}
 Let $\mu$ be a subadditive process with respect to the group $(\tau_z)_{z\in\Z^d}$. Then there exist a $\mathcal{T}$-measurable set $\Omega'$, with $P(\Omega')=1$, and a function $\varphi\colon\Omega\to[0,+\infty)$ such that
 \begin{equation*}
     \lim_{r\to+\infty} \frac{\mu(\omega,Q(rx,r))}{r^d}=\varphi(\omega)
\end{equation*}
for every $x\in \Rd$ and every $\omega \in \Omega' $. If the group $(\tau_z )_{z\in\Z^d}$ is also ergodic, then $\varphi$ is constant $P$-a.e.
 \end{theorem}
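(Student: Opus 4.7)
The plan is to follow the classical Akcoglu--Krengel strategy: first establish the statement for integer-sided axis-aligned cubes $R_n := [0,n)^d$, then extend to arbitrary cubes $Q(rx,r)$ by a sandwich argument based on subadditivity and the uniform bound (d). Setting $\nu_n(\omega) := \mu(\omega, R_n)$, whenever $k \mid n$ properties (b) and (c) yield the tiling inequality
\begin{equation*}
\nu_n(\omega) \leq \sum_{z \in [0, n/k)^d \cap \Z^d} \nu_k(\tau_{kz}\omega),
\end{equation*}
whose right-hand side, divided by $n^d$, is a multi-parameter Birkhoff average of the bounded function $\nu_k/k^d$ under the commuting $P$-preserving transformations $(\tau_{ke_i})_{i=1}^d$. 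The multi-parameter pointwise ergodic theorem therefore gives, for each $k$, the $P$-a.s.\ bound $\limsup_n \nu_n(\omega)/n^d \leq \mathbb{E}[\nu_k \mid \mathcal{I}_k](\omega)/k^d$, where $\mathcal{I}_k$ is the $\sigma$-algebra of $(\tau_{kz})_{z\in\Z^d}$-invariant sets. This bound is non-increasing in $k$ and, by dominated convergence coupled with Fekete's lemma applied to the subadditive sequence $\mathbb{E}[\nu_k]/k^d$, converges a.s.\ as $k \to \infty$ to some $\varphi \in L^\infty(\Omega)$ with $\mathbb{E}[\varphi] = \inf_k \mathbb{E}[\nu_k]/k^d =: \ell$. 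A reverse Fatou argument on $C - \nu_n/n^d \geq 0$ gives $\mathbb{E}[\limsup_n \nu_n/n^d] \geq \limsup_n \mathbb{E}[\nu_n/n^d] = \ell = \mathbb{E}[\varphi]$, and combined with the pointwise bound $\limsup_n \nu_n/n^d \leq \varphi$ this forces $\limsup_n \nu_n/n^d = \varphi$ a.s.; a parallel Fatou application to $\liminf_n$ then yields full a.s.\ convergence of $\nu_n/n^d$ to $\varphi$.

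To extend to a general cube $Q := Q(rx,r)$, fix $\omega$ in the good set above, pick a lattice point nearest to $rx$, and choose integer-sided axis-aligned cubes $R^- \subset Q \subset R^+$ of sides $\lfloor r \rfloor$ and $\lceil r \rceil + 1$, each obtained from some $R_n$ by an integer translation. Partitioning $Q = R^- \sqcup (Q \setminus R^-)$ and $R^+ = Q \sqcup (R^+ \setminus Q)$ into finite unions of rectangles in $\mathcal{R}$, properties (c) and (d) give the sandwich
\begin{equation*}
\mu(\omega, R^+) - C\Ld(R^+ \setminus Q) \leq \mu(\omega, Q) \leq \mu(\omega, R^-) + C\Ld(Q \setminus R^-).
\end{equation*}
The covariance (b) implies that $\varphi$ is $\tau_z$-invariant for every $z \in \Z^d$, so both $\mu(\omega, R^\pm)/r^d$ converge to $\varphi(\omega)$; since $\Ld(R^+ \setminus R^-)/r^d \to 0$ as $r \to \infty$, we obtain $\mu(\omega, Q(rx,r))/r^d \to \varphi(\omega)$ independently of $x$. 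In the ergodic case, the $\tau$-invariance of $\varphi$ promotes it to a $P$-a.s.\ constant.

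The main obstacle is the matching $\liminf$ for the integer cubes: since the hypotheses give only subadditivity (not superadditivity), one cannot directly reverse the tiling inequality to bound $\nu_n/n^d$ from below pointwise. The resolution, as sketched, is to use Fatou's lemma at the level of expectations together with Fekete's lemma to match the integral of the a.s.\ $\limsup$ with $\mathbb{E}[\varphi]$, thereby forcing $\limsup = \liminf = \varphi$ almost everywhere.
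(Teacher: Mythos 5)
First, note that the paper does not prove this statement at all: it is quoted verbatim from the literature (Akcoglu--Krengel, \cite[Theorem 2.7]{Krengel}, in the form of \cite[Proposition 1]{DalMod}), so the only comparison to be made is between your sketch and the classical proof you are implicitly reconstructing. Your outline of the upper bound is essentially the standard one (tiling by $k$-cubes, multiparameter Birkhoff averages, Fekete for the means), but the decisive step is flawed: the matching lower bound cannot be obtained by ``a parallel Fatou application to $\liminf_n$''. Fatou's lemma gives $\mathbb{E}[\liminf_n \nu_n/n^d]\leq \liminf_n\mathbb{E}[\nu_n]/n^d=\ell$, which is the wrong direction; there is no reverse inequality available for the $\liminf$ of a sequence bounded below. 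Moreover, the information you have at that point (namely $\limsup_n \nu_n/n^d=\varphi$ a.s., $0\leq \nu_n/n^d\leq C$, and $\mathbb{E}[\nu_n]/n^d\to\mathbb{E}[\varphi]$) genuinely does not imply a.s.\ convergence: take $X_n:=1-\mathbf{1}_{B_n}$ with $P(B_n)\to 0$ but $B_n$ chosen so that a.e.\ $\omega$ lies in infinitely many $B_n$; then $\limsup_n X_n=1=\varphi$ a.s., $\mathbb{E}[X_n]\to 1=\mathbb{E}[\varphi]$, yet $\liminf_n X_n=0$ a.s. The lower bound is exactly the hard part of the Akcoglu--Krengel theorem and requires a different mechanism (the filling/covering argument on ``good boxes'', or Kingman-type decompositions in $d=1$), not integral bookkeeping.

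Two further points would need repair even granting the $\liminf$. First, the asserted monotonicity in $k$ of $\mathbb{E}[\nu_k\mid\mathcal{I}_k]/k^d$ is not justified: the conditioning $\sigma$-algebras $\mathcal{I}_k$ change with $k$, and the tiling inequality does not pass through these conditional expectations without an additional argument (the standard proofs identify the limit differently, e.g.\ through the invariant $\sigma$-algebra of the full group and a separate comparison). Second, in the extension to $Q(rx,r)$ with $x\neq 0$ the translations $z_r$ of your comparison cubes $R^\pm$ drift to infinity as $r\to+\infty$; a.s.\ convergence of $\mu(\cdot,R_n+z)/n^d$ for each \emph{fixed} $z$ (plus $\tau_z$-invariance of $\varphi$) does not yield convergence along a sequence of cubes with unbounded translations, since the exceptional null set depends on $z$. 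This is precisely why the version used in the paper is stated for the drifting family $Q(rx,r)$ and is proved via the ergodic theorem for regular families of sets (as in \cite{DalMod,LichtGerard}), not by reduction to centred cubes through invariance of $\varphi$.
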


Let $f\in\mathcal{SF}^\alpha$ and $g\in\mathcal{SG}^\vartheta$. For every $\omega\in\Omega$, we set $f^\infty(\omega):=f(\omega)^\infty$ and $g^0(\omega):=g(\omega)^0$.

The following lemma shows that it is possible to define a subadditive process closely related to condition \eqref{eq:sufficient for homogenisation}.
 \begin{lemma}\label{lemma:is a subadditive process}
     Let $f\in\mathcal{SF}^\alpha$, let $g\in\mathcal{SG}^\vartheta$, let $\xi\in\Rkd$, and let $m\in\N$. For every $R\in\mathcal{R}$ let $\rho(R)$ be the length of the longest of its sides.  Then the function $\Phi_{\xi,m}\colon\Omega\times\mathcal{R}\to[0,+\infty)$ defined by 
     \begin{equation}\label{eq:def processo Phi}
         \Phi_{\xi,m}(\omega,R):=m^{E^{f(\omega),g^0(\omega)}}_{\rho(R)\beta_{\xi,m}}(\ell_\xi,R^\circ)
     \end{equation}
     is a covariant subadditive process.
 \end{lemma}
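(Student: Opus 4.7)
I would verify the four defining conditions (a)--(d) of a covariant subadditive process in turn, noting that property (a), the measurability of $\omega\mapsto\Phi_{\xi,m}(\omega,R)$, is the principal source of technicality. The boundedness (d) follows at once by using $\ell_\xi$ itself as a competitor: since $\|\ell_\xi-\ell_\xi\|_{L^\infty}=0\le\rho(R)\beta_{\xi,m}$ and $\ell_\xi$ has neither jump nor Cantor part, the explicit formula of Definition \ref{def:Functionals Efg} together with (f3$'$) gives $0\le\Phi_{\xi,m}(\omega,R)\le E^{f(\omega),g^0(\omega)}(\ell_\xi,R^\circ)\le(c_3k^{1/2}|\xi|+c_4)\Ld(R)$.

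For the covariance (b), given $\omega\in\Omega$, $z\in\Z^d$ and $R\in\mathcal{R}$, the assignment $u\mapsto\tilde u$ with $\tilde u(y):=u(y+z)-\xi z$ is a bijection between the admissible competitors for $\Phi_{\xi,m}(\omega,R+z)$ and those for $\Phi_{\xi,m}(\tau_z\omega,R)$: indeed $\text{tr}_{R^\circ}\tilde u=\text{tr}_{R^\circ}\ell_\xi$, $\|\tilde u-\ell_\xi\|_{L^\infty(R^\circ;\Rk)}=\|u-\ell_\xi\|_{L^\infty((R+z)^\circ;\Rk)}$, and $\rho(R+z)=\rho(R)$. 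Combining the stochastic $\Z^d$-periodicity of $f$ and $g$ (inherited by $g^0$ via its pointwise definition \eqref{eq:def g0}) with the transformation laws $\nabla\tilde u(y)=\nabla u(y+z)$, $[\tilde u](y)=[u](y+z)$, $\nu_{\tilde u}(y)=\nu_u(y+z)$, and performing the change of variables $x=y+z$ in each of the three integrals of Definition \ref{def:Functionals Efg}, I would obtain $E^{f(\tau_z\omega),g^0(\tau_z\omega)}(\tilde u,R^\circ)=E^{f(\omega),g^0(\omega)}(u,(R+z)^\circ)$, from which property (b) is immediate.

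For the subadditivity (c), given a finite partition $(R_i)_{i=1}^n\subset\mathcal{R}$ of $R\in\mathcal{R}$, I would fix $\eta>0$ and for each $i$ choose an $\eta/n$-almost optimal competitor $u_i\in BV(R_i^\circ;\Rk)$ for $\Phi_{\xi,m}(\omega,R_i)$, which in particular satisfies $\text{tr}_{R_i^\circ}u_i=\text{tr}_{R_i^\circ}\ell_\xi$ and $\|u_i-\ell_\xi\|_{L^\infty}\le\rho(R_i)\beta_{\xi,m}\le\rho(R)\beta_{\xi,m}$. Pasting these pieces into $u:=u_i$ on each $R_i^\circ$ produces an admissible competitor for $\Phi_{\xi,m}(\omega,R)$: the $L^\infty$ bound and the outer trace are preserved, and along every internal interface $\partial R_i\cap\partial R_j\cap R^\circ$ the one-sided traces of the two adjacent blocks both coincide with $\ell_\xi$, so $[u]=0$ there and no spurious jump contribution appears. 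Since $E^{f(\omega),g^0(\omega)}\in\E$ (Proposition \ref{prop: integral functionals are in E}), property (b) of Definition \ref{def:space of functionals E} yields $E^{f(\omega),g^0(\omega)}(u,R^\circ)=\sum_{i=1}^n E^{f(\omega),g^0(\omega)}(u_i,R_i^\circ)\le\sum_{i=1}^n\Phi_{\xi,m}(\omega,R_i)+\eta$, and sending $\eta\to 0^+$ gives (c).

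The main difficulty is property (a). For a fixed admissible $u$, the joint $\mathcal{T}\otimes\mathcal{B}$-measurability of $f$ and $g$ (and hence of $g^0$, which is a pointwise limit of rescalings of $g$) implies that each of the three summands in $E^{f(\omega),g^0(\omega)}(u,R^\circ)$ is $\mathcal{T}$-measurable in $\omega$. To pass to the infimum over the uncountable admissible class $\mathcal{U}:=\{u\in BV(R^\circ;\Rk):\text{tr}_{R^\circ}u=\text{tr}_{R^\circ}\ell_\xi,\ \|u-\ell_\xi\|_{L^\infty}\le\rho(R)\beta_{\xi,m}\}$, I would either extract a countable subfamily of $\mathcal{U}$ that is dense enough to preserve the infimum for every $\omega$ (thereby reducing $\Phi_{\xi,m}(\cdot,R)$ to a countable infimum of measurable functions), or invoke the measurable projection theorem applied to the jointly measurable set $\{(\omega,u)\in\Omega\times\mathcal{U}:E^{f(\omega),g^0(\omega)}(u,R^\circ)<t\}$, with $\mathcal{U}$ endowed with a Polish topology compatible with the $L^1$ convergence; in either case the $\widehat{\mathcal{T}}$-measurability of $\Phi_{\xi,m}(\cdot,R)$ required in (a) follows.
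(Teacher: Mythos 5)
Your verifications of (b), (c), and (d) are correct and are exactly the route the paper takes: the paper proves this lemma by citing the scalar-case argument of \cite[Lemma 6.9]{DalToa23b}, which likewise uses the affine competitor $\ell_\xi$ for the bound, the change of variables $y\mapsto y+z$ together with stochastic periodicity (inherited by $g^0$ and $f^\infty$) for covariance, and pasting of almost-minimisers for subadditivity; your observation that $\rho(R_i)\le\rho(R)$ is what makes the $L^\infty$-constraint compatible with pasting, and the interface set is Lebesgue-null and $\hd$-finite, so neither the volume nor the Cantor term charges it, while matching traces kill the jump term, as you say.

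The gap is in (a), which is the only genuinely delicate point of the lemma. Your first alternative (a countable subfamily of $\mathcal{U}$ preserving the infimum \emph{for every} $\omega$) is not justified and is unlikely to work as stated, since $E^{f(\omega),g^0(\omega)}(\cdot,R^\circ)$ is not continuous on $\mathcal{U}$ and no uniform-in-$\omega$ density argument is available. Your second alternative (measurable projection onto the complete space $(\Omega,\widehat{\mathcal T},\widehat P)$) is the standard and correct route, but the two hypotheses you would need are precisely what is asserted rather than proved: (i) the joint $\mathcal{T}\otimes\mathcal{B}(L^1)$-measurability of $(\omega,u)\mapsto E^{f(\omega),g^0(\omega)}(u,R^\circ)$ does not follow from separate measurability in $\omega$; one either needs lower semicontinuity in $u$ for fixed $\omega$ (not available here, since $g^0\notin\mathcal{G}$ and no semicontinuity of $E^{f,g^0}$ is claimed in the paper) or a direct approximation argument for the three integrals, which is the content of the measurability appendices in \cite{CagnettiStocFree} that \cite{DalToa23b} adapts; and (ii) the admissible class $\mathcal{U}$ must be shown to be Borel (or at least analytic) in $L^1(R^\circ;\Rk)$ — the $L^\infty$ constraint is closed, but the trace constraint is not, since the trace operator is not $L^1$-continuous; it can be handled, e.g., by extending $u$ by $\ell_\xi$ to a larger cube and expressing the condition as $|D\hat u|(\partial R)=0$ through a countable infimum over open neighbourhoods of $\partial R$. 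Without these two verifications the projection theorem cannot be invoked, so as written the measurability step remains a sketch of the intended argument rather than a proof.
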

 \begin{proof}
The proof can be obtained by arguing exactly as in \cite[Lemma 6.9]{DalToa23b}.
     \end{proof}
     With this lemma at hand, we are ready to show that   condition \eqref{eq:sufficient for homogenisation} with $f$ replaced by $f(\omega)$ and $g^0$ replaced by $g^0(\omega)$ is satisfied for $P$-a.e $\omega\in\Omega$  and for every $m\in\N$.
\begin{proposition}\label{prop:stochatstic ac integrand}
    Let $f \in \mathcal{SF}^\alpha$ and  $g \in\mathcal{SG}^\vartheta$. Then there exist a $\mathcal{T}$-measurable set $\Omega'$, with $P(\Omega')=1$, such that for every $m\in\N$ there exists a function $f_{\rm hom}^m\colon\Omega\times\Rkd\to [0, +\infty)$, with $f_{\rm hom}^m (\cdot, \xi)$   $\mathcal{T}$-measurable for every $\xi\in\Rkd$, such that 
\begin{equation*}
\lim_{r\to+\infty}\frac{m^{E^{f(\omega),g^0(\omega)}}_{r\beta_{\xi,m}}(\ell_\xi,Q(rx,r))}{r^d}=f_{\rm hom}^m(\omega,\xi)
\end{equation*}
for every $\omega\in\Omega'$, $x\in\Rd$, and $\xi\in\Rkd$. Moreover, the function $f_{\rm hom}\colon\Omega\times\Rkd\to[0,+\infty)$  defined by
\begin{equation}\label{eq:claim parte ac ergodico}
f_{\rm hom}(\omega,\xi)  :=\lim_{m\to+\infty}f_{\rm hom}^m(\omega,\xi)=\inf_{m\in\N}f_{\rm hom}^m(\omega,\xi)
\end{equation}
belongs to $\mathcal{SF}^\alpha$.
\noindent If, in addition,  $(\tau_z)_{z\in\Z^d}$ is ergodic, by choosing $\Omega'$ appropriately, we have that $f_{\rm hom}^m$ and $f_{\rm hom}$ are
independent of $\omega$.
\end{proposition}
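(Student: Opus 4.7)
The plan is to invoke the Subadditive Ergodic Theorem (Theorem~\ref{thm:subadditive ergodic theorem}) applied to the covariant subadditive process $\Phi_{\xi,m}$ constructed in Lemma~\ref{lemma:is a subadditive process}. For each $\xi\in\mathbb{Q}^{k\times d}$ and each $m\in\N$, this yields a set $\Omega_{\xi,m}\in\widehat{\mathcal{T}}$ with $\widehat{P}(\Omega_{\xi,m})=1$ and a non-negative $\widehat{\mathcal{T}}$-measurable function $\varphi_{\xi,m}\colon\Omega\to[0,+\infty)$ such that
$$\lim_{r\to+\infty}\frac{m^{E^{f(\omega),g^0(\omega)}}_{r\beta_{\xi,m}}(\ell_\xi,Q(rx,r))}{r^d}=\varphi_{\xi,m}(\omega)$$
for every $\omega\in\Omega_{\xi,m}$ and every $x\in\Rd$. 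Replacing each $\Omega_{\xi,m}$ and each $\varphi_{\xi,m}$ by $\mathcal{T}$-measurable representatives (which agree $P$-a.s.\ by completion of the $\sigma$-algebra), the set $\Omega':=\bigcap_{\xi\in\mathbb{Q}^{k\times d},\,m\in\N}\Omega_{\xi,m}$ belongs to $\mathcal{T}$ and has full probability, being a countable intersection.

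On $\Omega'\times\mathbb{Q}^{k\times d}$ I would define $f_{\rm hom}^m(\omega,\xi):=\varphi_{\xi,m}(\omega)$, and set $f_{\rm hom}^m\equiv 0$ off $\Omega'$. Lemma~\ref{lemma:is enough to check rationals} applied pointwise in $\omega\in\Omega'$ extends each $f_{\rm hom}^m(\omega,\cdot)$ to a continuous function on $\Rkd$ for which \eqref{eq:sufficient for homogenisation} holds for every $x\in\Rd$ and every $\xi\in\Rkd$. Measurability of $f_{\rm hom}^m(\cdot,\xi)$ with respect to $\mathcal{T}$ is immediate for rational $\xi$ and then inherited for arbitrary $\xi$ by pointwise limit, exploiting the uniform Lipschitz continuity of $f_{\rm hom}^m(\omega,\cdot)$ that it inherits from property (f4) of $f(\omega)$. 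Setting $f_{\rm hom}(\omega,\xi):=\inf_{m\in\N}f_{\rm hom}^m(\omega,\xi)$, the equality with $\lim_{m\to+\infty}f_{\rm hom}^m(\omega,\xi)$ follows from the non-increasing monotonicity of $m\mapsto m^{E^{f(\omega),g^0(\omega)}}_{r\beta_{\xi,m}}(\ell_\xi,Q(rx,r))$, noted at the beginning of the proof of Theorem~\ref{thm:cubes with r}, since the truncation parameter $r\beta_{\xi,m}$ is itself monotone in $m$.

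To conclude $f_{\rm hom}\in\mathcal{SF}^\alpha$, observe that for every $\omega\in\Omega'$ the integrands $f(\omega)\in\mathcal{F}^\alpha$ and $g(\omega)\in\mathcal{G}^\vartheta$ satisfy all hypotheses of Theorem~\ref{thm:cubes with r}: condition \eqref{eq:sufficient for homogenisation} by our very construction, and \eqref{eq:sufficient g for homogenisation} by the parallel surface construction producing $g_{\rm hom}(\omega,\cdot,\cdot)$, obtained by applying Theorem~\ref{thm:subadditive ergodic theorem} to a covariant subadditive process adapted to the surface minima of the form $m^{E^{f^\infty(\omega),g(\omega)}}(u_{rx,\zeta,\nu},Q_\nu(rx,r))$, along the lines of Lemma~\ref{lemma:is a subadditive process}. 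Theorem~\ref{thm:cubes with r} then delivers $f_{\rm hom}(\omega,\cdot)\in\mathcal{F}^\alpha$ (and $E^{f_{\rm hom}(\omega,\cdot),g_{\rm hom}(\omega,\cdot)}\in\ClosureE$) for every $\omega\in\Omega'$. The covariance property (b) of $\Phi_{\xi,m}$, combined with the fact that the limit in Theorem~\ref{thm:subadditive ergodic theorem} is independent of the base point $x\in\Rd$, gives $\varphi_{\xi,m}(\tau_z\omega)=\varphi_{\xi,m}(\omega)$ for every $z\in\Z^d$, whence $f_{\rm hom}(\tau_z\omega,\xi)=f_{\rm hom}(\omega,\xi)$; since $f_{\rm hom}$ is independent of $x$, this is exactly the stochastic periodicity of Definition~\ref{def:stochastic integrands}. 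When $(\tau_z)_{z\in\Z^d}$ is ergodic, the second clause of Theorem~\ref{thm:subadditive ergodic theorem} yields that each $\varphi_{\xi,m}$ is $P$-a.s.\ constant; intersecting the countable family of exceptional null sets and redefining $\Omega'$ accordingly produces an $\omega$-independent $f_{\rm hom}^m$ and $f_{\rm hom}$.

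The main obstacle is the verification of the $\alpha$-structural condition \eqref{eq:def falfa} for $f_{\rm hom}$. A direct passage to the limit in the $\mathcal{F}^\alpha$-inequality written at the level of $m^{E^{f(\omega),g^0(\omega)}}_{r\beta_{\xi,m}}(\ell_\xi,Q(rx,r))/r^d$ is delicate because $g^0(\omega)\notin\mathcal{G}$ (condition (g3) fails), so $E^{f(\omega),g^0(\omega)}$ is not an element of $\E^{\alpha,\vartheta}$ and the structural inequalities of Section~\ref{sec:smallerclass}, in particular Lemma~\ref{prop:closure ealpha}, are not directly available for this object. The indirect route outlined above packages the $\alpha$-condition inside the conclusion of Theorem~\ref{thm:cubes with r} and thereby sidesteps this difficulty, at the cost of relying on the companion surface construction for $g_{\rm hom}$ running in parallel.
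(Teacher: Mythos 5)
Your argument is essentially the paper's proof: apply the Subadditive Ergodic Theorem to the processes $\Phi_{\xi,m}$ of Lemma~\ref{lemma:is a subadditive process} for $\xi\in\mathbb{Q}^{k\times d}$ and $m\in\N$, intersect the countably many full-measure sets, extend to all $\xi\in\Rkd$ via Lemma~\ref{lemma:is enough to check rationals}, use the monotonicity in $m$ of the constrained minima to define $f_{\rm hom}$, obtain $f_{\rm hom}(\omega,\cdot)\in\mathcal{F}^\alpha$ from Theorem~\ref{thm:cubes with r} (thus packaging the $\alpha$-condition exactly as the paper does, with the same implicit reliance on the surface counterpart), and invoke ergodicity for the last clause. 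The only deviation is your choice $f_{\rm hom}^m\equiv 0$ off $\Omega'$: the zero function violates the lower bound (f2), so $f_{\rm hom}(\omega,\cdot)\notin\mathcal{F}^\alpha$ for $\omega\notin\Omega'$ and the membership $f_{\rm hom}\in\mathcal{SF}^\alpha$ (which requires $f_{\rm hom}(\omega)\in\mathcal{F}^\alpha$ for \emph{every} $\omega$) would fail; fix it as the paper does by setting $f_{\rm hom}^m(\omega,\xi):=f_{\rm hom}^m(\omega_0,\xi)$ for a fixed $\omega_0\in\Omega'$ outside $\Omega'$.
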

\begin{proof}
    By Lemma \ref{lemma:is a subadditive process} for every $m\in\N$ and for every $\xi\in\mathbb{Q}^{k\times d}$ the function $\Phi_{\xi,m}$ defined by \eqref{eq:def processo Phi} is a covariant subadditive process. Hence, by the Subadditive Ergodic Theorem \ref{thm:subadditive ergodic theorem}, there exists a $\mathcal{T}$-measurable set $\Omega'$, with $P(\Omega')=1$, and for every $m\in\N$ a function $f_{\rm hom}^m\colon \Omega\times \mathbb{Q}^{k\times d}\to [0,+\infty)$, with $f_{\rm hom}^m(\cdot,\xi)$  $\mathcal{T}$-measurable for every $\xi\in\mathbb{Q}^{k\times d}$,  such that 
    \begin{equation}\label{eq:def hat f m}
    \lim_{r\to+\infty}\frac{m^{E^{f(\omega),f^0(\omega)}}_{r\beta_{\xi,m}}(\ell_\xi,Q(rx,r))}{r^d}=f_{\rm hom}^m(\omega,\xi)
    \end{equation}
    for every $\omega\in\Omega'$, $m\in\N$, $x\in\Rd$, and $\xi\in\mathbb{Q}^{k\times d}$. Thanks to Lemma \ref{lemma:is enough to check rationals}, the function $f_{\rm hom}^m$ can be extended to a function $f_{\rm hom}^m\colon\Omega\times \Rkd\to[0,+\infty)$, $\mathcal{T}$-measurable with respect to $\omega$ and continuous with respect to $\xi$, such that \eqref{eq:def hat f m} holds for every $\omega\in\Omega'$, $m\in\N$, $x\in\Rd$, and $\xi\in\Rkd$. We now fix $\omega_0\in\Omega$ and we redefine $f_{\rm hom}^m$ on $\Omega\setminus\Omega'\times \Rkd$, by setting $f_{\rm hom}^m(\omega,\xi)=f_{\rm hom}^m(\omega_0,\xi)$for every $\omega\in\Omega$ and $\xi\in\Rkd$. Note that for every $\omega\in\Omega$ and $\xi\in\Rkd$,  the sequence $f^m(\omega,\xi)$ is non-increasing with respect to $m$, which justifies \eqref{eq:claim parte ac ergodico} and that $f_{\rm hom}(\cdot,\xi)$ is $\mathcal{T}$-measurable. By Theorem \ref{thm:cubes with r} for every $\omega\in\Omega$ the function $f_{\rm hom}(\omega)\in\mathcal{F}^\alpha$, hence,  $f_{\rm hom}\in\mathcal{SF}^\alpha$.

   If, in addition,  $(\tau_z)_{z\in\Z^d}$ is  ergodic, then $f_{\rm hom}^m(\cdot,\xi)$ is constant $P$-a.e. for every $m$. This leads to the last sentence of the statement, concluding the proof.
\end{proof}

The following proposition shows that $P$-a.e. in $\Omega$ condition \eqref{eq:sufficient g for homogenisation} is also satisfied.
\begin{proposition}\label{prop:stochastic surface integrand}
   Let $f \in \mathcal{SF}^\alpha$, let  $g \in\mathcal{SG}^\vartheta$. Then  there exists a $\mathcal{T}$-measurable set $\Omega'$, with $P(\Omega')=1$, 
and a $\mathcal{T}\otimes\mathcal{B}(\Rk\times\mathbb{S}^{d-1})$-measurable function $\hat{g}\colon\Omega\times\Rk\times \Sn^{d-1}\to [0, +\infty)$  such that
\begin{equation}\label{eq:def hat g}
            \lim_{r\to+\infty}\frac{m^{E^{f^\infty(\omega),g(\omega)}}(u_{rx,\zeta,\nu},Q_\nu(rx,r))}{r^{d-1}}=\hat{g}(\omega,\zeta,\nu)
\end{equation}
for every $\omega\in\Omega'$, $x\in \Rd$, $\zeta\in\Rk$, and $\nu\in\mathbb{S}^{d-1}$. If, in addition, the group $(\tau_z )_{z\in\Z^d}$ is ergodic, by choosing $\Omega'$ appropriately, we have that 
the function $g$ is independent of $\omega$.
\begin{proof}
    The result is proved by adapting the same arguments used in \cite[Proposition 9.3, Proposition 9.4, Proposition 9.5]{CagnettiGlobal}.
\end{proof}
\end{proposition}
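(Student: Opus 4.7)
The plan is to adapt the strategy of \cite[Section~9]{CagnettiGlobal} (which the authors themselves indicate as the reference) and mirror the scalar-case surface construction of \cite{DalToa23b}. The main new difficulty compared with Proposition~\ref{prop:stochatstic ac integrand} is that the rectangles $Q_\nu(x,\rho)$ are oriented along a general direction $\nu$, so covariance of the underlying process under $(\tau_z)_{z\in\Z^d}$ has to be handled more carefully than in the bulk case.

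First I would fix countable dense subsets $D_\zeta\subset\Rk$ and $D_\nu\subset\Sn^{d-1}$, choosing $D_\nu$ to consist of rational unit vectors so that the intersection of $\Z^d$ with $\Pi_0^\nu$ is a genuine $(d-1)$-dimensional lattice. For every $(\zeta,\nu)\in D_\zeta\times D_\nu$ I would introduce a subadditive stochastic process
\[
\Psi_{\zeta,\nu}(\omega,T):=m^{E^{f^\infty(\omega),g(\omega)}}(u_{x_T,\zeta,\nu},T),
\]
where $T$ ranges over thin cylindrical rectangles based on $(d-1)$-dimensional rectangles in the hyperplane $\Pi_0^\nu$ and $x_T$ is the centre of $T$. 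Using the standard gluing construction for admissible competitors near $\Pi_{x_T}^\nu$, the stochastic periodicity of $f$ and $g$, and the growth condition (g3), I would verify that $\Psi_{\zeta,\nu}$ is a covariant subadditive process with respect to the action induced by $(\tau_z)_{z\in\Z^d}$ on the $(d-1)$-dimensional lattice $\Z^d\cap\Pi_0^\nu$. An application of the multi-parameter Akcoglu--Krengel ergodic theorem for subadditive processes then yields, for $P$-a.e.~$\omega$ and every $x\in\Rd$, the existence of the limit in \eqref{eq:def hat g} for every $(\zeta,\nu)\in D_\zeta\times D_\nu$, together with the $\mathcal{T}$-measurability in $\omega$.

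Independence of $x$ follows as in the scalar case: using the covariance of $\Psi_{\zeta,\nu}$ and a standard exceptional-set argument based on countably many rational base points, one reduces the verification to $x=0$ and then propagates to arbitrary $x$. Next I would extend $\hat g$ to all $(\zeta,\nu)\in\Rk\times\Sn^{d-1}$ by continuity. Continuity in $\zeta$ is immediate from property (g4) via the modulus $\tau$, while continuity in $\nu$ requires a geometric perturbation argument comparing $Q_\nu(rx,r)$ with $Q_{\nu'}(rx,r)$ when $|\nu-\nu'|$ is small, and bounding the discrepancy of the corresponding $m^{E}$-values on the symmetric difference via the growth condition (g3) and the explicit continuity of $\nu\mapsto R_\nu$ on each hemisphere $\Sn^{d-1}_\pm$ postulated in item (e) of Section~\ref{sec:notationpreliminaries}. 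After intersecting the countably many full-probability sets obtained for $(\zeta,\nu)\in D_\zeta\times D_\nu$, I obtain a single $\Omega'$ with $P(\Omega')=1$ on which \eqref{eq:def hat g} holds for every $x$, $\zeta$, and $\nu$.

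The main obstacle I expect is the handling of irrational orientations $\nu$: the lattice $\Z^d$ does not descend to a lattice on $\Pi_0^\nu$, so the subadditive ergodic theorem cannot be applied directly in that direction. The workaround sketched above—proving the result first on a countable dense set of rational directions and then extending by continuity in $\nu$—is exactly the one developed in \cite{CagnettiStocFree,CagnettiGlobal} and it carries over here unchanged, because the growth bound (g3) and the asymmetry-symmetry condition (g5) do not depend on the vectorial nature of $u$. Finally, the ergodic case is immediate, since $\hat g(\cdot,\zeta,\nu)$ is $\tau_z$-invariant for every $z\in\Z^d$ and is therefore $P$-a.e.\ constant; choosing $\Omega'$ to lie in the intersection with this constancy event yields the final statement.
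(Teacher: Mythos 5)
Your proposal follows essentially the same route as the paper, whose proof simply adapts \cite[Propositions 9.3--9.5]{CagnettiGlobal}: a covariant $(d-1)$-dimensional subadditive process $\Psi_{\zeta,\nu}$ for rational directions, the Akcoglu--Krengel theorem, and then passage to all centres $x$ and all $\nu\in\mathbb{S}^{d-1}$ (and all $\zeta$ via (g4)). The two steps you compress -- independence of the drifting centre $rx$, whose component along $\nu$ leaves the hyperplane lattice, and the fact that for irrational $\nu$ one must prove the limit in \eqref{eq:def hat g} exists by squeezing between nearby rational directions rather than merely defining $\hat g$ by continuity -- are exactly the delicate parts carried out in the cited propositions, so no genuinely new ingredient is missing from your outline.
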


Combining Theorem \ref{thm:sufficient for Gammaconvergnece} with Propositions \ref{prop:stochatstic ac integrand} and Proposition \ref{prop:stochastic surface integrand}, we are finally able to obtain the desired stochastic homogenisation theorem.
\begin{theorem}\label{thm:Stochatsic homogenisation}
    Let $f \in \mathcal{SF}^\alpha$, let  $g \in\mathcal{SG}^\vartheta$, and for every $\e>0$ and $\omega\in\Omega$ let $E^\omega_\e:=E^{f_\e(\omega),g_\e(\omega)}$, according to Definition \ref{def:Functionals Efg}. Let $ f_{\rm hom}\colon\Omega\times \Rkd\to[0,+\infty)$ and $g_{\rm hom}\colon \Omega\times \Rk\times \mathbb{S}^{d-1}\to[0,+\infty)$ be the functions given in Propositions \ref{prop:stochatstic ac integrand} and \ref{prop:stochastic surface integrand}. Then for  the function $f_{\rm hom}(\omega,\cdot)\in\mathcal{F}^\alpha$ and $g_{\rm hom}(\omega,\cdot,\cdot)\in\mathcal{G}$ for every $\omega\in\Omega$ and there exists  $\mathcal{T}$-measurable set $\Omega'$, with $P(\Omega')=1$,  such that
     for every sequence $\e_n\to 0^+$, $\omega\in\Omega'$, and  $A\in\mathcal{A}_c(\Rd)$, the sequence $E^\omega_{\e_n}(\cdot,A)$ $\Gamma$-converges to $E^{f_{\rm hom}(\omega),g_{\rm hom}(\omega)}(\cdot,A)$ in the topology of $L^0(\Rd;\Rk)$.
  
    If, in addition, the group $(\tau_z)_{z\in\Z^d}$ is ergodic,  by choosing $\Omega'$ appropriately,   the functions $f_{\rm hom}$ and $g_{\rm hom}$ are independent of $\omega$.
\end{theorem}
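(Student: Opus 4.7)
The strategy is to assemble the two probabilistic convergence statements of Propositions \ref{prop:stochatstic ac integrand} and \ref{prop:stochastic surface integrand} with the deterministic $\Gamma$-convergence criterion of Theorem \ref{thm:cubes with r}, evaluating the latter pointwise in $\omega$. In this way the $\Gamma$-convergence is reduced to a purely deterministic statement, while the subadditive ergodic theorem (applied at each fixed $m\in\N$ in Proposition \ref{prop:stochatstic ac integrand}, and in a more delicate form in Proposition \ref{prop:stochastic surface integrand}) takes care of the stochastic part.

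First, let $\Omega_1'\subset \Omega$ be the $\mathcal{T}$-measurable full-measure set given by Proposition \ref{prop:stochatstic ac integrand}, on which the limits defining $f_{\rm hom}^m(\omega,\xi)$ exist for every $m\in\N$, $x\in\Rd$, and $\xi\in\Rkd$, and are independent of $x$; let $\Omega_2'$ be the analogous set from Proposition \ref{prop:stochastic surface integrand}, associated with the limits defining $g_{\rm hom}(\omega,\zeta,\nu)$. Setting $\Omega':=\Omega_1'\cap \Omega_2'$ gives $P(\Omega')=1$. For each $\omega\in\Omega'$, the deterministic integrands $f(\omega)\in\mathcal{F}^\alpha$ and $g(\omega)\in\mathcal{G}^\vartheta$ satisfy simultaneously hypotheses \eqref{eq:sufficient for homogenisation} and \eqref{eq:sufficient g for homogenisation} of Theorem \ref{thm:cubes with r}, with the cell-problem integrands being precisely $f_{\rm hom}^m(\omega,\cdot)$ and $g_{\rm hom}(\omega,\cdot,\cdot)$. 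Applying that theorem then yields at once that $f_{\rm hom}(\omega)\in\mathcal{F}^\alpha$, $g_{\rm hom}(\omega)\in\mathcal{G}$, and that for every vanishing sequence $\e_n\to 0^+$ and every $A\in\mathcal{A}_c(\Rd)$ the functional $E_{\e_n}^\omega(\cdot,A)$ $\Gamma$-converges in $L^0(\Rd;\Rk)$ to $E^{f_{\rm hom}(\omega),g_{\rm hom}(\omega)}(\cdot,A)$.

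To extend the membership statements to every $\omega\in\Omega$, I would fix $\omega_0\in\Omega'$ and redefine $f_{\rm hom}(\omega,\xi):=f_{\rm hom}(\omega_0,\xi)$ and $g_{\rm hom}(\omega,\zeta,\nu):=g_{\rm hom}(\omega_0,\zeta,\nu)$ for $\omega\in\Omega\setminus\Omega'$; this is consistent with the redefinition already performed in Proposition \ref{prop:stochatstic ac integrand} and preserves the joint measurability, while guaranteeing $f_{\rm hom}(\omega,\cdot)\in\mathcal{F}^\alpha$ and $g_{\rm hom}(\omega,\cdot,\cdot)\in\mathcal{G}$ for every $\omega\in\Omega$. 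Finally, under the ergodicity hypothesis, Propositions \ref{prop:stochatstic ac integrand} and \ref{prop:stochastic surface integrand} guarantee that, up to further intersecting $\Omega'$ with a countable family of $P$-full measure events, each $f_{\rm hom}^m$ and $g_{\rm hom}$ is $P$-a.s.\ constant in $\omega$; since $f_{\rm hom}=\inf_{m\in\N}f_{\rm hom}^m$ by \eqref{eq:claim parte ac ergodico}, this property is inherited by $f_{\rm hom}$, and a last redefinition on a null set yields the deterministic homogenised integrands.

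Substantively, this final theorem is a bookkeeping assembly: all the analytic work has been carried out earlier, in the deterministic $\Gamma$-convergence criterion of Section \ref{sec:integrands in the limit} and in the two probabilistic cell-formula results of Section \ref{sec:homogenisation}. The only delicate point, and therefore the closest thing to an obstacle, is ensuring that the exceptional null set $\Omega\setminus\Omega'$ can be chosen independently of the sequence $\e_n\to 0^+$ and of the open set $A\in\mathcal{A}_c(\Rd)$; this, however, comes for free from Theorem \ref{thm:cubes with r}, whose conclusion is already uniform in $\e_n$ and $A$ once the two cell-formula hypotheses have been verified for the fixed realization $\omega$.
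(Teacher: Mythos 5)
Your proposal is correct and follows essentially the same route as the paper, which obtains the theorem precisely by combining the deterministic criterion of Theorem \ref{thm:cubes with r} (itself a reformulation of Theorem \ref{thm:sufficient for Gammaconvergnece}) with Propositions \ref{prop:stochatstic ac integrand} and \ref{prop:stochastic surface integrand}, applied pointwise on the intersection of the two full-measure sets. The bookkeeping details you supply (redefinition of the integrands off $\Omega'$, inheritance of a.s.\ constancy by $f_{\rm hom}=\inf_m f_{\rm hom}^m$ in the ergodic case) match what the earlier propositions already arrange.
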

\begin{remark}\label{re:periodic}
The periodic homogenisation in the deterministic case follows immediately from Theorem \ref{thm:Stochatsic homogenisation} once we note that in the case where $\Omega$ consists of a single point and $\tau_z=Id$ for every $z\in\Z^d$, the stochastic periodicity reduces to the 1-periodicity in each variable.
\end{remark}

\noindent {\it Acknowledgments.}
This article is based on work supported by the National Research
Project PRIN 2022J4FYNJ “Variational methods for stationary and evolution problems
with singularities and interfaces” funded by the Italian Ministry of University and Research.
The authors are members of GNAMPA of INdAM.

\bibliographystyle{abbrv}
\bibliography{Sources}

\end{document}